\documentclass[11pt,twoside]{amsart}

\usepackage[T1]{fontenc}
\usepackage[latin1]{inputenc}
\usepackage[english]{babel}

\usepackage{amsmath,amsthm,amssymb,amsfonts,amscd}
\usepackage{url}
\usepackage{color}
\usepackage{setspace}
\usepackage{enumerate}

\linespread{1}

\usepackage{epsfig}
\usepackage{graphicx}
\usepackage{mathrsfs}
\usepackage{pinlabel}
\usepackage[outercaption]{sidecap}

\definecolor{light-gray}{gray}{0.60}




%
\newcounter{notes}%




\theoremstyle{plain}
\newtheorem{theorem}{Theorem}
\newtheorem{proposition}[theorem]{Proposition}
\newtheorem{corollary}[theorem]{Corollary}

\newtheorem{lemma}[theorem]{Lemma}
\newtheorem{fact}[theorem]{Fact}
\newtheorem{notation}[theorem]{Notation}

\newtheoremstyle{theoremwithref}{}{}{\itshape}{}{\bfseries}{.}{.5em}{#1 #2 #3}
\theoremstyle{theoremwithref}

\theoremstyle{definition}
\newtheorem{definition}[theorem]{Definition}
\newtheorem{example}[theorem]{Example}
\newtheorem{remark}[theorem]{Remark}
\newtheorem{remarks}[theorem]{Remarks}

\numberwithin{theorem}{section}
\numberwithin{equation}{section}



\newcommand{\D}{\mathrm{d}}

\newcommand{\NN}{\mathbb{N}}
\newcommand{\ZZ}{\mathbb{Z}}

\newcommand{\RR}{\mathbb{R}}
\newcommand{\CC}{\mathbb{C}}
\newcommand{\HH}{\mathbb{H}}
\newcommand{\PP}{\mathbb{P}}

\renewcommand{\SS}{\mathbb{S}}

\newcommand{\SL}{\mathrm{SL}}
\newcommand{\GL}{\mathrm{GL}}
\newcommand{\SO}{\mathrm{SO}}
\newcommand{\OO}{\mathrm{O}}
\newcommand{\PO}{\mathrm{PO}}

\newcommand{\PSL}{\mathrm{PSL}}
\newcommand{\PGL}{\mathrm{PGL}}

\newcommand{\Hom}{\mathrm{Hom}}

\newcommand{\ie}{i.e.\ }
\newcommand{\eg}{e.g.\ }
\newcommand{\resp}{resp.\ }

\newcommand{\C}{\mathcal{C}}
\newcommand{\DD}{\mathcal{D}}

\newcommand*{\longhookrightarrow}{\ensuremath{\lhook\joinrel\relbar\joinrel\rightarrow}}

\title{Convex cocompactness in pseudo-Riemannian hyperbolic spaces}

\author{Jeffrey Danciger}
\address{Department of Mathematics, The University of Texas at Austin, 1 University Station C1200, Austin, TX 78712, USA}
\email{jdanciger@math.utexas.edu}

\author{Fran\c{c}ois Gu\'eritaud}
\address{CNRS and Universit\'e Lille 1, Laboratoire Paul Painlev\'e, 59655 Villeneuve d'Ascq Cedex, France}
\email{francois.gueritaud@math.univ-lille1.fr}

\author{Fanny Kassel}
\address{CNRS and Institut des Hautes \'Etudes Scientifiques, Laboratoire Alexander Grothendieck, 35 route de Chartres, 91440 Bures-sur-Yvette, France}
\email{kassel@ihes.fr}

\dedicatory{To Bill Goldman on his 60th birthday}

\thanks{J.D. was partially supported by an Alfred P. Sloan Foundation fellowship and by the National Science Foundation grant DMS 1510254.
F.G. and F.K. were partially supported by the Agence Nationale de la Recherche through the Labex CEMPI (ANR-11-LABX-0007-01).
F.K. was partially supported by the European Research Council under ERC Starting Grant 715982 (DiGGeS).
The authors also acknowledge support from the GEAR Network, funded by the National Science Foundation grants DMS 1107452, 1107263, and 1107367 (``RNMS: GEometric structures And Representation varieties").}

\begin{document}

\maketitle

\begin{abstract}
Anosov representations of word hyperbolic groups into\linebreak higher-rank semisimple Lie groups are representations with finite kernel and discrete image that have strong analogies with convex cocompact representations into rank-one Lie groups.
However, the most naive analogy fails: generically, Anosov representations do not act properly and cocompactly on a convex set in the associated Riemannian symmetric space.
We study representations into projective indefinite orthogonal groups $\PO(p,q)$ by considering their action on the associated pseudo-Riemannian hyperbolic space $\HH^{p,q-1}$ in place of the Riemannian symmetric space.
Following work of Barbot and M\'erigot in anti-de Sitter geometry, we find an intimate connection between Anosov representations and a natural notion of convex cocompactness in this setting.
\end{abstract}

\section{Introduction}

Convex cocompact subgroups of rank-one semisimple Lie groups are an important class of discrete groups whose actions on the associated Riemannian symmetric space (and its visual boundary at infinity) exhibit many desirable geometric and dynamical properties.
Their study has been particularly important in the setting of Kleinian groups and hyperbolic geometry. 
This paper studies a generalized notion of convex cocompactness in the higher-rank setting of projective indefinite orthogonal groups $\PO(p,q)$, described in terms of the action on the projective space $\PP(\RR^{p,q})$ and on the associated pseudo-Riemannian hyperbolic space $\HH^{p,q-1}$.
Our forthcoming papers \cite{dgk-cc,dgk-racg-cc} will extend many of these ideas to the setting of discrete subgroups of the projective general linear group $\PGL(\RR^n)$ which do not necessarily preserve any nonzero quadratic form.

\subsection{Convex cocompactness in projective orthogonal groups} \label{subsec:intro-Hpq-cc}

In the whole paper, we fix integers $p,q\geq 1$ and let $G=\PO(p,q)$ be the orthogonal group, modulo its center $\{\pm\mathrm{I}\}$, of a nondegenerate symmetric bilinear form $\langle\cdot,\cdot\rangle_{p,q}$ of signature $(p,q)$ on~$\RR^{p+q}$.
We denote by $\RR^{p,q}$ the space $\RR^{p+q}$ endowed with the symmetric bilinear form $\langle\cdot,\cdot\rangle_{p,q}$.
For any linear subspace $W$ of $\RR^{p,q}$, we denote by $W^{\perp}$ the orthogonal of $W$ for $\langle\cdot,\cdot\rangle_{p,q}$.
We use similar notation in $\PP(\RR^{p,q})$: in particular, for $z\in\PP(\RR^{p,q})$ the set $z^{\perp}$ is a projective hyperplane of $\PP(\RR^{p,q})$, which contains $z$ if and only if $\langle z,z\rangle_{p,q}=0$.

When $q=1$, the group $G$ is the group of isometries of the real hyperbolic space
$$\HH^p = \big\{[x]\in\PP(\RR^{p,1})\,|\,\langle x,x\rangle_{p,1}<0\big\},$$
which is also the Riemannian symmetric space $G/K$ associated with~$G$.
Recall that a
discrete
subgroup $\Gamma$ of $G=\PO(p,1)$ is said to be \emph{convex cocompact} if it acts cocompactly on some nonempty closed convex subset $\mathcal{C}$ of~$\HH^p$. Note that since $\Gamma$ is discrete and $\HH^p$ is Riemannian, the action is automatically properly discontinuous, and so the quotient $\Gamma \backslash \mathcal{C}$ is a hyperbolic orbifold, or a manifold if the action is free, with convex boundary.
Basic examples of convex cocompact subgroups include uniform lattices of~$G$ and Schottky subgroups of~$G$.
In the case $p=3$, for which the accidental isomorphism $\PO(3,1)_0 \simeq \PSL_2(\CC)$ makes $G$ a complex group, the realm of Kleinian groups gives an abundance of interesting examples coming both from complex analysis \`a la Ahlfors and Bers and from $3$-manifold topology and Thurston's geometrization program.
Notable are the quasi-Fuchsian groups (isomorphic to closed surface groups) which are deformations of Fuchsian subgroups of $\PO(2,1) \subset \PO(3,1)$.

Assume that $G=\PO(p,q)$ has real rank $\geq 2$, \ie $\min(p,q)\geq 2$, and let $K=\mathrm{P}(\OO(p)\times\OO(q))$ be a maximal compact subgroup of~$G$.
The group $G$ is the isometry group of the Riemannian symmetric space $G/K$, and it is natural to study the discrete subgroups $\Gamma$ of~$G$ that act cocompactly on some convex subset of $G/K$. This naive generalization of convex cocompactness turns out to be quite restrictive due to the following general result proved independently by Kleiner--Leeb \cite{kl06} and Quint~\cite{qui05}.

\begin{fact}[{\cite{kl06,qui05}}]
Let $G$ be a real semisimple Lie group of real rank $\geq 2$ and $K$ a maximal compact subgroup of~$G$.
Any Zariski-dense discrete subgroup of~$G$ acting cocompactly on some nonempty closed convex subset $\mathcal{C}$ of the Riemannian symmetric space $G/K$ is a uniform lattice in~$G$.
\end{fact}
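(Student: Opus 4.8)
The plan is to prove that the convex set $\mathcal{C}$ must be all of the symmetric space $X := G/K$; the Fact then follows at once, since a discrete subgroup of $G$ acting cocompactly on $X$ automatically acts properly (as $X$ is Riemannian and $\Gamma$ discrete), so that $\Gamma\backslash X$, hence $\Gamma\backslash G$, is compact, i.e.\ $\Gamma$ is a uniform lattice. Write $k = \mathrm{rank}_{\RR}(G)\geq 2$; let $\partial_\infty X$ be the visual boundary of $X$ (a thick spherical building of rank $k$) and $\mathcal{F} = G/P$ the Furstenberg boundary. Since $\Gamma$ acts cocompactly on $\mathcal{C}$, the set $\mathcal{C}$ lies within bounded Hausdorff distance of any orbit $\Gamma\cdot o$ (one inclusion is cocompactness, the other $\Gamma$-invariance); hence its visual boundary $\partial_\infty\mathcal{C}$ — the set of endpoints of geodesic rays contained in $\mathcal{C}$ — equals the limit set $\Lambda_\Gamma\subseteq\partial_\infty X$, and in particular is closed and $\Gamma$-invariant. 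By a theorem of Benoist, the Zariski-density of $\Gamma$ implies that its limit cone in the closed Weyl chamber $\overline{\mathfrak{a}^+}$ has nonempty interior; equivalently, $\partial_\infty\mathcal{C}$ contains \emph{regular} ideal points (those lying in the interior of a model chamber), and moreover $\Gamma$ contains $\RR$-regular loxodromic elements.

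The technical core is to upgrade these regular ideal directions into genuine maximal flats contained in $\mathcal{C}$. Fix an $\RR$-regular loxodromic $\gamma\in\Gamma$: it has an attracting and a repelling fixed point $\xi^{\pm}\in\mathcal{F}$ in general position, and a unique invariant maximal flat $F_\gamma\subseteq X$ along which it translates. For any $x\in\mathcal{C}$ one has $d(\gamma^n x, F_\gamma) = d(x, F_\gamma)$ for all $n\in\ZZ$ (since $\gamma F_\gamma = F_\gamma$), while $\gamma^n x\to\xi^{\pm}$ in the visual compactification as $n\to\pm\infty$; taking a limit of the geodesic segments $[\gamma^{-n}x,\gamma^n x]\subseteq\mathcal{C}$ therefore produces a regular geodesic line $\ell\subseteq\mathcal{C}$, necessarily contained in $F_\gamma$ because a regular geodesic lies in a unique maximal flat. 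Next I would use $\Gamma$-invariance together with Zariski-density: every conjugate $\delta\gamma\delta^{-1}$ ($\delta\in\Gamma$) is again $\RR$-regular loxodromic with axis flat $\delta F_\gamma\subseteq\mathcal{C}$, and the resulting regular lines share ideal endpoints with $\ell$ in such a way that, combined with convexity of $\mathcal{C}$ and the incidence geometry of the spherical building $\partial_\infty X$, one forces $\mathcal{C}$ to contain a full Weyl cone based at $\xi^-$ and hence — being closed and convex — the entire flat $F_\gamma$. I expect this passage, from ``$\mathcal{C}$ contains regular geodesics'' to ``$\mathcal{C}$ contains a maximal flat'', to be the main obstacle: it is exactly where the assumptions $k\geq 2$, the fatness of the limit cone, and the building structure at infinity all have to be combined.

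It then remains to deduce $\mathcal{C} = X$ from the fact that $\mathcal{C}$ is a closed convex $\Gamma$-invariant subset containing a maximal flat $F$, on which the Zariski-dense discrete group $\Gamma$ acts cocompactly. Here one first records the general principle that a proper closed convex subset $D\subsetneq X$ has $\partial_\infty D\subsetneq\partial_\infty X$: if $p\notin D$ and $q$ is its nearest-point projection, the geodesic ray from $q$ through $p$ leaves $D$ and, by convexity of $t\mapsto d(\cdot,D)$, strays arbitrarily far from $D$, so its endpoint is not in $\partial_\infty D$. Thus it suffices to show $\partial_\infty\mathcal{C} = \partial_\infty X$. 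Now $\partial_\infty\mathcal{C} = \Lambda_\Gamma$ contains the apartment $\partial_\infty F$, hence — being closed and $\Gamma$-invariant — contains the closure of $\bigcup_{\delta\in\Gamma}\delta\,\partial_\infty F$, and one argues, using cocompactness of the action on $\mathcal{C}\supseteq\bigcup_\delta \delta F$ together with Zariski-density, that this already exhausts $\partial_\infty X$; this last step is the remaining substantive point and can be carried out either directly or, following Kleiner--Leeb, by passing to the asymptotic cone $\mathrm{Cone}_\omega(X)$ (a Euclidean building of rank $k$) in which $\mathrm{Cone}_\omega(\mathcal{C})$ is a closed convex subset carrying a cocompact action and the same flat-filling mechanism applies. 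Once $\partial_\infty\mathcal{C} = \partial_\infty X$, we get $\mathcal{C} = X$, and $\Gamma$ is a uniform lattice in $G$; compare \cite{kl06,qui05}.
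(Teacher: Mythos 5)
This Fact is not proved in the paper at all: it is quoted from Kleiner--Leeb and Quint \cite{kl06,qui05}, so there is no internal argument to compare yours against; the only fair comparison is with those papers, whose content your sketch is essentially trying to reproduce. As a proof, your proposal has a genuine gap, and you flag it yourself: the two pivotal steps are only named, not carried out. First, the passage from ``$\mathcal{C}$ contains a regular geodesic $\ell\subset F_\gamma$'' to ``$\mathcal{C}$ contains a Weyl cone, hence all of $F_\gamma$'' is asserted via ``convexity plus the incidence geometry of the building,'' with no actual mechanism; second, the passage from ``$\mathcal{C}$ contains flats $\delta F_\gamma$'' to ``$\partial_\infty\mathcal{C}=\partial_\infty X$'' is again left as ``one argues, using cocompactness and Zariski-density.'' These two steps are precisely the theorems of \cite{kl06} (proved via asymptotic cones and the structure theory of Euclidean buildings) and \cite{qui05} (proved via Benoist's limit cone and a careful analysis of the convex core), and they are exactly where the hypothesis $\mathrm{rank}_{\RR}(G)\geq 2$ must enter: in rank one the statement is false (convex cocompact non-lattices abound), so any argument that does not make the higher-rank mechanism explicit cannot be checked for correctness. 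A secondary caution: your claim that the closure of $\bigcup_{\delta}\delta\,\partial_\infty F$ ``already exhausts $\partial_\infty X$'' is not a formal consequence of Zariski-density and cocompactness alone; making it precise is tantamount to the rigidity statement being proved.

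The parts you do carry out are fine: the reduction to showing $\mathcal{C}=X$, the identification $\partial_\infty\mathcal{C}=\Lambda_\Gamma$, the existence of $\RR$-regular loxodromic elements and of regular limit directions via Benoist, and the Arzel\`a--Ascoli limit of the segments $[\gamma^{-n}x,\gamma^n x]$ producing a regular line in $\mathcal{C}\cap F_\gamma$ (using convexity of the distance to the flat and the fact that a regular geodesic lies in a unique maximal flat). So the outline correctly locates the difficulty, but as written it is a roadmap to the results of \cite{kl06,qui05} rather than a proof of them.
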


In this paper, we propose instead a notion of convex cocompactness in $G = \PO(p,q)$ in terms of the action on the real projective space $\PP(\RR^{p,q})$, and in particular on the invariant open domain
$$\HH^{p,q-1} = \big\{[x]\in\PP(\RR^{p,q})\,|\,\langle x,x\rangle_{p,q}<0\big\} \simeq G/\OO(p,q-1)$$
which is the projective model for a pseudo-Riemannian symmetric space associated to~$G$.
Indeed, $\HH^{p,q-1}$ has a natural pseudo-Riemannian structure of signature $(p,q-\nolinebreak 1)$ with isometry group~$G$, induced by the symmetric bilinear form $\langle\cdot,\cdot\rangle_{p,q}$ (see Section~\ref{subsec:Hpq}).
Geodesics of $\HH^{p,q-1}$ are intersections of $\HH^{p,q-1}$ with straight lines of $\PP(\RR^{p,q})$.
For $q=1$, the space $\HH^{p,q-1}$ is the real hyperbolic space~$\HH^p$, in its projective model.
In general, $\HH^{p,q-1}$ is a pseudo-Riemannian analogue of~$\HH^p$ of signature $(p,q-1)$, with constant negative sectional curvature.

Recall that a subset of projective space is said to be \emph{convex} if it is contained and convex in some affine chart; in other words, any two points of the subset are connected inside the subset by a unique projective segment.
A subset of projective space is said to be \emph{properly convex} if its closure is convex.
Unlike real hyperbolic space, for $q>1$ the space $\HH^{p,q-1}$ is not a convex subset of the projective space $\PP(\RR^{p,q})$, and the basic operation of taking convex hulls is not well defined.
Nonetheless, the notion of convexity in $\HH^{p,q-1}$ makes sense: we shall say that a subset $\C$ of $\HH^{p,q-1}$ is \emph{convex} if it is convex as a subset of $\PP(\RR^{p,q})$ or, from an intrinsic point of view, if any two points of~$\C$ are connected inside~$\C$ by a unique segment which is geodesic for the pseudo-Riemannian structure.
We shall say that $\C$ is \emph{properly convex} if its closure in $\PP(\RR^{p,q})$ is convex.

For $q=2$, the Lorentzian space $\HH^{p,q-1}$ is the $(p+1)$-dimensional \emph{anti-de Sitter space} $\mathrm{AdS}^{p+1}$, where a notion of \emph{AdS quasi-Fuchsian group} has been studied by Mess \cite{mes90} (for $p=2$) and Barbot--M\'erigot \cite{bm12,bar15} (for $p\geq 3$).
Inspired by this notion, we make the following definition.

\begin{definition}\label{def:ccc-Hpq}
A discrete subgroup $\Gamma$ of $G=\PO(p,q)$ is \emph{$\HH^{p,q-1}$-convex cocompact} if it acts properly discontinuously and cocompactly on some properly convex closed subset $\C$ of~$\HH^{p,q-1}$ with nonempty interior whose ideal boundary $\partial_i\C := \overline{\C} \smallsetminus \C$ does not contain any nontrivial projective segment.
\end{definition}

Here $\overline \C$ denotes the closure of $\C$ in $\PP(\RR^{p,q})$.
We note that an $\HH^{p,q-1}$-convex cocompact group is always finitely generated.

\begin{remark}
We shall say that a subgroup $\Gamma$ of $G=\PO(p,q)$ is \emph{irreducible} if it does not preserve any projective subspace of $\PP(\RR^{p,q})$ of positive codimension.
In that case, any nonempty $\Gamma$-invariant convex subset of $\PP(\RR^{p,q})$ has nonempty interior, and so ``$\C$ with nonempty interior'' may be replaced by ``$\C$ nonempty'' in Definition~\ref{def:ccc-Hpq}.
\end{remark}

Note that a discrete subgroup $\Gamma$ of $\PO(p,q)$ need not act properly discontinuously on $\HH^{p,q-1}$, since the stabilizer $\OO(p,q-1)$ of a point is noncompact.
When $\Gamma$ preserves a properly convex subset $\C \subset \HH^{p,q-1}$, the action on the interior of~$\C$ is always properly discontinuous (see Section~\ref{subsec:prop-conv-proj}), but the action on the whole of~$\C$ need not be.
The requirement of proper discontinuity in Definition~\ref{def:ccc-Hpq} guarantees that the accumulation points of any $\Gamma$-orbit are contained in the ideal boundary $\partial_i\C$.
Since $\C$ is assumed to be closed in $\HH^{p,q-1}$, the set $\partial_i\C$ is contained in the boundary
$$\partial_{\scriptscriptstyle\PP}\HH^{p,q-1} := \big\{[x]\in\PP(\RR^{p,q})\,|\,\langle x,x\rangle_{p,q}=0\big\}$$
of $\HH^{p,q-1}$.
The condition that $\partial_i\C$ not contain any nontrivial projective segment is equivalent to the condition that $\partial_i\C$ be \emph{transverse}, \ie $y\notin z^{\perp}$ for all $y\neq z$ in $\partial_i\C$.

\begin{remark}
When $\Gamma$ is a discrete subgroup of $\PO(p,q)$ which is \emph{not} irreducible, it is possible that $\Gamma$ act properly discontinuously and cocompactly on some closed convex subset $\C$ of $\HH^{p,q-1}$ with nonempty interior but that $\partial_i\C$ contain nontrivial projective segments, as the following example shows.
It is not clear whether this is possible for irreducible~$\Gamma$.
\end{remark}

\begin{example} \label{ex:bad-cyclic}
Let $\gamma$ be an element of $\PO(p,q)$ whose top eigenvalue $\lambda >\nolinebreak 1$ has multiplicity larger than one.
The cyclic group $\Gamma = \langle \gamma \rangle$ acts properly discontinuously and cocompactly on a closed convex neighborhood $\C$ in $\HH^{p,q-1}$ of a line connecting two points of $\partial_{\scriptscriptstyle\PP} \HH^{p,q-1}$ corresponding to eigenvectors of eigenvalue $\lambda$ and $\lambda^{-1}$ respectively.
However, the ideal boundary $\partial_i \C$ must contain a nontrivial segment of the projectivization of the highest eigenspace.
Thus $\Gamma$ is not $\HH^{p,q-1}$-convex cocompact in the sense of Definition~\ref{def:ccc-Hpq}.
\end{example}

It is well known that (unlike in Example~\ref{ex:bad-cyclic}) an irreducible discrete subgroup $\Gamma$ of $\PGL(\RR^n)$ preserving a nonempty properly convex subset $\C$ of $\PP(\RR^n)$ always contains a \emph{proximal} element, \ie an element~$\gamma$ with a unique attracting fixed point in $\PP(\RR^n)$ (see \cite[Prop.\,3.1]{ben00}).
We shall call \emph{proximal limit set} of $\Gamma$ in $\PP(\RR^n)$ the closure $\Lambda_\Gamma \subset \PP(\RR^n)$ of the set of attracting fixed points of proximal elements of~$\Gamma$ (Definition~\ref{def:limit-set}).
If the action of $\Gamma$ on~$\C$ is properly discontinuous, then the proximal limit set $\Lambda_{\Gamma}$ is contained in the ideal boundary $\partial_i\C$: indeed, for any proximal element $\gamma\in\Gamma$ and any point $y$ in the interior of~$\C$, the sequence $(\gamma^m\cdot y)_{m\in\NN}$ converges to the attracting fixed point of $\gamma$ in $\PP(\RR^n)$, which belongs to $\partial_i\C$ since the action is properly discontinuous.
For $\Gamma$ and $\C$ as in Definition~\ref{def:ccc-Hpq}, we shall see (Theorem~\ref{thm:main}) that in fact $\Lambda_\Gamma = \partial_i\C$.

\begin{remark}\label{rem:Spq-Hpq}
The boundary $\partial_{\scriptscriptstyle\PP}\HH^{p,q-1}$ divides $\PP(\RR^{p,q})$ into two connected components.
One component is $\HH^{p,q-1}$ and the other is 
$$\SS^{p-1,q} = \{[x]\in\PP(\RR^{p,q})\,|\,\langle x,x\rangle_{p,q}>\nolinebreak 0\},$$
which inherits from $\langle \cdot, \cdot \rangle_{p,q}$ a pseudo-Riemannian metric of positive curvature.
However, multiplication by $-1$ transforms $\langle \cdot, \cdot \rangle_{p,q}$ into a form of signature $(q,p)$, and $\SS^{p-1,q}$ into the copy of $\HH^{q,p-1}$ defined by $-\langle \cdot, \cdot \rangle_{p,q}$.
Rather than study two very similar notions of convex cocompactness in pseudo-Riemannian hyperbolic spaces $\HH^{p,q-1}$ and pseudo-Riemannian ``spheres''\linebreak $\SS^{p-1,q}$, we will use the isomorphism $\PO(\langle \cdot, \cdot \rangle_{p,q}) = \PO(-\langle \cdot, \cdot \rangle_{p,q})\simeq \PO(q,p)$ to exchange $\SS^{p-1,q}$ with $\HH^{q,p-1}$ when convenient.
\end{remark}

\subsection{Goals of the paper}

There are three main goals.
First, we show that the notion of convex cocompactness introduced above is closely related to the notion of Anosov representation --- a notion that has become fundamental in the study of higher Teichm\"uller theory.
Second, we show that in the setting of discrete irreducible subgroups of $\PO(p,q)$, our notion of $\HH^{p,q-1}$-convex cocompactness is equivalent to a notion of strong convex cocompactness in $\PP(\RR^n)$ introduced by Crampon--Marquis \cite{cm14}, for $n=p+q$.
Third, we show that a natural construction of Coxeter groups in projective orthogonal groups going back to Tits gives rise to many examples of $\HH^{p,q-1}$-convex cocompact groups, hence to many new examples of Anosov representations into $\PO(p,q)$ and of strongly convex cocompact groups in $\PP(\RR^{p+q})$.

\subsection{Link with Anosov representations} \label{subsec:intro-Anosov}

The main result that we establish in this paper is a close connection between convex cocompactness in $\HH^{p,q-1}\subset\PP(\RR^{p+q})$ and Anosov representations.

Anosov representations of word hyperbolic groups into real semisimple Lie groups are representations with finite kernel and discrete image, defined by the dynamics of their action on some flag varieties.
They were introduced by Labourie \cite{lab06} for fundamental groups of compact negatively-curved manifolds, and generalized for arbitrary word hyperbolic groups by Guichard--Wienhard \cite{gw12}.
They have been extensively studied recently by many authors (see \eg \cite{bcls15,klp-survey,ggkw17,bps} to just name a few) and now play a crucial role in higher Teichm\"uller theory (see \eg \cite{biw14}); they share many dynamical properties with classical convex cocompact subgroups of rank-one simple Lie groups (see in particular \cite{lab06,gw12,klp14,klp-survey}).

Let $P_1^{p,q}$ be the stabilizer in $G=\PO(p,q)$ of an isotropic line of $\RR^{p,q}$; it is a parabolic subgroup of~$G$, and $G/P_1^{p,q}$ identifies with the boundary $\partial_{\scriptscriptstyle\PP}\HH^{p,q-1}$ of $\HH^{p,q-1}$.
By definition, a \emph{$P_1^{p,q}$-Anosov} representation of a word hyperbolic group $\Gamma$ into~$G$ is a representation $\rho : \Gamma\to G$ for which there exists a continuous, $\rho$-equivariant boundary map $\xi : \partial_{\infty}\Gamma\to\partial_{\scriptscriptstyle\PP}\HH^{p,q-1}$ which
\begin{enumerate}[(i)]
  \item\label{item:def-Ano-transv} is transverse (a strengthening of injectivity), meaning that $\xi(\eta)\notin\xi(\eta')^{\perp}$ for any $\eta\neq\eta'$ in $\partial_{\infty}\Gamma$,
  \item\label{item:flow} has an associated flow with some uniform contraction/expansion properties described in \cite{lab06,gw12}.
\end{enumerate}
Here $\partial_{\infty}\Gamma$ denotes the Gromov boundary of~$\Gamma$.
A consequence of \eqref{item:flow} is that $\xi$ is \emph{dynamics-preserving}: for any infinite-order element $\gamma\in\Gamma$, the element $\rho(\gamma)\in G$ is proximal in $\partial_{\scriptscriptstyle\PP}\HH^{p,q-1}$, and $\xi$ sends the attracting fixed point of $\gamma$ in $\partial_{\infty}\Gamma$ to the attracting fixed point of $\rho(\gamma)$ in $\partial_{\scriptscriptstyle\PP}\HH^{p,q-1}$.
In particular, by a density argument, the continuous map $\xi$ is unique, and the image $\xi(\partial_\infty \Gamma)$ is the proximal limit set $\Lambda_{\rho(\Gamma)}$ of $\rho(\Gamma)$ in $\partial_{\scriptscriptstyle\PP}\HH^{p,q-1}$ (Definition~\ref{def:limit-set} and Remark~\ref{rem:lim-set-POpq}).
By \cite[Prop.\,4.10]{gw12}, if $\rho(\Gamma)$ is irreducible, then condition~\eqref{item:flow} is automatically satisfied as soon as \eqref{item:def-Ano-transv} is.
If $\Gamma$ is finite, then $\partial_{\infty}\Gamma$ is empty and any representation $\rho : \Gamma\to G$ is $P_1^{p,q}$-Anosov.

In real rank~1, it is easy to see \cite[Th.\,5.15]{gw12} that a discrete subgroup $\Gamma$ of $G=\PO(p,1)$ is convex cocompact if and only if $\Gamma$ is word hyperbolic and the natural inclusion $\Gamma\hookrightarrow G$ is $P_1^{p,1}$-Anosov.
In this paper, we prove the following generalization to higher real rank.

\begin{theorem} \label{thm:main}
For $p,q\in\NN^*$, let $\Gamma$ be an irreducible discrete subgroup of $G=\PO(p,q)$.
\begin{enumerate}[(1)]
  \item\label{item:ccc-implies-Anosov} If $\Gamma$ is $\HH^{p,q-1}$-convex cocompact, then it is word hyperbolic and the natural inclusion $\Gamma\hookrightarrow G$ is $P_1^{p,q}$-Anosov.
  \item\label{item:Ano-connected-implies-ccc} Conversely, if $\Gamma$ is word hyperbolic with connected boundary $\partial_{\infty}\Gamma$ and if the natural inclusion $\Gamma\hookrightarrow G$ is $P_1^{p,q}$-Anosov, then $\Gamma$ is $\HH^{p,q-1}$-convex cocompact or $\HH^{q,p-1}$-convex cocompact (after identifying $\PO(p,q)$ with $\PO(q,p)$, see Remark~\ref{rem:Spq-Hpq}).
\end{enumerate}
If these conditions are satisfied, then for any nonempty properly convex closed subset $\C$ of $\HH^{p,q-1}$ on which $\Gamma$ acts properly discontinuously and cocompactly, the ideal boundary $\partial_i\C$ is the proximal limit set $\Lambda_\Gamma\subset\partial_{\scriptscriptstyle\PP}\HH^{p,q-1}$.
\end{theorem}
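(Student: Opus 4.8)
\emph{Plan.} I would prove the two implications separately, and then deduce the last assertion by an argument valid in both cases. For \eqref{item:ccc-implies-Anosov}, suppose $\Gamma$ acts properly discontinuously and cocompactly on a properly convex closed $\C\subset\HH^{p,q-1}$ with nonempty interior $\Omega$ and transverse ideal boundary $\partial_i\C$. First I would record the elementary facts that $\overline\C$ is a properly convex body in $\PP(\RR^{p,q})$ with interior $\Omega$, that $\Gamma$ preserves the Hilbert metric of $\Omega$ and acts properly discontinuously there (Section~\ref{subsec:prop-conv-proj}), and that $\Gamma\backslash\C$ is compact. The crux is to prove that $\Gamma$ is word hyperbolic. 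The mechanism I would exploit is that transversality of $\partial_i\C$ forces $\overline\C$ to be strictly convex and $C^1$ along $\partial_i\C$: for $z\in\partial_i\C$ the hyperplane $z^\perp$ is tangent to the isotropic quadric $\partial_{\scriptscriptstyle\PP}\HH^{p,q-1}$ at $z$, hence supports $\overline\C\subset\overline{\HH^{p,q-1}}$ at $z$, and transversality gives $z^\perp\cap\partial_i\C=\{z\}$. Together with cocompactness of the action on $\C$, a Hilbert-geometry argument in the spirit of Benoist and Crampon--Marquis \cite{cm14} should then show that $\Gamma$, being quasi-isometric to $\C$, is word hyperbolic, with Gromov boundary $\partial_\infty\Gamma$ identified $\Gamma$-equivariantly with $\partial_i\C$ by a homeomorphism $\xi$ which is moreover dynamics-preserving. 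The composite $\partial_\infty\Gamma\xrightarrow{\ \xi\ }\partial_i\C\hookrightarrow\partial_{\scriptscriptstyle\PP}\HH^{p,q-1}$ is then continuous, $\Gamma$-equivariant, and transverse by hypothesis, so --- $\Gamma$ being irreducible --- \cite[Prop.\,4.10]{gw12} upgrades it to the statement that $\Gamma\hookrightarrow G$ is $P_1^{p,q}$-Anosov with boundary map $\xi$; in particular $\Lambda_\Gamma=\xi(\partial_\infty\Gamma)=\partial_i\C$.

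For \eqref{item:Ano-connected-implies-ccc}, following the anti-de Sitter strategy of Barbot--M\'erigot \cite{bm12}, assume $\Gamma$ is word hyperbolic with connected boundary and $\Gamma\hookrightarrow G$ is $P_1^{p,q}$-Anosov, with boundary map $\xi$ and limit set $\Lambda:=\xi(\partial_\infty\Gamma)$, a compact, connected, transverse, $\Gamma$-invariant subset of the isotropic quadric. The first step is a dichotomy: for distinct $y,z\in\Lambda$ transversality gives $\langle\tilde y,\tilde z\rangle_{p,q}\neq 0$ for any lifts, and since the lifts are isotropic the open projective segment from $y$ to $z$ lies in $\HH^{p,q-1}$ or in $\SS^{p-1,q}$ according to the sign of $\langle\tilde y,\tilde z\rangle_{p,q}$; connectedness of $\Lambda$ lets one choose a lift $\tilde\Lambda$ on which this sign is globally constant. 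Say it is negative (otherwise exchange $\PO(p,q)$ with $\PO(q,p)$, cf.\ Remark~\ref{rem:Spq-Hpq}). Then the Gram matrix of any finitely many of these lifts has zero diagonal and negative off-diagonal entries, from which one reads that $0\notin\overline{\mathrm{Conv}(\tilde\Lambda)}$ and that every point of $\overline{\mathrm{Conv}(\tilde\Lambda)}$ outside $\tilde\Lambda$ is negative; hence $\C:=\overline{\mathrm{Conv}(\Lambda)}\cap\HH^{p,q-1}$ is a well-defined properly convex closed $\Gamma$-invariant subset of $\HH^{p,q-1}$ with $\partial_i\C=\Lambda$ transverse and, since $\Gamma$ is irreducible, with nonempty interior. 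Proper discontinuity on $\mathrm{int}(\C)$ is automatic; to extend it to $\C$ and to obtain cocompactness of $\Gamma\backslash\C$, I would invoke the uniform contraction/expansion of the Anosov representation together with transversality of $\Lambda$, which together prevent $\Gamma$-orbits from accumulating inside $\C$ and force every point of $\C$ to lie within bounded distance of the $\Gamma$-cocompact family of projective geodesics joining pairs of points of $\Lambda$. This shows that $\Gamma$ is $\HH^{p,q-1}$-convex cocompact.

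For the last assertion, let $\C$ be any nonempty properly convex closed subset of $\HH^{p,q-1}$ on which $\Gamma$ acts properly discontinuously and cocompactly (such a $\C$ has nonempty interior because $\Gamma$ is irreducible). The inclusion $\Lambda_\Gamma\subseteq\partial_i\C$ is the observation made just before the statement of the theorem. For the reverse inclusion I would fix $o\in\mathrm{int}(\C)$, note that $o$ avoids every supporting hyperplane $\xi(\eta)^\perp$ of $\overline\C$ (each $\xi(\eta)\in\partial_i\C\subset\partial\overline\C$ with $\xi(\eta)^\perp$ tangent to the quadric there) and hence that $\Gamma\cdot o$ accumulates exactly on $\Lambda_\Gamma$ by the Anosov dynamics; then, given $z\in\partial_i\C$, I would push a sequence $y_n\to z$ along the half-open segment $[o,z)\subset\mathrm{int}(\C)$ back into a fixed compact subset of $\C$ by cocompactness, writing $y_n=\gamma_n\cdot d_n$ with $\gamma_n\to\infty$, and tracking the elements $\gamma_n$ via the Anosov dynamics to exhibit $z$ as a point of $\Lambda_\Gamma$. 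Hence $\partial_i\C=\Lambda_\Gamma$.

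I expect the main obstacle to be, in both directions, the conversion of the purely dynamical uniform contraction of an Anosov representation into honest geometric control on the convex set $\C\subset\HH^{p,q-1}$: in \eqref{item:ccc-implies-Anosov} extracting word hyperbolicity of $\Gamma$ from cocompactness on $\C$ (by way of the strict convexity of $\overline\C$ along $\partial_i\C$ that transversality supplies), and in \eqref{item:Ano-connected-implies-ccc} proving cocompactness of the action on the convex hull of the limit set. It is precisely here that transversality of $\partial_i\C$ and the constant-curvature pseudo-Riemannian geometry of $\HH^{p,q-1}$ --- as opposed to the Riemannian symmetric space $G/K$, for which the Fact of Kleiner--Leeb and Quint shows the naive analogue to fail --- are indispensable.
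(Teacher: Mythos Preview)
Your overall architecture matches the paper's, but there is a genuine gap in~\eqref{item:ccc-implies-Anosov}. You want to use the Hilbert metric $d_\Omega$ of $\Omega=\mathrm{Int}(\C)$ and argue that $\Gamma$ is quasi-isometric to~$\C$. But $d_\Omega$ is not defined on the nonideal boundary $\partial_{\scriptscriptstyle\HH}\C=\C\smallsetminus\Omega$, which for $q\geq 2$ is always nonempty (otherwise $\C$ would be open and closed in the connected set $\HH^{p,q-1}$, forcing $\C=\HH^{p,q-1}$, which is not properly convex). So there is no proper metric on $\C$ for which the \v{S}varc--Milnor lemma applies, and your appeal to ``strict convexity and $C^1$ along $\partial_i\C$'' has no ambient geometry in which to run a thin-triangles argument. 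The paper's fix is a nontrivial preliminary step (Lemma~\ref{lem:no-degenerate-faces}): using cocompactness on~$\C$ and the lightlike structure of $\partial_{\scriptscriptstyle\PP}\HH^{p,q-1}$, one shows that all of $\C$, nonideal boundary included, lies inside the maximal $\Gamma$-invariant properly convex open set $\Omega_{\max}$ of Proposition~\ref{prop:Lambda-non-pos-neg}. The Gromov-hyperbolicity argument (Lemmas~\ref{lem:geod-non-strict-conv}--\ref{lem:C-hyp-boundary}) then takes place for the Hilbert metric of $\Omega_{\max}$ restricted to~$\C$, where $\Gamma$ genuinely acts cocompactly by isometries.

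For~\eqref{item:Ano-connected-implies-ccc} your dichotomy and convex-hull construction are essentially the paper's (Proposition~\ref{prop:conn-transv}, Lemmas~\ref{lem:nonpos-Hpq} and~\ref{lem:Omega-min-Hpq}); note that proper discontinuity on all of $\C_{\min}$ follows directly from $\C_{\min}\subset\Omega_{\max}$ (Lemma~\ref{lem:Omega-min-Hpq}), not from a separate extension step. For cocompactness, however, the paper does not argue via bounded distance to limit geodesics. It instead invokes the \emph{expansion} property of $P_1$-Anosov representations on $\PP(\RR^{p+q})$ from \cite{klp14} and a Sullivan-type maximum argument: if some orbit in $\C_{\min}$ accumulated only on $\Lambda_\Gamma$, one looks at a point of that orbit realizing the maximum of $d_{\PP}(\cdot,\Lambda_\Gamma)$, and an expanding element at the nearby limit point would push it strictly farther from $\Lambda_\Gamma$, a contradiction. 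Your cocompactness sketch is not yet a proof. The final assertion $\partial_i\C=\Lambda_\Gamma$ is Lemma~\ref{lem:Lambda-Lambda-Gamma}, again proved using the Hilbert metric of~$\Omega_{\max}$ (so once more requiring Lemma~\ref{lem:no-degenerate-faces}); your orbit-tracking variant would likewise need $\C\subset\Omega_{\max}$ to get off the ground.
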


\begin{remark}
The special case when $q=2$ and $\Gamma$ is the fundamental group of a closed hyperbolic $p$-manifold follows from work of Mess \cite{mes90} for $p=2$ and work of Barbot--M\'erigot \cite{bm12} for $p\geq 3$.
In that case, if $\Omega_{\max}$ denotes a maximal $\Gamma$-invariant properly convex open subset of $\PP(\RR^{p,q})$ (see Proposition~\ref{prop:Lambda-non-pos-neg}), then the manifold $\Gamma\backslash\Omega_{\max}$ is a \emph{GHMC} spacetime (globally hyperbolic maximal Cauchy-compact) \cite[Th.\,4.3 \& Prop.\,4.5]{bm12}.
We refer to \cite[\S\,11]{dgk-cc} for further discussion of global hyperbolicity when $q=2$.
\end{remark}

\subsection{Anosov representations with negative or positive limit set}

We may replace the connectedness assumption of Theorem~\ref{thm:main}.\eqref{item:Ano-connected-implies-ccc} with the following simple consistency condition on the image of the boundary map.

\begin{definition} \label{def:pos-neg}
A subset $\Lambda$ of $\partial_{\scriptscriptstyle\PP}\HH^{p,q-1}$ is \emph{negative} (\resp \emph{positive}) if it lifts to a cone of $\RR^{p,q}\smallsetminus\{ 0\}$ on which all inner products $\langle\cdot,\cdot\rangle_{p,q}$ of noncollinear points are negative (\resp positive).
Equivalently (Lemma~\ref{lem:justif-neg-triple} and Remark~\ref{rem:pos-neg}.\eqref{item:other-equiv-pos-neg}), every triple of distinct points of~$\Lambda$ spans a triangle fully contained in $\HH^{p,q-1}$ (\resp $\SS^{p-1,q}$) outside of the vertices.
\end{definition}

By a \emph{cone} we mean a subset of $\RR^{p,q}\smallsetminus\{ 0\}$ which is invariant under multiplication by positive scalars.
Recall from Remark~\ref{rem:Spq-Hpq} that $\HH^{p,q-1}$ and $\SS^{p-1,q}$ are the two connected components of $\PP(\RR^{p,q})\smallsetminus\partial_{\scriptscriptstyle\PP}\HH^{p,q-1}$.
In the Lorentzian setting (\ie $q = 2$), a negative subset of $\partial_{\scriptscriptstyle\PP}\HH^{p,q-1}$ is also called an \emph{acausal} subset.

Since the connectedness of~$\Lambda$ implies the connectedness of the set of unordered distinct triples of~$\Lambda$ (Fact~\ref{fact:conn-ktuple}), the following holds (see Section~\ref{subsec:proof-conn-transv}).

\begin{proposition} \label{prop:conn-transv}
If a closed subset $\Lambda$ of $\partial_{\scriptscriptstyle\PP}\HH^{p,q-1}$ is connected and transverse, then it is negative or positive.
\end{proposition}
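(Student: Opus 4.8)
The plan is to exploit the hint given in the statement: connectedness of $\Lambda$ gives connectedness of the space $T$ of unordered triples of distinct points of $\Lambda$ (this is Fact~\ref{fact:conn-ktuple}), and I want to run a continuity/connectedness argument on $T$. First I would use transversality to define, for each triple $\tau=\{x,y,z\}\in T$, the plane $\Pi_\tau := \spa(\tilde x,\tilde y,\tilde z)\subset\RR^{p,q}$ spanned by any lifts; transversality ($y\notin z^\perp$ etc.) guarantees $\dim\Pi_\tau=3$ and, more importantly, that the restriction of $\langle\cdot,\cdot\rangle_{p,q}$ to $\Pi_\tau$ is nondegenerate (a degenerate restriction would force two of the three isotropic lines to be orthogonal). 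Hence each $\Pi_\tau$ carries a form of signature $(2,1)$ or $(1,2)$ — three isotropic lines in general position in a $3$-space span either a Lorentzian plane $\RR^{2,1}$ or its negative — and the triangle spanned by $x,y,z$ lies entirely in $\HH^{p,q-1}$ minus its vertices in the first case, entirely in $\SS^{p-1,q}$ minus its vertices in the second. This dichotomy is exactly the content of Definition~\ref{def:pos-neg} applied to a single triple, via Lemma~\ref{lem:justif-neg-triple}; call a triple \emph{negative} or \emph{positive} accordingly.

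Next I would show that "negative" and "positive" are each open conditions on $T$. Concretely, the condition that the symmetric bilinear form $\langle\cdot,\cdot\rangle_{p,q}|_{\Pi_\tau}$ have signature $(2,1)$ (resp. $(1,2)$) is open in the space of $3$-dimensional subspaces of $\RR^{p,q}$ equipped with a nondegenerate form — signature is locally constant on nondegenerate forms — and $\tau\mapsto\Pi_\tau$ is continuous from $T$ into the Grassmannian $\mathrm{Gr}_3(\RR^{p,q})$ by transversality (lifts can be chosen locally continuously, and the spanning map is continuous and of constant rank). So $T=T_{\mathrm{neg}}\sqcup T_{\mathrm{pos}}$ is a partition into two open sets. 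Since $\Lambda$ has at least three points (if $|\Lambda|\le 2$ there is nothing to prove, or the statement is vacuous/trivial after noting a connected transverse set with $\le 2$ points is a point) and $T$ is connected, one of the two pieces is empty: either every triple of $\Lambda$ is negative, or every triple is positive.

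Finally I would upgrade the "every triple negative/positive" conclusion to the global statement that $\Lambda$ itself is negative (resp. positive) in the sense of Definition~\ref{def:pos-neg}, i.e. that there is a single global choice of lift — a cone over $\Lambda$ — making all pairwise inner products of noncollinear points negative (resp. positive). Assume without loss of generality we are in the negative case. For each $z\in\Lambda$ pick a lift; the sign of $\langle\tilde y,\tilde z\rangle_{p,q}$ for $y\neq z$ is then well defined up to the simultaneous sign choices at $y$ and $z$, so this is a cocycle/coherent-orientation problem. Using connectedness of $\Lambda$ once more (or of the space of \emph{pairs}, Fact~\ref{fact:conn-ktuple} again), and the triple condition to guarantee compatibility around "triangles", one shows the $\ZZ/2$-valued obstruction vanishes: fix a basepoint $z_0\in\Lambda$ and a lift $\tilde z_0$, and for any other $z$ choose the lift $\tilde z$ with $\langle\tilde z_0,\tilde z\rangle_{p,q}<0$; the triple condition applied to $\{z_0,y,z\}$ forces $\langle\tilde y,\tilde z\rangle_{p,q}<0$ as well, so the choice is globally consistent. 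This produces the required cone and finishes the proof.

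The main obstacle I expect is the last step — passing from a local/pointwise sign condition on triples to a coherent global lift — since it is a genuine (if mild) topological coherence argument rather than a formal manipulation; it is essentially the observation that the relevant $\ZZ/2$-bundle over $\Lambda$, or over the configuration space of pairs, is trivial, which is where connectedness is used in an essential way and where the negative-versus-positive case distinction is locked in. The openness step is routine once one knows $\tau\mapsto\Pi_\tau$ is continuous with image in the nondegenerate locus, which is immediate from transversality; and the per-triple dichotomy is just linear algebra in $\RR^3$ together with Lemma~\ref{lem:justif-neg-triple}.
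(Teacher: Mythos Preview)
Your proposal is correct and follows essentially the same approach as the paper's proof: partition the connected space of unordered triples into negative and positive pieces, use continuity to conclude one piece is empty, then invoke Lemma~\ref{lem:justif-neg-triple}. Your signature-of-$\Pi_\tau$ formulation is equivalent to the paper's sign function $(y_1,y_2,y_3)\mapsto\mathrm{sgn}\big(\langle x_1,x_2\rangle_{p,q}\langle x_1,x_3\rangle_{p,q}\langle x_2,x_3\rangle_{p,q}\big)$ (that product is half the determinant of the Gram matrix, whose trace vanishes), so the continuity step is the same in substance.

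One redundancy: your final paragraph re-derives the basepoint/coherence argument passing from ``every triple is negative'' to ``$\Lambda$ is negative,'' but this is exactly the implication \eqref{item:ii-triple}~$\Rightarrow$~\eqref{item:i-triple} of Lemma~\ref{lem:justif-neg-triple}, which you already cited earlier. The paper simply invokes that lemma at the end; you can do the same and drop the cocycle discussion entirely.
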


As above, we say that $\Lambda$ is \emph{transverse} if for any $y\neq z$ in~$\Lambda$ we have $y\notin z^{\perp}$.

Theorem~\ref{thm:main} is an immediate consequence of Proposition~\ref{prop:conn-transv} and of the following, which is the main result of the paper.

\begin{theorem} \label{thm:main-negative}
For any $p,q\in\NN^*$ and any irreducible discrete subgroup $\Gamma$ of $G = \PO(p,q)$, the following two conditions are equivalent:
\begin{enumerate}[(i)]
  \item\label{item:a1} $\Gamma$ is $\HH^{p,q-1}$-convex cocompact,
  \item\label{item:a2} $\Gamma$ is word hyperbolic, the natural inclusion $\Gamma\hookrightarrow G$ is $P_1^{p,q}$-Anosov, and the proximal limit set $\Lambda_{\Gamma}\subset\partial_{\scriptscriptstyle\PP}\HH^{p,q-1}$ is negative.
\end{enumerate}
Similarly, the following two conditions are equivalent:
\begin{enumerate}[(i)]\setcounter{enumi}{2}
  \item\label{item:a3} $\Gamma$ is $\HH^{q,p-1}$-convex cocompact (after identifying $\PO(p,q)$ with $\PO(q,p)$),
  \item\label{item:a4} $\Gamma$ is word hyperbolic, the natural inclusion $\Gamma\hookrightarrow G$ is $P_1^{p,q}$-Anosov, and the proximal limit set $\Lambda_{\Gamma}\subset\partial_{\scriptscriptstyle\PP}\HH^{p,q-1}$ is positive.
\end{enumerate}
If (\ref{item:a1}) (\resp (\ref{item:a3})) holds, then for any nonempty properly convex closed subset $\C$ of $\HH^{p,q-1}$ (\resp $\HH^{q,p-1}$) on which $\Gamma$ acts properly discontinuously and cocompactly, the ideal boundary $\partial_i\C$ is the proximal limit set $\Lambda_\Gamma\subset\partial_{\scriptscriptstyle\PP}\HH^{p,q-1}$.
\end{theorem}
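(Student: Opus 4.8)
The plan is to prove the equivalence (\ref{item:a1})$\Leftrightarrow$(\ref{item:a2}), the second equivalence (\ref{item:a3})$\Leftrightarrow$(\ref{item:a4}) being the same statement applied to $-\langle\cdot,\cdot\rangle_{p,q}$ via Remark~\ref{rem:Spq-Hpq}, and the final assertion on $\partial_i\C = \Lambda_\Gamma$ coming out of the proof of (\ref{item:a1})$\Rightarrow$(\ref{item:a2}).

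For the direction (\ref{item:a1})$\Rightarrow$(\ref{item:a2}), suppose $\Gamma$ acts properly discontinuously and cocompactly on a properly convex closed $\C\subset\HH^{p,q-1}$ with nonempty interior and transverse ideal boundary $\partial_i\C$. First I would show $\Lambda_\Gamma=\partial_i\C$: the inclusion $\Lambda_\Gamma\subseteq\partial_i\C$ is the orbit-convergence argument already sketched in the introduction; for the reverse inclusion, pick a point of $\partial_i\C$, approximate it by $\gamma_n\cdot y$ for a fixed interior $y\in\C$, and use a Cartan/KAK decomposition of the $\gamma_n$ together with the fact that $\C$ is properly convex and $\Gamma$-invariant to extract a proximal element of $\Gamma$ whose attracting fixed point is near the chosen boundary point (this is where transversality of $\partial_i\C$, hence negativity via Proposition~\ref{prop:conn-transv} once connectedness is known, is used to rule out degenerate limits). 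Next, word hyperbolicity of $\Gamma$: since $\Gamma$ acts properly discontinuously cocompactly on $\C$, it suffices to exhibit a proper, geodesic, Gromov-hyperbolic metric on the interior of $\C$ preserved by $\Gamma$, and the natural candidate is the Hilbert metric on $\mathrm{int}(\C)$ — one must check $\mathrm{int}(\C)$ is properly convex (it is, being an open subset of the properly convex $\overline\C$) and that the Hilbert metric is Gromov-hyperbolic, for which the key input is precisely that $\partial_i\C$ contains no projective segment (no ``flat'' in the boundary), a criterion in the spirit of Benoist's work on divisible convex sets. Finally, the Anosov property: the boundary map $\xi:\partial_\infty\Gamma\to\partial_{\scriptscriptstyle\PP}\HH^{p,q-1}$ should be the $\Gamma$-equivariant homeomorphism $\partial_\infty\Gamma\to\Lambda_\Gamma=\partial_i\C$ coming from the Hilbert-geometry boundary identification; transversality of $\xi$ is exactly transversality of $\partial_i\C$, i.e.\ condition~(\ref{item:def-Ano-transv}), and since $\Gamma$ is irreducible, condition~(\ref{item:flow}) is automatic by \cite[Prop.\,4.10]{gw12}. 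Negativity of $\Lambda_\Gamma$ then follows from Proposition~\ref{prop:conn-transv} once one knows $\Lambda_\Gamma=\partial_i\C$ is connected — and connectedness holds because $\partial_i\C$ is the boundary of the contractible convex body $\overline\C$ sitting on one side of the quadric, so it is a homeomorphic image of a sphere, in particular connected (alternatively $\partial_\infty\Gamma$ of a one-ended hyperbolic group is connected, and the reducible/virtually-cyclic cases are excluded by irreducibility).

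For the converse (\ref{item:a2})$\Rightarrow$(\ref{item:a1}), assume $\Gamma\hookrightarrow G$ is $P_1^{p,q}$-Anosov with $\Lambda_\Gamma$ negative and $\Gamma$ irreducible. The strategy is to construct a suitable convex set $\C$ directly from $\Lambda_\Gamma$. Because $\Lambda_\Gamma$ is negative, it lifts to a cone in $\RR^{p,q}$ on which the form is negative on noncollinear pairs; the convex hull of this lift (intersected with a suitable affine chart) projects to a properly convex subset of $\PP(\RR^{p,q})$ whose interior I claim lies in $\HH^{p,q-1}$ — this is the content that makes ``negative'' the right hypothesis, and it is where one uses that a convex combination of vectors pairwise-negative for the form is itself negative for the form. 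Call this convex hull $\C_0$; it is $\Gamma$-invariant, properly convex, with $\partial_i\C_0\subseteq\Lambda_\Gamma$, and in fact $\partial_i\C_0=\Lambda_\Gamma$ by a dynamical argument (any boundary point is a limit of combinations concentrating on a single point of $\Lambda_\Gamma$, using the proximality encoded in the Anosov condition). Since $\Lambda_\Gamma$ is transverse, $\partial_i\C_0$ contains no projective segment, so $\C_0$ (or a slight enlargement $\C$ chosen to have nonempty interior and cocompact quotient) satisfies the hypotheses of Definition~\ref{def:ccc-Hpq} provided the action on $\C$ is properly discontinuous and cocompact. Proper discontinuity on $\mathrm{int}(\C)$ is automatic (Section~\ref{subsec:prop-conv-proj}); proper discontinuity up to the boundary and cocompactness follow from the Hilbert-metric being proper and $\Gamma$-cocompact, which in turn follows from $\Gamma$ being word hyperbolic with $\partial_\infty\Gamma\cong\Lambda_\Gamma=\partial_i\C$ via the Anosov boundary map — here one transports the cocompactness of the $\Gamma$-action on (a thickening of) the weak hull of $\partial_\infty\Gamma$ in its own Cayley graph over to $\C$ using the equivariant comparison between the Cayley graph and the Hilbert geometry of $\mathrm{int}(\C)$. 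One must take a little care that the chosen $\C$ has nonempty interior: by irreducibility, $\Lambda_\Gamma$ is not contained in a proper projective subspace, so its convex hull has nonempty interior, as noted in the Remark after Definition~\ref{def:ccc-Hpq}.

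The main obstacle I expect is the passage from the Anosov/Hilbert-metric picture to genuine cocompactness of the $\Gamma$-action on a convex set all the way up to and including the ideal boundary — i.e.\ controlling the quotient near $\partial_i\C$. Proper discontinuity and cocompactness on the open part $\mathrm{int}(\C)$ are comparatively soft, but to know the closed set $\C$ (minus its ideal boundary) has compact quotient one needs a uniform estimate: roughly, that the nearest-point projection (in the Hilbert metric) from a neighborhood of any boundary point into a fixed compact core is well-behaved, which forces one to understand how convex hulls degenerate near transverse limit sets in the indefinite-signature setting, where convex hulls are only defined after choosing affine charts and the quadric obstructs the naive picture. This is exactly the place where the negativity hypothesis — not merely transversality — is essential, and where the bulk of the technical work (uniform separation estimates, a ``no segment in the boundary $\Rightarrow$ uniform convexity'' argument, and the Švarc–Milnor-type comparison) will go.
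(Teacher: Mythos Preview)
Your overall architecture is close to the paper's, but there are two genuine gaps.

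\textbf{Negativity of $\Lambda_\Gamma$ in (\ref{item:a1})$\Rightarrow$(\ref{item:a2}).} You deduce negativity from Proposition~\ref{prop:conn-transv} by arguing that $\partial_i\C$ is connected. This is false in general: $\partial_i\C$ is only the part of $\partial_{\scriptscriptstyle\PP}\overline\C$ lying in $\partial_{\scriptscriptstyle\PP}\HH^{p,q-1}$, not the full topological boundary of a convex body, and it can be totally disconnected (e.g.\ a Cantor set when $\Gamma$ is free --- exactly the situation of Section~\ref{subsec:noncococo}). Your fallback (``$\partial_\infty\Gamma$ of a one-ended group is connected'') assumes what you are trying to avoid. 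The paper instead gets negativity directly from Lemma~\ref{lem:nonpos-Hpq}.\eqref{item:nonpos-Hpq-Omega}: since $\mathrm{Int}(\C)\subset\HH^{p,q-1}$, its ideal boundary is automatically nonpositive, and nonpositive plus transverse gives negative. No connectedness is needed.

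\textbf{Which Hilbert metric.} You propose to use the Hilbert metric of $\mathrm{Int}(\C)$, but $\Gamma$ acts cocompactly on $\C$, not on $\mathrm{Int}(\C)$, and the Hilbert metric of $\mathrm{Int}(\C)$ blows up at the non-ideal boundary $\partial_{\scriptscriptstyle\HH}\C$, so the \v{S}varc--Milnor argument does not apply. The paper's fix is Lemma~\ref{lem:no-degenerate-faces}: one first shows $\C\subset\Omega_{\max}$ for a $\Gamma$-invariant properly convex \emph{open} set $\Omega_{\max}$, then uses the Hilbert metric $d$ of $\Omega_{\max}$ restricted to the closed set~$\C$. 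With this metric, $(\C,d)$ is proper, $\Gamma$ acts cocompactly, and the hyperbolicity argument (Lemmas~\ref{lem:C-hyp}, \ref{lem:C-hyp-boundary}) goes through. Your proof of $\Lambda_\Gamma=\partial_i\C$ via Cartan/KAK is plausible but the paper's route (Lemma~\ref{lem:Lambda-Lambda-Gamma}) again relies on having $d_{\Omega_{\max}}$ available.

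\textbf{Cocompactness in (\ref{item:a2})$\Rightarrow$(\ref{item:a1}).} Your plan to ``transport cocompactness from the Cayley graph via a \v{S}varc--Milnor comparison'' is circular as stated: you would need to know the orbit map into $(\C_{\min},d_{\Omega_{\max}})$ is a quasi-isometry, which is essentially what you are trying to prove. The paper takes a different route: it invokes the expansion property of $P_1$-Anosov representations on $\PP(\RR^{p,q})$ (from \cite{klp14}) and runs a Sullivan-style argument to show that no $\Gamma$-orbit in $\C_{\min}$ can accumulate on $\Lambda_\Gamma$ without being pushed away by some expanding element. This is the technical heart of the converse direction and is not a soft comparison argument.
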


By \cite{lab06,gw12}, the space of $P_1^{p,q}$-Anosov representations is open in $\Hom(\Gamma,G)$.
In fact, the space of $P_1^{p,q}$-Anosov representations with negative proximal limit set is also open (Proposition~\ref{prop:comp-Ano-neg}).
Moreover, the space of irreducible representations is open.
Therefore Theorem~\ref{thm:main-negative} implies the following.

\begin{corollary} \label{coro:cc-open}
For any $p,q\in\NN^*$ and any finitely generated group~$\Gamma$, the set of irreducible injective representations $\Gamma\to G=\PO(p,q)$ whose image is $\HH^{p,q-1}$-convex cocompact is open in $\Hom(\Gamma,G)$.
\end{corollary}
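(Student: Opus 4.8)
The plan is to derive Corollary~\ref{coro:cc-open} from Theorem~\ref{thm:main-negative} together with the three openness statements recalled above (openness of the set of $P_1^{p,q}$-Anosov representations by \cite{lab06,gw12}, openness of those with negative proximal limit set by Proposition~\ref{prop:comp-Ano-neg}, and openness of the set of irreducible representations), the only genuinely new point being to check that injectivity is preserved under small deformations. So I would start with a representation $\rho_0\in\Hom(\Gamma,G)$ which is irreducible and injective and such that $\rho_0(\Gamma)$ is an $\HH^{p,q-1}$-convex cocompact subgroup of $G$. Since $\rho_0$ is injective, we may apply Theorem~\ref{thm:main-negative} to the subgroup $\rho_0(\Gamma)\simeq\Gamma$: this tells us that $\Gamma$ is word hyperbolic, that $\rho_0:\Gamma\to G$ is $P_1^{p,q}$-Anosov, and that the proximal limit set $\Lambda_{\rho_0(\Gamma)}\subset\partial_{\scriptscriptstyle\PP}\HH^{p,q-1}$ is negative.

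Because $\Gamma$ is now known to be word hyperbolic, the three openness statements apply at $\rho_0$, and intersecting the corresponding neighborhoods produces an open subset $\mathcal U\subset\Hom(\Gamma,G)$ containing $\rho_0$ on which every representation is irreducible, $P_1^{p,q}$-Anosov, and has negative proximal limit set. The step I expect to be the main (though minor) obstacle is injectivity, which is not an open condition in general. Here I would invoke the standard fact that a word hyperbolic group $\Gamma$ has a largest finite normal subgroup~$F$. The kernel of any $P_1^{p,q}$-Anosov representation of $\Gamma$ is finite and normal, hence contained in~$F$; since $\rho_0$ is injective and $F$ is finite, the condition ``$\rho(\gamma)\neq e$ for all $\gamma\in F\smallsetminus\{e\}$'' is open, so after shrinking $\mathcal U$ we may assume it holds for every $\rho\in\mathcal U$. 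Then $\ker\rho\subseteq F$ has trivial image, i.e.\ $\ker\rho=\{e\}$, so every $\rho\in\mathcal U$ is injective.

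Finally, for $\rho\in\mathcal U$ the image $\rho(\Gamma)$ is a discrete (Anosov representations having discrete image) irreducible subgroup of $G$, isomorphic via $\rho$ to the word hyperbolic group $\Gamma$, with $\rho(\Gamma)\hookrightarrow G$ being $P_1^{p,q}$-Anosov and $\Lambda_{\rho(\Gamma)}$ negative; by Theorem~\ref{thm:main-negative} the group $\rho(\Gamma)$ is $\HH^{p,q-1}$-convex cocompact. Hence $\mathcal U$ is contained in the set described in the corollary, which is therefore open. The only ingredients beyond the quoted results are the transport of the Anosov and limit-set properties along the isomorphism $\rho:\Gamma\to\rho(\Gamma)$ and the finite-normal-subgroup argument for injectivity; both are routine.
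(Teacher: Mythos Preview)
Your proof is correct and follows essentially the same route as the paper: both invoke Theorem~\ref{thm:main-negative} together with the openness of irreducible representations, of $P_1^{p,q}$-Anosov representations, and of Anosov representations with negative proximal limit set (Proposition~\ref{prop:comp-Ano-neg}). The paper treats the deduction as immediate and does not spell out the injectivity step; your argument via the maximal finite normal subgroup of a word hyperbolic group is the standard way to fill that in, and is correct.
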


\begin{remark}
In the special case when $p\geq q=2$ (\ie $\HH^{p,q-1}$ is the Lorentzian \emph{anti-de Sitter space} $\HH^{p,1}=\mathrm{AdS}^{p+1}$) and $\Gamma$ is isomorphic to the fundamental group of a closed, negatively-curved Riemannian $p$-manifold, the following strengthening of Theorem~\ref{thm:main-negative} holds by work of Barbot \cite{bar15}: $\Gamma$ is $\HH^{p,1}$-convex cocompact if and only if its proximal limit set $\Lambda_{\Gamma}$ is a topological $(p-1)$-sphere which is negative (Definition~\ref{def:pos-neg}), if and only if $\Lambda_{\Gamma}$ is a topological $(p-1)$-sphere which is \emph{nonpositive} (\ie it lifts to a cone of $\RR^{p,2}\smallsetminus\nolinebreak\{ 0\}$ on which $\langle\cdot,\cdot\rangle_{p,2}$ is nonpositive); this property is called \emph{GH-regularity} \cite[\S\,1.3]{bar15}.
Using this, Barbot shows that the space of $P_1^{p,2}$-Anosov representations of~$\Gamma$ into $G=\PO(p,2)$ is not only open but also closed in $\Hom(\Gamma,G)$, hence it is a union of connected components of $\Hom(\Gamma,G)$ \cite[Th.\,1.2]{bar15}.
This becomes false when $\Gamma$ has virtual cohomological dimension~$<p$: for instance, when $\Gamma$ is a finitely generated free group the space $\Hom(\Gamma,\PO(p,q)_0)$ is connected but contains both Anosov and non-Anosov representations.
\end{remark}

\begin{remark}
For $\mathrm{rank}_{\RR}(G):=\min(p,q)\geq 2$, there are examples of irreducible $P_1^{p,q}$-Anosov representations $\rho : \Gamma\to G=\PO(p,q)$ for which the proximal limit set $\Lambda_{\rho(\Gamma)}\subset\partial_{\scriptscriptstyle\PP}\HH^{p,q-1}$ is neither negative nor positive: see Section~\ref{subsec:noncococo}.
By Theorem~\ref{thm:main-negative} the group $\rho(\Gamma)$ is neither $\HH^{p,q-1}$-convex cocompact nor $\HH^{q,p-1}$-convex cocompact in this case.
In such examples $\partial_\infty \Gamma$ is always disconnected.
Thus one cannot remove the connectedness assumption in Theorem~\ref{thm:main}.\eqref{item:Ano-connected-implies-ccc}.
This subtlety should be kept in mind when reading \cite[\S\,8.2]{bm12}.
\end{remark}

\begin{remark}
The irreducibility assumption in this paper makes properly discontinuous actions on properly convex sets more tractable (see Fact~\ref{fact:Yves} below) and the notion of Anosov representation simpler (condition~\eqref{item:flow} of Section~\ref{subsec:intro-Anosov} is automatically satisfied).
However, Theorems \ref{thm:main} and~\ref{thm:main-negative} and Corollary~\ref{coro:cc-open} hold even when $\Gamma$ is not irreducible, as we shall prove~in~\cite{dgk-cc}.
\end{remark}

\subsection{Link with strong projective convex cocompactness}

Let $n\geq 2$.
A properly convex open subset $\Omega$ of $\PP(\RR^{n})$ is said to be \emph{strictly convex} if its boundary does not contain any nontrivial segment.
It is said to have \emph{$C^1$ boundary} if every point of the boundary of~$\Omega$ has a unique supporting hyperplane.
In \cite{cm14}, Crampon--Marquis introduced a notion of geometrically finite subgroup $\Gamma$ of $\PGL(\RR^n)$, requiring $\Gamma$ to preserve and act with various nice properties on a strictly convex open subset of $\PP(\RR^n)$ with $C^1$ boundary.
If cusps are not allowed, the notion reduces to a natural notion of convex cocompactness.
We will refer to this notion as \emph{strong convex cocompactness} to distinguish it from Definition~\ref{def:ccc-Hpq} and from a more general notion of convex cocompactness that we study in~\cite{dgk-cc}.

\begin{definition}[\cite{cm14}] \label{def:cm}
A discrete subgroup $\Gamma$ of $\PGL(\RR^n)$ is \emph{strongly convex cocompact in $\PP(\RR^n)$} if it preserves a nonempty strictly convex open subset $\Omega$ of $\PP(\RR^n)$ with $C^1$ boundary and if the convex hull of the orbital limit set $\Lambda^{\mathsf{orb}}_{\Omega}(\Gamma)$ in~$\Omega$ has compact quotient by $\Gamma$.
\end{definition}

Here we call \emph{orbital limit set} the set $\Lambda^{\mathsf{orb}}_{\Omega}(\Gamma)$ of accumulation points in $\partial_{\scriptscriptstyle\PP}\Omega$ of a $\Gamma$-orbit of~$\Omega$; it does not depend on the orbit since $\Omega$ is strictly convex (Lemma~\ref{lem:Lambda-orb}).
For strongly convex cocompact groups, this set coincides with the proximal limit set $\Lambda_{\Gamma}$ (Lemma~\ref{lem:snowman}).
When $\Gamma$ is finite, $\Lambda^{\mathsf{orb}}_{\Omega}(\Gamma)=\emptyset$.

In the setting of Definition~\ref{def:cm}, the action of $\Gamma$ on~$\Omega$ is automatically properly discontinuous (see Section~\ref{subsec:prop-conv-proj}), and so for torsion-free~$\Gamma$ the quotient $\Gamma\backslash\Omega$ is a real projective manifold.
The image in $\Gamma\backslash\Omega$ of the convex hull of~$\Lambda^{\mathsf{orb}}_{\Omega}(\Gamma)$ in~$\Omega$ is a \emph{compact convex core} for this manifold.
Such convex cocompact real projective manifolds $\Gamma\backslash\Omega$ provide a natural generalization of the compact real projective manifolds which have been classified by Goldman \cite{gol90} in dimension~$2$ and investigated by Benoist \cite{ben04,ben03,ben05,ben06} in higher dimension.

We make the following link between Definitions \ref{def:ccc-Hpq} and~\ref{def:cm}.

\begin{proposition} \label{prop:CramponMarquis}
Let $p,q\in\NN^*$ and let $\Gamma$ be an irreducible discrete subgroup of $G=\PO(p,q)$.
\begin{enumerate}[(1)]
  \item\label{item:CM1} If $\Gamma$ is $\HH^{p,q-1}$-convex cocompact, then it is strongly convex cocompact in $\PP(\RR^{p+q})$.
  Moreover, the set $\Omega$ of Definition~\ref{def:cm} may be taken to be contained in $\HH^{p,q-1}$.
  \item\label{item:CM2} Conversely, if $\Gamma$ is strongly convex cocompact in $\PP(\RR^{p+q})$, then it is $\HH^{p,q-1}$-convex cocompact or $\HH^{q,p-1}$-convex cocompact (after identifying $\PO(p,q)$ with $\PO(q,p)$).
\end{enumerate}
\end{proposition}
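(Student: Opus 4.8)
The plan is to obtain both implications from Theorem~\ref{thm:main-negative}, combined with the description of strongly convex cocompact subgroups of $\PGL(\RR^{p+q})$ due to Crampon--Marquis \cite{cm14}. We may assume $\Gamma$ infinite, so that $\Lambda_\Gamma\neq\emptyset$; the finite case is elementary.

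For part~(1), assume $\Gamma$ is $\HH^{p,q-1}$-convex cocompact and let $\C\subset\HH^{p,q-1}$ be as in Definition~\ref{def:ccc-Hpq}. By Theorem~\ref{thm:main-negative}, $\Gamma$ is word hyperbolic, the inclusion $\Gamma\hookrightarrow G$ is $P_1^{p,q}$-Anosov, the proximal limit set $\Lambda_\Gamma$ is transverse and negative, and $\partial_i\C=\Lambda_\Gamma$. I would first note that the convex hull $\mathrm{Conv}(\Lambda_\Gamma)$, formed in an affine chart containing the properly convex set $\overline\C$, is contained in $\overline\C$, and that, since $\Lambda_\Gamma$ is negative, every point of $\mathrm{Conv}(\Lambda_\Gamma)\smallsetminus\Lambda_\Gamma$ is a nontrivial convex combination of points of $\Lambda_\Gamma$ and therefore lies in $\HH^{p,q-1}$; hence $\mathrm{Conv}(\Lambda_\Gamma)\cap\HH^{p,q-1}=\mathrm{Conv}(\Lambda_\Gamma)\smallsetminus\Lambda_\Gamma$ is a closed, $\Gamma$-invariant subset of $\C$, so it has compact quotient, and it has nonempty interior by irreducibility. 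The heart of the argument is then to replace this set by a $\Gamma$-invariant strictly convex open subset $\Omega$ of $\HH^{p,q-1}$ with $C^1$ boundary and with the same ideal boundary $\Lambda_\Gamma$ in $\partial_{\scriptscriptstyle\PP}\HH^{p,q-1}$, obtained by a $\Gamma$-equivariant smoothing of the interior of $\mathrm{Conv}(\Lambda_\Gamma)$ which rounds off the flat faces of its finite boundary. This is possible precisely because $\Lambda_\Gamma$ is transverse --- so that $\partial_i\mathrm{Conv}(\Lambda_\Gamma)=\Lambda_\Gamma$ contains no nontrivial segment and only the finite part of $\partial\mathrm{Conv}(\Lambda_\Gamma)$ has to be smoothed --- and negative --- so that the smoothing can be kept inside $\HH^{p,q-1}$. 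One then checks, as for strongly convex cocompact groups (compare Lemma~\ref{lem:snowman}), that the orbital limit set $\Lambda^{\mathsf{orb}}_\Omega(\Gamma)$ equals $\Lambda_\Gamma$, so that the convex hull of $\Lambda^{\mathsf{orb}}_\Omega(\Gamma)$ in $\Omega$ coincides with $\mathrm{Conv}(\Lambda_\Gamma)\cap\HH^{p,q-1}$, which is $\Gamma$-cocompact. Thus $\Gamma$ is strongly convex cocompact in $\PP(\RR^{p+q})$, with the domain $\Omega$ contained in $\HH^{p,q-1}$ as asserted.

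For part~(2), assume $\Gamma$ is strongly convex cocompact in $\PP(\RR^{p+q})$ and let $\Omega$ be the corresponding nonempty strictly convex open set with $C^1$ boundary preserved by $\Gamma$. By \cite{cm14}, $\Gamma$ is word hyperbolic and the composition $\Gamma\hookrightarrow\PO(p,q)\hookrightarrow\PGL(\RR^{p+q})$ is Anosov with respect to the stabilizer of a line, with $\Lambda_\Gamma=\Lambda^{\mathsf{orb}}_\Omega(\Gamma)$ (Lemma~\ref{lem:snowman}). Since $\Gamma\subset\PO(p,q)$ preserves the quadric $\partial_{\scriptscriptstyle\PP}\HH^{p,q-1}$, the attracting fixed point of every proximal element of $\Gamma$ is an isotropic line; hence $\Lambda_\Gamma\subset\partial_{\scriptscriptstyle\PP}\HH^{p,q-1}$ and, by functoriality of Anosov representations and $\Gamma\subset\PO(p,q)$, the inclusion $\Gamma\hookrightarrow\PO(p,q)$ is $P_1^{p,q}$-Anosov, so in particular $\Lambda_\Gamma$ is transverse. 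Now $\Omega$ is a nonempty $\Gamma$-invariant properly convex open subset of $\PP(\RR^{p+q})$, so Proposition~\ref{prop:Lambda-non-pos-neg} applies and shows that $\Lambda_\Gamma$ lifts to a cone of $\RR^{p,q}\smallsetminus\{0\}$ on which $\langle\cdot,\cdot\rangle_{p,q}$ is everywhere $\leq 0$, or everywhere $\geq 0$. Combined with transversality --- the inner products of noncollinear points of such a lift being nonzero --- this inequality is in fact strict, i.e.\ $\Lambda_\Gamma$ is negative, \resp positive, in the sense of Definition~\ref{def:pos-neg}. Theorem~\ref{thm:main-negative} then yields that $\Gamma$ is $\HH^{p,q-1}$-convex cocompact, \resp $\HH^{q,p-1}$-convex cocompact.

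The step I expect to be the main obstacle is the smoothing construction in part~(1): producing, inside the \emph{non-convex} ambient space $\HH^{p,q-1}$, a $\Gamma$-invariant strictly convex open domain with $C^1$ boundary whose convex core has compact quotient. The difficulty lies in rounding off the faces of $\mathrm{Conv}(\Lambda_\Gamma)$ in a way that is at once $\Gamma$-equivariant, of class $C^1$, and confined to $\HH^{p,q-1}$; the transversality and negativity of $\Lambda_\Gamma$ are exactly the two properties that make this feasible. Part~(2), by contrast, should reduce cleanly to statements already at hand --- the Crampon--Marquis description, Proposition~\ref{prop:Lambda-non-pos-neg}, and Theorem~\ref{thm:main-negative} --- once one observes that transversality upgrades the conclusion ``$\leq 0$ or $\geq 0$'' of Proposition~\ref{prop:Lambda-non-pos-neg} to ``negative or positive''.
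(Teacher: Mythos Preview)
Your overall strategy is sound, and for part~\eqref{item:CM1} your outline matches the paper's: reduce to the convex hull $\C_{\min}$ of $\Lambda_\Gamma$, then construct a $\Gamma$-invariant strictly convex open set $\Omega\subset\HH^{p,q-1}$ with $C^1$ boundary containing it. You are right that the smoothing is the substantive step. The paper carries it out by first passing to a closed uniform Hilbert-neighborhood $\C_r$ of $\C_{\min}$ inside the maximal invariant domain $\Omega_{\max}$ (Lemmas~\ref{lem:no-degenerate-faces} and~\ref{lem:unif-neighb}); this guarantees a priori that the construction stays inside $\HH^{p,q-1}$ and that $\Gamma$ acts properly and cocompactly on the neighborhood. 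The smoothing (Lemma~\ref{lem:strict-conv-C1}) is then done in three stages: build a $C^1$ but not strictly convex $\Omega_1$ as a convex hull of translates of a smooth fundamental piece; deform to a strictly convex $\Omega_2$ invariant only under a torsion-free finite-index subgroup; and finally average over cosets to recover full $\Gamma$-invariance. Two preparatory lemmas (Lemma~\ref{lem:C1-at-limit-set} and Proposition~\ref{prop:possibly-superfluous}) handle the boundary behavior at $\Lambda_\Gamma$ separately, so the smoothing only has to deal with the ``finite'' boundary, as you anticipate.

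For part~\eqref{item:CM2} your route through \cite{cm14} and Theorem~\ref{thm:main-negative} is unnecessarily indirect, and the appeal to \cite{cm14} for the $P_1$-Anosov property is not quite safe: that paper establishes geometric finiteness and Gromov hyperbolicity in Hilbert geometry, but does not formulate the Anosov conclusion in the form you need. The paper avoids this entirely. Transversality of $\Lambda_\Gamma$ follows directly from strict convexity of $\Omega$: for $z\in\Lambda_\Gamma$ the hyperplane $z^\perp$ corresponds to a point of $\Lambda_\Gamma^*\subset\partial_{\scriptscriptstyle\PP}\Omega^*$ (Fact~\ref{fact:Yves} and Remark~\ref{rem:inv-dom}), hence is a supporting hyperplane to $\Omega$, so any $y\in\Lambda_\Gamma\cap z^\perp$ would give a segment $[y,z]\subset\partial_{\scriptscriptstyle\PP}\Omega$. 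Combined with Proposition~\ref{prop:Lambda-non-pos-neg} this yields negativity or positivity of $\Lambda_\Gamma$, exactly as you say. But then one does not need the Anosov machinery at all: the hypothesis already gives cocompactness of $\Gamma$ on $\C_{\min}$, Lemma~\ref{lem:Omega-min-Hpq} places $\C_{\min}$ inside $\HH^{p,q-1}$ (resp.\ $\HH^{q,p-1}$), and Remark~\ref{rem:conseq-Lambda-Lambda-Gamma} finishes. Your argument is salvageable, but the paper's is shorter and self-contained.
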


The following observation is an easy consequence of the definitions.
We refer to \cite{gw12} for the notion of $P_1$-Anosov representation into $\PGL(\RR^n)$, sometimes also known as \emph{projective Anosov representation}.

\begin{fact}[{\cite[Th.\,4.3]{gw12}}] \label{fact:Ano-PO-PGL}
Let $p,q\in\NN^*$ with $p+q=n$.
A representation with values in $\PO(p,q)$ is $P_1^{p,q}$-Anosov if and only if it is $P_1$-Anosov as a representation into $\PGL(\RR^n)$, where $P_1$ is the stabilizer of a line of~$\RR^n$.
\end{fact}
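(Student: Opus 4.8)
The plan is to deduce this from the functoriality of Anosov representations under Lie group morphisms, \ie from \cite[Th.\,4.3]{gw12}, applied to the inclusion $\iota:G=\PO(p,q)\hookrightarrow G':=\PGL(\RR^n)$ coming from the standard representation of $G$ on $\RR^n=\RR^{p+q}$. The first step is to record the correspondence of parabolics and flag varieties. Fix an isotropic line $\ell_0$ of $\RR^{p,q}$ and an isotropic line $\ell_0'$ with $\langle\ell_0,\ell_0'\rangle_{p,q}\neq 0$. Then the stabilizer of $\ell_0$ in $G$ is $P_1^{p,q}=\iota^{-1}(P_1)$, where $P_1\subset G'$ is the stabilizer of $\ell_0$; and the stabilizer of $\ell_0'$ in $G$ coincides with the stabilizer of the hyperplane $\ell_0'^{\perp}$ (any element of $\PO(p,q)$ preserving $\ell_0'^{\perp}$ preserves its $\langle\cdot,\cdot\rangle_{p,q}$-radical $\ell_0'$), so an opposite parabolic $(P_1^{p,q})^-$ equals $\iota^{-1}(P_1^-)$, with $P_1^-\subset G'$ the stabilizer of the hyperplane $\ell_0'^{\perp}$. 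The induced $G$-equivariant maps $G/P_1^{p,q}\to G'/P_1=\PP(\RR^n)$ and $G/(P_1^{p,q})^-\to G'/P_1^-=\PP((\RR^n)^*)$ are the inclusion of the quadric $\partial_{\scriptscriptstyle\PP}\HH^{p,q-1}$ into $\PP(\RR^n)$ and the composition of this inclusion with $z\mapsto z^{\perp}$; both are proper (closed) embeddings, and they match transversality relations, since two isotropic lines $\ell,\ell'$ satisfy $\ell\not\subset\ell'^{\perp}$ exactly when $\ell\oplus\ell'^{\perp}=\RR^n$.

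With this in hand, I would conclude in either of two ways. First, directly: the above is precisely the data needed to invoke \cite[Th.\,4.3]{gw12}, whose effect is that $\rho:\Gamma\to G$ is $P_1^{p,q}$-Anosov if and only if its composition with the ``Pl\"ucker'' representation $\iota$ is $P_1$-Anosov; and the standard representation $\RR^n$ of $\PO(p,q)$ \emph{is} such a representation for $P_1^{p,q}$, as $P_1^{p,q}$ is exactly the stabilizer of its highest weight line, which is isotropic. Concretely, one transports boundary maps across the dictionary above: a $P_1^{p,q}$-boundary map $\xi:\partial_{\infty}\Gamma\to\partial_{\scriptscriptstyle\PP}\HH^{p,q-1}$ yields the pair $(\iota\circ\xi,\ \eta\mapsto\xi(\eta)^{\perp})$, which is nested (since $\xi(\eta)$ is isotropic) and transverse; conversely, if $(\xi^1,\xi^{n-1})$ is a $P_1$-boundary map for $\iota\circ\rho$ then $\xi^1$ lands in $\partial_{\scriptscriptstyle\PP}\HH^{p,q-1}$ --- for every infinite-order $\gamma$ the proximal element $\rho(\gamma)\in\PO(p,q)$ has isotropic attracting eigenvector $w^+$, because $\langle w^+,w^+\rangle_{p,q}=\lambda_1^2\langle w^+,w^+\rangle_{p,q}$ with $|\lambda_1|>1$ --- and then $\xi^{n-1}=\xi^1(\cdot)^{\perp}$ by uniqueness of boundary maps, so $\xi^1$ is a $P_1^{p,q}$-boundary map. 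Second, more quickly: using the characterization of Anosov representations by linear growth of the Cartan projection (\cite{klp14,ggkw17,bps}), both ``$\rho$ is $P_1^{p,q}$-Anosov'' and ``$\iota\circ\rho$ is $P_1$-Anosov'' assert exactly that $\Gamma$ is word hyperbolic and $\log(\sigma_1(\rho(\gamma))/\sigma_2(\rho(\gamma)))$ grows linearly in the word length of $\gamma$, where $\sigma_1\geq\sigma_2\geq\cdots$ are the singular values of $\rho(\gamma)$ on $\RR^n$ --- literally the same condition, since these singular values do not depend on whether $\rho(\gamma)$ is regarded as an element of $\OO(p,q)$ or of $\GL(\RR^n)$, and $P_1^{p,q}$ and $P_1$ both correspond to the first simple (restricted) root, which in either case measures the gap $\log\sigma_1-\log\sigma_2$.

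The one genuinely non-formal ingredient is the contraction/flow condition \eqref{item:flow} of Section~\ref{subsec:intro-Anosov}: the two parabolics are not isomorphic --- $P_1^{p,q}$ is $|1|$-graded with $(n{-}2)$-dimensional nilradical, whereas $P_1\subset\PGL(\RR^n)$ has $(n{-}1)$-dimensional nilradical --- so one cannot naively identify the associated flat bundles. Both routes above circumvent this: \cite[Th.\,4.3]{gw12} by reducing $P_1^{p,q}$-Anosovness to projective Anosovness of one well-chosen representation, and the Cartan-projection route by avoiding bundles entirely. Everything else is the transversality bookkeeping recorded in the first paragraph, which is why the statement is ``an easy consequence of the definitions''.
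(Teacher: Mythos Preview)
Your proposal is correct, but there is nothing to compare it to: the paper does not prove this statement. It is recorded as a Fact with the citation \cite[Th.\,4.3]{gw12} and the remark that it is ``an easy consequence of the definitions'', with no further argument given.

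What you have written is essentially an expansion of that remark: you unpack why the inclusion $\PO(p,q)\hookrightarrow\PGL(\RR^n)$ satisfies the hypotheses of \cite[Th.\,4.3]{gw12} (the parabolic $P_1^{p,q}$ is the preimage of $P_1$, the flag maps are compatible closed embeddings, transversality matches), and you offer a second route via the singular-value gap characterization. Both arguments are sound and standard. If anything, your write-up is more detailed than the paper intends --- the authors treat this as a black-box citation, not something requiring proof in the body of the text.
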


Therefore, Theorems \ref{thm:main} and~\ref{thm:main-negative} and Proposition~\ref{prop:CramponMarquis} give an intimate relationship between $P_1$-Anosov representations into $\PGL(\RR^n)$ and discrete subgroups of $\PGL(\RR^n)$ which are strongly convex cocompact in $\PP(\RR^n)$, in the context where there is an invariant quadratic form on~$\RR^n$.
In~\cite{dgk-cc}, we shall generalize this relationship to the setting of subgroups of $\PGL(\RR^n)$ which do not necessarily preserve any quadratic form: indeed, the arguments in the proofs of Theorems~\ref{thm:main} and~\ref{thm:main-negative} take place in projective geometry, and with some work we will be able to remove the use of the quadratic form.

\subsection{Examples of $\HH^{p,q-1}$-convex cocompact subgroups coming from Anosov representations} \label{subsec:intro-ex-cc}

Theorems \ref{thm:main} and~\ref{thm:main-negative} imply that many well-known examples of Anosov representations yield $\HH^{p,q-1}$-convex cocompact groups.
In Section~\ref{sec:examples} we describe examples, generalizing quasi-Fuchsian representations, that come from deformations of a convex cocompact subgroup of a rank-one Lie subgroup $H$ of~$G$.
These include certain maximal representations of surface groups and Hitchin representations.
Applying Proposition~\ref{prop:CramponMarquis}, all of these examples are new examples of discrete subgroups of $\PGL(\RR^n)$ which are strongly convex cocompact in $\PP(\RR^n)$.

Here is a sample result from Section~\ref{subsec:Hitchin}.

\begin{proposition} \label{prop:Hitchin}
Let $\Gamma$ be the fundamental group of a closed orientable hyperbolic surface, and let $m\geq 1$ and $\ell\in\{m,m+1\}$.

If $m$ is odd (\resp even), then the group $\rho(\Gamma)$ is $\HH^{m+1,\ell-1}$-convex cocompact (\resp $\HH^{\ell,m}$-convex cocompact) for any irreducible representation $\rho$ in the Hitchin component of $\Hom(\Gamma,\PO(m+1,\ell))$.
\end{proposition}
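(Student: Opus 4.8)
\medskip
\noindent\emph{Proof proposal.} The plan is to deduce the statement from Theorems~\ref{thm:main} and~\ref{thm:main-negative}, reducing the only delicate point --- which of the two pseudo-Riemannian signatures occurs --- to a computation at the Fuchsian point of the Hitchin component. Throughout, set $N:=(m+1)+\ell$, so that $G=\PO(m+1,\ell)$ acts on $\RR^N$; the closed orientable hyperbolic surface group $\Gamma$ is word hyperbolic and torsion-free, with connected Gromov boundary $\partial_\infty\Gamma\simeq\SS^1$.

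First I would invoke the fact that every representation $\rho$ in the Hitchin component of $\Hom(\Gamma,\PO(m+1,\ell))$ is Anosov with respect to the minimal parabolic subgroup of~$G$, hence $P_1^{m+1,\ell}$-Anosov (Labourie~\cite{lab06} for $\PSL_n$, extended to all split real forms; see also~\cite{gw12}); in particular $\rho$ is injective with discrete image. If moreover $\rho$ is irreducible, then since $\partial_\infty\Gamma$ is connected, Theorem~\ref{thm:main}.\eqref{item:Ano-connected-implies-ccc} shows that $\rho(\Gamma)$ is $\HH^{m+1,\ell-1}$-convex cocompact or $\HH^{\ell,m}$-convex cocompact. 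When $\ell=m+1$ these two conclusions coincide (both read ``$\HH^{m+1,m}$-convex cocompact'', as then $(p,q)=(m+1,m+1)$), which already settles that case. So assume henceforth $\ell=m$, i.e.\ $G=\PO(m+1,m)$, $N=2m+1$ and $(p,q)=(m+1,m)$; by Theorem~\ref{thm:main-negative} the remaining task is to determine whether the proximal limit set $\Lambda_{\rho(\Gamma)}=\xi(\partial_\infty\Gamma)\subset\partial_{\scriptscriptstyle\PP}\HH^{m+1,m-1}$ is negative (yielding $\HH^{m+1,m-1}$-convex cocompactness) or positive (yielding $\HH^{m,m}$-convex cocompactness). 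By Proposition~\ref{prop:conn-transv} it is exactly one of the two, being connected and transverse.

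Next I would observe that this alternative is locally constant, hence constant, on the (connected) Hitchin component: by Proposition~\ref{prop:comp-Ano-neg} the set of $P_1^{m+1,m}$-Anosov representations of~$\Gamma$ with negative limit set is open, and applying the same result to the form $-\langle\cdot,\cdot\rangle_{m+1,m}$ shows the set with positive limit set is open as well, while every point of the Hitchin component is $P_1^{m+1,m}$-Anosov with connected (so negative or positive) limit set. It thus suffices to evaluate the alternative at the Fuchsian point $\rho_0=\iota\circ j$, where $j:\Gamma\hookrightarrow\PSL_2(\RR)$ is Fuchsian and $\iota:\PSL_2(\RR)\to G$ is a principal homomorphism (here $\rho_0$ need not be irreducible, but its equivariant boundary curve and limit set are still defined). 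Identifying $\RR^{2m+1}$ with $\mathrm{Sym}^{2m}(\RR^2)$ as an $\mathrm{SL}_2(\RR)$-module via~$\iota$, the curve $\xi_0(\partial_\infty\Gamma)$ becomes the rational normal curve $[\widetilde w]\mapsto[\widetilde w^{\otimes 2m}]$, and the $G$-invariant quadratic form on $\RR^{2m+1}$ becomes a nonzero real multiple of $B:=\omega^{\otimes 2m}|_{\mathrm{Sym}^{2m}(\RR^2)}$, where $\omega$ is an $\mathrm{SL}_2(\RR)$-invariant symplectic form on $\RR^2$. Two computations then finish the proof: (a) $B(\widetilde w^{\otimes 2m},\widetilde w'^{\otimes 2m})=\omega(\widetilde w,\widetilde w')^{2m}$, so $B$ is positive on every pair of noncollinear vectors of the lift of $\xi_0(\partial_\infty\Gamma)$; (b) decomposing $\mathrm{Sym}^{2m}(\RR^2)$ along the weight spaces of a maximal split torus of $\mathrm{SL}_2(\RR)$ presents $B$ as the orthogonal sum of $m$ hyperbolic planes and the weight-zero line, on which $B$ carries the sign $(-1)^m$; hence $B$ has signature $(m+1,m)$ if $m$ is even and $(m,m+1)$ if $m$ is odd. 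Since $\langle\cdot,\cdot\rangle_{m+1,m}$ has signature $(m+1,m)$, it is a \emph{positive} multiple of $B$ when $m$ is even and a \emph{negative} multiple when $m$ is odd, so by~(a) the lift of $\xi_0(\partial_\infty\Gamma)$ has all pairwise inner products positive for $m$ even and negative for $m$ odd. By Theorem~\ref{thm:main-negative}, $\rho_0(\Gamma)$ --- and hence $\rho(\Gamma)$ for every irreducible $\rho$ in the Hitchin component --- is $\HH^{m,m}$-convex cocompact when $m$ is even and $\HH^{m+1,m-1}$-convex cocompact when $m$ is odd, which is the claim.

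I expect the main obstacle to be computation~(b) together with the sign bookkeeping that surrounds it: one must reconcile the normalization making the rational normal curve ``positive with respect to $B$'' with the signature convention that designates which component of $\PP(\RR^{2m+1})\smallsetminus\partial_{\scriptscriptstyle\PP}\HH^{m+1,m-1}$ is $\HH^{m+1,m-1}$ and which is $\SS^{m-1,m}$; concretely, one must verify that $B$ restricted to the middle weight space carries the sign $(-1)^m$ rather than a sign independent of~$m$. The remaining ingredients are either citations (the Anosov property of Hitchin representations) or applications of results established earlier in the paper.
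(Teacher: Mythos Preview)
Your argument is correct and follows essentially the same route as the paper: both reduce to the Fuchsian point, compute the sign of the invariant form on the Veronese curve in $\mathrm{Sym}^{2m}(\RR^2)$, and propagate this sign to the whole Hitchin component using Proposition~\ref{prop:comp-Ano-neg} (the paper packages this step into the general framework of Proposition~\ref{prop:quasiFuchsian} and Corollary~\ref{cor:T-Gamma-rho-0}). The only cosmetic differences are that the paper works with $B_n=-\omega^{\otimes 2m}$ rather than your $B=\omega^{\otimes 2m}$ (so that the limit curve is directly \emph{negative} and the signature reads $(m+1,m)$ for $m$ odd, $(m,m+1)$ for $m$ even), and that the paper treats the case $\ell=m+1$ via the embedding $\RR^{k_n,\ell_n}\hookrightarrow\RR^{m+1,m+1}$ whereas your observation that the two conclusions of Theorem~\ref{thm:main}.\eqref{item:Ano-connected-implies-ccc} coincide when $p=q$ is a clean shortcut. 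Your parenthetical ``here $\rho_0$ need not be irreducible'' is unnecessary for $\ell=m$, since the $(2m{+}1)$-dimensional symmetric power is irreducible and $j(\Gamma)$ is Zariski-dense in $\PSL_2(\RR)$; but it does no harm.
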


The result is true also for nonirreducible representations: see \cite{dgk-cc}.

\subsection{New examples of Anosov representations}

Conversely, Theorem~\ref{thm:main} also enables us to give new examples of Anosov representations into higher-rank semisimple Lie groups.
While Anosov representations of free groups and surface groups are abundant in the literature, the same is not true for Anosov representations of more complicated hyperbolic groups outside the realm of Kleinian groups.
We show that certain natural and explicit representations of hyperbolic right-angled Coxeter groups, namely deformations of the Tits canonical representation studied by Krammer \cite{kra94} and others (see \eg Dyer--Hohlweg--Ripoll~\cite{dhr16}), are $\HH^{p,q-1}$-convex cocompact for some appropriate pair $(p,q)$; therefore, by Theorem~\ref{thm:main}, they are $P_1^{p,q}$-Anosov.

\begin{theorem} \label{thm:exist-Ano-RACG}
Let $W$ be an infinite word hyperbolic right-angled Coxeter group in $n$ generators.
Then $W$ admits a $P_1^{p,q}$-Anosov representation into $\PO(p,q)$ for some $p,q\in\NN^*$ with $p+q=n$.
Composing with the inclusion $\PO(p,q) \hookrightarrow \PGL(\RR^n)$ gives a $P_1$-Anosov representation of $W$ into $\PGL(\RR^n)$ (Fact~\ref{fact:Ano-PO-PGL}).
\end{theorem}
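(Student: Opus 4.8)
The plan is to construct, for each infinite word hyperbolic right-angled Coxeter group $W$ with generating set $S$ of cardinality $n$, an explicit representation of $W$ into some $\PO(p,q)$ with $p+q=n$ and to verify that this representation is $\HH^{p,q-1}$-convex cocompact, so that $P_1^{p,q}$-Anosov follows from Theorem~\ref{thm:main}.\eqref{item:ccc-implies-Anosov}. The natural candidate is the Tits canonical representation: one fixes a symmetric bilinear form $B$ on $\RR^S$ with $B(e_s,e_s)=1$ for all $s$ and $B(e_s,e_t)=-\cos(\pi/m_{st})$ for $s\neq t$, where $m_{st}\in\{2,\infty\}$ since $W$ is right-angled; thus $B(e_s,e_t)=0$ when $s,t$ commute and $B(e_s,e_t)\le-1$ when they do not. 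The reflections $\sigma_s : v\mapsto v-2B(v,e_s)e_s$ generate a representation $W\to\OO(B)$, and passing to $\PO(B)$ one gets a representation into $\PO(p,q)$ where $(p,q)$ is the signature of $B$ (possibly after a small perturbation of the off-diagonal entries of $B$ to make it nondegenerate, which is where the freedom in choosing the $B(e_s,e_t)\le-1$ enters). First I would recall, following Krammer \cite{kra94} and Dyer--Hohlweg--Ripoll \cite{dhr16}, that for a suitable choice of $B$ this Tits-type representation is discrete, faithful, and has the property that the roots (images of the $e_s$ under $W$) have a well-understood limit set on the projectivized isotropic cone.

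The key steps, in order, are: (1) choose the form $B$ so that it is nondegenerate of some signature $(p,q)$ with $p,q\ge 1$ (infiniteness of $W$ forces $B$ to be indefinite, since a positive definite $B$ gives a finite Coxeter group and a positive semidefinite degenerate $B$ gives an affine one; word hyperbolicity should be used to rule out the affine/degenerate borderline and, more importantly, to get strict negativity rather than mere nonpositivity in the estimates below); (2) verify that the action of $W$ on $\HH^{p,q-1}$ preserves a properly convex set with the required transversality of the ideal boundary — the natural candidate for $\C$ is (the intersection with $\HH^{p,q-1}$ of) the convex hull of the orbit $W\cdot[x_0]$ of a suitable basepoint, or dually the Tits cone / its polar, and one must check $\partial_i\C$ contains no projective segment; (3) establish properness and cocompactness of the $W$-action on $\C$. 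For steps (2)--(3) I would lean on the combinatorial geometry of the Coxeter complex: the fundamental chamber is a simplicial cone cut out by the hyperplanes $e_s^\perp$, its $W$-translates tile the Tits cone, and the word hyperbolicity of $W$ (equivalently, by Moussong's criterion, the absence of affine and of $\ZZ^2$ subgroups, i.e. no two disjoint non-adjacent edges whose endpoints pairwise... — concretely, no empty square in the defining graph and no... ) translates into a uniform separation of translated walls, which should give both the transversality of the limit set and the cocompactness of the action on the convex core.

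The main obstacle I expect is step (2)--(3): controlling the convex set $\C$ and proving the $W$-action on it is properly discontinuous and cocompact, with ideal boundary exactly the (transverse, hence negative or positive) proximal limit set. The subtlety is that $\HH^{p,q-1}$ is not convex in $\PP(\RR^{p+q})$, so one cannot just take a convex hull; one must produce $\C$ by hand — e.g. as the "dual" convex core built from supporting hyperplanes at the vertices of the orbit, intersected with $\HH^{p,q-1}$ — and then show it is closed, properly convex, has nonempty interior (automatic here by irreducibility, once one notes the Tits representation is irreducible for word hyperbolic $W$, or reduces to an irreducible factor), and that $\Gamma=W$ acts on it cocompactly. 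The cocompactness is the crux: one wants a compact fundamental domain, which should come from intersecting a neighborhood of the fundamental chamber with $\C$ and using the uniform wall-separation coming from hyperbolicity, in the spirit of the Benoist--Vinberg theory of divisible convex sets and its adaptation by Crampon--Marquis \cite{cm14} to the strictly convex setting. Once $\HH^{p,q-1}$-convex cocompactness is in hand, Theorem~\ref{thm:main}.\eqref{item:ccc-implies-Anosov} gives that $W$ is word hyperbolic (already known) and that $W\hookrightarrow\PO(p,q)$ is $P_1^{p,q}$-Anosov, and Fact~\ref{fact:Ano-PO-PGL} upgrades this to $P_1$-Anosov into $\PGL(\RR^n)$, completing the proof.
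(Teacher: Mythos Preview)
Your approach matches the paper's: perturb the Tits representation into $\OO(B)\simeq\OO(p,q)$, prove the image is $\HH^{p,q-1}$-convex cocompact, and apply Theorem~\ref{thm:main}.\eqref{item:ccc-implies-Anosov}; the reduction to the irreducible case by projecting off the finite direct factor is also the same.

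The gaps are in the execution of what you correctly flag as the main obstacle. The paper's convex set is $\C=\Omega\cap\overline{\Omega^*}$, where $\Omega$ is the Tits--Vinberg domain and $\Omega^*$ its dual via~$B$; cocompactness then comes for free because $\C$ is contained in the $\rho(W_S)$-orbit of the compact set $\Sigma=\Delta\cap\Delta^*$ (fundamental chamber intersected with its dual simplex) --- no wall-separation estimate is used. A point you miss is that the perturbation must make the off-diagonal entries strictly less than $-1$ (equivalently $\alpha_{ij}>1$): this strictness is used repeatedly --- to show $\Sigma\subset\HH^{p,q-1}$, to show stabilizers of points of~$\Sigma$ are finite (hence $\Sigma\subset\Omega$), and to make each $\rho(s_is_j)$ proximal when $m_{ij}=\infty$. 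That proximality is the engine behind the hardest step, which your proposal leaves unresolved: proving $\partial_i\C$ contains no segment. The paper first shows (using the proximal dynamics of the $\rho(s_is_j)$ on supporting hyperplanes) that a uniform neighborhood $\C_r$ of~$\C$ contains no infinite straight ray in its $\HH^{p,q-1}$-boundary, and then argues by contradiction that an ideal segment would, after recentering via cocompactness, yield a nondegenerate ideal triangle whose three sides lie in $\partial_{\scriptscriptstyle\PP}\HH^{p,q-1}$, forcing the whole triangle to be isotropic and contradicting that its interior meets $\HH^{p,q-1}$. Neither Moussong's criterion nor Crampon--Marquis enters this argument; in fact hyperbolicity of $W_S$ is recovered as a \emph{consequence} (Corollary~\ref{cor:Moussong}) rather than used as an input.
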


The class of infinite hyperbolic right-angled Coxeter groups is quite large.
It includes groups of arbitrarily large virtual cohomological dimension \cite{js03,hag03,osa13}, which can have exotic Gromov boundaries such as the Menger curve, Pontryagin surfaces, Menger compacta, or the Sierpi\'nski carpet \cite{ben92,dra99,dra01,swi16}.

Theorem~\ref{thm:exist-Ano-RACG} also provides (by restriction to a subgroup or induction to a finite-index overgroup) Anosov representations for all groups commensurable to hyperbolic right-angled Coxeter groups, as well as for all their quasi-isometrically embedded subgroups.

\subsection{Organization of the paper}

In Section~\ref{sec:reminders} we recall some well-known facts about the space $\HH^{p,q-1}$ and properly convex domains in projective space.
In Section~\ref{sec:non-pos-sets} we give a characterization of negative subsets of $\partial_{\scriptscriptstyle\PP}\HH^{p,q-1}$, from which we deduce Proposition~\ref{prop:conn-transv} and Corollary~\ref{coro:cc-open}, and we establish some general properties of properly convex domains of $\PP(\RR^{p,q})$ preserved by discrete subgroups of $\PO(p,q)$.
Sections \ref{sec:cc->Ano} and~\ref{sec:Ano-neg->cc} are devoted to the proofs of implications \eqref{item:a1}~$\Rightarrow$~\eqref{item:a2} and~\eqref{item:a2}~$\Rightarrow$ \eqref{item:a1} of Theorem~\ref{thm:main-negative}, respectively.
In Section~\ref{sec:proofCM} we prove Proposition~\ref{prop:CramponMarquis}, which makes the link between our notion of $\HH^{p,q-1}$-convex cocompactness and strong convex cocompactness in $\PP(\RR^{p+q})$ (Definition~\ref{def:cm}).
In Section~\ref{sec:examples} we give examples of $\HH^{p,q-1}$-convex cocompact representations coming from well-known families of Anosov representations.
Finally, in Section~\ref{sec:ex-Ano} we construct $\HH^{p,q-1}$-convex cocompact right-angled Coxeter groups and prove Theorem~\ref{thm:exist-Ano-RACG}.
In Appendix~\ref{app:connectivity} we provide a proof of a (surely well known) basic result in point-set topology.

\subsection*{Acknowledgements}

We are grateful to Yves Benoist and Anna Wienhard for motivating comments and questions, and for their encouragement.
We also thank Vivien Ripoll for interesting discussions on limit sets of Coxeter groups, the referee for useful suggestions, and Jean-Philippe Burelle, Virginie Charette and Son Lam Ho for pointing out a subtlety in Section~4.
The main results, examples, and ideas of proofs in this paper were presented by the third-named author in June 2016 at the conference \emph{Geometries, Surfaces and Representations of Fundamental Groups} in honor of Bill Goldman's 60th birthday; we would like to thank the organizers for a very interesting and enjoyable conference.
Finally, we thank Bill for being a constant source of inspiration and encouragement to us and many others in the field.

\section{Reminders and basic facts} \label{sec:reminders}

\subsection{Pseudo-Riemannian hyperbolic spaces} \label{subsec:Hpq}

Fix two integers $p,q\geq 1$.
Let $G=\PO(p,q)$ and let $P_1^{p,q}$ be the stabilizer in~$G$ of an isotropic line of~$\RR^{p,q}$.
The projective space $\PP(\RR^{p,q})$ is the disjoint union of
$$\HH^{p,q-1} = \big\{[x]\in\PP(\RR^{p,q})\,|\,\langle x,x\rangle_{p,q}<0\big\},$$
of
$$\SS^{p-1,q} = \big\{[x]\in\PP(\RR^{p,q})\,|\,\langle x,x\rangle_{p,q}>0\big\},$$
and of
$$\partial_{\scriptscriptstyle\PP}\HH^{p,q-1} = \partial_{\scriptscriptstyle\PP}\SS^{p-1,q} = \big\{[x]\in\PP(\RR^{p,q})\,|\,\langle x,x\rangle_{p,q}=0\big\} \simeq G/P_1^{p,q}.$$
For instance, Figure~\ref{fig:H123} shows
$$\PP(\RR^4) = \HH^{3,0} \sqcup \left (\partial_{\scriptscriptstyle\PP}\HH^{3,0} = \partial_{\scriptscriptstyle\PP}\SS^{2,1}\right ) \sqcup \SS^{2,1}$$
and
$$\PP(\RR^4) = \HH^{2,1} \sqcup \left (\partial_{\scriptscriptstyle\PP}\HH^{2,1} = \partial_{\scriptscriptstyle\PP}\SS^{1,2}\right ) \sqcup \SS^{1,2}.$$
\begin{figure}[h!]
\centering
\labellist
\small\hair 2pt
\pinlabel $\SS^{1,2}$ [u] at 160 61
\pinlabel $\SS^{2,1}$ [u] at -20 61
\pinlabel $\HH^{3,0}$ [u] at 50 35
\pinlabel $\HH^{2,1}$ [u] at 207 35
\pinlabel $\ell$ [u] at 50 87
\pinlabel $\ell_2$ [u] at 207 52
\pinlabel $\ell_0$ [u] at 219 90 
\pinlabel $\ell_1$ [u] at 197 90
\endlabellist
\includegraphics[scale=1]{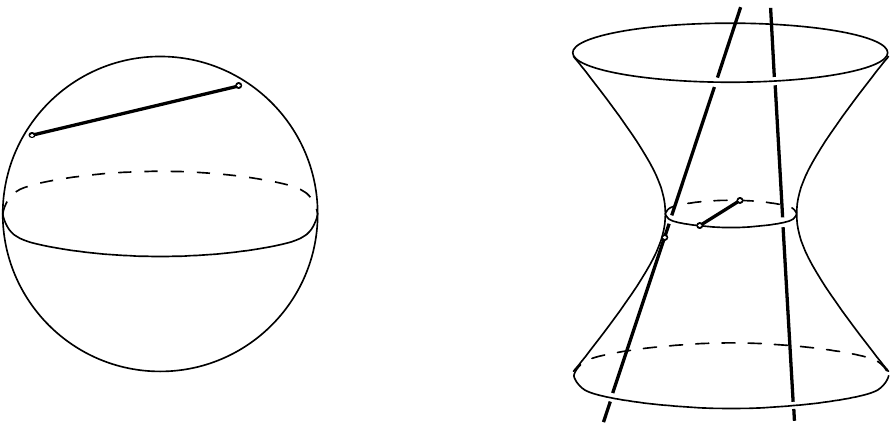}
\caption{Left: $\HH^{3,0}$ with a geodesic line $\ell$ (necessarily spacelike), and $\SS^{2,1}$. Right: $\HH^{2,1}$ with three geodesic lines $\ell_2$ (spacelike), $\ell_1$ (lightlike), and $\ell_0$ (timelike), and $\SS^{1,2}$.}
\label{fig:H123}
\end{figure}
The space $\HH^{p,q-1}$ is homeomorphic to $\RR^p\times\PP(\RR^q)$.
It has a natural pseudo-Riemannian structure of signature $(p,q-1)$ with isometry group~$G$.
To see this, consider the double covering
$$\widehat{\HH}^{p,q-1} = \big\{ x\in\RR^{p,q} ~|~ \langle x,x\rangle_{p,q}=-1\big\}.$$
The restriction of $\langle\cdot,\cdot\rangle_{p,q}$ to any tangent space to $\widehat{\HH}^{p,q-1}$ in $\RR^{p,q}$ has signature $(p,q-1)$ and defines a pseudo-Riemannian structure on~$\widehat{\HH}^{p,q-1}$ with isometry group $\OO(p,q)$, descending to a pseudo-Riemannian structure on~$\HH^{p,q-1}$ with isometry group $\PO(p,q)$.
The sectional curvature is constant negative for this pseudo-Riemannian structure.
The geodesic lines of the pseudo-Riemannian space $\HH^{p,q-1}$ are the intersections of $\HH^{p,q-1}$ with projective lines in $\PP(\RR^{p,q})$.
Such a line is called \emph{spacelike} (\resp \emph{lightlike}, \resp \emph{timelike}) if it meets $\partial_{\scriptscriptstyle\PP}\HH^{p,q-1}$ in two (\resp one, \resp zero) points: see Figure~\ref{fig:H123}.

Similarly, $\SS^{p-1,q}$ is homeomorphic to $\PP(\RR^p)\times\RR^q$ and has a natural pseudo-Riemannian structure of signature $(p-1,q)$ with isometry group~$G$, of constant positive curvature.
It identifies with $\HH^{q,p-1}$ as in Remark~\ref{rem:Spq-Hpq}.

\begin{remark} \label{rem:PO22}
For $(p,q)\notin\{(1,1),(2,2)\}$, the group $G=\PO(p,q)$ is simple and $P_1^{p,q}$ is a maximal proper parabolic subgroup of~$G$.
On the other hand, for $(p,q)=(1,1)$, the group $\PO(1,1)_0$ is isomorphic to $\RR_{>0}$ (hence reductive but not simple) and $P_1^{1,1}=\PO(1,1)_0$.
For $(p,q)=(2,2)$, the group $\PO(2,2)_0$ is isomorphic to $\PSL_2(\RR)\times\PSL_2(\RR)$ (hence semisimple but not simple) and its subgroup $P_1^{2,2}\cap\PO(2,2)_0$ is $B\times B$ where $B$ is a Borel subgroup of $\PSL_2(\RR)$.
To see this, observe that the space $\mathrm{M}_2(\RR)$ of $(2\times 2)$ real matrices is endowed with a natural nondegenerate quadratic form of signature $(2,2)$, namely the determinant; it thus identifies with $\RR^{2,2}$.
The group $\PO(2,2)_0$ acting on~$\PP(\RR^{2,2})$ identifies with the group $\PSL_2(\RR)\times\PSL_2(\RR)$ acting on $\PP(\mathrm{M}_2(\RR))$ by right and left multiplication.
The space $\partial_{\scriptscriptstyle\PP}\HH^{2,1} = \partial_{\scriptscriptstyle\PP}\SS^{1,2}$ identifies with the image in $\PP(\mathrm{M}_2(\RR))$ of the rank-one matrices in $\mathrm{M}_2(\RR)$, which identifies with $\PP^1\RR\times\PP^1\RR$ by taking the kernel and the image.
\end{remark}

The following notation, used in the introduction, will remain valid throughout the paper.

\begin{notation} \label{not:partial}
For a subset $X$ of $\PP(\RR^{p,q})$ (\eg a subset of $\HH^{p,q-1}$), we denote by
\begin{itemize}
  \item $\overline{X}$ the closure of $X$ in $\PP(\RR^{p,q})$;
  \item $\mathrm{Int}(X)$ the interior of $X$ in $\PP(\RR^{p,q})$ (or equivalently in $\HH^{p,q-1}$ if $X\subset\nolinebreak\HH^{p,q-1}$).
\end{itemize}
We set $\partial_{\scriptscriptstyle\PP} X:=\overline{X}\smallsetminus\mathrm{Int}(X)$ and, when $X\subset\HH^{p,q-1}$, we denote by
\begin{itemize}
  \item $\partial_{\scriptscriptstyle\HH} X:=\partial_{\scriptscriptstyle\PP} X\cap\HH^{p,q-1}$ the boundary of $X$ in $\HH^{p,q-1}$;
\item $\partial_i X:=\partial_{\scriptscriptstyle\PP} X \cap \partial_{\scriptscriptstyle\PP} \HH^{p,q-1}$ the boundary of $X$ in $\partial_{\scriptscriptstyle\PP} \HH^{p,q-1}$; if $X$ is closed in $\HH^{p,q-1}$, this coincides with the \emph{ideal boundary} of~$X$, namely $\overline{X}\smallsetminus X$.
\end{itemize}
We also denote by $\partial_\infty\Gamma$ the Gromov boundary of a word hyperbolic group~$\Gamma$.
\end{notation}

\subsection{Limit sets in projective space} \label{subsec:limit-set}

Let $V$ be a finite-dimensional real vector space of dimension $\geq 2$.
Recall that an element $g\in\PGL(V)$ is said to be \emph{proximal in $\PP(V)$} if it admits a unique attracting fixed point $\xi_g^+$ in $\PP(V)$.
Equivalently, $g$ has a unique complex eigenvalue of maximal modulus.

If $g$ is proximal in $\PP(V)$, then $g^{-1}$ is proximal in the dual projective space $\PP(V^*)$, for the dual action given by $g^{-1}\cdot \ell:=\ell\circ g$ for a linear form $\ell\in V^*$; the unique attracting fixed point of $g^{-1}$ in $\PP(V^*)$ corresponds to the projective hyperplane $H_g^-\subset\PP(V)$ which is the projectivization of the sum of the generalized eigenspaces of~$g$ for eigenvalues of nonmaximal modulus; we have $g^n\cdot y\to\xi_g^+$ for all $y\in\PP(V)\smallsetminus H_g^-$ as $n\to +\infty$.

We shall use the following terminology.

\begin{definition}\label{def:limit-set}
Let $\Gamma$ be a discrete subgroup of $\PGL(V)$.
The \emph{proximal limit set of $\Gamma$ in $\PP(V)$} is the closure $\Lambda_{\Gamma}$ of the set of attracting fixed points of elements of~$\Gamma$ which are proximal in $\PP(V)$.
\end{definition}

This set is a closed, $\Gamma$-invariant subset of $\PP(V)$.
When $\Gamma$ is irreducible and contains at least one proximal element, it was studied in \cite{gui90,ben97}.
In that setting, by \cite{ben97}, the action of $\Gamma$ on~$\Lambda_{\Gamma}$ is \emph{minimal}, \ie all $\Gamma$-orbits are dense; any nonempty, closed, $\Gamma$-invariant subset of $\PP(V)$ contains~$\Lambda_{\Gamma}$.

\begin{remark} \label{rem:lim-set-POpq}
For $p,q\in\NN^*$, an element $g\in G=\PO(p,q)$ is proximal in $\PP(\RR^{p,q})$ if and only if it is proximal in $\partial_{\scriptscriptstyle\PP}\HH^{p,q-1}$, in the sense that $g$ admits a unique attracting fixed point $\xi_g^+$ in $\partial_{\scriptscriptstyle\PP}\HH^{p,q-1}$.
In this case, $g^{-1}$ is automatically proximal too, and its unique attracting fixed point $\xi_g^-$ satisfies $\langle\xi_g^+,\xi_g^-\rangle_{p,q}\neq 0$.
In particular, for a discrete subgroup $\Gamma$ of $G=\PO(p,q)$, the proximal limit set $\Lambda_{\Gamma}$ of $\Gamma$ in $\PP(\RR^{p,q})$ is contained in $\partial_{\scriptscriptstyle\PP}\HH^{p,q-1}$, and called the proximal limit set of $\Gamma$ in $\partial_{\scriptscriptstyle\PP}\HH^{p,q-1}$.
\end{remark}

\subsection{Properly convex domains in projective space} \label{subsec:prop-conv-proj}

Let $\Omega$ be a properly convex open subset of $\PP(V)$, with boundary $\partial_{\scriptscriptstyle\PP}\Omega$.
Recall the \emph{Hilbert metric} $d_{\Omega}$ on~$\Omega$:
$$d_{\Omega}(y,z) := \frac{1}{2} \log \, [a,y,z,b] $$
for all distinct $y,z\in\Omega$, where $a,b$ are the intersection points of $\partial_{\scriptscriptstyle\PP}\Omega$ with the projective line through $y$ and~$z$, with $a,y,z,b$ in this order.
Here $[\cdot,\cdot,\cdot,\cdot]$ denotes the cross-ratio on $\PP^1\RR$, normalized so that $[0,1,t,\infty]=t$ for all~$t$. 
The metric space $(\Omega,d_{\Omega})$ is complete and proper, and the automorphism group
$$\mathrm{Aut}(\Omega) := \{g\in\PGL(V) ~|~ g\cdot\Omega=\Omega\}$$
acts on~$\Omega$ by isometries for~$d_{\Omega}$.
As a consequence, any discrete subgroup of $\mathrm{Aut}(\Omega)$ acts properly discontinuously on~$\Omega$.

Let $V^*$ be the dual vector space of~$V$.
By definition, the \emph{dual convex set} of~$\Omega$ is
$$\Omega^* := \PP\big(\big\{ \ell\in V^* ~|~ \ell(x)<0\quad \forall x\in\overline{\widetilde{\Omega}}\big\}\big),$$
where $\overline{\widetilde{\Omega}}$ is the closure in $V\smallsetminus \{0\}$ of an open convex cone $\widetilde{\Omega}$ of $V\smallsetminus\{0\}$ lifting~$\Omega$.
The set $\Omega^*$ is a properly convex open subset of $\PP(V^*)$ which is preserved by the dual action of $\mathrm{Aut}(\Omega)$ on $\PP(V^*)$.

Straight lines (contained in projective lines) are always geodesics for the Hilbert metric $d_{\Omega}$.
When $\Omega$ is not strictly convex, there can be other geodesics as well, by the following well-known and easy fact.

\begin{fact} \label{fact:triang-ineq-Hilbert}
For pairwise distinct points $w_1,w_2,w_3\in\Omega$, we have $d_{\Omega}(w_1,w_2)=d_{\Omega}(w_1,w_3)+d_{\Omega}(w_3,w_2)$ if and only if there are segments $[y,y']$ and $[z,z']$ in the boundary of~$\Omega$ such that $y,w_1,w_3,z$ on the one hand, and $y',w_3,w_2,z'$ on the other hand, are aligned in this order.
In this case, there exist points $y''\in [y,y']$ and $z''\in [z,z']$ such that $y'',w_1,w_2,z''$ are aligned in this order.
\end{fact}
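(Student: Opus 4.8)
The plan is to work entirely in the $2$-dimensional projective subspace spanned by $w_1, w_2, w_3$, intersected with $\Omega$; this is a properly convex open subset of a projective plane, and all the Hilbert distances in question are computed inside it (Hilbert distance is intrinsic to the convex set and unchanged under restriction to a projective subspace that meets it). So without loss of generality $\Omega \subset \PP(\RR^3)$, and we may pass to an affine chart in which $\overline{\Omega}$ is a compact convex body. First I would set up coordinates: let $\ell_1 = \partial_{\scriptscriptstyle\PP}\Omega \cap (w_1 w_3)$-line meet $\partial_{\scriptscriptstyle\PP}\Omega$ at points $a_1, b_1$ with $a_1, w_1, w_3, b_1$ in this order, and similarly $a_2, w_3, w_2, b_2$ on the line through $w_3$ and $w_2$ (here I use that $w_1, w_3$ are on one line through $w_3$ and $w_2, w_3$ on another; if $w_1, w_2, w_3$ happen to be collinear the statement reduces to the degenerate/flat case and both sides of the equality automatically agree with $y=y'$, $z=z'$ at the two endpoints, so I focus on the generic triangle configuration).

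The core computation is the following projective-geometry lemma: given the triangle with vertices $w_1, w_2, w_3$, the equality $d_\Omega(w_1,w_2) = d_\Omega(w_1,w_3) + d_\Omega(w_3,w_2)$ forces the convex body $\overline{\Omega}$ to be ``pinched'' along the sides of the triangle. Concretely, I would argue: the straight segment $[w_1,w_2]$ is always a geodesic, so if the broken path $w_1 \to w_3 \to w_2$ is also geodesic (has the same length), then by the first-variation/convexity properties of the Hilbert metric the entire ``lune'' bounded by $[w_1,w_2]$ and $[w_1,w_3]\cup[w_3,w_2]$ must be \emph{flat} for $d_\Omega$. Flatness of a $2$-dimensional Hilbert region is classical (Busemann, de la Harpe): it happens exactly when $\overline{\Omega}$ locally looks like the region between two segments in its boundary. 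I would make this precise by a direct cross-ratio estimate: write $d_\Omega(w_1,w_2)$ in terms of the endpoints $a,b$ of $\partial_{\scriptscriptstyle\PP}\Omega$ on line $(w_1 w_2)$, and compare using the cross-ratio cocycle identity to the sum $d_\Omega(w_1,w_3) + d_\Omega(w_3,w_2)$. Strict convexity of $\log$ (equivalently the strict triangle inequality for cross-ratios in a strictly convex region) gives that equality holds iff $a, a_1, a_2$ are forced to be collinear with $w_1$ — i.e.\ the supporting picture degenerates so that there is a boundary segment $[y,y']$ through $a$ with $y, w_1, w_3$ aligned and extending to $a_1$, and a boundary segment $[z,z']$ through $b$ with $w_3, w_2, z$ aligned and extending past $b_1$; tracking which endpoint goes with which line yields exactly the stated alignment $y, w_1, w_3, z$ and $y', w_3, w_2, z'$.

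For the final sentence (existence of $y'' \in [y,y']$ and $z'' \in [z,z']$ with $y'', w_1, w_2, z''$ aligned in this order), the idea is simply continuity/convexity: the line through $w_1$ and $w_2$ lies in the convex hull of the two boundary segments $[y,y']$ and $[z,z']$ together with the two aligned triples, so it exits $\overline{\Omega}$ through $[y,y']$ on the $w_1$-side and through $[z,z']$ on the $w_2$-side; these exit points are the desired $y'', z''$. Concretely, $y''$ is the intersection of line $(w_1 w_2)$ with segment $[y,y']$ and $z''$ the intersection with $[z,z']$; that these intersections are nonempty and occur in the claimed order follows from the planar configuration of the four aligned points and convexity of $\overline{\Omega}$.

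The main obstacle is making the ``equality in the Hilbert triangle inequality forces a flat lune, hence boundary segments in the right position'' step fully rigorous rather than hand-wavy: one must rule out that equality could occur for some other degenerate reason. I expect this is handled cleanly by the explicit cross-ratio computation together with the strict concavity of $t \mapsto \log t$ — the equality case of a Jensen-type inequality — but writing down the cross-ratios for the three relevant lines and identifying precisely when the inequality is tight is the one genuinely computational point. The converse direction (if such boundary segments exist, then the broken path is geodesic) is the easier half and follows by running the same cross-ratio identity in reverse.
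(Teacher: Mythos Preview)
The paper does not actually prove this statement: it is quoted as a ``well-known and easy fact'' with no argument, only a reference to the left panel of Figure~\ref{fig:hairpin} for an illustration. So there is no paper proof to compare your proposal against.

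Your outline has the right overall shape --- reduce to the $2$-plane through $w_1,w_2,w_3$, set up the six boundary points $y,z,y',z',a,b$, and compare cross-ratios --- but the key step is left as a gesture (``cross-ratio cocycle identity'', ``strict concavity of $\log$'', ``Jensen-type''), and some of the specifics are muddled: the sentence ``equality holds iff $a,a_1,a_2$ are forced to be collinear with $w_1$'' does not parse, and strict concavity of $\log$ is not the mechanism.

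What actually does the work is a clean projective identity plus monotonicity. With $y,z$ the boundary points on line $(w_1w_3)$ and $y',z'$ those on line $(w_3w_2)$ as in the statement, set $y'':=(yy')\cap(w_1w_2)$ and $z'':=(zz')\cap(w_1w_2)$. Two perspectivities (one centred at $y'$, sending line $(w_1w_3)$ to line $(w_1w_2)$ and taking $y,w_1,w_3,z$ to $y'',w_1,w_2,\ast$; one centred at $z$, doing the analogous thing on the other side) give the exact identity
\[
[y'',w_1,w_2,z''] \;=\; [y,w_1,w_3,z]\cdot[y',w_3,w_2,z'].
\]
Since $y''\in[y,y']\subset\overline\Omega$ and $z''\in[z,z']\subset\overline\Omega$ lie on line $(w_1w_2)$, the order is $a,y'',w_1,w_2,z'',b$, and monotonicity of the cross-ratio in its outer arguments gives
\[
[a,w_1,w_2,b]\;\le\;[y'',w_1,w_2,z''],
\]
which is exactly the triangle inequality. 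Equality forces $y''=a$ and $z''=b$, hence $y''\in\partial_{\scriptscriptstyle\PP}\Omega$; and an interior point of the segment $[y,y']$ lying in $\partial_{\scriptscriptstyle\PP}\Omega$ while the endpoints lie in $\overline\Omega$ forces (by openness and convexity of $\Omega$) the whole segment $[y,y']\subset\partial_{\scriptscriptstyle\PP}\Omega$, and likewise for $[z,z']$. The last sentence of the Fact is then immediate with the same $y'',z''$. This is presumably the ``easy'' argument the authors have in mind; your plan would become a proof once the identity above replaces the vague appeal to concavity.
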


(For an illustration, we refer to the left panel of Figure~\ref{fig:hairpin}, where $w_1,w_2,w_3$ correspond to $\mathcal{G}(s),\mathcal{G}(t),\mathcal{G}(u)$.)

However, the following fact is always true, and will be used in Section~\ref{subsec:C-hyperb}.
It is proved by Foertsch--Karlsson \cite[Th.\,3]{fk05}, as was pointed out to us by Constantin Vernicos.
Here we provide a short proof for the reader's convenience.

\begin{lemma} \label{lem:geod-loop}
\begin{enumerate}
  \item\label{item:end-geod-ray} Any geodesic ray of $(\Omega,d_{\Omega})$ has a well-defined endpoint in the boundary $\partial_{\scriptscriptstyle\PP}\Omega$.
  \item\label{item:end-biinf-geod} Any biinfinite geodesic of $(\Omega,d_{\Omega})$ has two distinct endpoints in $\partial_{\scriptscriptstyle\PP}\Omega$.
\end{enumerate}
\end{lemma}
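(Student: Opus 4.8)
\textbf{Proof plan for Lemma~\ref{lem:geod-loop}.}

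The plan is to work in an affine chart containing $\overline\Omega$ and to use a compactness/extraction argument together with the basic geometry of the Hilbert metric. For part~\eqref{item:end-geod-ray}, let $\mathcal G : [0,\infty) \to \Omega$ be a unit-speed geodesic ray. Since $(\Omega, d_\Omega)$ is proper and $\mathcal G$ is isometric, $\mathcal G(t)$ leaves every compact subset of $\Omega$, hence all its accumulation points in the compact set $\overline\Omega$ lie in $\partial_{\scriptscriptstyle\PP}\Omega$. The point is to show there is only one accumulation point. Suppose $a = \lim \mathcal G(t_n)$ and $b = \lim \mathcal G(s_n)$ are two distinct accumulation points in $\partial_{\scriptscriptstyle\PP}\Omega$, with $t_n, s_n \to \infty$; after passing to subsequences we may interleave them, so we get times $u_1 < v_1 < u_2 < v_2 < \cdots \to \infty$ with $\mathcal G(u_k) \to a$ and $\mathcal G(v_k) \to b$. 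Fix a basepoint $o = \mathcal G(0)$. The key estimate is a lower bound, linear in $k$, for the quantity $d_\Omega(o,\mathcal G(u_k)) + d_\Omega(\mathcal G(u_k),\mathcal G(v_k)) + d_\Omega(\mathcal G(v_k),\mathcal G(u_{k+1})) + \cdots$ that contradicts the fact that, by the geodesic property, this telescoping sum equals $d_\Omega(o,\mathcal G(u_k)) = u_k$ along the ray; more precisely, I would argue that near the end of the ray the Hilbert distance oscillates: going from a point near $a$ to a point near $b$ and back to a point near $a$ costs a definite amount more than a monotone trip would, violating $d_\Omega(\mathcal G(u_k),\mathcal G(u_{k+1})) = u_{k+1}-u_k$. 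The cleanest way to package this is to use the convexity of Hilbert balls and the fact that, for $y$ close to a boundary point $a$ and $z$ close to a distinct boundary point $b$, the geodesic segment $[y,z]$ passes through a fixed compact region of $\Omega$ (depending only on $a \ne b$), so $d_\Omega(\mathcal G(u_k),\mathcal G(v_k))$ forces $\mathcal G$ to re-enter a compact set infinitely often, contradicting that $\mathcal G(t) \to \partial_{\scriptscriptstyle\PP}\Omega$.

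For part~\eqref{item:end-biinf-geod}, apply part~\eqref{item:end-geod-ray} to the two geodesic rays obtained by restricting a biinfinite geodesic $\mathcal G : \RR \to \Omega$ to $[0,\infty)$ and to $(-\infty,0]$; this gives well-defined endpoints $\mathcal G(+\infty), \mathcal G(-\infty) \in \partial_{\scriptscriptstyle\PP}\Omega$. It remains to show these two endpoints are distinct. Suppose $\mathcal G(+\infty) = \mathcal G(-\infty) = a$. Then for $t$ large positive and $s$ large negative, both $\mathcal G(t)$ and $\mathcal G(s)$ are close to $a$, so by the same compactness fact used above the segment $[\mathcal G(s),\mathcal G(t)]$ — which lies on the geodesic $\mathcal G$, or at least is bridged by $\mathcal G|_{[s,t]}$ which is a geodesic — would stay in a small neighborhood of $a$; but $\mathcal G|_{[s,t]}$ passes through the fixed point $\mathcal G(0)$, which is at positive distance from $\partial_{\scriptscriptstyle\PP}\Omega$, a contradiction. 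To make ``$[\mathcal G(s),\mathcal G(t)]$ stays near $a$'' rigorous I would use the following elementary fact about Hilbert geometry: if $y_n, z_n \to a \in \partial_{\scriptscriptstyle\PP}\Omega$, then every geodesic segment joining $y_n$ to $z_n$ converges (in the Hausdorff sense, after projecting to $\overline\Omega$) to $\{a\}$ — equivalently, $\sup_{w \in [y_n,z_n]} \mathrm{dist}_{\overline\Omega}(w, a) \to 0$. This follows from the explicit cross-ratio formula for $d_\Omega$ and the fact that a segment through a point $w$ far inside $\Omega$ but with both endpoints near $a$ would have one of the two ``exit points'' of the line through $w$ and $y_n$ bounded away from $a$, forcing $d_\Omega(w, y_n)$ or $d_\Omega(w,z_n)$ to be large — but since $\mathcal G|_{[s,t]}$ is a geodesic of length $t-s$, which can indeed be large, one instead uses that $d_\Omega(\mathcal G(0), \mathcal G(t)) = |t| \to \infty$ forces $\mathcal G(t) \to \partial_{\scriptscriptstyle\PP}\Omega$ and that approaching the \emph{same} endpoint from both sides is incompatible with passing through the interior point $\mathcal G(0)$.

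The main obstacle is making precise the statement that ``a geodesic segment with both endpoints near a single boundary point $a$ stays near $a$,'' and more generally controlling how geodesic segments for a non-strictly-convex Hilbert metric can wander; Fact~\ref{fact:triang-ineq-Hilbert} describes exactly the flexibility of such geodesics (they can slide along boundary segments), so the estimates must be robust enough to accommodate that. I expect the slickest route is the one via Foertsch--Karlsson~\cite[Th.\,3]{fk05}, whose argument establishes precisely that Hilbert geodesic rays converge; for a self-contained proof I would reduce everything to the one-dimensional cross-ratio estimate $d_\Omega(y,z) \geq \tfrac12\log\frac{|a-z|\,|b-y|}{|a-y|\,|b-z|}$ in an affine chart, chase how the four points $a,y,z,b$ degenerate, and extract the needed uniform lower bounds; all remaining steps (proper metric, isometric action, compactness of $\overline\Omega$) are standard and already recalled in this subsection.
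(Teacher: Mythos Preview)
Your approach differs substantially from the paper's, and Part~\eqref{item:end-geod-ray} contains a genuine gap. The claim that ``for $y$ close to a boundary point $a$ and $z$ close to a distinct boundary point $b$, the geodesic segment $[y,z]$ passes through a fixed compact region of~$\Omega$'' is \emph{false} when $\Omega$ is not strictly convex. Take $\Omega$ a triangle and $a,b$ two of its vertices: straight segments from points near~$a$ to points near~$b$ hug the edge $[a,b]\subset\partial_{\scriptscriptstyle\PP}\Omega$ and never enter any fixed compact subset of~$\Omega$. More generally, whenever $a$ and~$b$ lie in a common face of $\partial_{\scriptscriptstyle\PP}\Omega$, this compactness claim fails, and so does the conclusion that ``$\mathcal G$ re-enters a compact set infinitely often.'' Your oscillation/telescoping heuristic runs into the same obstruction: Fact~\ref{fact:triang-ineq-Hilbert} shows that the triangle inequality can be an \emph{equality} for genuinely non-collinear triples, so ``going from near~$a$ to near~$b$ and back'' need not cost anything extra. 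The problem is precisely to rule out that the two accumulation points lie in a common face, and your argument does not address this.

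The paper's proof handles this by tracking the face structure directly. For $s<t$ one lets $y_{s,t},z_{s,t}\in\partial_{\scriptscriptstyle\PP}\Omega$ be the two boundary points on the projective line through $\mathcal G(s),\mathcal G(t)$; using Fact~\ref{fact:triang-ineq-Hilbert} one shows that the faces $P_{s,t}$ at $y_{s,t}$ are nested (namely $P_{u,v}\subset P_{s,t}$ for $s<u<v<t$), hence all $y_{s,t}$ lie in a single face~$P$, and similarly all $z_{s,t}$ in a single face~$Q$. Forward accumulation points are then shown to lie in $Q\smallsetminus P$ (and backward ones in $P\smallsetminus Q$, giving Part~\eqref{item:end-biinf-geod} immediately). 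If two distinct forward accumulation points $a\neq b$ existed, an elementary Euclidean argument (the angle of the triangle $y_{s_m,t_m},\,\mathcal G(t_m),\,y_{t_m,s_{m+1}}$ at $\mathcal G(t_m)$ tends to~$\pi$, while the opposite edge lies in~$P$) forces $b=\lim\mathcal G(t_m)\in P$, contradicting $b\in Q\smallsetminus P$. This face bookkeeping is exactly what your ``stays near~$a$'' and ``passes through a compact'' statements are trying to capture, but it cannot be replaced by purely metric estimates without confronting the non-uniqueness of Hilbert geodesics head-on.
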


\begin{proof}
We work in an affine Euclidean chart where $\Omega$ is bounded.
Let $I$ be $\RR_{\geq 0}$ or~$\RR$ and let $\mathcal{G}=(\mathcal{G}(t))_{t\in I}$ be a geodesic ray or biinfinite geodesic of $(\Omega,d_{\Omega})$.
For any $s<t$ in~$I$, let $y_{s,t}\in\partial_{\scriptscriptstyle\PP}\Omega$ and $z_{s,t}\in\partial_{\scriptscriptstyle\PP}\Omega$ be such that $y_{s,t},\mathcal{G}(s),\mathcal{G}(t),z_{s,t}$ are aligned in this order.

Recall that for any $y\in\partial_{\scriptscriptstyle\PP}\Omega$, the \emph{face} of $\partial_{\scriptscriptstyle\PP}\Omega$ at~$y$ is by definition the intersection of $\partial_{\scriptscriptstyle\PP} \Omega$ with all supporting hyperplanes to~$\Omega$ at~$y$.

We claim that all points $y_{s,t}$ for $s<t$ in~$I$ are contained in a common face $P$ of $\partial_{\scriptscriptstyle\PP}\Omega$ (of arbitrary dimension).
Indeed, for any $s<t$, let $P_{s,t}$ be the face of $\partial_{\scriptscriptstyle\PP}\Omega$ at~$y_{s,t}$; it is a nonempty compact convex subset of $\partial_{\scriptscriptstyle\PP}\Omega$.
For any $s < u < t$, observe that by  Fact~\ref{fact:triang-ineq-Hilbert} we have $y_{s,t}\in [y_{s,u},y_{u,t}]$ (see Figure~\ref{fig:hairpin}, left), hence any supporting hyperplane to $\Omega$ at $y_{s,t}$ is also a supporting hyperplane to $\Omega$ at $y_{s,u}$ and $y_{u,t}$, \ie $P_{s,u}$ and $P_{u,t}$ are contained in $P_{s,t}$.
For any four points $s < u < v <t$, we apply the previous statement to the triples $(u,v,t)$ and $(s,u,t)$ to obtain $P_{u,v} \subset P_{u,t} \subset P_{s,t}$.
Thus the sequence $(P_{0,m})_{m\geq 1}$ (if $I=\RR_{\geq 0}$) or $(P_{-m,m})_{m\geq 1}$ (if $I=\RR$) of nonempty compact convex subsets of $\partial_{\scriptscriptstyle\PP}\Omega$ is nondecreasing for inclusion; it must have a limit~$P$.

Similarly, all points $z_{s,t}$ for $s<t$ are contained in a common face $Q$ of~$\partial_{\scriptscriptstyle\PP}\Omega$.

Any forward accumulation point $a$ of $\mathcal{G}$ in the boundary of $\Omega$ is an accumulation point of the $z_{0,t}$ as $t\to +\infty$, and therefore belongs to~$Q$.
In the Euclidean metric, $z_{0,t}$ for $t>0$ is further away from the span of $P$ than $\mathcal{G}(0)$ is: therefore $a$ belongs in fact to $Q\smallsetminus P$.
(In particular $P\neq Q$.) 
Similarly, in the case $I=\RR$, all backward accumulation points of $\mathcal{G}$ are in $P\smallsetminus Q$.

Suppose by contradiction that there are two sequences $(s_m)_{m\in\NN}$ and $(t_m)_{m\in\NN}$ of positive numbers tending to $+\infty$ such that $\mathcal{G}(s_m)$ and $\mathcal{G}(t_m)$ converge respectively to some points $a\neq b$ in $Q\smallsetminus P$.
Up to taking subsequences, we may assume that $s_m<t_m<s_{m+1}$ for all~$m$.

Consider the triangle $T_m$ spanned by $y_{s_m,t_m}$, $\mathcal{G}(t_m)$, and $y_{t_m,s_{m+1}}$ (see Figure~\ref{fig:hairpin}, right).
Its angle at $\mathcal{G}(t_m)$ goes to $\pi$ in the chosen chart as $m\to +\infty$, because $\mathcal{G}(s_m), \mathcal{G}(s_{m+1})\to a$ and $\mathcal{G}(t_m) \to b$.
The opposite edge $[y_{s_m,t_m}, y_{t_m,s_{m+1}}]$ of $T_m$ converges to a segment of~$P$ as $m\to +\infty$: therefore $\lim_m \mathcal{G}(t_m)\in P$.
But $\mathcal{G}(t_m) \to b \notin P$: contradiction. 
Thus $\mathcal{G}$ has a unique forward endpoint $a$ in the boundary of~$\Omega$, belonging to $Q\smallsetminus P$.
Similarly, in the case $I=\RR$, it has a unique backward endpoint $a'\neq a$ in $P\smallsetminus Q$.
\end{proof}

\begin{figure}[h]
\centering
\labellist
\small\hair 2pt
\pinlabel $y_{u,t}$ [u] at 15 60
\pinlabel $y_{s,t}$ [u] at 7 42
\pinlabel $y_{s,u}$ [u] at -2 26
\pinlabel $z_{s,u}$ [u] at 97 65
\pinlabel $z_{s,t}$ [u] at 109 42
\pinlabel $z_{u,t}$ [u] at 115 29
\pinlabel $\scriptstyle\mathcal{G}(s)$ [u] at 45 34
\pinlabel $\scriptstyle\mathcal{G}(u)$ [u] at 54 52
\pinlabel $\scriptstyle\mathcal{G}(t)$ [u] at 69 34
\pinlabel $(\PP^2\RR)$ [u] at 55 85
\endlabellist
\includegraphics[scale=1.1]{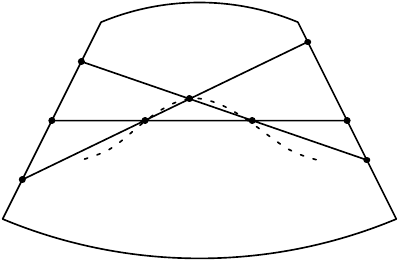}
\hspace{0.5cm}
\labellist
\pinlabel $y_{s_m,t_m}$ [u] at 152 21
\pinlabel $y_{t_m,s_{m+1}}$ [u] at 24 42
\pinlabel $T_m$ [u] at 70 58
\pinlabel $\scriptstyle\mathcal{G}(0)$ [u] at 73 17
\pinlabel $a$ [u] at 115 88
\pinlabel $b$ [u] at 93 90
\pinlabel $P$ [u] at 105 16
\pinlabel $Q$ [u] at 45 80
\pinlabel $\Omega$ [u] at -5 65
\pinlabel $\scriptstyle\mathcal{G}(s_m)$ [u] at 121 67
\pinlabel $\scriptstyle\mathcal{G}(s_{m+1})$ [u] at 126 78
\pinlabel $\scriptstyle\mathcal{G}(t_m)$ [u] at 84 77
\pinlabel $(\PP^3\RR)$ [u] at 0 100
\endlabellist
\includegraphics[scale=1]{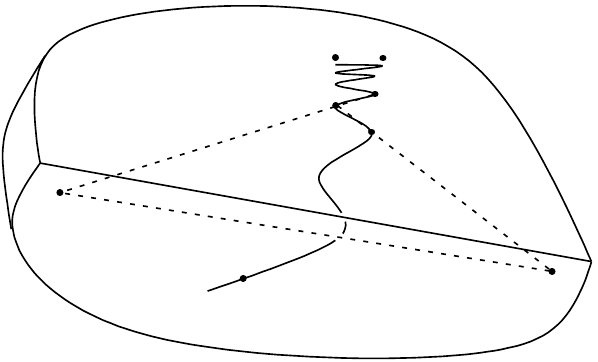}
\caption{Illustration for the proof of Lemma~\ref{lem:geod-loop}. Left: definition of the points $y_{s,t}$ and $z_{s,t}$. Right: absurd situation where $\mathcal{G}$ would have two forward accumulation points $a\neq b$.}
\label{fig:hairpin}
\end{figure}

In Section~\ref{subsec:proofCM1} we shall also use the following elementary observation.

\begin{lemma} \label{lem:Conv-C1}
Let $\mathcal{O},\mathcal{O}_1,\dots,\mathcal{O}_k$ be properly convex open subsets of $\PP(V)$ such that $\mathcal{O}$ is the convex hull of the~$\mathcal{O}_i$ in some affine chart of $\PP(V)$.
If the boundary $\partial_{\scriptscriptstyle\PP}\mathcal{O}_i$ is $C^1$ for every~$i$, then so is the boundary $\partial_{\scriptscriptstyle\PP}\mathcal{O}$.
\end{lemma}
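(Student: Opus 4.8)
The plan is to argue directly with supporting hyperplanes, using only the definition of $C^1$ boundary (every boundary point has a \emph{unique} supporting hyperplane) together with elementary convex geometry in an affine chart; this is the same argument, in different language, as the dual statement that an intersection of strictly convex sets is strictly convex, since passing to the dual convex set turns convex hulls into intersections and the $C^1$ condition into strict convexity.

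First I would fix an affine chart of $\PP(V)$ in which $\overline{\mathcal O}$ is compact (possible since $\mathcal O$ is properly convex; one may take the given chart, shrunk if necessary), so that each $\overline{\mathcal O_i}\subseteq\overline{\mathcal O}$ is compact as well. Then $\mathrm{conv}\big(\bigcup_i\overline{\mathcal O_i}\big)$ is a compact, hence closed, convex set containing $\mathcal O=\mathrm{conv}\big(\bigcup_i\mathcal O_i\big)$, so $\overline{\mathcal O}\subseteq\mathrm{conv}\big(\bigcup_i\overline{\mathcal O_i}\big)$; in particular every $\bar x\in\overline{\mathcal O}$ can be written $\bar x=\sum_{i=1}^k\lambda_i\bar x_i$ with $\bar x_i\in\overline{\mathcal O_i}$, $\lambda_i\geq0$ and $\sum_i\lambda_i=1$.

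Now the key step. Let $\bar x\in\partial_{\scriptscriptstyle\PP}\mathcal O$, fix such a decomposition, and set $S=\{i:\lambda_i>0\}\neq\emptyset$. I claim $\bar x_i\in\partial_{\scriptscriptstyle\PP}\mathcal O_i$ for all $i\in S$: otherwise $\bar x_i\in\mathcal O_i$ for some $i\in S$; if $\lambda_i=1$ then $\bar x=\bar x_i\in\mathcal O_i\subseteq\mathcal O$, impossible, and if $\lambda_i<1$ then, with $y:=\frac1{1-\lambda_i}\sum_{j\neq i}\lambda_j\bar x_j\in\overline{\mathcal O}$, the set $\lambda_i\mathcal O_i+(1-\lambda_i)\{y\}$ is open (a scaled translate of the open set $\mathcal O_i$), contained in $\overline{\mathcal O}$ by convexity, hence in $\mathrm{Int}(\overline{\mathcal O})=\mathcal O$, and contains $\bar x$ — again impossible. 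Next let $H=\{f=c\}$ be any supporting hyperplane to $\mathcal O$ at $\bar x$, with $\mathcal O\subseteq\{f<c\}$. Since $\bar x_i\in\overline{\mathcal O_i}\subseteq\overline{\mathcal O}\subseteq\{f\leq c\}$ and $\sum_{i\in S}\lambda_i f(\bar x_i)=f(\bar x)=c=\sum_{i\in S}\lambda_i c$ with all $\lambda_i>0$, we get $f(\bar x_i)=c$, i.e.\ $\bar x_i\in H$, for every $i\in S$. Thus for each $i\in S$ the hyperplane $H$ passes through $\bar x_i\in\partial_{\scriptscriptstyle\PP}\mathcal O_i$ and avoids $\mathcal O_i\subseteq\mathcal O$, so it is a supporting hyperplane to $\mathcal O_i$ at $\bar x_i$; as $\partial_{\scriptscriptstyle\PP}\mathcal O_i$ is $C^1$, it must be \emph{the} supporting hyperplane $H_i$ to $\mathcal O_i$ at $\bar x_i$. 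Fixing one $i_0\in S$, this forces $H=H_{i_0}$ regardless of which supporting hyperplane $H$ we started with, so $\mathcal O$ has a unique supporting hyperplane at $\bar x$; since $\bar x\in\partial_{\scriptscriptstyle\PP}\mathcal O$ was arbitrary, $\partial_{\scriptscriptstyle\PP}\mathcal O$ is $C^1$.

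The proof is entirely elementary, as the lemma suggests; the only points that need a little care are the reduction to an affine chart in which $\overline{\mathcal O}$ (hence each $\overline{\mathcal O_i}$) is compact, which is what makes the convex-combination decomposition $\bar x=\sum_i\lambda_i\bar x_i$ available, and the verification that the components $\bar x_i$ of a boundary point of $\mathcal O$ genuinely lie on the boundaries $\partial_{\scriptscriptstyle\PP}\mathcal O_i$ — the step where openness of the $\mathcal O_i$ (and hence the very meaning of ``$C^1$ boundary'') is used.
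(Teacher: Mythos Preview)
Your proof is correct and is, as you yourself note, the direct supporting-hyperplane unpacking of the paper's duality argument: the paper simply observes that $\mathcal{O}^*=\bigcap_i\mathcal{O}_i^*$, that each $\mathcal{O}_i^*$ is strictly convex since $\partial_{\scriptscriptstyle\PP}\mathcal{O}_i$ is $C^1$, and that an intersection of strictly convex sets is strictly convex. Your version makes explicit the mechanism behind this last step, namely that any supporting hyperplane to $\mathcal{O}$ at $\bar x=\sum_i\lambda_i\bar x_i$ is forced to be the unique tangent hyperplane to $\mathcal{O}_{i_0}$ at $\bar x_{i_0}$; the two arguments are the same in content.
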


\begin{proof}
For any~$i$, the boundary $\partial_{\scriptscriptstyle\PP}\mathcal{O}_i$ is $C^1$ if and only if the dual convex set $\mathcal{O}_i^*$ is strictly convex.
In this case, the intersection $\bigcap_{i=1}^k \mathcal{O}_i^*$ is also strictly convex.
But this intersection is the dual $\mathcal{O}^*$ of~$\mathcal{O}$.
Therefore, $\partial_{\scriptscriptstyle\PP}\mathcal{O}$ is~$C^1$.
\end{proof}

\subsection{Irreducible groups preserving properly convex domains}

We shall use the following general properties due to Benoist \cite[Prop.\,3.1]{ben00}.
We denote by $\Lambda_{\Gamma}^*$ the proximal limit set of $\Gamma$ in $\PP(V^*)$.

\begin{fact}[\cite{ben00}] \label{fact:Yves}
An irreducible discrete subgroup $\Gamma$ of $\PGL(V)$ preserves a nonempty properly convex subset of $\PP(V)$ if and only if the following two conditions are satisfied:
\begin{enumerate}[(i)]
  \item\label{item:i-lifts} $\Gamma$ contains an element of $\PGL(V)$ which is proximal both in $\PP(V)$ and in $\PP(V^*)$,
  \item\label{item:ii-lifts} $\Lambda_{\Gamma}$ and~$\Lambda_{\Gamma}^*$ lift respectively to cones $\widetilde{\Lambda}_{\Gamma}$ of $V\smallsetminus\{0\}$ and $\widetilde{\Lambda}_{\Gamma}^*$ of $V^*\smallsetminus\{0\}$ such that $\ell(x)\leq 0$ for all $x\in\widetilde{\Lambda}_{\Gamma}$ and $\ell\in\widetilde{\Lambda}_{\Gamma}^*$.
\end{enumerate}
In this case, for any $\Gamma$-invariant properly convex open subset $\Omega\neq\emptyset$ of $\PP(V)$,
\begin{enumerate}
  \item the proximal limit set $\Lambda_{\Gamma}$ (\resp $\Lambda_{\Gamma}^*$) is contained in the boundary of $\Omega$ (\resp $\Omega^*$) in $\PP(V)$ (\resp $\PP(V^*)$);
  \item\label{item:2-lifts} more specifically, $\Omega$ and $\Lambda_{\Gamma}$ lift to cones $\widetilde{\Omega}$ and $\widetilde{\Lambda}_\Gamma$ of $V\smallsetminus\{0\}$ with $\widetilde{\Omega}$ properly convex containing $\widetilde{\Lambda}_{\Gamma}$ in its boundary, and $\Omega^*$ and $\Lambda_{\Gamma}^*$ lift to cones $\widetilde{\Omega}^*$ and $\widetilde{\Lambda}_\Gamma^*$ of $V^*\smallsetminus\{0\}$ with $\widetilde{\Omega}^*$ properly convex containing $\widetilde{\Lambda}_{\Gamma}^*$ in its boundary, such that $\ell(x)\leq 0$ for all $x\in\widetilde{\Lambda}_{\Gamma}$ and $\ell\in\widetilde{\Lambda}_{\Gamma}^*$;
  \item there is a unique smallest nonempty $\Gamma$-invariant properly convex open subset $\Omega_{\min}$ of $\PP(V)$ contained in~$\Omega$, namely the projectivization of the interior of the $\RR^+$-span of $\widetilde{\Lambda}_{\Gamma}$ for $\widetilde{\Lambda}_{\Gamma}$ as in (\ref{item:2-lifts}); it is the interior of the convex hull of $\Lambda_{\Gamma}$ in~$\Omega$;
  \item there is a unique largest $\Gamma$-invariant properly convex open subset $\Omega_{\max}$ of $\PP(V)$ containing~$\Omega$, namely the dual convex set to the projectivization of the interior of the $\RR^+$-span of $\widetilde{\Lambda}_{\Gamma}^*$ for $\widetilde{\Lambda}_{\Gamma}^*$ as in (\ref{item:2-lifts}).
\end{enumerate}
\end{fact}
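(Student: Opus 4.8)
This is \cite[Prop.\,3.1]{ben00}, so the plan is to recall the shape of Benoist's argument and isolate the one nontrivial point.

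For the implication ``\eqref{item:i-lifts} and \eqref{item:ii-lifts} imply that $\Gamma$ preserves a nonempty properly convex set'', I would build the invariant set directly out of the limit set. Condition \eqref{item:i-lifts} gives in particular a proximal element of $\Gamma$ in $\PP(V)$, so $\Lambda_\Gamma\neq\emptyset$; since $\Lambda_\Gamma$ and $\Lambda_\Gamma^*$ are $\Gamma$-invariant and $\Gamma$ is irreducible, $\Lambda_\Gamma$ spans $V$ and $\Lambda_\Gamma^*$ spans $V^*$. After lifting $\Gamma$ inside $\GL(V)$ so that the cones $\widetilde\Lambda_\Gamma$ and $\widetilde\Lambda_\Gamma^*$ of \eqref{item:ii-lifts} are genuinely $\Gamma$-invariant, I would take $\widetilde\Omega_{\min}$ to be the interior of the convex hull of $\widetilde\Lambda_\Gamma$: it is a nonempty (because $\widetilde\Lambda_\Gamma$ spans $V$), open, $\Gamma$-invariant convex cone, and the key point is that it contains no line, which holds because \eqref{item:ii-lifts} forces $\widetilde\Omega_{\min}\subset\{x\in V : \ell(x)\leq 0\ \text{for all}\ \ell\in\widetilde\Lambda_\Gamma^*\}$, and this half-space intersection contains no line since $\widetilde\Lambda_\Gamma^*$ spans $V^*$. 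Its projectivization $\Omega_{\min}$ is then the required nonempty $\Gamma$-invariant properly convex open set (and will turn out to be the minimal one).

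For the converse, I would start from a $\Gamma$-invariant nonempty properly convex set; by irreducibility it spans $\PP(V)$, so passing to the interior of its closure I may assume $\Gamma$ preserves a properly convex \emph{open} set $\Omega$, whose dual $\Omega^*\subset\PP(V^*)$ is then also properly convex, open and $\Gamma$-invariant. The soft half is \eqref{item:ii-lifts}: for a proximal $\gamma\in\Gamma$ with attracting fixed point $\xi_\gamma^+$ and any $y\in\Omega\smallsetminus H_\gamma^-$ one has $\gamma^n\cdot y\to\xi_\gamma^+$ with $\gamma^n\cdot y\in\Omega$, so $\xi_\gamma^+\in\overline\Omega$; it cannot lie in $\Omega$, since a $d_\Omega$-isometry fixing $\xi_\gamma^+\in\Omega$ would move nearby points at constant distance from it, contradicting $\gamma^n\cdot y\to\xi_\gamma^+$. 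Hence $\Lambda_\Gamma\subset\partial_{\scriptscriptstyle\PP}\Omega$ and dually $\Lambda_\Gamma^*\subset\partial_{\scriptscriptstyle\PP}\Omega^*$, and once the lifts are chosen so that $\widetilde\Lambda_\Gamma\subset\overline{\widetilde\Omega}$ and $\widetilde\Lambda_\Gamma^*\subset\overline{\widetilde{\Omega^*}}$ (the opposite sign being excluded by proper convexity of $\widetilde\Omega$), the inequality $\ell(x)\leq 0$ falls out of the definition of the dual cone. The substantial half is \eqref{item:i-lifts}: if $\Gamma$ were finite it would fix a line in $V$ by averaging an interior point of $\widetilde\Omega$, contradicting irreducibility, so $\Gamma$ is infinite, hence unbounded in $\PGL(V)$; taking $\gamma_n\to\infty$ in $\Gamma$ and a limit of $\gamma_n/\|\gamma_n\|$ in a Cartan decomposition, the positivity $\ell_0(\gamma_n x_0)>0$ for a fixed $x_0\in\widetilde\Omega$ and $\ell_0\in\widetilde{\Omega^*}$ is used to rule out limits of rank $\geq 2$, and a rank-one limit, combined with irreducibility (a small perturbation inside $\Gamma$, together with the element produced the same way from $\Omega^*$), yields an element of $\Gamma$ proximal both in $\PP(V)$ and in $\PP(V^*)$.

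It then remains to verify the four ``in this case'' assertions for an arbitrary $\Gamma$-invariant properly convex open $\Omega$: the containments $\Lambda_\Gamma\subset\partial_{\scriptscriptstyle\PP}\Omega$, $\Lambda_\Gamma^*\subset\partial_{\scriptscriptstyle\PP}\Omega^*$ and the signed lifting are exactly what was argued above; for $\Omega_{\min}$ I would note that $\widetilde\Omega_{\min}\subset\overline{\widetilde\Omega}$ gives $\Omega_{\min}\subset\mathrm{Int}(\overline\Omega)=\Omega$, and that for any $\Gamma$-invariant properly convex open $\Omega'\subset\Omega$ the above gives $\widetilde\Lambda_\Gamma\subset\overline{\widetilde{\Omega'}}$, whence the convex hull of $\widetilde\Lambda_\Gamma$ lies in $\overline{\widetilde{\Omega'}}$ and $\widetilde\Omega_{\min}\subset\widetilde{\Omega'}$, i.e.\ $\Omega_{\min}\subset\Omega'$ — so $\Omega_{\min}$ is the interior of the convex hull of $\Lambda_\Gamma$ in $\Omega$; and $\Omega_{\max}$ I would obtain by running this same construction for $\Omega^*$ inside $\PP(V^*)$ to get the smallest $\Gamma$-invariant properly convex open $\Omega^*_{\min}$ and setting $\Omega_{\max}:=(\Omega^*_{\min})^*$, so that $\Omega^*_{\min}\subset\Omega^*$ dualizes to $\Omega\subset\Omega_{\max}$, and any $\Gamma$-invariant properly convex open $\Omega'\supset\Omega$ has $(\Omega')^*\subset\Omega^*$, hence $(\Omega')^*\supset\Omega^*_{\min}$ by minimality, hence $\Omega'\subset\Omega_{\max}$. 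I expect the only genuine obstacle to be the production of the biproximal element in \eqref{item:i-lifts} from the bare existence of an invariant properly convex set: this is precisely where the positivity carried by the cone is indispensable, and it is the technical heart of \cite[\S\,3]{ben00}; everything else is bookkeeping with duality and the Hilbert metric.
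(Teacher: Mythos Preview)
The paper does not prove this statement: it is quoted as a Fact from \cite[Prop.\,3.1]{ben00}, with no argument given. Your sketch is therefore not competing with anything in the paper; it is a reasonable outline of Benoist's proof, and you correctly identify the production of a biproximal element from the existence of an invariant properly convex set as the one substantive step.

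One small inaccuracy in your outline: the claim that the positivity of $\ell_0(\gamma_n x_0)$ ``rules out limits of rank $\geq 2$'' is not how the argument goes. A limit $\pi$ of $\gamma_n/\|\gamma_n\|$ can perfectly well have rank larger than one; what the invariant cone gives you is that the image of $\pi$ lies in $\overline{\widetilde\Omega}$ and its kernel contains no point of $\widetilde\Omega$, so $\pi$ sends the open cone into its boundary. One then combines such limiting endomorphisms (and uses irreducibility to move attracting and repelling subspaces into general position) to manufacture an element with a strictly dominant eigenvalue. The cone does not force rank one directly. Since you explicitly flag this step as the technical heart and defer to \cite{ben00}, this is a minor point of presentation rather than a gap.
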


\begin{remark} \label{rem:inv-dom}
When $V=\RR^{p,q}$ and $\Gamma$ is contained in $\PO(p,q)$ for some $p,q\in\NN^*$, the map $x\mapsto\langle x,\cdot\rangle_{p,q}$ from $V$ to~$V^*$ induces a homeomorphism $\Lambda_{\Gamma}\simeq\Lambda_{\Gamma}^*$ and $\Omega_{\max}$ is a connected component of the complement in~$V$ of the union of the projective hyperplanes $z^{\perp}$ for $z\in\Lambda_{\Gamma}$.
In the Lorentzian case (\ie $q=2$), the terminology \emph{invisible domain of~$\Lambda_{\Gamma}$} is often used for $\Omega_{\max}$.
\end{remark}

\subsection{Strictly convex open domains}

We now make two elementary observations about strictly convex domains.

\begin{lemma} \label{lem:Lambda-orb}
Let $\Gamma$ be a discrete subgroup of $\PGL(V)$ preserving a nonempty strictly convex open subset $\Omega$ of $\PP(V)$.
The set $\Lambda^{\mathsf{orb}}_{\Omega}(\Gamma)$ of accumulation points in $\partial_{\scriptscriptstyle\PP}\Omega$ of a $\Gamma$-orbit of~$\Omega$ does not depend on the $\Gamma$-orbit.
\end{lemma}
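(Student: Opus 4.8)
The plan is to show that for two points $y_0, y_1 \in \Omega$, the accumulation sets of their $\Gamma$-orbits in $\partial_{\scriptscriptstyle\PP}\Omega$ coincide; by symmetry it suffices to prove one inclusion. So let $\xi \in \partial_{\scriptscriptstyle\PP}\Omega$ be an accumulation point of $\Gamma \cdot y_0$, say $\gamma_n \cdot y_0 \to \xi$ for some sequence $\gamma_n \in \Gamma$ going to infinity, and I want to show $\xi$ is also an accumulation point of $\Gamma \cdot y_1$. The key tool is the Hilbert metric $d_\Omega$ from Section~\ref{subsec:prop-conv-proj}: since $\Gamma \subset \mathrm{Aut}(\Omega)$ acts by isometries, $d_\Omega(\gamma_n \cdot y_0, \gamma_n \cdot y_1) = d_\Omega(y_0, y_1)$ is constant in $n$. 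Thus $\gamma_n \cdot y_1$ stays at bounded Hilbert distance from $\gamma_n \cdot y_0$, and I claim this forces $\gamma_n \cdot y_1 \to \xi$ as well.

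First I would make precise the claim that a bounded-$d_\Omega$-distance companion of a sequence converging to a boundary point must converge to the same boundary point, using strict convexity. Concretely: pass to a subsequence so that $\gamma_n \cdot y_1$ converges to some point $\xi' \in \overline\Omega$. Since $d_\Omega(\gamma_n \cdot y_0, \gamma_n \cdot y_1)$ is bounded and both sequences would have to stay in the interior to keep finite distance, while $\gamma_n \cdot y_0 \to \xi \in \partial_{\scriptscriptstyle\PP}\Omega$, the point $\xi'$ must also lie on the boundary (if $\xi'$ were interior, then $d_\Omega(\gamma_n\cdot y_0, \gamma_n \cdot y_1) \to \infty$ because $\gamma_n \cdot y_0$ escapes every compact subset of $\Omega$ — this is the standard properness of $(\Omega, d_\Omega)$). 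Then, working in an affine chart where $\Omega$ is bounded, I would invoke the cross-ratio formula for $d_\Omega$: if $\xi \neq \xi'$ were two distinct boundary points, consider the projective segment $[\gamma_n \cdot y_0, \gamma_n \cdot y_1]$, which converges to the segment $[\xi, \xi']$; this segment must be nontrivial and both its endpoints lie in $\partial_{\scriptscriptstyle\PP}\Omega$, contradicting strict convexity of $\Omega$ unless $\xi = \xi'$. Hence $\xi' = \xi$, so $\xi$ is an accumulation point of $\Gamma \cdot y_1$.

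The main obstacle — though it is more a matter of care than of depth — is the estimate showing that bounded Hilbert distance is incompatible with the two sequences limiting to distinct boundary points. The cleanest way is probably the cross-ratio computation: write $a_n, \gamma_n\cdot y_0, \gamma_n \cdot y_1, b_n$ aligned in this order on a projective line (relabeling $y_0, y_1$ if necessary so the order is as stated), with $a_n, b_n \in \partial_{\scriptscriptstyle\PP}\Omega$; then $d_\Omega(\gamma_n \cdot y_0, \gamma_n \cdot y_1) = \tfrac12 \log[a_n, \gamma_n \cdot y_0, \gamma_n \cdot y_1, b_n]$ being bounded forces, after passing to a subsequence so that $a_n \to a$, $b_n \to b$, $\gamma_n \cdot y_0 \to \xi$, $\gamma_n \cdot y_1 \to \xi'$, all in $\overline\Omega$, that the limiting cross-ratio $[a, \xi, \xi', b]$ is finite and positive. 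If $\xi \neq \xi'$ then $a, \xi, \xi', b$ are four points of the convex set $\overline\Omega$ on a line with $a, b \in \partial_{\scriptscriptstyle\PP}\Omega$, and the open segment $(\xi, \xi')$ — which is nonempty and, being between $a$ and $b$ on this chord, contained in $\overline\Omega$ — actually lies in $\partial_{\scriptscriptstyle\PP}\Omega$ because one of $\xi, \xi'$ is a boundary point and $\overline\Omega$ is convex (a chord of a convex body with one endpoint on the boundary either stays on the boundary or enters the interior; here the finite cross-ratio rules out degeneracy but strict convexity forbids a nontrivial boundary segment). This contradiction gives $\xi = \xi'$ and completes the proof.

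I should also note the trivial case: if $\Gamma$ is finite, or more generally if some $\Gamma$-orbit has no accumulation point on $\partial_{\scriptscriptstyle\PP}\Omega$, then the argument above shows every $\Gamma$-orbit has empty accumulation set, so $\Lambda^{\mathsf{orb}}_\Omega(\Gamma) = \emptyset$ independently of the orbit, consistent with the remark following Definition~\ref{def:cm}. In all cases the conclusion is that $\Lambda^{\mathsf{orb}}_\Omega(\Gamma)$ depends only on $\Gamma$ and $\Omega$, not on the chosen orbit.
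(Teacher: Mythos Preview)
Your argument is correct and follows essentially the same route as the paper: use the Hilbert metric on~$\Omega$, pass to a subsequence, show both limit points lie in $\partial_{\scriptscriptstyle\PP}\Omega$, and invoke strict convexity. One wording issue: in your first paragraph you write that ``both its endpoints lie in $\partial_{\scriptscriptstyle\PP}\Omega$, contradicting strict convexity'' --- but two distinct boundary points alone do not contradict strict convexity; what is needed (and what your cross-ratio analysis in the second paragraph is meant to supply) is that the \emph{entire} segment $[\xi,\xi']$ lies in $\partial_{\scriptscriptstyle\PP}\Omega$. The paper reaches this conclusion more directly: since $[y_0,y_1]$ is compact in~$\Omega$ and $\Gamma$ acts properly discontinuously there, the whole segment $\gamma_m\cdot[y_0,y_1]$ escapes every compact subset of~$\Omega$, so its limit $[\xi,\xi']$ lies entirely in $\partial_{\scriptscriptstyle\PP}\Omega$; strict convexity then gives $\xi=\xi'$ immediately, without the cross-ratio case analysis on whether $a=\xi$ or $\xi'=b$.
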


We call this set the \emph{orbital limit set} of $\Gamma$ in~$\Omega$.

\begin{proof}
We may assume that $\Gamma$ is infinite, otherwise $\Lambda^{\mathsf{orb}}_{\Omega}(\Gamma)=\emptyset$.
Consider two points $y\neq z$ in~$\Omega$ and a sequence $(\gamma_m)\in\Gamma^{\NN}$ of pairwise distinct elements such that $\gamma_m\cdot y\to y_{\infty}\in\partial_{\scriptscriptstyle\PP}\Omega$ and $\gamma_m\cdot z\to z_{\infty}\in\partial_{\scriptscriptstyle\PP}\Omega$.
By proper discontinuity of the action of $\Gamma$ on~$\Omega$ (see Section~\ref{subsec:prop-conv-proj}), the Hilbert distance $d_{\Omega}$ from a given point of~$\Omega$ to any point of the segment $\gamma_m\cdot [y,z]$ tends to infinity with~$m$.
Therefore the segment $[y_{\infty},z_{\infty}]=\lim_m \gamma_m\cdot [y,z]$ is fully contained in $\partial_{\scriptscriptstyle\PP}\Omega$.
Since $\Omega$ is strictly convex, $y_{\infty}=z_{\infty}$.
\end{proof}

\begin{lemma}\label{lem:snowman}
Let $\Gamma$ be a discrete subgroup of $\PGL(V)$ preserving a nonempty strictly convex open subset $\Omega$ of $\PP(V)$.
If the proximal limit set $\Lambda_\Gamma$ of $\Gamma$ in $\PP(V)$ contains at least two points, then it coincides with the orbital limit set $\Lambda^{\mathsf{orb}}_\Omega(\Gamma)$.
\end{lemma}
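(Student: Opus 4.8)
The plan is to prove the two inclusions $\Lambda_\Gamma \subseteq \Lambda^{\mathsf{orb}}_\Omega(\Gamma)$ and $\Lambda^{\mathsf{orb}}_\Omega(\Gamma) \subseteq \Lambda_\Gamma$ separately, using strict convexity of $\Omega$ throughout.

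For the inclusion $\Lambda_\Gamma \subseteq \Lambda^{\mathsf{orb}}_\Omega(\Gamma)$, I would first treat attracting fixed points of proximal elements. If $\gamma \in \Gamma$ is proximal in $\PP(V)$ with attracting fixed point $\xi_\gamma^+$ and repelling hyperplane $H_\gamma^-$, then since $\Omega$ is open and nonempty it cannot be contained in $H_\gamma^-$, so pick $y \in \Omega \smallsetminus H_\gamma^-$; then $\gamma^m \cdot y \to \xi_\gamma^+$, and since $\gamma^m \cdot y \in \Omega$ this shows $\xi_\gamma^+ \in \overline{\Omega}$, indeed in $\partial_{\scriptscriptstyle\PP}\Omega$ because $\Omega$ is preserved by $\gamma$ and (by properness of the Hilbert-metric action, Section~\ref{subsec:prop-conv-proj}) the orbit cannot accumulate inside $\Omega$. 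Hence the attracting fixed points of proximal elements lie in $\Lambda^{\mathsf{orb}}_\Omega(\Gamma)$, and since the latter is closed (it is an accumulation set), taking closures gives $\Lambda_\Gamma \subseteq \Lambda^{\mathsf{orb}}_\Omega(\Gamma)$. This direction does not even need the two-point hypothesis.

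For the reverse inclusion $\Lambda^{\mathsf{orb}}_\Omega(\Gamma) \subseteq \Lambda_\Gamma$, I would proceed as follows. Fix the basepoint orbit $\Gamma \cdot y$ for some $y \in \Omega$ (legitimate by Lemma~\ref{lem:Lambda-orb}), let $\gamma_m \in \Gamma$ be pairwise distinct with $\gamma_m \cdot y \to \eta \in \partial_{\scriptscriptstyle\PP}\Omega$, and aim to show $\eta \in \Lambda_\Gamma$. Since $\Lambda_\Gamma$ has at least two points, fix two distinct points $\alpha, \beta \in \Lambda_\Gamma$; these lie in $\overline{\Omega}$ by the first part. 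Pass to a subsequence so that $\gamma_m^{-1} \cdot y \to \zeta \in \overline{\Omega}$ as well. For at least one of $\alpha, \beta$ — say $\alpha$ — we have $\alpha \neq \zeta$; then the segments $[\gamma_m^{-1}\cdot y, \alpha]$ stay at bounded Hilbert distance from $y$ near one endpoint while the other endpoint runs off, and applying $\gamma_m$ we get that $\gamma_m \cdot \alpha$ accumulates onto $\eta$: more precisely, by the standard Hilbert-geometry contraction argument (the kind underlying the proof of Lemma~\ref{lem:Lambda-orb}, using strict convexity to collapse the image segment to a point), $\gamma_m \cdot \alpha \to \eta$. Since $\alpha \in \Lambda_\Gamma$ and $\Lambda_\Gamma$ is closed and $\Gamma$-invariant, $\eta \in \Lambda_\Gamma$. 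Finally, accumulation points of an arbitrary $\Gamma$-orbit equal those of $\Gamma \cdot y$ by Lemma~\ref{lem:Lambda-orb}, so $\Lambda^{\mathsf{orb}}_\Omega(\Gamma) \subseteq \Lambda_\Gamma$, completing the proof.

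The main obstacle is the contraction argument in the second inclusion: one must show that $\gamma_m \cdot \alpha \to \eta$ rather than merely accumulating somewhere in $\overline{\Omega}$. The key point is that $d_\Omega(\gamma_m \cdot y, \gamma_m \cdot \alpha')$ is bounded for $\alpha'$ on the sub-ray of $[\,\zeta', \alpha\,]$ near $\zeta'$ (for a nearby interior point $\zeta'$ approximating $\zeta$), so $\gamma_m \cdot \alpha$ lies within bounded Hilbert distance of $\gamma_m \cdot y \to \eta$; then Lemma~\ref{lem:geod-loop}, or directly strict convexity as in Lemma~\ref{lem:Lambda-orb}, forces the whole image configuration to converge to the single boundary point $\eta$. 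One must also be slightly careful choosing interior approximants and handling the case $\zeta \in \partial_{\scriptscriptstyle\PP}\Omega$, but strict convexity makes all the relevant limiting segments degenerate to points, so the estimates go through.
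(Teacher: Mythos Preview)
Your first inclusion $\Lambda_\Gamma\subseteq\Lambda^{\mathsf{orb}}_\Omega(\Gamma)$ is fine, and in fact slightly more elementary than the paper's: the paper invokes Fact~\ref{fact:Yves} to conclude that the repelling hyperplane $H_\gamma^-$ misses $\Omega$ entirely, whereas you only use $\Omega\not\subset H_\gamma^-$ together with the basepoint-independence from Lemma~\ref{lem:Lambda-orb}.

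For the reverse inclusion, your contraction argument has a genuine gap. You want $\gamma_m\cdot\alpha\to\eta$ whenever $\alpha\in\partial_{\scriptscriptstyle\PP}\Omega$ with $\alpha\neq\zeta:=\lim\gamma_m^{-1}\cdot y$. Your justification (``$d_\Omega(\gamma_m\cdot y,\gamma_m\cdot\alpha')$ is bounded for $\alpha'$ near~$\zeta'$, so $\gamma_m\cdot\alpha$ lies within bounded Hilbert distance of $\gamma_m\cdot y$'') conflates the interior approximant~$\alpha'$ with the boundary point~$\alpha$: the Hilbert distance from $\gamma_m\cdot y$ to $\gamma_m\cdot\alpha$ is infinite, and knowing $\gamma_m\cdot\alpha'\to\eta$ for each fixed interior~$\alpha'$ does not by itself control the limit of $\gamma_m\cdot\alpha$. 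The north--south dynamics you want \emph{is} true for strictly convex~$\Omega$, but proving it requires a different argument --- for instance, passing to a rank-one limit $\gamma_\infty$ of the $\gamma_m$ in $\PP(\mathrm{End}(V))$, showing its kernel hyperplane~$K$ is a supporting hyperplane to~$\Omega$ (else $\overline{\widetilde\Omega}$ would contain a full line), and then using strict convexity to get $K\cap\overline\Omega=\{\zeta\}$.

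The paper sidesteps all of this with a one-line trick you overlook: since Lemma~\ref{lem:Lambda-orb} lets you choose the basepoint freely, take $y$ on the open segment $(a,b)\subset\Omega$ between two distinct points $a,b\in\Lambda_\Gamma$. Then $\gamma_m\cdot y$ lies on the segment $(\gamma_m\cdot a,\gamma_m\cdot b)$; passing to a subsequence so that $\gamma_m\cdot a\to a_\infty$ and $\gamma_m\cdot b\to b_\infty$ in~$\Lambda_\Gamma$, the limit $y_\infty$ lies on $[a_\infty,b_\infty]\cap\partial_{\scriptscriptstyle\PP}\Omega$, which by strict convexity is contained in $\{a_\infty,b_\infty\}\subset\Lambda_\Gamma$. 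No inverse orbits, no contraction estimates.
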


\begin{proof}
For any element $\gamma\in\Gamma$ which is proximal in $\PP(V)$, the element $\gamma^{-1}$ is proximal in $\PP(V^*)$, and its attracting fixed point is contained in $\partial\Omega^*$ by Fact~\ref{fact:Yves}.
Thus the projective hyperplane $H_{\gamma}^-\subset\PP(V)$ from Section~\ref{subsec:limit-set} does not meet~$\Omega$, and for any $y\in\Omega$ the sequence $(\gamma^m\cdot y)_{m\in\NN}$ converges to the attracting fixed point $\xi_{\gamma}^+$ of $\gamma$ in $\PP(V)$.
This shows that $\Lambda_\Gamma \subset \Lambda^{\mathsf{orb}}_\Omega(\Gamma)$.

For the reverse inclusion, suppose that $\Lambda_\Gamma$ contains two points $a\neq b$.
In particular, $\Gamma$ is infinite.
Consider a point $y \in \Omega$ on the open segment $(a,b)$ and a sequence $(\gamma_m)\in\Gamma^{\NN}$ of pairwise distinct elements such that $\gamma_m\cdot y\to y_{\infty}\in\partial_{\scriptscriptstyle\PP}\Omega$.
Up to passing to a subsequence, we may assume that $\gamma_m\cdot a \to a_\infty$ and $\gamma_m\cdot b \to b_\infty$ for some $a_\infty,b_\infty \in \Lambda_{\Gamma}$, with $y_\infty \in [a_\infty, b_\infty]$.
Since $\Omega$ is strictly convex, $y_\infty\in \{a_\infty, b_\infty\} \subset \Lambda_{\Gamma}$.
Thus $\Lambda_\Omega^{\mathsf{orb}}(\Gamma) \subset \Lambda_\Gamma$.
\end{proof}

If $\Gamma$ is irreducible, then $\Lambda_{\Gamma}$ always contains at least two points since it is nonempty (Fact~\ref{fact:Yves}) and not contained in a projective subspace of $\PP(V)$ of positive codimension.

\section{Nonpositive subsets of $\partial_{\scriptscriptstyle\PP}\HH^{p,q-1}$} \label{sec:non-pos-sets}

We shall use the following terminology which extends Definition~\ref{def:pos-neg}.

\begin{definition} \label{def:non-pos-neg}
For $p,q\in\NN^*$, a subset $\Lambda$ of $\partial_{\scriptscriptstyle\PP}\HH^{p,q-1}$ is \emph{negative} (\resp \emph{nonpositive}, \emph{nonnegative}, \emph{positive}) if it lifts to a cone of $\RR^{p,q}\smallsetminus\{ 0\}$ on which all inner products $\langle\cdot,\cdot\rangle_{p,q}$ of noncollinear points are negative (\resp nonpositive, nonnegative, positive).
\end{definition}

In the Lorentzian case ($q=2$), the usual terminology for a negative (\resp nonpositive) subset of $\partial_{\scriptscriptstyle\PP}\HH^{p,q-1}$ is \emph{acausal} (\resp \emph{achronal}).

\subsection{Reading the sign on triples}

The following characterization will be used only to prove Proposition~\ref{prop:conn-transv} and Corollary~\ref{coro:cc-open}, in Section~\ref{subsec:proof-conn-transv} below.

\begin{lemma} \label{lem:justif-neg-triple}
Let $\Lambda$ be a subset of $\partial_{\scriptscriptstyle\PP}\HH^{p,q-1}$ with at least three points.
Then the following are equivalent:
\begin{enumerate}[(i)]
  \item\label{item:i-triple} $\Lambda$ is negative,
  \item\label{item:ii-triple} every triple of distinct points of~$\Lambda$ is negative,
  \item\label{item:iii-triple} every triple of distinct points of~$\Lambda$ spans a triangle fully contained in $\HH^{p,q-1}$ outside of the vertices.
\end{enumerate}
\end{lemma}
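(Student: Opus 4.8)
The implications $(\ref{item:i-triple})\Rightarrow(\ref{item:ii-triple})$ and $(\ref{item:iii-triple})\Rightarrow(\ref{item:ii-triple})$ are essentially definitional, and $(\ref{item:ii-triple})\Leftrightarrow(\ref{item:iii-triple})$ for a single triple is the content of Definition~\ref{def:pos-neg} (and should follow from an elementary computation in the $2$- or $3$-dimensional subspace spanned by a lift of the triple: if $u,v,w$ lift the triple with $\langle u,u\rangle=\langle v,v\rangle=\langle w,w\rangle=0$ and all pairwise products negative, then a convex combination $au+bv$ with $a,b>0$ satisfies $\langle au+bv,au+bv\rangle=2ab\langle u,v\rangle<0$, so the edge lies in $\HH^{p,q-1}$, and similarly for a point of the triangle, which is a positive combination of the three vertices, giving a sum of three negative cross-terms). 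So the real work is the implication $(\ref{item:ii-triple})\Rightarrow(\ref{item:i-triple})$: from the hypothesis that \emph{every} triple is negative, one must produce a \emph{single global} lift $\widetilde\Lambda$ of $\Lambda$ to a cone in $\RR^{p,q}\smallsetminus\{0\}$ on which all pairwise inner products of noncollinear vectors are negative simultaneously.

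The plan is as follows. First, pick a basepoint $z_0\in\Lambda$ and a lift $\widetilde z_0$; the condition $\langle\widetilde z_0, \widetilde z\rangle\ne 0$ for all $z\ne z_0$ in $\Lambda$ (which follows from negativity of the pair $\{z_0,z\}$ when $\Lambda$ has a third point, via transversality) lets us normalize a lift $\widetilde z$ of each $z\ne z_0$ by the requirement $\langle\widetilde z_0,\widetilde z\rangle=-1$; set $\widetilde\Lambda$ to be the union of the rays $\RR_{>0}\widetilde z$ together with $\RR_{>0}\widetilde z_0$. The point is that this normalization is \emph{canonical} given $\widetilde z_0$, so compatibility of pairwise signs becomes a purely $3$-point condition. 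Concretely, for $y,z\in\Lambda\smallsetminus\{z_0\}$ distinct, I want to show $\langle\widetilde y,\widetilde z\rangle<0$: this is a statement about the triple $\{z_0,y,z\}$, which by hypothesis is negative, i.e.\ admits \emph{some} lift with all three cross-products negative; one then checks that rescaling the three vectors to achieve $\langle\cdot,\widetilde z_0\rangle=-1$ on the last two (which only multiplies each by a positive scalar, since their $\widetilde z_0$-products have a fixed sign) preserves the sign of $\langle\widetilde y,\widetilde z\rangle$. Finally one must also handle the collinearity bookkeeping: if $y=z$ in $\PP(\RR^{p,q})$ the two lifts agree by uniqueness of the normalization, and if $z=z_0$ the product is $-1<0$ by construction; so all noncollinear pairs in $\widetilde\Lambda$ have negative product, giving $(\ref{item:i-triple})$.

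The main obstacle is precisely the rigidity of the sign argument in the $3$-point step: one needs that for a negative triple, the \emph{normalized} representatives (normalized against a fixed fourth—here first—vector) still have negative mutual product, i.e.\ that negativity of a triple does not depend on which admissible lift is chosen once one anchors against $\widetilde z_0$. I expect this to reduce to the observation that in the span of $\{\widetilde z_0,\widetilde y,\widetilde z\}$ (at most $3$-dimensional, with $\widetilde z_0$ isotropic and $\langle\widetilde z_0,\widetilde y\rangle,\langle\widetilde z_0,\widetilde z\rangle$ both nonzero of the same sign as in the ambient negative lift), the Gram matrix has a form in which the sign of the off-diagonal $\langle\widetilde y,\widetilde z\rangle$ entry is forced — but some care is needed because $\widetilde y$ or $\widetilde z$ could \emph{a priori} need to be replaced by their negatives to meet $\langle\cdot,\widetilde z_0\rangle=-1$, and one must rule this out using that the triple's negative lift already pins down a consistent choice of signs. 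A clean way to organize this is to first establish, as a sub-lemma, that any two lifts of a negative triple that both satisfy the normalization against a common isotropic vector in their span differ only by positive scalars on each ray, then conclude. Once that rigidity is in hand, the gluing to a global lift over all of $\Lambda$ is automatic, and the proof is complete.
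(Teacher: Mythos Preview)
Your approach is correct and essentially the same as the paper's: fix a basepoint, choose lifts of all other points so that their inner product with the basepoint lift is negative, then use the triple hypothesis to show any remaining pair also has negative inner product. The paper uses two basepoints $y_1,y_2$ rather than one, but this is cosmetic --- only $y_1$ is actually used in the final step, and your single-basepoint normalization $\langle\widetilde z_0,\widetilde z\rangle=-1$ works just as well.

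Your ``main obstacle'' is not an obstacle at all, and the sub-lemma you propose is unnecessary. The key point (made explicit in the paper as Remark~\ref{rem:neg-triple-explicit}) is that the sign of the product $\langle x_1,x_2\rangle_{p,q}\,\langle x_1,x_3\rangle_{p,q}\,\langle x_2,x_3\rangle_{p,q}$ is independent of the choice of lifts, since replacing any $x_i$ by $\lambda x_i$ multiplies this product by $\lambda^2>0$. So for your normalized lifts $\widetilde z_0,\widetilde y,\widetilde z$ with $\langle\widetilde z_0,\widetilde y\rangle=\langle\widetilde z_0,\widetilde z\rangle=-1$, negativity of the triple $\{z_0,y,z\}$ gives $(-1)(-1)\langle\widetilde y,\widetilde z\rangle<0$ immediately --- no rigidity lemma needed.
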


The equivalence \eqref{item:ii-triple}~$\Leftrightarrow$~\eqref{item:iii-triple} is contained in the following immediate remark.

\begin{remark} \label{rem:neg-triple-explicit}
For any pairwise distinct points $y_1,y_2,y_3$ of $\partial_{\scriptscriptstyle\PP}\HH^{p,q-1}$, the following are equivalent:
\begin{itemize}
  \item there exist lifts $x_i\in\RR^{p,q}\smallsetminus\{0\}$ of~$y_i$ such that $\langle x_i,x_j\rangle_{p,q}<0$ for all $i\neq j$,
  \item for any lifts $x_i\in\RR^{p,q}\smallsetminus\{0\}$ of~$y_i$, we have
  $$\langle x_1,x_2\rangle_{p,q} \, \langle x_1,x_3\rangle_{p,q} \, \langle x_2,x_3\rangle_{p,q} <\nolinebreak 0,$$
  \item there exist lifts $x_i\in\RR^{p,q}\smallsetminus\{0\}$ of~$y_i$ such that for any $t_i\geq 0$, if at least two of the $t_i$ are nonzero, then
  $$\Big\langle \sum_{i=1}^3 t_i x_i, \sum_{i=1}^3 t_i x_i \Big\rangle_{p,q} = \sum_{1\leq i<j\leq 3} 2 t_i t_j \langle x_i,x_j\rangle_{p,q} < 0,$$
  \item $(y_1,y_2,y_3)$ spans a triangle fully contained in $\HH^{p,q-1}$ outside of the vertices.
\end{itemize}
\end{remark}

\begin{proof}[Proof of Lemma~\ref{lem:justif-neg-triple}]
If $\Lambda$ is negative, then any subset of~$\Lambda$ is as well, and so \eqref{item:i-triple}~$\Rightarrow$~\eqref{item:ii-triple} holds.
We now check \eqref{item:ii-triple}~$\Rightarrow$~\eqref{item:i-triple}.

Suppose that every triple of distinct points of~$\Lambda$ is negative.
Choose two distinct points $y_1,y_2\in\Lambda$ and respective lifts $x_1,x_2\in\RR^{p,q}\smallsetminus\{0\}$ with $\langle x_1,x_2\rangle_{p,q}<0$.
We now define a map $f : \Lambda\to\RR^{p,q}\smallsetminus\{0\}$ as follows.
We set $f(y_i):=x_i$ for $i\in\{1,2\}$.
For each $y\in\Lambda\smallsetminus\{y_1,y_2\}$, we choose a lift $x\in\RR^{p,q}\smallsetminus\{0\}$ of~$y$; by Remark~\ref{rem:neg-triple-explicit}, we have $\langle x_1,x_2\rangle_{p,q} \, \langle x_1,x\rangle_{p,q} \, \langle x_2,x\rangle_{p,q} <\nolinebreak 0$, and so $\langle x_1,x\rangle_{p,q}$ and $\langle x_2,x\rangle_{p,q}$ are both nonzero of the same sign; we set $f(y):=x$ if this sign is negative, and $f(y):=-x$ otherwise.
We claim that $\langle f(y),f(y')\rangle_{p,q}<0$ for any $y\neq y'$ in~$\Lambda$.
Indeed, this is true by construction if $y$ or~$y'$ is equal to $y_1$, so we assume this is not the case.
By Remark~\ref{rem:neg-triple-explicit}, we have $\langle x_1,f(y)\rangle_{p,q} \, \langle x_1,f(y')\rangle_{p,q} \, \langle f(y),f(y')\rangle_{p,q} <\nolinebreak 0$.
Since $\langle x_1,f(y)\rangle_{p,q}<0$ and $\langle x_1,f(y')\rangle_{p,q}<0$ by construction, we have $\langle f(y),f(y')\rangle_{p,q}<0$.
Thus $\{ tf(y) \,|\, t>0\,\mathrm{and}\,y\in\Lambda\}$ is a cone of $\RR^{p,q}\smallsetminus\{0\}$ lifting~$\Lambda$ on which all inner products $\langle\cdot,\cdot\rangle_{p,q}$ of noncollinear points are negative, and so $\Lambda$ is negative.
\end{proof}

\begin{remarks} \label{rem:pos-neg}
\begin{enumerate}
  \item\label{item:other-equiv-pos-neg} Similar equivalences to Lemma~\ref{lem:justif-neg-triple} hold after replacing \emph{negative} with \emph{positive} in conditions \eqref{item:i-triple} and~\eqref{item:ii-triple}, and $\HH^{p,q-1}$ with $\SS^{p-1,q}$ in condition~\eqref{item:iii-triple}.
  \item\label{item:not-neg-and-pos} It follows from Remark~\ref{rem:neg-triple-explicit} that a triple of distinct points of $\partial_{\scriptscriptstyle\PP}\HH^{p,q-1}$ cannot be both negative and positive.
Therefore, by Lemma~\ref{lem:justif-neg-triple}, an arbitrary subset of $\partial_{\scriptscriptstyle\PP}\HH^{p,q-1}$ with at least three points cannot be both negative and positive.
  \item Our notion of positivity should not be confused with that of \cite{ben00}.
  For $V=\RR^{p,q}$, consider a subset of $\PP(V)\times\PP(V^*)$ of the form $\underline{\Lambda}=\{ (z,z^{\perp})\,|\,z\in\Lambda\}$ where $\Lambda\subset\partial_{\scriptscriptstyle\PP}\HH^{p,q-1}$.
  The set $\underline{\Lambda}$ is \emph{positive} in the sense of \cite{ben00} if and only if $\Lambda$ is nonnegative or nonpositive in the sense of Definition~\ref{def:non-pos-neg}.
  The set $\underline{\Lambda}$ is \emph{$3$-by-$3$ positive} in the sense of \cite{ben00} if and only if every triple of points of~$\Lambda$ is nonnegative or every triple of distinct points of $\Lambda$ is nonpositive in the sense of Definition~\ref{def:non-pos-neg}.
  We shall not use the terminology of \cite{ben00} in this paper.
\end{enumerate}
\end{remarks}

\subsection{Consequences of Lemma~\ref{lem:justif-neg-triple}} \label{subsec:proof-conn-transv}

\begin{proof}[Proof of Proposition~\ref{prop:conn-transv}]
Let $\Lambda$ be a closed, connected, transverse subset of $\partial_{\scriptscriptstyle\PP}\HH^{p,q-1}$, and let $\Lambda^{(3)}$ be the set of unordered triples of distinct points of~$\Lambda$.
By Remark~\ref{rem:neg-triple-explicit}, we may define a function $\Lambda^{(3)}\to\{\pm 1\}$ by sending $(y_1,y_2,y_3)\in\Lambda^{(3)}$ to the sign of $\langle x_1,x_2\rangle_{p,q} \, \langle x_1,x_3\rangle_{p,q} \, \langle x_2,x_3\rangle_{p,q}$, where $x_i\in\RR^{p,q}\smallsetminus\{0\}$ is an arbitrary lift of~$y_i$.
This function is continuous and $\Lambda^{(3)}$ is connected by Fact~\ref{fact:conn-ktuple}, hence the function is constant.
In other words, every triple of distinct points of~$\Lambda$ is negative or every triple of distinct points of~$\Lambda$ is positive.
By Lemma~\ref{lem:justif-neg-triple} and Remark~\ref{rem:pos-neg}.\eqref{item:other-equiv-pos-neg}, the set $\Lambda$ is negative or positive.
\end{proof}

Here is another consequence of Lemma~\ref{lem:justif-neg-triple}.

\begin{proposition} \label{prop:comp-Ano-neg}
Let $\Gamma$ be a word hyperbolic group and $\mathcal{T}$ a connected component in the space of $P_1^{p,q}$-Anosov representations of $\Gamma$ into $G=\PO(p,q)$.
If the proximal limit set $\Lambda_{\rho(\Gamma)}$ is negative (\resp positive) for some $\rho\in\mathcal{T}$, then it is negative (\resp positive) for all $\rho\in\mathcal{T}$.
\end{proposition}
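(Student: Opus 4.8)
The plan is to argue by continuity and a connectedness argument, analogous to the proof of Proposition~\ref{prop:conn-transv} but now varying the representation rather than the triple of points. Fix a word hyperbolic group $\Gamma$ and work inside a connected component $\mathcal{T}$ of the space of $P_1^{p,q}$-Anosov representations $\Gamma\to G$. For each $\rho\in\mathcal{T}$, the boundary map $\xi_\rho:\partial_\infty\Gamma\to\partial_{\scriptscriptstyle\PP}\HH^{p,q-1}$ is transverse, and its image is the proximal limit set $\Lambda_{\rho(\Gamma)}$ (as recalled in Section~\ref{subsec:intro-Anosov}). By Lemma~\ref{lem:justif-neg-triple} and Remark~\ref{rem:pos-neg}.\eqref{item:other-equiv-pos-neg}, the set $\Lambda_{\rho(\Gamma)}$ is negative (\resp positive) if and only if every triple of distinct points in it is negative (\resp positive); and by Remark~\ref{rem:pos-neg}.\eqref{item:not-neg-and-pos}, no triple of distinct points can be both, so (assuming $\Lambda_{\rho(\Gamma)}$ has at least three points, which it does when $\Gamma$ is infinite since $\Gamma$ is then irreducible or at least the limit set is not contained in a proper subspace — and when $\Gamma$ is finite the statement is vacuous) being negative and being positive are mutually exclusive alternatives, and each is a condition on individual triples.

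The key step is to show that the sign of a triple depends continuously on $\rho$. Consider the space
\[
Z := \big\{(\rho,(\eta_1,\eta_2,\eta_3)) \;:\; \rho\in\mathcal{T},\ \eta_1,\eta_2,\eta_3\in\partial_\infty\Gamma \text{ pairwise distinct}\big\},
\]
which is connected: $\mathcal{T}$ is connected by hypothesis, and the space of pairwise-distinct triples in $\partial_\infty\Gamma$ is connected by Fact~\ref{fact:conn-ktuple} (applied with $k=3$; this uses that $\partial_\infty\Gamma$ is either a single point or has no isolated points and is connected — one should check the case $\partial_\infty\Gamma$ is finite separately, but a finite hyperbolic group is trivial so $\Gamma$ infinite forces $\partial_\infty\Gamma$ infinite, while if $\partial_\infty\Gamma$ is disconnected one instead argues component-by-component as below). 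On $Z$ define $\sigma(\rho,(\eta_1,\eta_2,\eta_3))$ to be the sign of
\[
\langle x_1,x_2\rangle_{p,q}\,\langle x_1,x_3\rangle_{p,q}\,\langle x_2,x_3\rangle_{p,q},
\]
where $x_i$ is any lift of $\xi_\rho(\eta_i)\in\partial_{\scriptscriptstyle\PP}\HH^{p,q-1}$; this is well defined and nonzero by transversality of $\xi_\rho$ (Remark~\ref{rem:neg-triple-explicit}). Since $\rho\mapsto\xi_\rho$ is continuous — indeed the boundary map of an Anosov representation varies continuously with the representation, by \cite{lab06,gw12} — and since evaluation and the bilinear form are continuous, $\sigma$ is a continuous $\{\pm1\}$-valued function on the connected space $Z$, hence constant.

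To conclude: if $\Lambda_{\rho_0(\Gamma)}$ is negative for some $\rho_0\in\mathcal{T}$, then $\sigma(\rho_0,\cdot)\equiv -1$ (every triple in $\xi_{\rho_0}(\partial_\infty\Gamma)$ is negative, and $\xi_{\rho_0}$ is surjective onto $\Lambda_{\rho_0(\Gamma)}$), so $\sigma\equiv-1$ on all of $Z$; thus for every $\rho\in\mathcal{T}$ every triple of distinct points of $\Lambda_{\rho(\Gamma)}$ is negative, whence $\Lambda_{\rho(\Gamma)}$ is negative by Lemma~\ref{lem:justif-neg-triple}. The positive case is identical. The one point requiring care — and the main obstacle — is the possibility that $\partial_\infty\Gamma$ is disconnected, so that $\Lambda^{(3)}$ is disconnected and Fact~\ref{fact:conn-ktuple} does not apply directly; but the argument still goes through because $\sigma$ is continuous on each connected component of $Z$, negativity (resp.\ positivity) of $\Lambda_{\rho(\Gamma)}$ is equivalent to $\sigma(\rho,\cdot)$ being constantly $-1$ (resp.\ $+1$) on \emph{all} of the fiber over $\rho$, and the partition of the fiber into components is itself locally constant in $\rho$ (the components of $\partial_\infty\Gamma$ are permuted by a finite-index-stable action as $\rho$ varies in $\mathcal{T}$), so the value of $\sigma$ on each component is again forced to be locally constant in $\rho$ and hence constant on $\mathcal{T}$.
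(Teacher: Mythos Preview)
Your core idea---using continuity of the boundary map $\rho\mapsto\xi_\rho$ together with Lemma~\ref{lem:justif-neg-triple} to propagate the sign---is exactly what the paper does. But the paper's execution is considerably simpler than yours, and your attempt to go through connectedness of the product space $Z$ creates unnecessary trouble.

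The paper never tries to vary the triple. It fixes \emph{one} triple $(\eta_1,\eta_2,\eta_3)\in(\partial_\infty\Gamma)^{(3)}$ at a time and takes a path $(\rho_t)_{t\in[0,1]}$ in $\mathcal{T}$ from $\rho_0$ to an arbitrary $\rho_1$. Since $t\mapsto\xi_t(\eta_i)$ is continuous (by \cite[Th.\,5.13]{gw12}) and the triple $\{\xi_t(\eta_1),\xi_t(\eta_2),\xi_t(\eta_3)\}$ is either negative or positive for each $t$ by transversality, the sign is constant in $t$; it is negative at $t=0$, hence at $t=1$. This holds for every triple, so $\Lambda_{\rho_1(\Gamma)}$ is negative by Lemma~\ref{lem:justif-neg-triple}. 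No appeal to connectedness of $(\partial_\infty\Gamma)^{(3)}$ is needed, and the disconnected-boundary case requires no special patching whatsoever.

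Your patching argument is salvageable---the connected components of $Z=\mathcal{T}\times(\partial_\infty\Gamma)^{(3)}$ are exactly the sets $\mathcal{T}\times C$ for $C$ a component of $(\partial_\infty\Gamma)^{(3)}$, and $\sigma$ is constant on each, with value determined by its value over $\rho_0$---but your explanation of it is muddled: the parenthetical about ``components of $\partial_\infty\Gamma$ permuted by a finite-index-stable action as $\rho$ varies'' is meaningless, since the fiber $(\partial_\infty\Gamma)^{(3)}$ is the same over every $\rho$. Also, two small errors in your setup: infinite hyperbolic $\Gamma$ does \emph{not} force $\#\partial_\infty\Gamma\geq 3$ (virtually cyclic groups have two-point boundary), and $\partial_\infty\Gamma$ need not be connected (free groups have Cantor-set boundary). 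The paper handles the small-boundary cases by noting that when $\#\partial_\infty\Gamma\leq 2$ the limit set is vacuously both negative and positive.
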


\begin{proof}
We may assume $\#\partial_{\infty}\Gamma\geq 3$, otherwise for any $P_1^{p,q}$-Anosov representation $\rho : \Gamma\to G$ the proximal limit set $\Lambda_{\rho(\Gamma)}$ is both negative and positive.

Suppose $\Lambda_{\rho_0(\Gamma)}$ is negative for some $\rho_0\in\mathcal{T}$, and let $(\rho_t)_{t\in [0,1]}$ be a continuous path in~$\mathcal{T}$.
For $t\in [0,1]$, let $\xi_t : \partial_{\infty}\Gamma\to\partial_{\scriptscriptstyle\PP}\HH^{p,q-1}$ be the boundary map of the Anosov representation~$\rho_t$.
For any triple $\{\eta_1,\eta_2,\eta_3\}$ of distinct points of $\partial_{\infty}\Gamma$ and any $t\in [0,1]$, the triple $\{\xi_t(\eta_1),\xi_t(\eta_2),\xi_t(\eta_3)\}\subset\Lambda_{\rho_t(\Gamma)}$ is either negative or positive, by transversality of~$\xi_t$.
Since $\{\xi_0(\eta_1),\xi_0(\eta_2),\xi_0(\eta_3)\}$ is negative and $t\mapsto\xi_t(\eta_i)$ is continuous for all~$i$ (see \cite[Th.\,5.13]{gw12}), we deduce as in the proof of Proposition~\ref{prop:conn-transv} that $\{\xi_t(\eta_1),\xi_t(\eta_2),\xi_t(\eta_3)\}$ is negative for all $t\in [0,1]$.
By Lemma~\ref{lem:justif-neg-triple}, the set $\Lambda_{\rho_t(\Gamma)}$ is negative for all $t\in [0,1]$.

The case that $\Lambda_{\rho_0(\Gamma)}$ is positive is similar.
\end{proof}

\subsection{Boundaries of convex subsets of $\HH^{p,q-1}$}

The following lemma makes a link between convexity in $\HH^{p,q-1}$ and nonpositivity in $\partial_{\scriptscriptstyle\PP}\HH^{p,q-1}$.

\begin{lemma} \label{lem:nonpos-Hpq}
\begin{enumerate}[(1)]
  \item\label{item:nonpos-Hpq-Lambda0} Let $\Lambda_0$ be a closed nonpositive (\resp nonnegative) subset of $\partial_{\scriptscriptstyle\PP}\HH^{p,q-1}$ which is \emph{not} contained in a projective hyperplane.
Then $\Lambda_0$ spans a nonempty convex open subset $\Omega$ of $\PP(\RR^{p,q})$ which is contained in $\HH^{p,q-1}$ (\resp in $\SS^{p-1,q}$).
Moreover, if $\Lambda_1\supset\Lambda_0$ is the intersection of $\partial_{\scriptscriptstyle\PP}\HH^{p,q-1}$ with the closure of~$\Omega$, then $\Lambda_1$ is still nonpositive (\resp nonnegative), and it is equal to~$\Lambda_0$ if $\Lambda_0$ is negative (\resp positive).
  \item\label{item:nonpos-Hpq-Omega} Conversely, for any nonempty properly convex open subset $\Omega$ of $\HH^{p,q-1}$ (\resp $\SS^{p-1,q}$), the intersection of $\partial_{\scriptscriptstyle\PP}\HH^{p,q-1}$ with the closure of~$\Omega$ is nonpositive (\resp nonnegative).
\end{enumerate}
\end{lemma}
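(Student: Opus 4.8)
The plan is to treat the two parts separately, handling the "negative/positive" case first since it is essentially Lemma~\ref{lem:justif-neg-triple}, and then obtaining the "nonpositive/nonnegative" case by a limiting argument. Throughout I will only write the nonpositive/negative case; the nonnegative/positive case follows by replacing $\langle\cdot,\cdot\rangle_{p,q}$ with $-\langle\cdot,\cdot\rangle_{p,q}$ and using the isomorphism $\PO(p,q)\simeq\PO(q,p)$ of Remark~\ref{rem:Spq-Hpq}, which swaps $\HH^{p,q-1}$ and $\SS^{p-1,q}$.

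For part~\eqref{item:nonpos-Hpq-Lambda0}, first I would fix a lift: since $\Lambda_0$ is nonpositive it lifts to a cone $\widetilde\Lambda_0\subset\RR^{p,q}\smallsetminus\{0\}$ on which $\langle\cdot,\cdot\rangle_{p,q}$ is nonpositive on noncollinear pairs (and $\equiv 0$ on each ray, since points of $\partial_{\scriptscriptstyle\PP}\HH^{p,q-1}$ are isotropic). Let $\widetilde C$ be the convex hull of $\widetilde\Lambda_0$ in $\RR^{p,q}$; I claim that every $v=\sum t_i x_i$ (finite sum, $x_i\in\widetilde\Lambda_0$, $t_i>0$) satisfies $\langle v,v\rangle_{p,q}\le 0$, by expanding $\langle v,v\rangle_{p,q}=\sum_{i<j}2t_it_j\langle x_i,x_j\rangle_{p,q}\le 0$ exactly as in Remark~\ref{rem:neg-triple-explicit}. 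So $\widetilde C$ lies in the closed negative cone $\{\langle\cdot,\cdot\rangle_{p,q}\le 0\}$. Since $\Lambda_0$ is not contained in a projective hyperplane, $\spa(\widetilde\Lambda_0)=\RR^{p,q}$, so $\widetilde C$ has nonempty interior; let $\widetilde\Omega=\mathrm{Int}(\widetilde C)$ and $\Omega=\PP(\widetilde\Omega)$. A point in the interior of the convex hull is a positive combination of at least two noncollinear points (here using $\dim\ge 2$, automatic since $p+q\ge 2$ and $\Lambda_0$ spans), so on $\widetilde\Omega$ the inequality $\langle v,v\rangle_{p,q}\le 0$ must in fact be \emph{strict}: if it were zero, then all the cross terms $\langle x_i,x_j\rangle_{p,q}$ appearing would vanish, forcing $v$ to lie on the degenerate boundary locus — I would make this precise by noting that $\langle v,v\rangle_{p,q}=0$ with $v$ interior would make $v^\perp$ a supporting hyperplane of $\widetilde C$ at an interior point, a contradiction. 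Hence $\Omega\subset\HH^{p,q-1}$, and $\Omega$ is convex in $\PP(\RR^{p,q})$ as the projectivization of a convex cone lying in a halfspace. For the "Moreover" clause: $\Lambda_1=\partial_{\scriptscriptstyle\PP}\HH^{p,q-1}\cap\overline\Omega$ lifts to $\partial\widetilde C\cap\{\langle\cdot,\cdot\rangle_{p,q}=0\}$ (intersected appropriately with the closed cone), and any two points $y\ne z$ of $\Lambda_1$ lie on $\partial\widetilde C$; picking lifts $x_y,x_z$, the segment $[x_y,x_z]\subset\widetilde C$ has $\langle\cdot,\cdot\rangle_{p,q}\le 0$ throughout, and evaluating at the midpoint gives $\langle x_y,x_z\rangle_{p,q}\le 0$, so $\Lambda_1$ is nonpositive (after checking the sign normalization is consistent, i.e.\ one can choose lifts coherently — here the connectedness/convexity of $\widetilde C$ lets me propagate the sign as in the proof of Lemma~\ref{lem:justif-neg-triple}). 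Finally, if $\Lambda_0$ is negative, then by Lemma~\ref{lem:justif-neg-triple} every triple is negative; I would show $\Lambda_1=\Lambda_0$ by arguing that a point $z\in\Lambda_1\smallsetminus\Lambda_0$ would be a limit of positive combinations of points of $\Lambda_0$, forcing $\langle x,z\rangle_{p,q}=0$ for some $x\in\widetilde\Lambda_0$ (again: $z^\perp$ supports $\widetilde C$ at the boundary point $z$, and the face it cuts out meets $\widetilde\Lambda_0$), contradicting strict negativity on $\Lambda_0\cup\{z\}$ — this is where I expect the only real subtlety, namely controlling the face structure of $\widetilde C$ at boundary points.

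For part~\eqref{item:nonpos-Hpq-Omega}, let $\Omega$ be a nonempty properly convex open subset of $\HH^{p,q-1}$, lift it to a properly convex open cone $\widetilde\Omega\subset\RR^{p,q}$ contained in (one nappe of) $\{\langle\cdot,\cdot\rangle_{p,q}<0\}$. Then $\Lambda:=\partial_{\scriptscriptstyle\PP}\HH^{p,q-1}\cap\overline\Omega$ lifts to $\widetilde\Lambda\subset\partial\widetilde\Omega$ with $\langle x,x\rangle_{p,q}=0$ for $x\in\widetilde\Lambda$. For $x\ne z$ noncollinear in $\widetilde\Lambda$: the whole segment $[x,z]$ lies in $\overline{\widetilde\Omega}$, hence $\langle v,v\rangle_{p,q}\le 0$ for every $v\in[x,z]$ (closure of a set where it is negative); writing $v=(1-s)x+sz$ gives $2s(1-s)\langle x,z\rangle_{p,q}\le 0$, so $\langle x,z\rangle_{p,q}\le 0$. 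This shows the chosen lift makes $\langle\cdot,\cdot\rangle_{p,q}$ nonpositive on noncollinear pairs, i.e.\ $\Lambda$ is nonpositive. The only care needed is the coherent choice of lift: since $\widetilde\Omega$ is connected and lies entirely in one connected nappe of the negative cone, all boundary rays in $\partial\widetilde\Omega\cap\{\langle\cdot,\cdot\rangle_{p,q}=0\}$ can be oriented consistently so that the pairing is nonpositive — this is immediate once one normalizes lifts to lie in the closure of the single nappe.

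The main obstacle in the whole argument is the face-theoretic step in part~\eqref{item:nonpos-Hpq-Lambda0}: showing that passing from $\Lambda_0$ to $\Lambda_1$ cannot enlarge the isotropic boundary set when $\Lambda_0$ is negative, and dually that the interior $\Omega$ genuinely lands in the open cone $\{\langle\cdot,\cdot\rangle_{p,q}<0\}$ rather than touching the isotropic locus. Everything else is a routine expansion of quadratic forms over convex combinations, exactly in the spirit of Remark~\ref{rem:neg-triple-explicit} and the proof of Lemma~\ref{lem:justif-neg-triple}.
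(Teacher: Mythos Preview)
Your approach is the paper's approach --- expand the form on convex combinations via the polarization identity --- but two of your steps are more laborious than they need to be, and one of them has a genuine gap as written.

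For $\Omega\subset\HH^{p,q-1}$: the paper dispatches this in one line. Since $\langle\cdot,\cdot\rangle_{p,q}\le 0$ on the $\RR^+$-span of $\widetilde\Lambda_0$, the open set $\Omega$ is contained in $\HH^{p,q-1}\cup\partial_{\scriptscriptstyle\PP}\HH^{p,q-1}$; but $\partial_{\scriptscriptstyle\PP}\HH^{p,q-1}$ is a hypersurface, hence has empty interior, so $\Omega\subset\HH^{p,q-1}$. Your supporting-hyperplane argument also works (your claim that $v^\perp$ supports $\widetilde C$ follows by expanding $\langle v+tw,v+tw\rangle\le 0$ and letting $t\to 0^+$), but it is overkill.

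For $\Lambda_1=\Lambda_0$ in the negative case: your face-theoretic argument, as stated, does not close. You write that finding $x\in\widetilde\Lambda_0$ with $\langle x,x_z\rangle=0$ ``contradicts strict negativity on $\Lambda_0\cup\{z\}$'', but you only know $\Lambda_0$ is negative, not $\Lambda_0\cup\{z\}$; there is nothing yet preventing $z$ from being orthogonal to a single point of $\Lambda_0$. The paper avoids the face structure entirely: since $\Lambda_0$ is closed in the compact set $\partial_{\scriptscriptstyle\PP}\HH^{p,q-1}$, it is compact, so the closed convex cone on $\widetilde\Lambda_0$ consists precisely of finite nonnegative combinations (Carath\'eodory). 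Thus any $z\in\Lambda_1$ lifts to $\sum_{i=1}^k t_i x_i$ with $x_i\in\widetilde\Lambda_0$ pairwise noncollinear and $t_i>0$; isotropy of $z$ gives $\sum_{i<j}2t_it_j\langle x_i,x_j\rangle_{p,q}=0$ with each term strictly negative, forcing $k=1$ and $z\in\Lambda_0$. This is exactly the computation of Remark~\ref{rem:neg-triple-explicit} that you invoke elsewhere, and it finishes the step without any appeal to faces or extreme rays.

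Part~\eqref{item:nonpos-Hpq-Omega} in your proposal matches the paper's proof.
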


\begin{proof}
\eqref{item:nonpos-Hpq-Lambda0} Let $\widetilde{\Lambda}_0$ be a cone of $\RR^{p,q}\smallsetminus\{ 0\}$ on which $\langle\cdot,\cdot\rangle_{p,q}$ is nonpositive (the nonnegative case is similar).
Using the equality
\begin{equation} \label{eqn:polar}
\Big\langle \sum_i t_i x_i, \sum_i t_i x_i \Big\rangle_{p,q} = \sum_{i,j} t_i t_j \langle x_i, x_j\rangle_{p,q}
\end{equation}
for $t_i\in\RR^+$ and $x_i\in\widetilde{\Lambda}_0$, we see that $\langle\cdot,\cdot\rangle_{p,q}$ is still nonpositive on the $\RR^+$-span of~$\widetilde{\Lambda}_0$.
In particular, $\Lambda_1$ is nonpositive since it is contained in the projectivization of the $\RR^+$-span of~$\widetilde{\Lambda}_0$.
Let $\Omega$ be the projectivization of the interior of this $\RR^+$-span.
Then $\Omega$ is convex, contained in $\HH^{p,q-1}\cup\partial_{\scriptscriptstyle\PP}\HH^{p,q-1}$ and open, hence contained in $\HH^{p,q-1}$ (since $\partial_{\scriptscriptstyle\PP}\HH^{p,q-1}$ is a hypersurface of $\PP(\RR^{p,q})$).
Suppose $\Lambda_0$ is negative, \ie all inner products $\langle\cdot,\cdot\rangle_{p,q}$ of noncollinear points of $\widetilde{\Lambda}_0$ are negative.
Any point $z\in\Lambda_1$ admits a lift to $\RR^{p,q}\smallsetminus\{0\}$ of the form $\sum_{i=1}^k t_i x_i$ where $x_1,\dots,x_k\in\widetilde{\Lambda}_0$ are pairwise noncollinear and $t_1,\dots,t_k\geq 0$.
Since $z\in\partial_{\scriptscriptstyle\PP}\HH^{p,q-1}$, we see from \eqref{eqn:polar} that $\langle x_i,x_j\rangle_{p,q}=0$ for all $1\leq i<j\leq k$, hence $k=1$ and $z\in\Lambda_0$.
This shows that the inclusion $\Lambda_1\supset\Lambda_0$ is an equality when $\Lambda_0$ is negative.

\eqref{item:nonpos-Hpq-Omega} Let $\Omega$ be a nonempty properly convex open subset of $\HH^{p,q-1}$ (the $\SS^{p-1,q}$ case is similar).
We can lift it to a properly convex open cone $\widetilde{\Omega}$ of $\RR^{p,q}\smallsetminus\{0\}$ such that $\langle x,x\rangle_{p,q}<0$ for all $x\in\widetilde{\Omega}$.
Let $\Lambda_1$ be the intersection of $\partial_{\scriptscriptstyle\PP}\HH^{p,q-1}$ with the closure of~$\Omega$, and let $\widetilde{\Lambda}_1$ be a cone of $\RR^{p,q}\smallsetminus\{0\}$ lifting~$\Lambda_1$, contained in the closure of~$\widetilde{\Omega}$.
Using the equality \eqref{eqn:polar} for $t_i\in\RR^+$ and $x_i\in\widetilde{\Lambda}_1$, we see that $\langle\cdot,\cdot\rangle_{p,q}$ is nonpositive on~$\widetilde{\Lambda}_1$.
Thus $\Lambda_1$ is nonpositive.
\end{proof}

\subsection{Irreducible subgroups of $\PO(p,q)$ preserving properly convex domains}

Here is a consequence of Fact~\ref{fact:Yves} (see Figure~\ref{fig:Omega-min-max}).

\begin{proposition} \label{prop:Lambda-non-pos-neg}
For $p,q\in\NN^*$, an irreducible discrete subgroup $\Gamma$ of $G = \PO(p,q)$ preserves a nonempty properly convex subset of $\PP(\RR^{p,q})$ if and only if the following two conditions are satisfied:
\begin{enumerate}[(i)]
  \item\label{item:i-lifts-Hpq} $\Gamma$ contains an element of $G$ which is proximal in $\partial_{\scriptscriptstyle\PP}\HH^{p,q-1}$,
  \item\label{item:ii-lifts-Hpq} $\Lambda_{\Gamma}$ is nonpositive or nonnegative (Definition~\ref{def:non-pos-neg}).
\end{enumerate}
In this case, let $\Omega$ be a nonempty $\Gamma$-invariant properly convex open subset of $\PP(\RR^{p,q})$ and $\widetilde{\Omega}$ a properly convex cone of $\RR^{p,q}\smallsetminus\{0\}$ lifting~$\Omega$.
If $\Lambda_{\Gamma}$ is nonpositive (\resp nonnegative), then $\Lambda_{\Gamma}$ lifts to a cone $\widetilde{\Lambda}_{\Gamma}$ in the boundary of $\widetilde{\Omega}$ on which $\langle\cdot,\cdot\rangle_{p,q}$ is nonpositive (\resp nonnegative).
There is a unique smallest nonempty $\Gamma$-invariant properly convex open subset $\Omega_{\min}$ of $\PP(\RR^{p,q})$ contained in~$\Omega$, namely the interior of the convex hull of $\Lambda_{\Gamma}$ in~$\Omega$.
There is a unique largest $\Gamma$-invariant properly convex open subset $\Omega_{\max}$ of $\PP(\RR^{p,q})$ containing~$\Omega$, namely the projectivization of the interior of the set of $x'\in\RR^{p,q}$ such that $\langle x,x'\rangle_{p,q}\leq\nolinebreak 0$ for all $x\in\widetilde{\Lambda}_{\Gamma}$ (\resp$\langle x,x'\rangle_{p,q}\geq 0$ for all $x\in\widetilde{\Lambda}_{\Gamma}$).
\end{proposition}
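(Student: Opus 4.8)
The plan is to deduce everything from Fact~\ref{fact:Yves} applied with $V = \RR^{p,q}$, using the identification $V \simeq V^*$ given by $x \mapsto \langle x, \cdot\rangle_{p,q}$, as in Remark~\ref{rem:inv-dom}. The key observation is that under this identification the dual proximal limit set $\Lambda_\Gamma^*$ corresponds to $\{z^\perp : z \in \Lambda_\Gamma\}$ as a subset of $\PP(V^*)$, i.e.\ to $\Lambda_\Gamma$ itself; and a lift $\widetilde\Lambda_\Gamma \subset V \smallsetminus \{0\}$ maps to the lift $\widetilde\Lambda_\Gamma^* = \{\langle x, \cdot\rangle_{p,q} : x \in \widetilde\Lambda_\Gamma\} \subset V^* \smallsetminus \{0\}$. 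Thus condition~\eqref{item:ii-lifts} of Fact~\ref{fact:Yves} — that $\ell(x) \le 0$ for all $x \in \widetilde\Lambda_\Gamma$ and $\ell \in \widetilde\Lambda_\Gamma^*$ — translates precisely to $\langle x, x'\rangle_{p,q} \le 0$ for all $x, x' \in \widetilde\Lambda_\Gamma$, i.e.\ to $\langle\cdot,\cdot\rangle_{p,q}$ being nonpositive on $\widetilde\Lambda_\Gamma$. Replacing the form by $-\langle\cdot,\cdot\rangle_{p,q}$ (which does not change the group, see Remark~\ref{rem:Spq-Hpq}) covers the nonnegative case. Also, by Remark~\ref{rem:lim-set-POpq}, an element $g \in G$ is proximal in $\PP(\RR^{p,q})$ if and only if it is proximal in $\partial_{\scriptscriptstyle\PP}\HH^{p,q-1}$, and in that case $g$ is automatically proximal in $\PP(V^*)$ too (since $g^{-1}$ is proximal in $\PP(V)$ by Remark~\ref{rem:lim-set-POpq}, and $V \simeq V^*$ is $\Gamma$-equivariant up to the outer automorphism). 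So condition~\eqref{item:i-lifts} of Fact~\ref{fact:Yves} is equivalent to condition~\eqref{item:i-lifts-Hpq} here.

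First I would establish the equivalence of the two conditions: Fact~\ref{fact:Yves} says $\Gamma$ preserves a nonempty properly convex subset of $\PP(V)$ iff \eqref{item:i-lifts} and \eqref{item:ii-lifts} hold; by the translation above, \eqref{item:i-lifts} $\Leftrightarrow$ \eqref{item:i-lifts-Hpq}, and \eqref{item:ii-lifts} says $\Lambda_\Gamma$ lifts to a cone on which $\langle\cdot,\cdot\rangle_{p,q}$ is nonpositive, which is exactly \eqref{item:ii-lifts-Hpq} (nonpositive case) or, after sign change, the nonnegative case. One subtlety to check carefully: Fact~\ref{fact:Yves} gives \emph{a} lift $\widetilde\Lambda_\Gamma$ with the nonpositivity property, but a priori a different lift could be needed to realize nonpositivity versus nonnegativity — here one must argue that if $\langle\cdot,\cdot\rangle_{p,q}$ is nonpositive on some lift $\widetilde\Lambda_\Gamma$ then it is automatically nonpositive (rather than indefinite) on \emph{every} connected lift, or else simply fold the sign ambiguity into the statement (which is what the ``nonpositive or nonnegative'' phrasing does). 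I expect this to be mostly bookkeeping given Definition~\ref{def:non-pos-neg}.

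Next, given a $\Gamma$-invariant properly convex open $\Omega$ with lift $\widetilde\Omega$, I would invoke part~\eqref{item:2-lifts} of Fact~\ref{fact:Yves}: $\Omega$ and $\Lambda_\Gamma$ lift to cones $\widetilde\Omega \supset \widetilde\Lambda_\Gamma$ (the latter in the boundary of the former), and $\Omega^*, \Lambda_\Gamma^*$ lift compatibly with $\ell(x) \le 0$ throughout. Transporting $\widetilde\Lambda_\Gamma^* \subset V^*$ back to $V$ via the form identification gives a lift of $\Lambda_\Gamma$ sitting in $\partial\widetilde\Omega$ on which $\langle\cdot,\cdot\rangle_{p,q} \le 0$ (resp.\ $\ge 0$ after sign change); this is the asserted $\widetilde\Lambda_\Gamma$. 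The existence of $\Omega_{\min}$ as the interior of the convex hull of $\Lambda_\Gamma$ in $\Omega$ is then literally part~(3) of Fact~\ref{fact:Yves}. For $\Omega_{\max}$, part~(4) of Fact~\ref{fact:Yves} says it is the dual convex set to the projectivization of the interior of the $\RR^+$-span of $\widetilde\Lambda_\Gamma^*$; unwinding the definition of dual convex set (Section~\ref{subsec:prop-conv-proj}) and transporting via $V \simeq V^*$, this is exactly $\PP$ of the interior of $\{x' \in \RR^{p,q} : \langle x, x'\rangle_{p,q} \le 0 \ \forall x \in \widetilde\Lambda_\Gamma\}$ (resp.\ with $\ge 0$), matching Remark~\ref{rem:inv-dom}.

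The main obstacle will be the sign-bookkeeping around the identification $V \simeq V^*$ and the interplay between ``nonpositive'' and ``nonnegative'': one must be consistent about which lift of $\Lambda_\Gamma$ is used, and verify that the outer automorphism $\PO(p,q) \simeq \PO(q,p)$ (replacing $\langle\cdot,\cdot\rangle_{p,q}$ by its negative) genuinely swaps the two cases without affecting any of the asserted statements about $\Omega_{\min}$ and $\Omega_{\max}$. All the substantive convex-geometry content is already packaged in Fact~\ref{fact:Yves}; the work here is purely translating it through the quadratic form, so once the dictionary $\Lambda_\Gamma^* \leftrightarrow \{z^\perp\} \leftrightarrow \Lambda_\Gamma$ and the sign conventions are pinned down, the rest is immediate.
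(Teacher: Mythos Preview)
Your overall strategy is right, and most of the translation through $V\simeq V^*$ via $\psi:x\mapsto\langle x,\cdot\rangle_{p,q}$ is correct. There is, however, a genuine gap in the direction ``$\Gamma$ preserves a properly convex set $\Rightarrow$ $\Lambda_\Gamma$ is nonpositive or nonnegative''. You write that condition~\eqref{item:ii-lifts} of Fact~\ref{fact:Yves} ``translates precisely to $\langle x,x'\rangle_{p,q}\le 0$ for all $x,x'\in\widetilde{\Lambda}_\Gamma$''. This would be true if the lift $\widetilde{\Lambda}_\Gamma^*$ appearing in Fact~\ref{fact:Yves} were equal to $\psi(\widetilde{\Lambda}_\Gamma)$, but Fact~\ref{fact:Yves} only asserts the existence of \emph{some} lifts $\widetilde{\Lambda}_\Gamma\subset V$ and $\widetilde{\Lambda}_\Gamma^*\subset V^*$ with $\ell(x)\le 0$; it does not say they correspond under~$\psi$. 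Pulling $\widetilde{\Lambda}_\Gamma^*$ back through $\psi$ gives a \emph{second} lift $\widetilde{\Lambda}_\Gamma'$ of $\Lambda_\Gamma$ in~$V$, and what you actually get is $\langle x',x\rangle_{p,q}\le 0$ for $x\in\widetilde{\Lambda}_\Gamma$ and $x'\in\widetilde{\Lambda}_\Gamma'$. A priori $\widetilde{\Lambda}_\Gamma'$ and $\widetilde{\Lambda}_\Gamma$ differ by an arbitrary sign function $\varepsilon:\Lambda_\Gamma\to\{\pm 1\}$, and this is not ``mostly bookkeeping'': ruling out a mixed sign function is the substantive step.

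The paper closes this gap by a minimality argument. Working with the lifts contained in $\partial\widetilde{\Omega}$ and $\partial\widetilde{\Omega}^*$ (as in Fact~\ref{fact:Yves}.\eqref{item:2-lifts}), one splits $\widetilde{\Lambda}_\Gamma$ into the two subcones $F^\pm=\{x\in\widetilde{\Lambda}_\Gamma:\psi(x)\in\pm\widetilde{\Lambda}_\Gamma^*\}$. One checks that $F^-=\{x\in\widetilde{\Lambda}_\Gamma:\langle x,x'\rangle_{p,q}\le 0\ \forall x'\in\widetilde{\Lambda}_\Gamma\}$ (and similarly for $F^+$), hence $F^-,F^+$ are closed and invariant under the lift $\widehat{\Gamma}\subset\OO(p,q)$ preserving~$\widetilde{\Omega}$. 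They are disjoint because, by irreducibility of~$\Gamma$, no nonzero $x$ satisfies $\langle x,x'\rangle_{p,q}=0$ for all $x'\in\widetilde{\Lambda}_\Gamma$. Their projections are therefore disjoint closed $\Gamma$-invariant subsets of $\Lambda_\Gamma$, and since the $\Gamma$-action on $\Lambda_\Gamma$ is minimal (Section~\ref{subsec:limit-set}), one of them is all of $\Lambda_\Gamma$ and the other is empty. This is exactly what yields ``nonpositive or nonnegative''. Once you insert this argument, the rest of your plan (the description of $\Omega_{\min}$, $\Omega_{\max}$, and the lift in $\partial\widetilde{\Omega}$) goes through as you say.
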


\begin{figure}
[h!]
\centering
\includegraphics[scale=1]{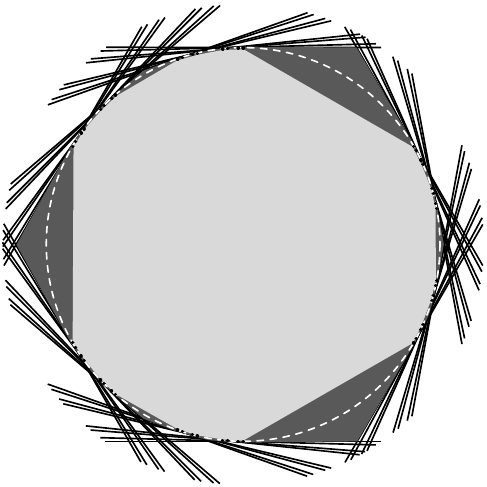}
\hspace{0.8cm}
\includegraphics[scale=1.2]{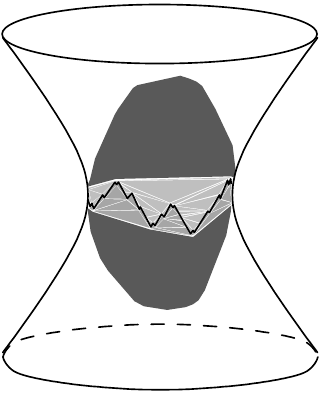}
\caption{The sets $\Omega_{\min}\subset\HH^{p,q-1}$ (light gray) and $\Omega_{\max}\subset\PP(\RR^{p,q})$ (dark gray), for a negative proximal limit set $\Lambda_{\Gamma}$.
On the left $(p,q)=(2,1)$, and on the right $(p,q)=(2,2)$.}
\label{fig:Omega-min-max}
\end{figure}

\begin{proof}
Taking $V=\RR^{p,q}$, we only need to check that condition~\eqref{item:i-lifts-Hpq} of Proposition~\ref{prop:Lambda-non-pos-neg} is equivalent to condition~\eqref{item:i-lifts} of Fact~\ref{fact:Yves}, that condition~\eqref{item:ii-lifts-Hpq} of Proposition~\ref{prop:Lambda-non-pos-neg} implies condition~\eqref{item:ii-lifts} of Fact~\ref{fact:Yves}, and that the existence of a nonempty $\Gamma$-invariant properly convex open subset of $\PP(\RR^{p,q})$ implies condition~\eqref{item:ii-lifts-Hpq} of Proposition~\ref{prop:Lambda-non-pos-neg}.

An element $g\in G=\PO(p,q)$ is proximal in $\PP(\RR^{p,q})$ if and only if $g^{-1}$ is proximal in $\PP(\RR^{p,q})$, because the set of eigenvalues of~$g$ is stable under taking inverses, and so $g$ has a unique eigenvalue of maximal modulus if and only if $g^{-1}$ has.
On the other hand, $g^{-1}$ is proximal in $\PP(\RR^{p,q})$ if and only if $g$ is proximal in $\PP((\RR^{p,q})^*)$ (see Section~\ref{subsec:limit-set}).
Using Remark~\ref{rem:lim-set-POpq}, we see that condition~\eqref{item:i-lifts} of Fact~\ref{fact:Yves} is equivalent to condition~\eqref{item:i-lifts-Hpq} of Proposition~\ref{prop:Lambda-non-pos-neg} for $\Gamma\subset G=\PO(p,q)$.

Suppose that $\Lambda_{\Gamma}$ is nonpositive (\resp nonnegative), \ie we can lift it to a cone $\widetilde{\Lambda}_{\Gamma}$ of $\RR^{p,q}\smallsetminus\{0\}$ on which $\langle\cdot,\cdot\rangle_{p,q}$ is nonpositive (\resp nonnegative).
Recall from Remark~\ref{rem:inv-dom} that the map $\psi : x\mapsto\langle x,\cdot\rangle_{p,q}$ identifies $\RR^{p,q}$ with $(\RR^{p,q})^*$ and $\Lambda_{\Gamma}\subset\PP(\RR^{p,q})$ with $\Lambda_{\Gamma}^*\subset\PP((\RR^{p,q})^*)$.
The set $\widetilde{\Lambda}_{\Gamma}^* := \psi(\widetilde{\Lambda}_{\Gamma})$ (\resp $\widetilde{\Lambda}_{\Gamma}^* := - \psi(\widetilde{\Lambda}_{\Gamma})$) is a cone of $(\RR^{p,q})^*\smallsetminus\nolinebreak\{0\}$ lifting $\Lambda_{\Gamma}^*$, and by construction $\ell(x)\leq 0$ for all $x\in\widetilde{\Lambda}_{\Gamma}$ and $\ell\in\widetilde{\Lambda}_{\Gamma}^*$.
Thus condition~\eqref{item:ii-lifts-Hpq} of Proposition~\ref{prop:Lambda-non-pos-neg} implies condition~\eqref{item:ii-lifts} of Fact~\ref{fact:Yves} for $\Gamma\subset G=\PO(p,q)$.

Suppose that there exists a nonempty $\Gamma$-invariant properly convex open subset $\Omega$ of $\PP(\RR^{p,q})$.
It lifts to a properly convex cone $\widetilde{\Omega}$ of $\RR^{p,q}\smallsetminus\{0\}$, and $\Lambda_{\Gamma}$ lifts to a cone $\widetilde{\Lambda}_{\Gamma}$ of $\RR^{p,q}\smallsetminus\{0\}$ contained in the boundary of~$\widetilde{\Omega}$.
Let
$$\widetilde{\Omega}^* := \big\{ \ell\in (\RR^{p,q})^* ~|~ \ell(x)<0\quad \forall x\in\overline{\widetilde{\Omega}}\big\},$$
where $\overline{\widetilde{\Omega}}$ is the closure of $\widetilde{\Omega}$ in $\RR^{p,q}\smallsetminus \{0\}$.
The set $\widetilde{\Omega}^*$ is a properly convex cone of $(\RR^{p,q})^*\smallsetminus\{0\}$ lifting~$\Omega^*$. 
The set~$\Lambda_{\Gamma}^*$ lifts to a cone $\widetilde{\Lambda}_{\Gamma}^*$ of $(\RR^{p,q})^*\smallsetminus\{0\}$ contained in the boundary of~$\widetilde{\Omega}^*$.
By construction, $\ell(x)\leq 0$ for all $x\in\widetilde{\Lambda}_{\Gamma}$ and $\ell\in\widetilde{\Lambda}_{\Gamma}^*$.
Let $\widehat\Gamma \subset \OO(p,q)$ be the lift of~$\Gamma$ leaving invariant $\widetilde{\Omega}$ (hence also $\widetilde{\Omega}^*$, $\widetilde{\Lambda}_\Gamma$, and $\widetilde{\Lambda}_\Gamma^*$). 
Since $\psi$ induces an identification between $\Lambda_{\Gamma}$ and $\Lambda_{\Gamma}^*$, we have $\psi(x)\in\widetilde{\Lambda}_{\Gamma}^*\cup -\widetilde{\Lambda}_{\Gamma}^*$ for all $x\in\widetilde{\Lambda}_{\Gamma}$.
Let $F^-$ (\resp $F^+$) be the subcone of $\widetilde{\Lambda}_{\Gamma}$ consisting of those vectors $x$ such that $\psi(x)\in\widetilde{\Lambda}_{\Gamma}^*$ (\resp $\psi(x)\in -\widetilde{\Lambda}_{\Gamma}^*$).
By construction, we have $x\in F^-$ if and only if $\langle x,x'\rangle_{p,q}\leq 0$ for all $x'\in\widetilde{\Lambda}_{\Gamma}$; in particular, $F^-$ is closed in~$\widetilde{\Lambda}_{\Gamma}$ and $\widehat\Gamma$-invariant.
Similarly, $F^+$ is closed and $\widehat\Gamma$-invariant.
The sets $F^-$ and~$F^+$ are disjoint since no $x\in\RR^{p,q}\smallsetminus\{0\}$ can satisfy $\langle x,x'\rangle_{p,q}=\nolinebreak 0$ for all $x'\in\nolinebreak\widetilde{\Lambda}_{\Gamma}$, otherwise the $\Gamma$-invariant subset ${\Lambda}_{\Gamma}$ of $\PP(\RR^{p,q})$ would be contained in the hyperplane $\PP(x^{\perp})$, contradicting the irreducibility of~$\Gamma$.
Thus $F^-$ and~$F^+$ are disjoint, $\widehat \Gamma$-invariant, closed subcones of~$\widetilde{\Lambda}_{\Gamma}$, whose projections to $\PP(\RR^{p,q})$ are disjoint, $\Gamma$-invariant, closed subsets of~$\Lambda_{\Gamma}$.
Since $\Gamma$ is irreducible, $\Lambda_{\Gamma}$ is the smallest nonempty $\Gamma$-invariant closed subset of $\PP(\RR^{p,q})$ (see Section~\ref{subsec:limit-set}), and so $\widetilde{\Lambda}_{\Gamma} = F^-$ or $\widetilde{\Lambda}_{\Gamma} = F^+$.
In the first case $\Lambda_{\Gamma}$ is nonpositive, and in the second case it is nonnegative.
\end{proof}

In the setting of Proposition~\ref{prop:Lambda-non-pos-neg}, if $\Lambda_{\Gamma}\subset\partial_{\scriptscriptstyle\PP}\HH^{p,q-1}$ is nonpositive (\resp nonnegative), then $\Omega_{\min}$ is contained in $\HH^{p,q-1}$ (\resp $\SS^{p-1,q}$) by Lemma~\ref{lem:nonpos-Hpq}.\eqref{item:nonpos-Hpq-Lambda0}.
We shall use the following in the stronger situation that $\Lambda_{\Gamma}$ is negative (\resp positive).

\begin{lemma} \label{lem:Omega-min-Hpq}
In the setting of Proposition~\ref{prop:Lambda-non-pos-neg}, if $\Lambda_{\Gamma}\subset\partial_{\scriptscriptstyle\PP}\HH^{p,q-1}$ is negative (\resp positive), then the closure $\C_{\min}$ of $\Omega_{\min}$ in $\HH^{p,q-1}$ (\resp $\SS^{p-1,q}$) is contained in~$\Omega_{\max}$.
\end{lemma}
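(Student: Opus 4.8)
The statement to prove is Lemma~\ref{lem:Omega-min-Hpq}: in the setting of Proposition~\ref{prop:Lambda-non-pos-neg}, if $\Lambda_{\Gamma}\subset\partial_{\scriptscriptstyle\PP}\HH^{p,q-1}$ is \emph{negative}, then the closure $\C_{\min}$ of $\Omega_{\min}$ in $\HH^{p,q-1}$ is contained in $\Omega_{\max}$ (the positive case being symmetric via Remark~\ref{rem:Spq-Hpq}). The plan is to work with a lift $\widetilde{\Lambda}_{\Gamma}$ of $\Lambda_{\Gamma}$ to a cone of $\RR^{p,q}\smallsetminus\{0\}$ on which $\langle\cdot,\cdot\rangle_{p,q}$ is negative on noncollinear pairs, as provided by Lemma~\ref{lem:justif-neg-triple} (and consistent with the lift in Proposition~\ref{prop:Lambda-non-pos-neg}). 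Then $\Omega_{\min}$ is the projectivization of the interior of the $\RR^+$-span of $\widetilde{\Lambda}_{\Gamma}$, and by Proposition~\ref{prop:Lambda-non-pos-neg}, $\Omega_{\max}$ is the projectivization of the interior of $\{x'\in\RR^{p,q} : \langle x, x'\rangle_{p,q}\leq 0 \text{ for all } x\in\widetilde{\Lambda}_{\Gamma}\}$. So the task reduces to a statement purely about the quadratic form and the cone $\widetilde{\Lambda}_{\Gamma}$.

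First I would describe $\C_{\min}$ explicitly. A point of $\overline{\Omega_{\min}}$ (closure in $\PP(\RR^{p,q})$) lifts to a nonnegative combination $v=\sum_{i=1}^k t_i x_i$ with $x_i\in\widetilde{\Lambda}_{\Gamma}$ pairwise noncollinear and $t_i>0$; by the polarization identity \eqref{eqn:polar} and negativity of the off-diagonal inner products, $\langle v,v\rangle_{p,q} = \sum_{i<j} 2 t_i t_j\langle x_i,x_j\rangle_{p,q} \leq 0$, with equality iff $k=1$. Hence $\overline{\Omega_{\min}}$ meets $\partial_{\scriptscriptstyle\PP}\HH^{p,q-1}$ exactly in $\Lambda_{\Gamma}$, and $\C_{\min}=\overline{\Omega_{\min}}\cap\HH^{p,q-1}$ consists precisely of the classes $[v]$ with $v=\sum_{i=1}^k t_i x_i$ as above and $k\geq 2$ (together with the open part $\Omega_{\min}$ itself). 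Now I need to check that every such $[v]$ lies in $\Omega_{\max}$, i.e.\ that $v$ (suitably signed) lies in the interior of the dual cone $\{x' : \langle x,x'\rangle_{p,q}\leq 0 \ \forall x\in\widetilde{\Lambda}_{\Gamma}\}$.

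The key computation: for $v=\sum_{i=1}^k t_i x_i$ with $k\geq 2$ and any $x\in\widetilde{\Lambda}_{\Gamma}$, we have $\langle v,x\rangle_{p,q} = \sum_i t_i\langle x_i,x\rangle_{p,q}$. Each term $\langle x_i,x\rangle_{p,q}$ is negative unless $x$ is collinear with $x_i$, in which case it is $0$ (since $\langle x_i,x_i\rangle_{p,q}=0$). Since $k\geq 2$ and the $x_i$ are pairwise noncollinear, at most one index $i$ can have $x$ collinear with $x_i$, so at least $k-1\geq 1$ of the terms are strictly negative and none is positive; hence $\langle v,x\rangle_{p,q}<0$ \emph{strictly}, for every $x\in\widetilde{\Lambda}_{\Gamma}$. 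Thus $v$ lies in the \emph{strict} dual cone, hence in its interior provided one argues compactly, e.g.\ restricting $x$ to the compact set $\widetilde{\Lambda}_{\Gamma}\cap S$ for a sphere $S$ in an affine chart and using continuity plus the fact that $\widetilde{\Lambda}_{\Gamma}$ is closed. This shows $[v]\in\Omega_{\max}$. Together with $\Omega_{\min}\subset\Omega_{\max}$ (Proposition~\ref{prop:Lambda-non-pos-neg}), we conclude $\C_{\min}\subset\Omega_{\max}$.

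\textbf{Main obstacle.} The one point requiring care is passing from ``$\langle v,x\rangle_{p,q}<0$ for all $x\in\widetilde{\Lambda}_{\Gamma}$'' to ``$[v]$ lies in the \emph{interior}'' defining $\Omega_{\max}$ — i.e.\ ruling out that $v$ sits on the boundary of the dual cone. This needs a compactness/uniformity argument: one should normalize representatives of $\Lambda_{\Gamma}$ (e.g.\ to a compact cross-section $\widetilde\Lambda_{\Gamma}\cap S$), observe $x\mapsto\langle v,x\rangle_{p,q}$ is continuous and strictly negative on this compact set, hence bounded above by some $-\epsilon<0$, and then the same holds for an open neighborhood of $v$ in $\RR^{p,q}$, showing $v$ is interior to $\{x' : \langle x,x'\rangle_{p,q}\leq 0 \ \forall x\in\widetilde\Lambda_{\Gamma}\}$. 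A subtlety to mention is the sign convention: one fixes the lift $\widetilde{\Omega}$ (and correspondingly $\widetilde\Lambda_{\Gamma}$, $\widetilde{\Omega}^*$) as in the proof of Proposition~\ref{prop:Lambda-non-pos-neg}, so that all the inequalities line up; with the negative lift of $\Lambda_{\Gamma}$, the vector $v$ (not $-v$) is the one landing in the interior of $\widetilde{\Omega}_{\max}$ that projectivizes to $\Omega_{\max}$. Apart from this, the argument is a direct manipulation of \eqref{eqn:polar} and the strict negativity on noncollinear pairs guaranteed by the negativity hypothesis — no further input is needed.
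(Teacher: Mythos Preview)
Your proposal is correct and follows essentially the same approach as the paper: the paper's proof also fixes a negative lift $\widetilde{\Lambda}_{\Gamma}$, expands $\langle x,\sum_i t_i x_i\rangle_{p,q}=\sum_i t_i\langle x,x_i\rangle_{p,q}$ to get strict negativity for any $x\in\widetilde{\Lambda}_{\Gamma}$, and concludes $\C_{\min}\subset\Omega_{\max}$. You are in fact more careful than the paper on two points it leaves implicit --- the restriction to combinations with $k\geq 2$ noncollinear terms (so that strict negativity actually holds), and the compactness argument upgrading ``$\langle v,x\rangle_{p,q}<0$ for all $x\in\widetilde{\Lambda}_{\Gamma}$'' to membership in the \emph{interior} of the dual cone defining $\Omega_{\max}$.
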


\begin{proof}
Suppose $\Lambda_{\Gamma}$ is negative.
For any $x\in\widetilde{\Lambda}_{\Gamma}$, using the equality
$$\Big\langle x, \sum_i t_i x_i \Big\rangle_{p,q} = \sum_i t_i \langle x, x_i\rangle_{p,q}$$
for $t_i\in\RR^+$ and $x_i\in\widetilde{\Lambda}_{\Gamma}$, we see that $\langle x,\cdot\rangle_{p,q}$ is negative on the $\RR^+$-span of~$\widetilde{\Lambda}_{\Gamma}$ minus $\{0\}$.
In particular, the set $\C_{\min}$, which is the projectivization of this $\RR^+$-span minus $\{0\}$, is contained in $\Omega_{\max}$, which is the projectivization of the interior of the set of $x'\in\RR^{p,q}$ such that $\langle x,x'\rangle_{p,q}\leq 0$ for all $x\in\widetilde{\Lambda}_{\Gamma}$.

The case that $\Lambda_{\Gamma}$ is positive is analogous.
\end{proof}

\section{$\HH^{p,q-1}$-convex cocompact groups are Anosov} \label{sec:cc->Ano}

The goal of this section is to prove the implications \eqref{item:a1}~$\Rightarrow$~\eqref{item:a2} and \eqref{item:a3}~$\Rightarrow$~\eqref{item:a4} of Theorem~\ref{thm:main-negative}, which contain Theorem~\ref{thm:main}.\eqref{item:ccc-implies-Anosov}.
By the following observation, which is immediate from the definitions, we can focus on \eqref{item:a1}~$\Rightarrow$~\eqref{item:a2} only.

\begin{remark}\label{rem:pos-impl}
A representation $\rho : \Gamma\to\PO(p,q)$ is $P_1^{p,q}$-Anosov if and only if it is $P_1^{q,p}$-Anosov under the identification $\PO(p,q)\simeq\PO(q,p)$.
A subset of $\partial_{\scriptscriptstyle\PP}\HH^{p,q-1}$ is positive if and only if it is negative under the identification $\partial_{\scriptscriptstyle\PP}\HH^{p,q-1}\simeq\partial_{\scriptscriptstyle\PP}\HH^{q,p-1}$.
\end{remark}

We also prove (Lemma~\ref{lem:Lambda-Lambda-Gamma} and Remark~\ref{rem:conseq-Lambda-Lambda-Gamma}) that if $\Gamma$ is $\HH^{p,q-1}$-convex cocompact, then for any nonempty, closed, properly convex subset $\C$ of $\HH^{p,q-1}$ on which $\Gamma$ acts properly discontinuously and cocompactly, the ideal boundary $\partial_i\C$ is the proximal limit set $\Lambda_\Gamma\subset\partial_{\scriptscriptstyle\PP}\HH^{p,q-1}$.

\subsection{Working inside a properly convex \emph{open} domain}

The following lemma will enable us to use the restriction to~$\C$ of the Hilbert metric of $\Omega_{\max}$.

\begin{lemma} \label{lem:no-degenerate-faces}
Let $\Gamma$ be an irreducible discrete subgroup of $G=\PO(p,q)$ acting properly discontinuously and cocompactly on some nonempty properly convex closed subset $\C$ of~$\HH^{p,q-1}$.
Then $\C$ is contained in a maximal $\Gamma$-invariant properly convex open subset $\Omega_{\max}$ of $\PP(\RR^{p,q})$.
\end{lemma}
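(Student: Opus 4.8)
The plan is to show that the proximal limit set $\Lambda_\Gamma$ is negative, and then invoke Lemma~\ref{lem:nonpos-Hpq} together with Proposition~\ref{prop:Lambda-non-pos-neg} to produce $\Omega_{\max}$ containing $\C$. First I would record that $\Gamma$ is irreducible and acts properly discontinuously on $\C$, so (as explained in the introduction) $\Lambda_\Gamma \subset \partial_i\C \subset \partial_{\scriptscriptstyle\PP}\HH^{p,q-1}$; in particular $\Lambda_\Gamma$ is nonpositive by Lemma~\ref{lem:nonpos-Hpq}.\eqref{item:nonpos-Hpq-Omega} applied to the interior of $\C$ (which is a nonempty properly convex open subset of $\HH^{p,q-1}$, using that $\C$ has nonempty interior as it is properly convex with the cocompact action forcing $\mathrm{Int}(\C)\neq\emptyset$; alternatively invoke the remark after Definition~\ref{def:ccc-Hpq} on irreducibility). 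Since $\Gamma$ is irreducible and infinite, it contains a proximal element (Benoist, \cite{ben00}), so condition~\eqref{item:i-lifts-Hpq} of Proposition~\ref{prop:Lambda-non-pos-neg} holds; combined with nonpositivity of $\Lambda_\Gamma$, Proposition~\ref{prop:Lambda-non-pos-neg} already yields a largest $\Gamma$-invariant properly convex open subset $\Omega_{\max}$ of $\PP(\RR^{p,q})$. So the only real content is that $\C \subset \Omega_{\max}$.

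The key step is to upgrade nonpositivity of $\Lambda_\Gamma$ to \emph{negativity}, which is where the hypothesis that $\partial_i\C$ is transverse (contains no nontrivial projective segment) enters, together with cocompactness. Here is the approach I would take. Suppose $\Lambda_\Gamma$ were merely nonpositive but not negative: then there exist distinct points $y,z \in \Lambda_\Gamma$ with lifts $\tilde y,\tilde z$ satisfying $\langle \tilde y,\tilde z\rangle_{p,q}=0$, i.e.\ $y\in z^\perp$. Since $\Lambda_\Gamma \subset \partial_i\C$ and $\partial_i\C$ is transverse, this is already a contradiction --- but one must be careful that $\Lambda_\Gamma$ having at least two points is genuine (true by irreducibility, Lemma~\ref{lem:snowman} discussion) and that the nonpositive-not-negative dichotomy on a lift really does produce such a pair. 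Concretely: if $\Lambda_\Gamma$ lifts to a cone on which $\langle\cdot,\cdot\rangle_{p,q}\leq 0$ but some pair of noncollinear vectors $\tilde y,\tilde z$ in that cone has $\langle \tilde y,\tilde z\rangle_{p,q}=0$, then $y = [\tilde y]$ and $z=[\tilde z]$ are distinct points of $\Lambda_\Gamma$ with $y\in z^\perp$, violating transversality of $\partial_i\C \supset \Lambda_\Gamma$. Hence $\Lambda_\Gamma$ is negative. (Symmetrically it cannot be nonnegative-not-positive, but we are in the $\HH^{p,q-1}$ picture so nonpositivity is the relevant case.)

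Now with $\Lambda_\Gamma$ negative, Lemma~\ref{lem:Omega-min-Hpq} gives that the closure $\C_{\min}$ of $\Omega_{\min}$ in $\HH^{p,q-1}$ is contained in $\Omega_{\max}$. It remains to see $\C \subset \Omega_{\max}$ rather than just $\C_{\min}$. For this I would argue that $\C \subset \Omega_{\max}$ directly: $\Omega_{\max}$ is, by Remark~\ref{rem:inv-dom}, the connected component of $\PP(\RR^{p,q}) \smallsetminus \bigcup_{z\in\Lambda_\Gamma} z^\perp$ containing $\Omega_{\min}$; so it suffices to check that $\C$ is disjoint from every hyperplane $z^\perp$ with $z\in\Lambda_\Gamma$, and that $\C$ is connected and meets $\Omega_{\max}$. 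Connectedness and meeting $\Omega_{\max}$ are clear ($\C$ is convex with nonempty interior, and $\Omega_{\min}\subset\mathrm{Int}(\C)$). For disjointness: given $w\in\C$ and $z\in\Lambda_\Gamma\subset\partial_i\C$, pick a proximal $\gamma\in\Gamma$ with attracting fixed point near $z$; the point $w$ lies off the repelling hyperplane $H_\gamma^-$ (which does not meet $\C$, since $H_\gamma^-$ corresponds under $\psi$ to the attracting point of $\gamma^{-1}$ which lies in $\partial\Omega^*$ by Fact~\ref{fact:Yves}, so $H_\gamma^-\cap\Omega=\emptyset$ and a fortiori $H_\gamma^-\cap\C=\emptyset$ as $\C\subset\overline\Omega$... here one takes $\Omega=\mathrm{Int}(\C)$). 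Pushing $w$ forward, $\gamma^m\cdot w \to \xi_\gamma^+$, and the whole geodesic ray stays in $\C$ by convexity; taking a limit and using that $z$ ranges over a dense subset of $\Lambda_\Gamma$ (minimality of the $\Gamma$-action on $\Lambda_\Gamma$), one concludes $w\notin z^\perp$ for all $z\in\Lambda_\Gamma$, i.e.\ $\C\subset\PP(\RR^{p,q})\smallsetminus\bigcup_z z^\perp$. The main obstacle is this last disjointness argument --- making precise that no point of $\C$ lies on any $z^\perp$ for $z\in\Lambda_\Gamma$; the cleanest route may instead be to observe that $\C \subset \overline{\mathrm{Int}(\C)} = \overline\Omega$ and that $\Omega$, being a $\Gamma$-invariant properly convex open set, satisfies $\Omega \subset \Omega_{\max}$ by the maximality statement in Proposition~\ref{prop:Lambda-non-pos-neg}, so $\overline\Omega \subset \overline{\Omega_{\max}}$; then one still needs that $\C$, which is closed in $\HH^{p,q-1}$, avoids $\partial_{\scriptscriptstyle\PP}\Omega_{\max}$, which again comes down to transversality of $\partial_i\C$ and the description of $\partial_{\scriptscriptstyle\PP}\Omega_{\max}$ as lying in $\bigcup_{z\in\Lambda_\Gamma} z^\perp$. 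Either way, transversality of $\partial_i\C$ plus Fact~\ref{fact:Yves}/Proposition~\ref{prop:Lambda-non-pos-neg} is the crux.
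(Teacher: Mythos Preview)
There is a genuine gap: your argument repeatedly invokes the hypothesis that $\partial_i\C$ is transverse (contains no nontrivial projective segment), but Lemma~\ref{lem:no-degenerate-faces} does \emph{not} assume this. Transversality is part of Definition~\ref{def:ccc-Hpq} of $\HH^{p,q-1}$-convex cocompactness, but the lemma is stated and proved under the weaker hypothesis that $\Gamma$ acts properly discontinuously and cocompactly on some nonempty properly convex closed $\C\subset\HH^{p,q-1}$, full stop. (Indeed the lemma is then applied in the proof of Lemma~\ref{lem:Lambda-Lambda-Gamma}, where transversality of $\Lambda_\Gamma$ is an \emph{additional} assumption, and the conclusion that $\partial_i\C$ is transverse is part of what is being established.) Without transversality you cannot upgrade nonpositivity of $\Lambda_\Gamma$ to negativity, so your key step collapses; and even if you could, your final ``disjointness'' argument that $\C\cap z^\perp=\emptyset$ for all $z\in\Lambda_\Gamma$ is left essentially as a sketch and would still need work.

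The paper's proof avoids all of this by a direct dynamical argument that uses only cocompactness and proper discontinuity. One takes $\Omega_{\max}$ to be the maximal $\Gamma$-invariant properly convex open set containing $\mathrm{Int}(\C)$ (Fact~\ref{fact:Yves}, Remark~\ref{rem:inv-dom}) and supposes for contradiction that some $y\in\C$ lies in $z^\perp$ for some $z\in\Lambda_\Gamma\subset\partial_i\C$. The segment $[y,z)$ is then a lightlike ray contained in~$\C$. Choosing $a_m\in[y,z)$ with $a_m\to z$ and using cocompactness to pull $a_m$ back into a fixed compact set by elements $\gamma_m\in\Gamma$, one extracts limits $a_\infty\in\C$, $y_\infty\in\overline\C$, $z_\infty\in\Lambda_\Gamma$ with $a_\infty\in[y_\infty,z_\infty]\subset z_\infty^\perp$. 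A lightlike line meets $\partial_{\scriptscriptstyle\PP}\HH^{p,q-1}$ only at one point, so $y_\infty$ cannot be ideal (else $y_\infty=z_\infty$ and $\overline\C$ would contain a full projective line), hence $y_\infty\in\C$; but then the $\gamma_m$-orbit of $y$ accumulates in $\C$, contradicting proper discontinuity. No transversality, no negativity of $\Lambda_\Gamma$, is needed.
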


The point of Lemma~\ref{lem:no-degenerate-faces} is that, not only the interior of~$\C$, but also its boundary in $\HH^{p,q-1}$ is contained in such a set~$\Omega_{\max}$.

Recall from Fact~\ref{fact:Yves} and Remark~\ref{rem:inv-dom} that in this setting the proximal limit set $\Lambda_{\Gamma}$ is contained in the ideal boundary $\partial_i\C$ of~$\C$; a maximal $\Gamma$-invariant properly convex open subset $\Omega_{\max}$ of $\PP(\RR^{p,q})$ containing~$\C$ is unique.

\begin{proof}
Let $\Omega_{\max}$ be the largest $\Gamma$-invariant properly convex open subset of $\PP(\RR^{p,q})$ containing $\mathrm{Int}(\C)$, as in Fact~\ref{fact:Yves} and Remark~\ref{rem:inv-dom}.
Suppose by contradiction that $\C$ is not contained in $\Omega_{\max}$.
By Fact~\ref{fact:Yves} and Remark~\ref{rem:inv-dom}, this means that some point $y\in\C$ belongs to $z^{\perp}$ for some $z \in \Lambda_{\Gamma} \subset \partial_i\C$.
The interval $[y,z)$ is a lightlike ray of~$\HH^{p,q-1}$.
By convexity of~$\C$, it is fully contained in~$\C$.
Let $(a_m)_{m\in\NN}$ be a sequence of points of $[y,z)$ converging to~$z$ (see Figure~\ref{fig:CinOhm}).
Since $\Gamma$ acts cocompactly on~$\C$, for any $m$ there exists $\gamma_m\in\Gamma$ such that $\gamma_m\cdot a_m$ belongs to a fixed compact subset of~$\mathcal{C}$.
Up to taking a subsequence, the sequences $(\gamma_m\cdot a_m)_m$ and $(\gamma_m\cdot y)_m$ and $(\gamma_m\cdot z)_m$ converge respectively to some points $a_{\infty}, y_{\infty}, z_{\infty}$ in~$\overline{\C}$, with $a_{\infty}\in\C$ and $z_{\infty}\in\nolinebreak\Lambda_{\Gamma}$.
Since $a_{\infty}\in [y_{\infty},z_{\infty}]\subset z_{\infty}^{\perp}$, the intersection of $[y_{\infty},z_{\infty}]$ with $\HH^{p,q-1}$ is contained in a lightlike geodesic, hence can meet $\partial_{\scriptscriptstyle\PP}\HH^{p,q-1}$ only at~$z_\infty$.
Thus $y_{\infty}$ cannot belong to $\partial_{\scriptscriptstyle\PP}\HH^{p,q-1}$, lest $y_{\infty}=z_{\infty}$ and the closure of $\C$ in $\PP(\RR^{p,q})$ contain a full projective line, contradicting the proper convexity of~$\C$.
Therefore, $y_{\infty}\in\C$.
But this contradicts the proper discontinuity of the action of $\Gamma$ on~$\mathcal{C}$. 
\end{proof}
\begin{figure}[h]
\centering
\labellist
\small\hair 2pt
\pinlabel {$z$} [u] at 37 52
\pinlabel {$a_m$} [u] at 37 60
\pinlabel {$y$} [u] at 43 75
\pinlabel {$\gamma_m \cdot z$} [u] at 73.6 35.5
\pinlabel {$\gamma_m \cdot a_m$} [u] at 71 59
\pinlabel {$\gamma_m \cdot y$} [u] at 73 74
\pinlabel {$\mathcal{C}$} [u] at 100 65
\pinlabel {$\Omega_{\mathrm{max}}$} [u] at 70 98
\pinlabel {$\Lambda_\Gamma$} [u] at 106 38
\pinlabel {$\partial_{\scriptscriptstyle\PP} \HH^{p,q-1}$} [u] at 8 90
\endlabellist
\includegraphics[scale=1.5]{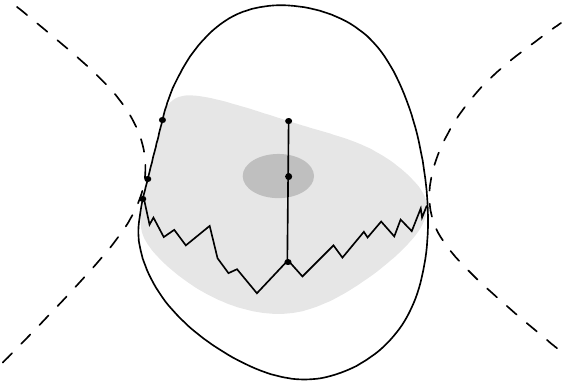}
\caption{Illustration for the proof of Lemma~\ref{lem:no-degenerate-faces}}
\label{fig:CinOhm}
\end{figure}

\subsection{Equality $\partial_i\C=\Lambda_{\Gamma}$}

Using Lemma~\ref{lem:no-degenerate-faces}, we establish the following.

\begin{lemma}\label{lem:Lambda-Lambda-Gamma}
Let $\Gamma$ be an irreducible discrete subgroup of $G=\PO(p,q)$ acting properly discontinuously and cocompactly on a nonempty properly convex closed subset $\C$ of~$\HH^{p,q-1}$.
If the proximal limit set $\Lambda_{\Gamma}$ is transverse, then the inclusion $\Lambda_{\Gamma} \subset \partial_i\C$ is an equality, and this set is negative (Definition~\ref{def:pos-neg}).
\end{lemma}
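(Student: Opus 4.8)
The plan is to establish the two assertions of Lemma~\ref{lem:Lambda-Lambda-Gamma} in turn, using that $\C\subset\Omega_{\max}$ by Lemma~\ref{lem:no-degenerate-faces}, so that the Hilbert metric $d_{\Omega_{\max}}$ restricts to a genuine (complete, proper) metric on $\C$ on which $\Gamma$ acts by isometries, properly discontinuously and cocompactly. In particular $(\C,d_{\Omega_{\max}})$ is quasi-isometric to $\Gamma$ with any word metric. First I would prove negativity. Since $\Lambda_\Gamma\subset\partial_i\C$ and $\C$ has nonempty interior, $\Omega:=\mathrm{Int}(\C)$ is a nonempty $\Gamma$-invariant properly convex open subset of $\HH^{p,q-1}$; by Lemma~\ref{lem:nonpos-Hpq}.\eqref{item:nonpos-Hpq-Omega} the set $\partial_i\C=\partial_{\scriptscriptstyle\PP}\HH^{p,q-1}\cap\overline{\Omega}$ is nonpositive, hence so is its subset $\Lambda_\Gamma$. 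A nonpositive \emph{and} transverse subset of $\partial_{\scriptscriptstyle\PP}\HH^{p,q-1}$ is automatically negative: a lift $\widetilde\Lambda_\Gamma$ on which $\langle\cdot,\cdot\rangle_{p,q}$ is $\leq 0$ must in fact take strictly negative values on noncollinear pairs, since $\langle x,x'\rangle_{p,q}=0$ with $[x]\neq[x']$ in $\partial_{\scriptscriptstyle\PP}\HH^{p,q-1}$ would mean $[x']\in[x]^\perp$, contradicting transversality. (Alternatively, invoke Remark~\ref{rem:neg-triple-explicit} triplewise and Lemma~\ref{lem:justif-neg-triple}.) Thus $\Lambda_\Gamma$ is negative.

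Next I would prove the inclusion $\Lambda_\Gamma\subset\partial_i\C$ is an equality. The inclusion holds by Fact~\ref{fact:Yves}/Remark~\ref{rem:inv-dom} as already recalled in the excerpt. For the reverse inclusion, let $z\in\partial_i\C$; I want to produce a sequence in $\C$ converging to $z$ whose limit I can control, or better, to show directly that $z$ lies in the closure of the orbit of attracting fixed points. Pick any basepoint $y_0\in\mathrm{Int}(\C)$; since $z\in\overline{\C}\smallsetminus\C$, there is a sequence $y_m\in\C$ with $y_m\to z$, and by cocompactness there are $\gamma_m\in\Gamma$ with $\gamma_m\cdot y_m$ in a fixed compact $\mathcal K\subset\C$. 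The $\gamma_m$ are pairwise distinct (since $d_{\Omega_{\max}}(y_0,y_m)\to\infty$ by Lemma~\ref{lem:geod-loop}-type reasoning, properness of the metric forces the orbit to escape). Consider the biinfinite geodesic rays or segments of $(\C,d_{\Omega_{\max}})$ joining $y_0$ to $y_m$ inside $\C$; the contracting properties one extracts are that, passing to a subsequence, $\gamma_m^{-1}$ behaves like a proximal sequence with attracting point $z$ and repelling hyperplane determined by a limit of $\gamma_m\cdot\mathcal K$. Concretely: after passing to a subsequence, $(\gamma_m^{-1})$ converges in the sense of linear maps (up to scaling) to a rank-one-ish limit whose image is $z$; since $\Gamma$ is irreducible and contains proximal elements (Fact~\ref{fact:Yves}\eqref{item:i-lifts}), a standard ping-pong/density argument — $\Lambda_\Gamma$ is the unique minimal closed $\Gamma$-invariant set and the closure of $\Gamma\cdot\xi$ for any $\xi\in\Lambda_\Gamma$ — then shows $z\in\Lambda_\Gamma$.

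I expect the genuine obstacle to be this last step, namely upgrading ``$z$ is an accumulation point of a $\Gamma$-orbit in $\C$'' to ``$z\in\Lambda_\Gamma$'' — i.e.\ showing $\partial_i\C$ contains no ``extra'' points beyond the proximal limit set. The cleanest route is probably: (1) note $\partial_i\C$ is closed, $\Gamma$-invariant, and, being transverse, satisfies that the cone over it meets each $z^\perp$ trivially off $z$; (2) by irreducibility of $\Gamma$, $\partial_i\C$ spans $\PP(\RR^{p,q})$ and $\Lambda_\Gamma$ is the unique minimal nonempty closed $\Gamma$-invariant subset, so $\Lambda_\Gamma\subset\partial_i\C$; (3) for the converse, use that $\Gamma$ acts cocompactly on $\C$ to show every point of $\partial_i\C$ is a limit of translates of a fixed compact set, hence (via the north–south dynamics of proximal elements of $\Gamma$, whose existence is guaranteed, together with a contraction estimate for the $\gamma_m$ coming from proper discontinuity and the Hilbert metric as in the proof of Lemma~\ref{lem:no-degenerate-faces}) lies in $\overline{\Gamma\cdot\xi_\gamma^+}=\Lambda_\Gamma$ for a suitable proximal $\gamma$. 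The technical heart is extracting the convergence-group-style dynamics of the sequence $(\gamma_m)$ from cocompactness plus properness of $d_{\Omega_{\max}}$ on $\C$; I would model this on the argument already used for Lemma~\ref{lem:no-degenerate-faces}, where a similar escaping sequence is analyzed via limits of segments. Finally, the statement that $\partial_i\C=\Lambda_\Gamma$ then transfers negativity of $\Lambda_\Gamma$ to $\partial_i\C$, closing the loop with the convex-cocompactness hypothesis (transversality of $\partial_i\C$).
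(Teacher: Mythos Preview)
Your argument for negativity of $\Lambda_\Gamma$ is correct and essentially matches the paper's: $\Lambda_\Gamma$ sits in the ideal boundary of a properly convex open subset of $\HH^{p,q-1}$, hence is nonpositive by Lemma~\ref{lem:nonpos-Hpq}.\eqref{item:nonpos-Hpq-Omega}, and nonpositive plus transverse gives negative.

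The gap is in the equality $\partial_i\C=\Lambda_\Gamma$. You correctly identify the obstacle and then do not overcome it. The step ``$z$ is an accumulation point of a $\Gamma$-orbit in $\C$, hence $z\in\Lambda_\Gamma$'' is exactly the content of Lemma~\ref{lem:snowman}, which requires the ambient convex set to be \emph{strictly convex}. Here neither $\C$ nor $\Omega_{\max}$ is strictly convex (the boundary of $\Omega_{\max}$ is a union of pieces of the hyperplanes $z^\perp$, $z\in\Lambda_\Gamma$), so that lemma does not apply. Your proposed workaround via ``rank-one-ish limits'' of $\gamma_m^{-1}$ and north--south dynamics is not fleshed out: a divergent sequence in $\PO(p,q)$ subconverges to a quasi-projective map whose image is a totally isotropic subspace that may have dimension $>1$, and nothing you have written forces the orbital accumulation point $z$ to coincide with the (unique, by transversality) point of $\Lambda_\Gamma$ in that image. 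Establishing this would essentially amount to proving a contraction property for the $\gamma_m$ that is close to the Anosov condition you are ultimately trying to deduce.

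The paper bypasses this dynamical difficulty by a purely metric comparison. It introduces $\C_{\min}$, the closure in $\HH^{p,q-1}$ of the convex hull $\Omega_{\min}$ of $\Lambda_\Gamma$ inside~$\C$, and observes that $\partial_i\C_{\min}=\Lambda_\Gamma$ by Lemma~\ref{lem:nonpos-Hpq}.\eqref{item:nonpos-Hpq-Lambda0} (this is where negativity, not just nonpositivity, of $\Lambda_\Gamma$ is used). The key point is then that $\Gamma$ acts cocompactly on \emph{both} $\C$ and $\C_{\min}$, so the two sets are at bounded Hilbert distance in $\Omega_{\max}$. If some $z\in\partial_i\C\smallsetminus\partial_i\C_{\min}$ existed, one would take $z_m\in\C$ converging to $z$ and companion points $y_m\in\Omega_{\min}$ at bounded $d_{\Omega_{\max}}$-distance from $z_m$ but bounded away from $\partial_{\scriptscriptstyle\HH}\C_{\min}$; passing to the limit along the segments $[y_m,z_m]$ (which meet $\partial_{\scriptscriptstyle\HH}\C_{\min}$ at a unique point $u_m$) produces two distinct points $y\neq u$ in $\partial_i\C_{\min}=\Lambda_\Gamma$ joined by a segment in $\partial_{\scriptscriptstyle\PP}\Omega_{\max}$, contradicting the transversality of $\Lambda_\Gamma$. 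The argument never needs to identify orbital limits with proximal ones; it only uses cross-ratio bookkeeping and the fact that $\Lambda_\Gamma$ contains no nontrivial segment of $\partial_{\scriptscriptstyle\PP}\Omega_{\max}$. This is the idea you are missing.
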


Recall that the transversality of $\Lambda_{\Gamma}$ means that $y\notin z^{\perp}$ for all $y\neq z$~in~$\Lambda_{\Gamma}$.

\begin{proof}
Let $\Omega_{\min} \subset \C \subset \HH^{p,q-1}$ be the interior of the convex hull of $\Lambda_\Gamma$ in~$\C$, and $\C_{\min}$ its closure in $\HH^{p,q-1}$.
We have
$$\Lambda_{\Gamma} \subset \partial_i\C_{\min} = \partial_{\scriptscriptstyle\PP}\Omega_{\min} \cap \partial_{\scriptscriptstyle\PP}\HH^{p,q-1} \subset \partial_i\C.$$
The set $\Lambda_{\Gamma}$ is transverse by assumption, hence negative by Lemma~\ref{lem:nonpos-Hpq}.\eqref{item:nonpos-Hpq-Omega}, and so $\Lambda_{\Gamma} = \partial_i\C_{\min}$ by Lemma~\ref{lem:nonpos-Hpq}.\eqref{item:nonpos-Hpq-Lambda0}.
We now check that $\partial_i\C_{\min} = \partial_i\C$.
For this we use the fact that, by Lemma~\ref{lem:no-degenerate-faces}, the set $\C$ is contained in a $\Gamma$-invariant properly convex open subset $\Omega_{\max}$ of $\PP(\RR^{p,q})$; we denote by $d$ the Hilbert metric on $\Omega_{\max}$.

Suppose by contradiction that there exists $z \in \partial_i\C \smallsetminus \partial_i\C_{\min}$.
Let $(z_m)_{m\in\NN}$ be a sequence in $\C\smallsetminus\C_{\min}$ converging to~$z$.
By cocompactness of the action of $\Gamma$ on~$\C$ and~$\C_{\min}$, we may find a sequence $(y_m)_{m\in\NN}$ in $\Omega_{\min}$ such that $d(y_m, z_m)$ is uniformly bounded and $d(y_m, \partial_{\scriptscriptstyle\HH} \C_{\min})$ is uniformly bounded away from zero.
The segment $[y_m, z_m]$ contains a unique point $u_m$ of $\partial_{\scriptscriptstyle\HH} \C_{\min}$, as depicted in Figure~\ref{fig:Nested}.
Let $(a_m, b_m)$ be the maximal interval of $\Omega_{\max}$ containing $y_m, z_m$, so that $d(y_m, z_m) = \frac{1}{2} \log [a_m, y_m, z_m, b_m]$ and $d(y_m, u_m) = \frac{1}{2} \log [a_m, y_m, u_m, b_m]$.
Up to passing to a subsequence, we may assume that $a_m \to a$, $b_m \to b$, $u_m \to u$, and $y_m \to y$ where $u$ and~$y$ belong to the line segment $[a,b] \subset \partial_{\scriptscriptstyle\PP}\Omega_{\max}$ and $u,y \in \partial_i\C_{\min}$.
By assumption $y \neq z$; since the cross ratios $[a_m, y_m, z_m, b_m] = e^{2d(y_m,z_m)}$ are bounded away from $0$, $1$, and $+\infty$, the points $a,y,z,b$ are pairwise distinct and $[a_m,y_m,z_m,b_m] \to [a,y,z,b]$. On the other hand, the cross ratios $[a_m, y_m, u_m, b_m] = e^{2d(y_m,u_m)}$ are bounded away from~$1$, hence $[a,y,u,b] \neq 1$.
Since the points $a,y,b$ are pairwise distinct, we conclude $y \neq u$.
But the segment $[y,u]$ is contained in $\partial_{\scriptscriptstyle\PP}\Omega_{\max}$, hence contained in $\partial_i\C_{\min}$, contradicting the transversality of $\partial_i\C_{\min} = \Lambda_{\Gamma}$.
\end{proof}

\begin{figure}
\centering
\labellist
\small\hair 2pt
\pinlabel {$a$} [u] at 32 32
\pinlabel {$y$} [u] at 39 62
\pinlabel {$u$} [u] at 39 68
\pinlabel {$z$} [u] at 42 76
\pinlabel {$b$} [u] at 46 87
\pinlabel {$a_m$} [u] at 71 -2
\pinlabel {$y_m$} [u] at 77 66
\pinlabel {$u_m$} [u] at 78 73
\pinlabel {$z_m$} [u] at 78 80
\pinlabel {$b_m$} [u] at 79 116
\pinlabel {$\mathcal{C}_{\mathrm{min}}$} [u] at 90 40
\pinlabel {$\mathcal{C}$} [u] at 84 32
\pinlabel {$\Omega_{\mathrm{max}}$} [u] at 84 7
\pinlabel {$\partial_{\scriptscriptstyle\PP} \HH^{p,q-1}$} [u] at 11 90 
\endlabellist
\includegraphics[scale=1.5]{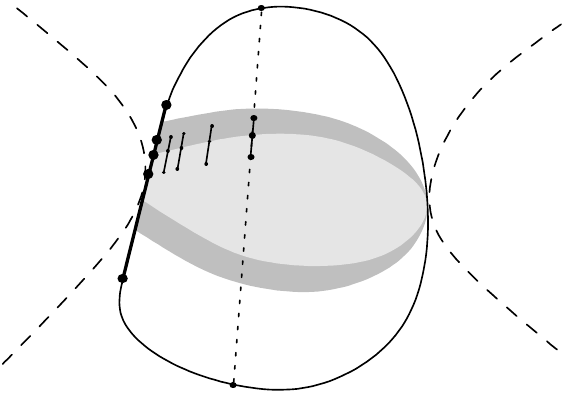}
\caption{Illustration of the proof of Lemma~\ref{lem:Lambda-Lambda-Gamma}}
\label{fig:Nested}
\end{figure}

\begin{remark} \label{rem:conseq-Lambda-Lambda-Gamma}
Lemma~\ref{lem:Lambda-Lambda-Gamma} shows that if an irreducible discrete subgroup $\Gamma$ of $\PO(p,q)$ acts properly discontinuously and cocompactly on some nonempty properly convex closed subset $\C$ of~$\HH^{p,q-1}$ and if the proximal limit set $\Lambda_{\Gamma}$ is transverse, then $\Gamma$ is $\HH^{p,q-1}$-convex cocompact.
It also shows that if $\Gamma$ is $\HH^{p,q-1}$-convex cocompact, then for any nonempty properly convex closed subset $\C$ of $\HH^{p,q-1}$ on which $\Gamma$ acts properly discontinuously and cocompactly, the ideal boundary $\partial_i\C$ is the proximal limit set $\Lambda_\Gamma\subset\partial_{\scriptscriptstyle\PP}\HH^{p,q-1}$.
\end{remark}

\subsection{Gromov hyperbolicity of $(\C,d)$} \label{subsec:C-hyperb}

In the setting of Lemma~\ref{lem:Lambda-Lambda-Gamma}, we denote by $d$ the Hilbert metric on a $\Gamma$-invariant properly convex open subset $\Omega_{\max}$ of $\PP(\RR^{p,q})$ containing~$\C$, as given by Lemma~\ref{lem:no-degenerate-faces}.
Using arguments inspired from \cite{ben04}, we now prove that in this setting the metric space $(\C,d)$ is Gromov hyperbolic with Gromov boundary $\partial_i\C=\Lambda_{\Gamma}$.

We start with the following lemma; recall from Lemma~\ref{lem:geod-loop}.\eqref{item:end-geod-ray} that any geodesic ray of $(\C,d)$ has a well-defined endpoint in $\partial_i\C=\Lambda_{\Gamma}$.

\begin{lemma} \label{lem:geod-non-strict-conv}
In the setting of Lemma~\ref{lem:Lambda-Lambda-Gamma}, there exists $R>0$ such that any geodesic ray of $(\C,d)$ lies at Hausdorff distance $\leq R$ from the projective interval with the same endpoints.
\end{lemma}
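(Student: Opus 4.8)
The plan is to prove the following uniform statement: there is $R>0$, depending only on $\Gamma$ and $\C$, such that every geodesic ray of $(\C,d)$ lies in the $R$-neighbourhood of the straight ray with the same endpoints; the Hausdorff bound then follows formally. \textbf{Setup.} Let $\mathcal{G}\colon[0,\infty)\to\C$ be a geodesic ray. By Lemma~\ref{lem:geod-loop}.\eqref{item:end-geod-ray} it has a well-defined endpoint $\xi$ in $\partial_{\scriptscriptstyle\PP}\Omega_{\max}$; since its accumulation points lie in $\partial_i\C=\Lambda_{\Gamma}$ (Lemma~\ref{lem:Lambda-Lambda-Gamma}), in fact $\xi\in\Lambda_{\Gamma}$. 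As $\C$ is convex and closed in $\HH^{p,q-1}$, the projective segment $\sigma:=[\mathcal{G}(0),\xi)$ lies in $\C$, and being straight it is a $d$-geodesic ray with the same endpoints as~$\mathcal{G}$. The key geometric input is that, by Remark~\ref{rem:inv-dom}, $\Omega_{\max}$ is a connected component of the complement of $\bigcup_{z\in\Lambda_{\Gamma}}z^{\perp}$, so that, by transversality of $\Lambda_{\Gamma}$, for each $w\in\Lambda_{\Gamma}$ the only hyperplane $z^{\perp}$ ($z\in\Lambda_{\Gamma}$) through $w$ is $w^{\perp}$; hence $\Omega_{\max}$ admits a \emph{unique} supporting hyperplane at $w$, namely $w^{\perp}$, and consequently two points of $\Lambda_{\Gamma}$ that lie in a common face of $\partial_{\scriptscriptstyle\PP}\Omega_{\max}$ must coincide (else $w^{\perp}=w'^{\perp}$).

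\textbf{Contradiction setup via cocompactness.} Suppose no uniform $R$ exists: there are geodesic rays $\mathcal{G}_n$, with straight companions $\sigma_n$, and points $w_n=\mathcal{G}_n(t_n)$ with $d(w_n,\sigma_n)\to\infty$. Since points at bounded $d$-distance from $\mathcal{G}_n(0)\in\sigma_n$ stay at bounded distance from $\sigma_n$, necessarily $t_n\to\infty$. Using that $\Gamma$ acts cocompactly on~$\C$, apply suitable $\gamma_n\in\Gamma$ so that $w_n$ lies in a fixed compact $K\subset\C$, and reparametrize so that $\mathcal{G}_n(0)=w_n$, so $\mathcal{G}_n$ is defined on $[-t_n,\infty)$. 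By Arzel\`a--Ascoli ($(\C,d)$ is proper and geodesics are $1$-Lipschitz), after a subsequence $\mathcal{G}_n$ converges uniformly on compacts to a \emph{biinfinite} geodesic $\mathcal{G}_\infty\colon\RR\to\C$ (biinfinite because $t_n\to\infty$ and the forward part is an infinite ray), which by Lemma~\ref{lem:geod-loop}.\eqref{item:end-biinf-geod} has two distinct endpoints $\xi^-\neq\xi^+$, lying in $\partial_i\C=\Lambda_{\Gamma}$. Meanwhile the translated segments $\sigma_n\subset\overline{\C}$ subconverge, in the Hausdorff topology inside the compact set $\overline{\Omega_{\max}}$, to a segment $\sigma_\infty\subset\overline{\C}$; since $d(w_n,\sigma_n)\to\infty$ with $w_n\in K$, the $\sigma_n$ eventually avoid every compact subset of~$\C$, so $\sigma_\infty\cap\C=\emptyset$, i.e.\ $\sigma_\infty\subset\partial_i\C=\Lambda_{\Gamma}$; as $\Lambda_{\Gamma}$ is transverse it contains no nontrivial segment, so $\sigma_\infty$ is a single point $\eta\in\Lambda_{\Gamma}$, and in particular the two endpoints of $\sigma_n$ --- namely $\mathcal{G}_n(-t_n)$ and the ideal endpoint $\xi_n$ of~$\mathcal{G}_n$ --- both converge to~$\eta$. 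It remains to contradict $\xi^-\neq\xi^+$.

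\textbf{Tracking faces.} For this I run the argument in the proof of Lemma~\ref{lem:geod-loop} along the forward rays $\mathcal{G}_n|_{[0,\infty)}$ and $\mathcal{G}_\infty|_{[0,\infty)}$: it produces, for the limit ray, a face $Q_\infty$ of $\partial_{\scriptscriptstyle\PP}\Omega_{\max}$ with $\xi^+\in Q_\infty$ and, by stabilization of the relevant nondecreasing chain of faces, a ``forward'' boundary exit point $z_0$ of a fixed chord of $\mathcal{G}_\infty$ lying in the \emph{relative interior} of $Q_\infty$; and, for each~$n$, a face $Q_n$ with $\xi_n\in Q_n$ containing the corresponding exit point $z_n$. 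Since chords through interior points of $\Omega_{\max}$ have continuously varying exit points, $z_n\to z_0$; as faces are convex and contained in $\partial_{\scriptscriptstyle\PP}\Omega_{\max}$, the segments $[z_n,\xi_n]\subset Q_n$ pass to the limit to give $[z_0,\eta]\subset\partial_{\scriptscriptstyle\PP}\Omega_{\max}$. The face of $\partial_{\scriptscriptstyle\PP}\Omega_{\max}$ at an interior point of $[z_0,\eta]$ (or $Q_\infty$ itself if $z_0=\eta$) is an extreme subset containing $z_0$ and $\eta$; since $z_0$ lies in the relative interior of $Q_\infty$, this face contains $Q_\infty$, hence contains both $\eta$ and $\xi^+$. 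By the supporting-hyperplane fact from the Setup, $\eta=\xi^+$. The symmetric argument along the backward parts $\mathcal{G}_n|_{[-t_n,0]}$ and $\mathcal{G}_\infty|_{(-\infty,0]}$ --- where now $\mathcal{G}_n|_{[-t_n,0]}$ is a degenerating family of \emph{segments} and the endpoint $\mathcal{G}_n(-t_n)\to\eta$, being swept toward $\partial_{\scriptscriptstyle\PP}\Omega_{\max}$, forces $b_{0,t_n}^{(n)}\to\eta$ and thus plays the role of a moving exit point --- gives $\eta=\xi^-$. Hence $\xi^-=\xi^+$, the desired contradiction, and the uniform $R$ exists.

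\textbf{Conclusion and main obstacle.} Finally, parametrizing $\mathcal{G}$ and $\sigma$ by arclength from their common initial point, for each $t$ one may choose $\phi(t)$ with $d(\mathcal{G}(t),\sigma(\phi(t)))\leq R$; comparing $d$-distances to the initial point gives $|t-\phi(t)|\leq R$, whence $d(\mathcal{G}(t),\sigma(t))\leq 2R$, so $\sigma$ lies in the $2R$-neighbourhood of $\mathcal{G}$ as well, and $\mathcal{G},\sigma$ are at Hausdorff distance $\leq 2R$. The hard part is the face-tracking step: one must carefully combine the nondecreasing-chain-of-faces mechanism from the proof of Lemma~\ref{lem:geod-loop}, the convexity and closedness of faces, the continuity of exit points of chords through interior points, and the uniqueness of supporting hyperplanes of $\Omega_{\max}$ along $\Lambda_{\Gamma}$, to show that the degenerating family $\mathcal{G}_n$ forces the faces $Q_n$ (resp.\ their backward analogues) onto a single face containing $\eta$ and $\xi^{+}$ (resp.\ $\xi^{-}$); everything else in the argument is soft.
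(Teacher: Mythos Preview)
Your proof is correct and follows the same overall strategy as the paper: assume a contradicting sequence, recenter using cocompactness, extract a limiting biinfinite geodesic in $(\Omega_{\max},d)$, and show its two endpoints coincide, contradicting Lemma~\ref{lem:geod-loop}.\eqref{item:end-biinf-geod}. The paper compresses the ``endpoints coincide'' step into one sentence (since $\gamma_m\cdot a_m$ and $\gamma_m\cdot b_m$ both converge to the same point $a_\infty=b_\infty$), whereas you expand it into the face-tracking argument; your version is more explicit but not essentially different, and your Conclusion correctly supplies the easy reverse inclusion for the Hausdorff bound that the paper leaves implicit.
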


\begin{proof}
Suppose by contradiction that for any $m\in\NN$ there is a geodesic ray $\mathcal{G}_m$ with endpoints $a_m\in\C$ and $b_m\in\Lambda_{\Gamma}$ and a point $y_m\in\C$ on that geodesic ray which lies at distance $\geq m$ from the projective interval $[a_m,b_m)$.
By cocompactness of the action of $\Gamma$ on~$\C$, for any $m\in\NN$ there exists $\gamma_m\in\Gamma$ such that $\gamma_m\cdot y_m$ belongs to a fixed compact set of~$\mathcal{C}$.
Up to taking a subsequence, $(\gamma_m\cdot y_m)_m$ converges to some $y_{\infty}\in\mathcal{C}$, and $(\gamma_m\cdot a_m)_m$ and $(\gamma_m\cdot b_m)_m$ converge respectively to some $a_{\infty}\in\overline{C}=\C\cup\Lambda_{\Gamma}$ and $b_{\infty}\in\Lambda_{\Gamma}$.
Since the distance $d$ from $y_m$ to $[a_m,b_m)$ goes to infinity, we have $[a_{\infty},b_{\infty}]\subset\partial_i\C=\Lambda_{\Gamma}$, hence $a_{\infty}=b_{\infty}$ by transversality of~$\Lambda_{\Gamma}$.
Therefore, up to extracting, the geodesic rays $\mathcal{G}_m$ converge to a biinfinite geodesic of $(\Omega_{\max},d)$ with both endpoints equal, contradicting Lemma~\ref{lem:geod-loop}.\eqref{item:end-biinf-geod}.
\end{proof}

\begin{lemma} \label{lem:C-hyp}
In the setting of Lemma~\ref{lem:Lambda-Lambda-Gamma}, the metric space $(\mathcal{C},d)$ is Gromov hyperbolic.
\end{lemma}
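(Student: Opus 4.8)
The plan is to show that $(\C,d)$ is a proper geodesic metric space on which $\Gamma$ acts by isometries, properly discontinuously and cocompactly, and then to deduce Gromov hyperbolicity from a uniform thinness statement for the triangles of $(\C,d)$ whose sides are projective segments, the latter being proved by a compactness argument in the spirit of Lemma~\ref{lem:geod-non-strict-conv}. For the setup: by Lemma~\ref{lem:no-degenerate-faces} we have $\C\subset\Omega_{\max}$, and by Fact~\ref{fact:Yves} together with Lemma~\ref{lem:Lambda-Lambda-Gamma} the ideal boundary $\partial_i\C=\Lambda_{\Gamma}$ is contained in $\partial_{\scriptscriptstyle\PP}\Omega_{\max}$. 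Since $\overline{\C}=\C\cup\partial_i\C$ disjointly, it follows that $\overline{\C}\cap\Omega_{\max}=\C$, so $\C$ is closed in the proper metric space $(\Omega_{\max},d)$ and hence $(\C,d)$ is proper; and as $\C$ is convex, any two of its points are joined inside $\C$ by a projective segment, which is a $d$-geodesic, so $(\C,d)$ is geodesic. The group $\Gamma$ acts on it by isometries, properly discontinuously and cocompactly by hypothesis. Between any two points of $\C$ the projective segment is the unique geodesic of this type, and such geodesics form a family stable under passing to subsegments; by the standard criterion for Gromov hyperbolicity (it suffices that the triangles built from some such family be uniformly thin, cf.\ \cite{ben04}), it therefore suffices to find $\delta_0\geq 0$ such that every triangle with vertices in $\C$ and projective sides is $\delta_0$-thin.

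Suppose not: there exist such triangles $T_m$, with vertices $x_m,y_m,z_m$ and a point $w_m$ on the side $[x_m,y_m]$, with $d\big(w_m,[y_m,z_m]\cup[z_m,x_m]\big)\geq m$. Since each vertex lies on two of the three sides, $d(w_m,x_m),d(w_m,y_m),d(w_m,z_m)\to+\infty$. Using cocompactness of the action on $\C$, after translating the $T_m$ by suitable elements of $\Gamma$ and relabelling we may assume $w_m\to w_\infty\in\C$. Parametrising $[x_m,y_m]$ by arclength with $w_m$ at the origin and applying a standard compactness (Arzel\`a--Ascoli) argument in the proper space $(\Omega_{\max},d)$ — using properness of $d$ on $\C$ to keep $\alpha_m(t)$ in a fixed compact subset of $\C$ for each fixed $t$ — we obtain, along a subsequence, a biinfinite geodesic $\alpha_\infty:\RR\to\C$ of $(\C,d)$ through $w_\infty$. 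By Lemma~\ref{lem:geod-loop}.\eqref{item:end-biinf-geod} its two endpoints are distinct; and since the vertices recede from $w_m$, they converge in the compact set $\overline{\Omega_{\max}}$ to points $\xi^-,\xi^+,\zeta\in\overline{\C}\cap\partial_{\scriptscriptstyle\PP}\Omega_{\max}=\Lambda_{\Gamma}$, where $\{\xi^-,\xi^+\}$ is the endpoint pair of $\alpha_\infty$, so $\xi^-\neq\xi^+$ and $w_\infty\in(\xi^-,\xi^+)$. I would also record that, $\Lambda_{\Gamma}$ being negative and $\overline{\C}$ convex, for any distinct $\eta,\eta'\in\Lambda_{\Gamma}$ the open projective segment $(\eta,\eta')$ lies in $\HH^{p,q-1}\cap\overline{\C}=\C$, since $\langle s\eta+t\eta',\,s\eta+t\eta'\rangle_{p,q}=2st\,\langle\eta,\eta'\rangle_{p,q}<0$ on appropriate lifts for $s,t>0$.

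It remains to reach a contradiction. If $\zeta\notin\{\xi^-,\xi^+\}$, then $[x_m,z_m]\to[\xi^-,\zeta]$, and any $p\in(\xi^-,\zeta)\subset\C$ is a limit of points $p_m\in[x_m,z_m]\subset[z_m,x_m]$; hence $d(w_\infty,p)=\lim_m d(w_m,p_m)\geq\liminf_m m=+\infty$, absurd since $w_\infty,p\in\C$. If $\zeta=\xi^-$ (the case $\zeta=\xi^+$ is symmetric), then $[z_m,y_m]\to[\xi^-,\xi^+]$, so $w_\infty\in(\xi^-,\xi^+)$ is a limit of points $q_m\in[z_m,y_m]\subset[y_m,z_m]$; hence $d(w_m,[y_m,z_m])\leq d(w_m,q_m)\to d(w_\infty,w_\infty)=0$, contradicting $d(w_m,[y_m,z_m])\geq m$. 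In all cases we obtain a contradiction, so the projective triangles of $(\C,d)$ are uniformly thin and $(\C,d)$ is Gromov hyperbolic.

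The main obstacle, compared with Benoist's strictly convex setting, is that $\C$ may be flat along $\partial_{\scriptscriptstyle\HH}\C$, so not every $d$-geodesic is a projective segment; the step that really needs care is therefore the reduction of Gromov hyperbolicity to thinness of \emph{projective} triangles, which rests on the (standard) fact that thinness for a subsegment-closed family of geodesics already forces the four-point inequality. The remaining delicate points are routine: checking that the Arzel\`a--Ascoli limit is a genuine biinfinite geodesic of $(\C,d)$ so that Lemma~\ref{lem:geod-loop} applies, and that projective segments with converging, distinct limiting endpoints converge in $\overline{\Omega_{\max}}$ — which together with the negativity of $\Lambda_{\Gamma}$ is what places the limiting triangle's sides inside $\C$ and makes the distance estimates above meaningful.
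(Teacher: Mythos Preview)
Your argument is correct and follows the same overall strategy as the paper, but the contradiction step is more roundabout. After passing to the limiting configuration, the paper observes directly that the two ``far'' sides $[\xi^-,\zeta]\cup[\zeta,\xi^+]$ must lie entirely in $\overline{\C}\smallsetminus\C=\partial_i\C=\Lambda_\Gamma$ (any point of them in $\C\subset\Omega_{\max}$ would be at finite $d$-distance from $w_\infty$, contradicting the lower bound $m$); transversality of $\Lambda_\Gamma$ then forces $\xi^-=\zeta=\xi^+$, which is absurd since $w_\infty\in[\xi^-,\xi^+]\cap\C$. This single observation replaces your case split on whether $\zeta\in\{\xi^-,\xi^+\}$, your appeal to negativity of $\Lambda_\Gamma$ to place $(\xi^-,\zeta)$ inside $\C$ (transversality and $\C\subset\HH^{p,q-1}$ already suffice), and your Arzel\`a--Ascoli detour through Lemma~\ref{lem:geod-loop} to obtain $\xi^-\neq\xi^+$ (immediate from $w_\infty\in[\xi^-,\xi^+]\cap\C$). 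For the reduction from arbitrary geodesic triangles to projective ones, the paper invokes Lemma~\ref{lem:geod-non-strict-conv} (geodesic rays are uniformly Hausdorff-close to projective intervals) rather than your ``standard criterion''; both routes are valid.
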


\begin{proof}
Suppose by contradiction that triangles of $(\C,d)$ are not uniformly thin.
By Lemma~\ref{lem:geod-non-strict-conv}, triangles of $(\C,d)$ whose sides are projective segments are not uniformly thin: namely, there exist $a_m,b_m,c_m\in\mathcal{C}$ and $y_m\in [a_m,b_m]$ such that
\begin{equation} \label{eqn:u_m-far}
d(y_m, [a_m,c_m]\cup [c_m,b_m]) \underset{n\to +\infty}{\longrightarrow} +\infty.
\end{equation}
By cocompactness, for any $m$ there exists $\gamma_m\in\Gamma$ such that $\gamma_m\cdot y_m$ belongs to a fixed compact set of~$\mathcal{C}$, as shown in Figure~\ref{fig:QuasiGeo}.
Up to taking a subsequence, $(\gamma_m\cdot y_m)_m$ converges to some $y_{\infty}\in\mathcal{C}$, and $(\gamma_m\cdot a_m)_m$ and $(\gamma_m\cdot b_m)_m$ and $(\gamma_m\cdot\nolinebreak c_m)_m$ converge respectively to some $a_{\infty},b_{\infty},c_{\infty}\in\overline{\C}$.
By \eqref{eqn:u_m-far} we have $[a_{\infty},c_{\infty}]\cup [c_{\infty},b_{\infty}]\subset\partial_i\C$, hence $a_{\infty}=b_{\infty}=c_{\infty}$ by transversality of $\partial_i\C = \Lambda_{\Gamma}$.
This contradicts the fact that $y_{\infty}\in (a_{\infty},b_{\infty})$.
\end{proof}
\begin{figure}[h]
\centering
\labellist
\small\hair 2pt
\pinlabel {$a_m$} [u] at 75 18
\pinlabel {$b_m$} [u] at 91 51
\pinlabel {$c_m$} [u] at 46 47
\pinlabel {$y_m$} [u] at 84 34
\pinlabel {$\mathcal{C}$} [u] at 40 14
\pinlabel {$\Lambda_\Gamma$} [u] at 64 8.5
\pinlabel {$\partial_{\scriptscriptstyle\PP} \HH^{p,q-1}$} [u] at 4 60 
\endlabellist
\includegraphics[scale=1.5]{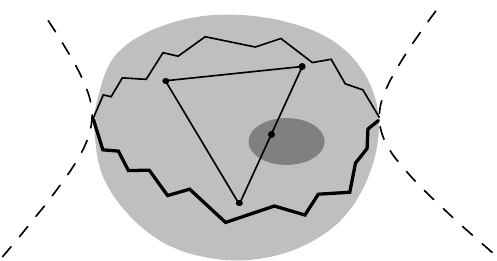}
\caption{Illustration of the proof of Lemma~\ref{lem:C-hyp}}
\label{fig:QuasiGeo}
\end{figure}

\begin{lemma} \label{lem:C-hyp-boundary}
In the setting of Lemma~\ref{lem:Lambda-Lambda-Gamma}, the Gromov boundary of $(\mathcal{C},d)$ is $\Gamma$-equivariantly homeomorphic to $\partial_i\C = \Lambda_{\Gamma}$.
\end{lemma}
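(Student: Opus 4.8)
The plan is to set up the standard coarse geometry of $(\C,d)$ and identify $\partial_\infty(\C,d)$ with $\Lambda_\Gamma$ through the endpoint map. First I would record that $(\C,d)$ is proper and geodesic: it is geodesic because any two points of $\C$ are joined inside $\C$ by a projective segment, which is a $d$-geodesic; and it is proper because $\C$ is closed in the proper metric space $(\Omega_{\max},d)$ of Section~\ref{subsec:prop-conv-proj} (indeed $\overline{\C}\cap\Omega_{\max}=\C$, since $\partial_i\C=\Lambda_{\Gamma}$ is disjoint from $\Omega_{\max}$ by Remark~\ref{rem:inv-dom}, $\Lambda_\Gamma$ being contained in the union of the hyperplanes $z^{\perp}$). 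By Lemma~\ref{lem:C-hyp} the space $(\C,d)$ is Gromov hyperbolic, so $\partial_\infty(\C,d)$ is a compact metrizable space on which $\Gamma$ acts by homeomorphisms (induced by its isometric action on $(\C,d)$, since $\Gamma\subset\mathrm{Aut}(\Omega_{\max})$), and every point of $\partial_\infty(\C,d)$ is represented by a geodesic ray issued from a fixed basepoint $o\in\C$. By Lemma~\ref{lem:geod-loop}.\eqref{item:end-geod-ray}, any geodesic ray $\mathcal G$ of $(\C,d)$ — being a geodesic ray of $(\Omega_{\max},d)$ — has a projective endpoint $\lim_t\mathcal G(t)$; as $\mathcal G$ lies in $\C$ and $\overline{\C}\cap\Omega_{\max}=\C$, this endpoint lies in $\partial_i\C=\Lambda_{\Gamma}$. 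I would then define $\Phi\colon\partial_\infty(\C,d)\to\Lambda_{\Gamma}$ by sending the class of a geodesic ray to its projective endpoint, and prove that $\Phi$ is a $\Gamma$-equivariant homeomorphism. Note that $\Gamma$-equivariance of $\Phi$ is immediate from the definition, so the content is that $\Phi$ is a well-defined homeomorphism.

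\textbf{Well-definedness, injectivity, surjectivity.} For well-definedness I must show that two asymptotic geodesic rays $\mathcal G_1,\mathcal G_2$ of $(\C,d)$ have the same projective endpoint. After a bounded reparametrization $d(\mathcal G_1(t),\mathcal G_2(t))\le K$ for all $t\ge 0$, and $\mathcal G_i(t)\to\xi_i\in\Lambda_{\Gamma}$ by Lemma~\ref{lem:geod-loop}.\eqref{item:end-geod-ray}. Suppose $\xi_1\ne\xi_2$. Since $\overline{\C}$ is convex and $\overline{\C}\cap\HH^{p,q-1}=\C$, and since $\Lambda_{\Gamma}$ is negative by Lemma~\ref{lem:Lambda-Lambda-Gamma}, the open segment $(\xi_1,\xi_2)$ lies in $\C\subset\Omega_{\max}$; pick $c\in(\xi_1,\xi_2)$. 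For $t$ large the segment $[\mathcal G_1(t),\mathcal G_2(t)]\subset\C$ passes through a fixed compact neighborhood of $c$ in $\C$, so $d(\mathcal G_1(t),\mathcal G_2(t))$ is bounded below by the $d$-distance from $\mathcal G_1(t)$, which escapes to $\partial_{\scriptscriptstyle\PP}\Omega_{\max}$, to that compact set; hence $d(\mathcal G_1(t),\mathcal G_2(t))\to+\infty$, contradicting the bound $K$. Thus $\xi_1=\xi_2$ and $\Phi$ is well-defined. The same mechanism gives injectivity: if $\Phi$ sends $[\mathcal G_1]$ and $[\mathcal G_2]$ to the same $\xi\in\Lambda_{\Gamma}$, represent them by rays from $o$; by Lemma~\ref{lem:geod-non-strict-conv} each $\mathcal G_i$ lies within Hausdorff distance $R$ of the projective ray $[o,\xi)$, so $\mathcal G_1$ and $\mathcal G_2$ lie within Hausdorff distance $2R$ of each other and are asymptotic. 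Surjectivity is elementary: for $\xi\in\Lambda_{\Gamma}$ the half-open projective segment $[o,\xi)$ lies in $\C$ (again $\overline{\C}$ convex and $\overline{\C}\cap\HH^{p,q-1}=\C$) and is a $d$-geodesic ray with projective endpoint $\xi$, so $\Phi$ hits $\xi$.

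\textbf{Homeomorphism and equivariance.} Since $\partial_\infty(\C,d)$ is compact and $\Lambda_{\Gamma}$ Hausdorff, it suffices to show the inverse bijection $\Psi\colon\Lambda_{\Gamma}\to\partial_\infty(\C,d)$, $\xi\mapsto[\,[o,\xi)\,]$, is continuous. Given $\xi_m\to\xi$ in $\Lambda_{\Gamma}$, write the maximal chord of $\Omega_{\max}$ through $o$ toward $\xi_m$ as $[\xi_m^-,\xi_m]$; then $\xi_m^-\to\xi^-$, and the point of $[o,\xi_m)$ at $d$-arclength $t$ from $o$, being an explicit (cross-ratio) continuous function of $(\xi_m^-,o,\xi_m,t)$, converges to the point of $[o,\xi)$ at arclength $t$ for each fixed $t$. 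Hence for each $T>0$ the rays $[o,\xi_m)$ and $[o,\xi)$ are within $o(1)$ of each other on $[0,T]$ as $m\to\infty$, so their Gromov product at $o$ is $\ge T-o(1)$, giving $\liminf_m(\Psi(\xi_m)\mid\Psi(\xi))_o=+\infty$, i.e. $\Psi(\xi_m)\to\Psi(\xi)$. Thus $\Psi$ is a continuous bijection between compact Hausdorff spaces, hence a homeomorphism, and $\Phi=\Psi^{-1}$ is the desired $\Gamma$-equivariant homeomorphism $\partial_\infty(\C,d)\xrightarrow{\ \sim\ }\Lambda_{\Gamma}=\partial_i\C$.

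\textbf{Main obstacle.} The delicate point is well-definedness of $\Phi$: for a general properly convex $\Omega_{\max}$, distinct geodesic rays limiting to the \emph{same} point of $\partial_{\scriptscriptstyle\PP}\Omega_{\max}$ need not be asymptotic (think of the triangle, whose Hilbert metric is a flat normed plane in which an entire boundary edge collapses to one point of the bordification), and $d$-bounded sequences escaping to $\partial_{\scriptscriptstyle\PP}\Omega_{\max}$ need not have the same projective limit. What makes the argument go through is exactly the negativity of $\Lambda_{\Gamma}=\partial_i\C$ established in Lemma~\ref{lem:Lambda-Lambda-Gamma}: it forces the chord between two distinct limit points to enter $\Omega_{\max}$ and hence to have infinite Hilbert length, which is incompatible with the uniform bound coming from asymptoticity. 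Everything else is a routine application of the coarse geometry of proper geodesic Gromov hyperbolic spaces together with Lemmas~\ref{lem:geod-loop} and~\ref{lem:geod-non-strict-conv}.
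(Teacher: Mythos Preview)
Your proof is correct and follows essentially the same approach as the paper. The paper defines the map in the opposite direction, sending $z\in\partial_i\C$ to the class of the straight ray $[y,z)$ --- this is your $\Psi$ --- and argues surjectivity via Lemma~\ref{lem:geod-non-strict-conv} and injectivity via transversality (no two points of $\Lambda_\Gamma$ share a face of $\partial_{\scriptscriptstyle\PP}\Omega_{\max}$, so rays to distinct points diverge), then concludes by the compact--Hausdorff trick; your well-definedness argument for $\Phi$ is the contrapositive of their injectivity for $\Psi$, phrased via negativity rather than faces, and your injectivity of $\Phi$ is their surjectivity of $\Psi$. The only substantive difference is that you supply an explicit continuity argument for $\Psi$ where the paper simply asserts it.
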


\begin{proof}
Fix a basepoint $y\in\mathcal{C}$.
The Gromov boundary of $(\mathcal{C},d)$ is the set of equivalence classes of infinite geodesic rays in~$\mathcal{C}$ starting at~$y$, for the equivalence relation ``to remain at bounded distance for~$d$''.
Consider the $\Gamma$-equivariant continuous map from $\partial_i\C$ to the Gromov boundary of $(\C,d)$ sending $z\in\partial_i\C$ to the class of the straight geodesic ray (with image a projective interval) from $y$ to~$z$.
This map is surjective by Lemma~\ref{lem:geod-non-strict-conv}.
Moreover it is injective, since transversality implies that no two points of $\partial_i \C = \Lambda_\Gamma$ lie in a common face of $\partial \Omega_{\max}$, hence the Hilbert distance $d$ between rays going out to two different points of $\partial_i \C$ goes to infinity.
We conclude using the fact that a continuous bijection between two compact Hausdorff spaces is a homeomorphism.
\end{proof}

\subsection{Proof of the implication (\ref{item:a1})~$\Rightarrow$~(\ref{item:a2}) of Theorem~\ref{thm:main-negative}}

Let $\Gamma$ be an irreducible discrete subgroup of $G=\PO(p,q)$ acting properly discontinuously and cocompactly on some nonempty properly convex closed subset $\C$ of~$\HH^{p,q-1}$ whose ideal boundary $\partial_i\C$ is transverse.
By Lemma~\ref{lem:nonpos-Hpq}.\eqref{item:nonpos-Hpq-Omega}, the set $\partial_i\C$ is negative.
By Lemma~\ref{lem:no-degenerate-faces}, the closed set $\C$ is contained in a $\Gamma$-invariant properly convex open subset $\Omega_{\max}$ of $\PP(\RR^{p,q})$, and so we may consider the restriction to~$\C$ of the Hilbert metric $d$ of~$\Omega_{\max}$.
The discrete group $\Gamma$ acts cocompactly by isometries on the metric space $(\mathcal{C},d)$, which by Lemmas \ref{lem:C-hyp} and~\ref{lem:C-hyp-boundary} is Gromov hyperbolic with boundary $\Gamma$-equivariantly homeomorphic to $\partial_i\C=\Lambda_{\Gamma}$.
Therefore $\Gamma$ is word hyperbolic and any orbit map $\Gamma\to\mathcal{C}$ is a quasi-isometry, extending to a $\Gamma$-equivariant homeomorphism $\xi : \partial_{\infty}\Gamma\to\Lambda_{\Gamma}$.
This homeomorphism is transverse since $\Lambda_{\Gamma}$ is a transverse subset of $\partial_{\scriptscriptstyle\PP} \HH^{p,q-1}$.
Since $\Gamma$ is irreducible, we conclude (using \cite[Prop.\,4.10]{gw12}) that the natural inclusion $\Gamma\hookrightarrow G=\PO(p,q)$ is $P_1^{p,q}$-Anosov.

\section{Anosov subgroups with negative proximal limit set are $\HH^{p,q-1}$-convex cocompact} \label{sec:Ano-neg->cc}

In this section we prove the implication \eqref{item:a2}~$\Rightarrow$~\eqref{item:a1} of Theorem~\ref{thm:main-negative}. 
The implication \eqref{item:a4}~$\Rightarrow$~\eqref{item:a3} of Theorem~\ref{thm:main-negative} immediately follows by Remark~\ref{rem:pos-impl}.
Together with Proposition~\ref{prop:conn-transv}, this yields Theorem~\ref{thm:main}.\eqref{item:Ano-connected-implies-ccc}.
We also show that the connectedness assumption in Theorem~\ref{thm:main}.\eqref{item:Ano-connected-implies-ccc} cannot be removed, by providing a counterexample.

\subsection{Proof of the implication (\ref{item:a2})~$\Rightarrow$~(\ref{item:a1}) of Theorem~\ref{thm:main-negative}}

Let $\Gamma$ be an irreducible discrete subgroup of $G=\PO(p,q)$.
Suppose that $\Gamma$ is word hyperbolic, that the natural inclusion $\Gamma\hookrightarrow G$ is $P_1^{p,q}$-Anosov, and that the proximal limit set $\Lambda_{\Gamma}\subset\partial_{\scriptscriptstyle\PP}\HH^{p,q-1}$ is negative (Definition~\ref{def:pos-neg}).

By Proposition~\ref{prop:Lambda-non-pos-neg}, the group $\Gamma$ preserves a nonempty properly convex open subset of $\PP(\RR^{p,q})$; there is a maximal such subset, namely
$$\Omega_{\max} := \PP\big(\mathrm{Int}\big\{ x'\in\RR^{p,q} ~|~ \langle x,x'\rangle_{p,q}\leq 0\ \forall x\in\widetilde{\Lambda}_{\Gamma}\big\}\big),$$
where $\widetilde{\Lambda}_{\Gamma}$ is a cone of $\RR^{p,q}\smallsetminus\{0\}$ lifting $\Lambda_{\Gamma}$ on which all inner products $\langle\cdot,\cdot\rangle_{p,q}$ of noncollinear points are negative.
There is also a minimal such subset $\Omega_{\min}\subset\Omega_{\max}$, namely the interior of the convex hull of $\Lambda_{\Gamma}$ in~$\Omega_{\max}$.
By Lemma~\ref{lem:nonpos-Hpq}.\eqref{item:nonpos-Hpq-Lambda0} we have $\Omega_{\min}\subset\HH^{p,q-1}$, and by Lemma~\ref{lem:Omega-min-Hpq} the closure $\C_{\min}$ of $\Omega_{\min}$ in~$\HH^{p,q-1}$ is contained in $\Omega_{\max}$.
In particular, the action of $\Gamma$ on $\C_{\min}$ is properly discontinuous.
Moreover, the ideal boundary $\partial_i\C_{\min}$ is equal to~$\Lambda_{\Gamma}$ by Lemma~\ref{lem:nonpos-Hpq}.\eqref{item:nonpos-Hpq-Lambda0}; in particular, it is transverse.

To see that $\Gamma$ is $\HH^{p,q-1}$-convex cocompact and thus complete the proof of the implication \eqref{item:a2}~$\Rightarrow$~\eqref{item:a1} of Theorem~\ref{thm:main-negative}, it only remains to prove the following.

\begin{lemma}
In this setting, the action of $\Gamma$ on $\C_{\min}$ is cocompact.
\end{lemma}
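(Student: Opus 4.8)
The plan is to deduce cocompactness of the $\Gamma$-action on $\C_{\min}$ from the fact that $\Gamma\hookrightarrow G$ is $P_1^{p,q}$-Anosov, using the standard picture that the Anosov boundary map $\xi:\partial_\infty\Gamma\to\Lambda_\Gamma$ together with its dual give a continuous, $\Gamma$-equivariant identification of the "orbit data" of $\Gamma$ with the geometry of $\C_{\min}\subset\Omega_{\max}$. Concretely, I would equip $\Omega_{\max}$ with its Hilbert metric $d$ (finite on $\C_{\min}$ since $\C_{\min}\subset\Omega_{\max}$ by Lemma~\ref{lem:Omega-min-Hpq}), and show that an orbit map $\gamma\mapsto\gamma\cdot y_0$ (for a fixed $y_0\in\Omega_{\min}$) is a quasi-isometric embedding of $(\Gamma,d_{\mathrm{word}})$ into $(\C_{\min},d)$ whose image is coarsely dense in $\C_{\min}$; cocompactness of the $\Gamma$-action on the closed set $\C_{\min}$ then follows because $\Gamma$ acts properly discontinuously on $\C_{\min}$ (again by $\C_{\min}\subset\Omega_{\max}$) and the quotient is covered by finitely many translates of a ball.

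First I would recall/establish that, for the $P_1^{p,q}$-Anosov inclusion, the proximal limit set $\Lambda_\Gamma$ equals $\xi(\partial_\infty\Gamma)$, is transverse, and — by hypothesis — is negative; by Lemma~\ref{lem:nonpos-Hpq}.\eqref{item:nonpos-Hpq-Lambda0} we already know $\partial_i\C_{\min}=\Lambda_\Gamma$ and $\Omega_{\min}\subset\HH^{p,q-1}$. The heart of the matter is the coarse-density statement: every point of $\C_{\min}$ lies within bounded $d$-distance of the orbit $\Gamma\cdot y_0$. The natural way to see this is to use the Anosov flow/contraction property. For $\gamma\in\Gamma$ with Cartan projection large, $\gamma$ is proximal in $\PP(\RR^{p,q})$ with attracting fixed point $\xi_\gamma^+=\xi(\gamma^+)$ close to $\xi(\partial_\infty\Gamma)$, and repelling hyperplane $H_\gamma^-$ avoiding $\Omega_{\max}$; the Anosov property gives uniform estimates on the singular values (equivalently, a uniform gap $\mu_1(\gamma)-\mu_2(\gamma)\geq c\,|\gamma|-C$) which translate, via the formula for the Hilbert metric, into the statement that $d(y_0,\gamma\cdot y_0)$ is comparable to $|\gamma|$ up to multiplicative and additive constants — this is the quasi-isometric embedding. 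For coarse density, given $w\in\C_{\min}$, one writes $w$ as a limit of straight segments with endpoints in $\Lambda_\Gamma$, pulls these back via $\xi^{-1}$ to a pair of points of $\partial_\infty\Gamma$, and uses the fact that $\Gamma$ acts cocompactly on the space of (bi-infinite geodesics in) its Cayley graph, transporting this cocompactness through the quasi-isometry provided by the orbit map and the Anosov boundary map.

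An equivalent and perhaps cleaner route — which I would probably prefer to write up — is to bypass the explicit singular-value estimates by invoking the abstract characterization of Anosov representations as those for which an orbit map into the associated symmetric space (or into any proper geodesic metric space on which $G$ acts with the right boundary behavior) is a quasi-isometric embedding with "$\Lambda_\Gamma$-regular" limit behavior. One checks: (i) the inclusion $\C_{\min}\hookrightarrow\Omega_{\max}$ is isometric for $d$ and the action on $\Omega_{\max}$ is by isometries; (ii) by Lemma~\ref{lem:geod-loop} geodesic rays in $(\Omega_{\max},d)$ have well-defined endpoints in $\partial_{\scriptscriptstyle\PP}\Omega_{\max}$, and transversality of $\Lambda_\Gamma$ (no two points of $\Lambda_\Gamma$ in a common face of $\partial\Omega_{\max}$) forces any orbit map $\Gamma\to\C_{\min}$ to be a proper embedding extending continuously to $\xi:\partial_\infty\Gamma\xrightarrow{\sim}\Lambda_\Gamma=\partial_i\C_{\min}$; (iii) since $\partial_i\C_{\min}=\Lambda_\Gamma$ is exactly the closure of endpoints of $\Gamma\cdot y_0$ and $\C_{\min}$ is the convex hull of $\Lambda_\Gamma$, every point of $\C_{\min}$ lies on a segment joining two limit points, hence near the orbit by a standard "fellow-traveling" argument for the Hilbert metric (using Lemma~\ref{lem:geod-non-strict-conv}-type control, which here I would re-derive in this Anosov setting). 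Then $\Gamma\backslash\C_{\min}$ is compact.

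The main obstacle I expect is step (iii): upgrading "the orbit map is a quasi-isometric embedding onto a set whose boundary is all of $\partial_i\C_{\min}$" to genuine coarse density of the orbit in $\C_{\min}$ for the Hilbert metric on $\Omega_{\max}$. The subtlety is that $\Omega_{\max}$ need not be strictly convex, so a point of $\C_{\min}$ could a priori drift toward $\partial_{\scriptscriptstyle\PP}\Omega_{\max}$ along a direction "transverse" to $\Lambda_\Gamma$ while staying within $\C_{\min}$; one must rule this out using that $\C_{\min}$ is literally the convex hull of the \emph{transverse} (hence, by negativity, "spanning-a-triangle-inside-$\HH^{p,q-1}$") set $\Lambda_\Gamma$, together with the Hilbert-metric estimate that the distance from a point on a chord $[\xi(\eta_1),\xi(\eta_2)]$ to the orbit is controlled by the Gromov product $(\eta_1\cdot\eta_2)$ in $\partial_\infty\Gamma$. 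This is precisely where the negativity hypothesis (not merely transversality) is used, and where the bulk of the technical work lies; the rest is packaging properness (already known) with this coarse density to conclude compactness of $\Gamma\backslash\C_{\min}$.
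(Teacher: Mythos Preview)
Your approach is genuinely different from the paper's, and the gap you yourself flag in step~(iii) is real and not easily closed along the lines you suggest.

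The paper does \emph{not} go through quasi-isometry or coarse density at all. Instead it invokes the result of Kapovich--Leeb--Porti that a $P_1$-Anosov subgroup of $\PGL(\RR^{p+q})$ acts \emph{expandingly} on $\PP(\RR^{p+q})$ at each point of~$\Lambda_\Gamma$ (for a fixed Riemannian metric $d_{\PP}$ on projective space): for every $z\in\Lambda_\Gamma$ there is $\gamma\in\Gamma$, a neighborhood $\mathcal{U}\ni z$, and $C>1$ with $\gamma$ being $C$-expanding on~$\mathcal{U}$. One then runs a short Sullivan-type argument: if the action on $\C_{\min}$ were not cocompact, there would be orbits staying arbitrarily close to $\Lambda_\Gamma$ (in $d_{\PP}$); picking on each such orbit a point $z_m$ realizing the maximum of $d_{\PP}(\cdot,\Lambda_\Gamma)$ and letting $z_m\to z\in\Lambda_\Gamma$, the expansion at $z$ produces $\gamma\cdot z_m$ strictly farther from $\Lambda_\Gamma$ than $z_m$, contradicting maximality. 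This is a two-paragraph proof with no Hilbert-metric estimates.

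Your route, by contrast, has two concrete problems. First, a general point $w\in\C_{\min}$ need \emph{not} lie on a segment $[\xi(\eta_1),\xi(\eta_2)]$ with $\eta_1,\eta_2\in\partial_\infty\Gamma$: when $\Lambda_\Gamma$ is totally disconnected (e.g.\ $\Gamma$ free), most points of the convex hull require more than two extreme points, so the ``pull back a pair of endpoints and use cocompactness on geodesics of $\Gamma$'' step does not cover $\C_{\min}$. Second, even on such segments, passing from ``$\Gamma$ acts cocompactly on its own space of geodesics'' to ``the $\Gamma$-orbit is coarsely dense along $[\xi(\eta_1),\xi(\eta_2)]$ for the Hilbert metric of $\Omega_{\max}$'' requires a Morse/fellow-traveling statement in $(\Omega_{\max},d)$. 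You cite Lemma~\ref{lem:geod-non-strict-conv} as the model, but that lemma is proved \emph{using} cocompactness (it lives in the converse implication of the main theorem), so invoking it here is circular, and ``re-deriving it in the Anosov setting'' is not sketched. Since $\Omega_{\max}$ is typically not strictly convex and not known a priori to be Gromov hyperbolic, there is no off-the-shelf Morse lemma available. The expansion argument sidesteps all of this.
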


\begin{proof}
By Fact~\ref{fact:Ano-PO-PGL}, the natural inclusion $\Gamma\hookrightarrow\PO(p,q)\hookrightarrow\PGL(\RR^{p+q})$ is $P_1$-Anosov, where $P_1$ is the stabilizer in $\PGL(\RR^{p+q})$ of a line of~$\RR^{p+q}$.
By \cite[Th.\,1.7]{klp14} (see also \cite[Rem.\,5.15]{ggkw17}), the action of $\Gamma$ on $\PP(\RR^{p,q})$ is \emph{expanding}: for any point $z\in\Lambda_{\Gamma}$ there exist an element $\gamma\in\Gamma$, a neighborhood $\mathcal{U}$ of $z$ in $\PP(\RR^{p,q})$, and a constant $C>1$ such that $\gamma$ is $C$-expanding~on~$\mathcal{U}$ for the metric
\begin{equation*}
d_{\PP}([x],[x']) := | \sin \measuredangle (x,x') |
\end{equation*}
on $\PP(\RR^{p,q})$.
We now use a version of the argument of \cite[Prop.\,2.5]{klp14}, inspired by Sullivan's dynamical characterization \cite{sul85} of convex cocompactness in Riemannian hyperbolic spaces.
(The argument in \cite{klp14} is a little more involved because it deals with bundles, whereas we work directly in~$\PP(\RR^{p,q})$.)

Suppose by contradiction that the action of $\Gamma$ on $\C_{\min}$ is \emph{not} cocompact, and let $(\varepsilon_m)_{n\in\NN}$ be a sequence of positive reals converging to~$0$.
For any~$m$, the set $K_m := \{ z\in\C_{\min} \,|\, d_{\PP}(z,\Lambda_{\Gamma}) \geq \varepsilon_m\}$ is compact, hence there exists a $\Gamma$-orbit contained in $\C_{\min}\smallsetminus (K_m \cup \Lambda_{\Gamma})$. 
By proper discontinuity of the action on $\C_{\min}$, the supremum of $d_{\PP}(\cdot,\Lambda_{\Gamma})$ on this orbit is achieved at some point $z_m\in\C_{\min}$, and by construction $0 < d_{\PP}(z_m,\Lambda_{\Gamma}) \leq \varepsilon_m$.
Then, for all $\gamma\in\Gamma$,
$$d_{\PP}(\gamma\cdot z_m,\Lambda_{\Gamma}) \leq d_{\PP}(z_m,\Lambda_{\Gamma}). $$
Up to extracting, we may assume that \((z_m)_{n\in\NN}\) converges to some \(z\in\Lambda_{\Gamma}\).
Consider an element $\gamma\in\Gamma$, a neighborhood $\mathcal{U}$ of $z$ in $\partial_{\scriptscriptstyle\PP}\HH^{p,q-1}$, and a constant $C>1$ such that $\gamma$ is $C$-expanding on~$\mathcal{U}$.
For any $n\in\NN$, there exists $z'_m\in\Lambda_{\Gamma}$ such that $d_{\PP}(\gamma\cdot z_m,\Lambda_{\Gamma}) = d_{\PP}(\gamma\cdot z_m,\gamma\cdot z'_m)$.
We then have
$$d_{\PP}(\gamma\cdot z_m,\Lambda_{\Gamma}) \geq C\, d_{\PP}( z_m, z'_m) \geq C\, d_{\PP}( z_m,\Lambda_{\Gamma}) \geq  
C\, d_{\PP}(\gamma\cdot z_m,\Lambda_{\Gamma}).$$
This is impossible since $C>1$.
\end{proof}

\subsection{Disconnected limit sets} \label{subsec:noncococo}

Let $\Gamma$ be a free group on two generators.
For $\mathrm{rank}_{\RR}(G):=\min(p,q)\geq 2$, let us give an example of an irreducible $P_1^{p,q}$-Anosov representation $\rho : \Gamma\to G=\PO(p,q)$ for which the proximal limit set $\Lambda_{\rho(\Gamma)}$ is neither negative nor positive (Definition~\ref{def:pos-neg}).
This shows that Theorem~\ref{thm:main}.\eqref{item:Ano-connected-implies-ccc} is not true when $\partial_{\infty}\Gamma$ is not connected.
We first work in $\PO(2,2)$ (Example~\ref{ex:noncococo}), then in $\PO(p,q)$ for any $p,q\geq 2$ (Example~\ref{ex:noncococo-higher}).

\begin{example} \label{ex:noncococo}
Let $\Gamma$ be a free group on two generators. 
Consider two injective and discrete representations $\rho_1,\rho_2:\Gamma\rightarrow \PSL_2(\RR)$ with convex cocompact images, such that $\rho_1$ is the holonomy of a hyperbolic 3-holed sphere and $\rho_2$ the holonomy of a hyperbolic one-holed torus. 
For $i\in\{1,2\}$, let $\xi_i :\nolinebreak\partial_\infty\Gamma\rightarrow \PP^1\RR$ be the boundary map associated to $\rho_i$, with image $\Lambda_i$ (a Cantor set). 
Let $\psi:=\xi_2\circ\xi_1^{-1}: \Lambda_1\rightarrow \Lambda_2$ be the unique $(\rho_1, \rho_2)$-equivariant homeomorphism. 
This map $\psi$ does not preserve the cyclic order of $\PP^1\RR$: there exist $x_1,x_2,x_3,x_4 \in \partial_\infty \Gamma$ such that  the quadruples $(\xi_1(x_1),\xi_1(x_2),\xi_1(x_3),\xi_1(x_4))$ and $(\xi_2(x_1),\xi_2(x_2),\xi_2(x_4),\xi_2(x_3))$ are both cyclically ordered.

The identification $\PO(2,2)_0 \simeq \PSL_2(\RR)\times \PSL_2(\RR)$ of Remark~\ref{rem:PO22} lets us see $(\rho_1, \rho_2)$ as a single representation $\rho:\Gamma\rightarrow \PO(2,2)$. 
The boundary maps $\xi_1, \xi_2:\partial_\infty\Gamma\rightarrow \PP^1\RR$ associated to $\rho_1, \rho_2$ combine into a map $\xi$ from $\partial_\infty\Gamma$ to the doubly ruled quadric $\partial_{\scriptscriptstyle\PP} \HH^{2,1}\simeq \PP^1\RR\times \PP^1\RR$: under this identification, the image $\Lambda$ of $\xi$ is the graph of $\psi$, and $\rho$ is $P_1^{2,2}$-Anosov with boundary map~$\xi$.
\begin{figure}
\centering
\labellist
\small\hair 2pt
\pinlabel {$\xi(x_1)$} [u] at 34 40
\pinlabel {$\xi(x_2)$} [u] at 34 59 
\pinlabel {$\xi(x_3)$} [u] at 98 69
\pinlabel {$\xi(x_4)$} [u] at 98 30
\pinlabel {$\partial_{\scriptscriptstyle\PP} \HH^{2,1}$} [u] at 1 25 
\endlabellist
\includegraphics[scale=1.5]{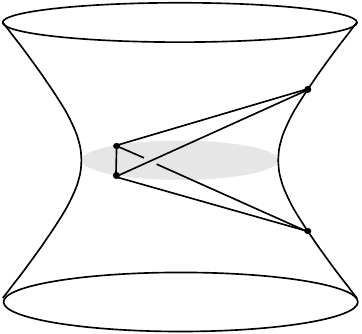}
\caption{Illustration for Example~\ref{ex:noncococo}. The triples $\{\xi(x_1),\xi(x_2),\xi(x_3)\}$ and $\{\xi(x_1),\xi(x_2),\xi(x_4)\}$ are negative, \ie span an ideal triangle contained in~$\HH^{2,1}$. The triples $\{\xi(x_1),\xi(x_3),\xi(x_4)\}$ and $\{\xi(x_2),\xi(x_3),\xi(x_4)\}$ are positive, \ie span ideal triangles contained in $\SS^{1,2}$ (which go through infinity in the picture).}
\label{fig:Mixed}
\end{figure}
However, $\rho(\Gamma)$ is not $\HH^{2,1}$-convex cocompact nor is it $\HH^{1,2}$-convex cocompact (with respect to $-\langle \cdot, \cdot \rangle_{2,2}$).
As depicted in Figure~\ref{fig:Mixed}, the triples $\xi(x_1),\xi(x_2),\xi(x_3)$ and $\xi(x_1),\xi(x_2),\xi(x_4)$ are negative, while the triples $\xi(x_1),\xi(x_3),\xi(x_4)$ and $\xi(x_2),\xi(x_3),\xi(x_4)$ are positive.
Alternatively, observe that the six segments connecting the $\xi(x_i)$ (for $1\leq i \leq 4$) inside $\HH^{2,1}$ (\resp inside $\SS^{1,2}$) carry the generator of $\pi_1(\PP(\RR^{2,2}))$, precluding the possibility that these segments could be part of a properly convex subset of $\PP(\RR^{2,2})$.
\end{example}

\begin{example} \label{ex:noncococo-higher}
Take any $p,q\geq 2$ and consider the embedding
$$\tau : \PO(2,2)_0 \simeq \SO(2,2)_0 \longhookrightarrow \SO(p,q)_0 \simeq \PO(p,q)_0$$
coming from the natural inclusion $\RR^{2,2}\subset\RR^{p,q}$.
The corresponding $\tau$-equiva\-riant embedding $\iota : \partial_{\scriptscriptstyle\PP}\HH^{2,1}\hookrightarrow\partial_{\scriptscriptstyle\PP}\HH^{p,q-1}$ has the property that a subset $\Lambda$ of $\partial_{\scriptscriptstyle\PP}\HH^{2,1}$ is negative (\resp positive) if and only $\iota(\Lambda)$ is negative (\resp positive) as a subset of $\partial_{\scriptscriptstyle\PP}\HH^{p,q-1}$.
Let $\rho : \Gamma\to\PO(2,2)_0$ be as in Example~\ref{ex:noncococo}.
By \cite[Prop.\,4.7]{gw12}, the composition $\tau\circ\rho : \Gamma\to\PO(p,q)$ is $P_1^{p,q}$-Anosov with proximal limit set $\Lambda_{\tau\circ\rho(\Gamma)} = \iota(\Lambda_{\rho(\Gamma)})$.
By Example~\ref{ex:noncococo}, this limit set is neither negative nor positive.
Since being Anosov is an open property \cite{lab06,gw12} and since a small deformation of a negative (\resp positive) triple in $\partial_{\scriptscriptstyle\PP}\HH^{p,q-1}$ is still negative (\resp positive), any small deformation of $\tau\circ\rho$ in $\Hom(\Gamma,\PO(p,q))$ is still a $P_1^{p,q}$-Anosov representation with a proximal limit set that is neither negative nor positive.
Since $\Gamma$ is a free group, such deformations are abundant, including many which are irreducible.
The image of any such representation fails to be $\HH^{p,q-1}$-convex cocompact.
\end{example}

\section{Link with strong projective convex cocompactness} \label{sec:proofCM}

The goal of this section is to prove Proposition~\ref{prop:CramponMarquis}.
We start with some general lemmas.

\subsection{Supporting hyperplanes at the limit set}

Recall that a \emph{supporting hyperplane} of a properly convex set $\Omega$ at a point $z\in\partial_{\scriptscriptstyle\PP}\Omega$ is a projective hyperplane whose intersection with $\overline{\Omega}$ is a subset of $\partial_{\scriptscriptstyle\PP}\Omega$ containing~$z$.
In the following lemma, we denote by $\Omega_{\min}\subset\Omega_{\max}$ a pair of minimal and maximal nonempty $\Gamma$-invariant properly convex open subsets of $\PP(\RR^{p,q})$, given by Proposition~\ref{prop:Lambda-non-pos-neg}, and by $d$ the Hilbert metric on~$\Omega_{\max}$.

\begin{lemma} \label{lem:C1-at-limit-set}
Let $\Gamma$ be an irreducible discrete subgroup of $G = \PO(p,q)$ preserving a nonempty properly convex open subset $\Omega$ of $\PP(\RR^{p,q})$.
Suppose that the proximal limit set $\Lambda_{\Gamma} \subset \partial_{\scriptscriptstyle\PP}\Omega$ is transverse.
If $\Omega$ contains a uniform neighborhood of $\Omega_{\min}$ in $(\Omega_{\max},d)$, then $\Omega$ has a unique supporting hyperplane at any point $z$ of~$\Lambda_{\Gamma}$, namely~$z^{\perp}$.
\end{lemma}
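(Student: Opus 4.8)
\emph{(We may assume $\Gamma$ is infinite, since otherwise $\Lambda_{\Gamma}=\emptyset$ and there is nothing to prove.)} The plan is to prove the two halves of the statement separately. That $z^{\perp}$ \emph{is} a supporting hyperplane of $\Omega$ at $z$ is immediate: since $\Gamma$ is irreducible and preserves a nonempty properly convex open set, Proposition~\ref{prop:Lambda-non-pos-neg} applies, and combined with the transversality of $\Lambda_{\Gamma}$ it shows $\Lambda_{\Gamma}$ is negative or positive; assume it is negative, the other case being symmetric after replacing $\langle\cdot,\cdot\rangle_{p,q}$ by its opposite. Fix a lift $\widetilde{\Lambda}_{\Gamma}$ of $\Lambda_{\Gamma}$ on which the form is negative on noncollinear pairs, and a lift $x_z\in\widetilde{\Lambda}_{\Gamma}$ of $z$. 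From the explicit formula $\Omega_{\max}=\PP(\mathrm{Int}\{x'\mid\langle x,x'\rangle_{p,q}\le 0\ \forall x\in\widetilde{\Lambda}_{\Gamma}\})$ of Proposition~\ref{prop:Lambda-non-pos-neg} one reads that $\langle x_z,\cdot\rangle_{p,q}$ is $\le 0$ on $\overline{\widetilde{\Omega}_{\max}}$ and vanishes at $x_z$, as $\langle x_z,x_z\rangle_{p,q}=0$; hence $z^{\perp}$ supports $\Omega_{\max}$, and a fortiori $\Omega\subset\Omega_{\max}$, at the boundary point $z\in\Lambda_{\Gamma}\subset\partial_{\scriptscriptstyle\PP}\Omega$. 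The crucial sharpening, read off from the same formula, is that $z^{\perp}$ is the \emph{unique} supporting hyperplane of $\Omega_{\max}$ at $z$, \ie $\partial_{\scriptscriptstyle\PP}\Omega_{\max}$ is $C^1$ at $z$: every supporting functional of $\Omega_{\max}$ lies in the $\RR^{+}$-span of $\psi(\widetilde{\Lambda}_{\Gamma})$, where $\psi:x\mapsto\langle x,\cdot\rangle_{p,q}$, so one vanishing at $x_z$ has the form $\psi(\sum_i t_i x_i)$ with $x_i\in\widetilde{\Lambda}_{\Gamma}$, $t_i>0$ and $\sum_i t_i\langle x_i,x_z\rangle_{p,q}=0$; by negativity each summand is $\le 0$ and vanishes only when $x_i\parallel x_z$, so the functional is proportional to $\psi(x_z)$ and the hyperplane is $z^{\perp}$.

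For uniqueness I would argue by contradiction. Suppose $H\neq z^{\perp}$ is a second supporting hyperplane of $\Omega$ at $z$, and pass to an affine chart containing $\overline{\Omega_{\max}}$ in which $z$ is the origin, $z^{\perp}=\{x_1=0\}$, $H=\{x_2=0\}$, and, after orienting the coordinates, $\overline{\Omega_{\max}}\subset\{x_1\ge 0\}$ while $\overline{\Omega}\subset\{x_1\ge 0\}\cap\{x_2\ge 0\}$. By the $C^1$ property just established, near $z$ the hypersurface $\partial_{\scriptscriptstyle\PP}\Omega_{\max}$ is the graph $x_1=\phi(\hat x)$ of a nonnegative convex function of $\hat x:=(x_2,\dots,x_{p+q})$ with $\phi(0)=0$ and $\nabla\phi(0)=0$, so $\phi(\hat x)=o(|\hat x|)$; encode this in a modulus $\rho(h)$ with $\phi\le h$ on the $\rho(h)$-ball and $\rho(h)/h\to\infty$ as $h\to 0$. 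Fix $p_0\in\Omega_{\min}$; being open and disjoint from its supporting hyperplane $z^{\perp}$, $\Omega_{\min}$ forces $x_1(p_0)>0$, so the points $y_m:=s_m p_0\in\Omega_{\min}$ (for $s_m\downarrow 0$) tend to $z$ with $h_m:=x_1(y_m)\asymp|\hat y_m|$, \ie transversally to $z^{\perp}$. Because $\phi=o(|\hat x|)$, moving from $y_m$ along either of $\pm e_2$ one remains inside $\Omega_{\max}$ for a Euclidean length $\gtrsim\rho(h_m)$, so the distances $d_m^{\pm}$ from $y_m$ to $\partial_{\scriptscriptstyle\PP}\Omega_{\max}$ along $\mp e_2$ satisfy $\min(d_m^{+},d_m^{-})\gtrsim\rho(h_m)$. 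An elementary cross-ratio estimate then shows that the Hilbert ball $B_{\Omega_{\max}}(y_m,R)$ reaches from $y_m$ in the direction $-e_2$ a Euclidean distance at least $\tanh(R)\min(d_m^{+},d_m^{-})\gtrsim\tanh(R)\,\rho(h_m)$, which for $m$ large beats the distance $\asymp x_2(y_m)\le|\hat y_m|\asymp h_m$ from $y_m$ to $H$; hence $B_{\Omega_{\max}}(y_m,R)$ contains a point with $x_2<0$. But $B_{\Omega_{\max}}(y_m,R)\subset\Omega$ by the uniform--neighborhood hypothesis, while $\Omega\subset\{x_2\ge 0\}$ --- a contradiction. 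Therefore $H=z^{\perp}$.

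I expect the heart of the matter to be the estimate of the last paragraph: one must make watertight the idea that $C^1$-regularity of $\partial_{\scriptscriptstyle\PP}\Omega_{\max}$ at $z$ makes that boundary hug $z^{\perp}$ \emph{strictly} faster than linearly, so that a Hilbert ball of fixed radius centered at a point creeping toward $z$ transversally to $z^{\perp}$ has Euclidean width, in the direction normal to $H$, of strictly larger order than the linear rate at which the wedge $\{x_1\ge 0,\,x_2\ge 0\}\supset\Omega$ pinches shut at the origin. The surrounding bookkeeping --- the cross-ratio lower bound $\tanh(R)\min(d^{+},d^{-})$ for the protrusion of a Hilbert ball, the choice of the transversal sequence $y_m\in\Omega_{\min}$ with $y_m\to z$ (which is where $z\in\Lambda_{\Gamma}\subset\partial_{\scriptscriptstyle\PP}\Omega_{\min}$ and the inclusion $B_{\Omega_{\max}}(y_m,R)\subset\Omega$ enter), and the control of the modulus $\rho$ --- is routine but must be laid out with some care.
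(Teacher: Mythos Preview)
Your argument is correct and follows the same two-step architecture as the paper: first establish that $z^{\perp}$ is the \emph{unique} supporting hyperplane of $\Omega_{\max}$ at $z$ (your dual-cone computation is exactly the paper's), then transfer this to~$\Omega$ via the uniform-neighborhood hypothesis. The difference lies in how the transfer is carried out. You set up affine coordinates, extract a modulus $\rho(h)$ quantifying the $o(|\hat x|)$ tangency of $\partial_{\scriptscriptstyle\PP}\Omega_{\max}$ to $z^{\perp}$, and produce an explicit cross-ratio lower bound $\tanh(R)\min(d^{+},d^{-})$ for the Euclidean protrusion of a Hilbert $R$-ball; this works, though (as you anticipate) the bookkeeping requires care --- in particular one should note that as $y_m\to z$ the line through $y_m$ in the $e_2$-direction converges to a line in $z^{\perp}$, so its intersection with $\Omega_{\max}$ shrinks to~$z$ and the exit points are eventually governed by the local graph description. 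The paper bypasses all of this by invoking the standard Hilbert-geometric fact directly: since $\partial_{\scriptscriptstyle\PP}\Omega_{\max}$ is $C^1$ at~$z$, any two straight rays in $\Omega_{\max}$ ending at $z$ have Hilbert distance tending to~$0$ (after reparametrization); taking one ray inside $\Omega_{\min}$ shows that \emph{every} ray in $\Omega_{\max}$ ending at $z$ eventually lies in the uniform neighborhood of $\Omega_{\min}$, hence in~$\Omega$, which forces any supporting hyperplane of $\Omega$ at $z$ to support $\Omega_{\max}$ as well. Your approach has the virtue of being self-contained; the paper's is shorter and exposes the geometric mechanism more transparently.
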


\begin{proof}
We first check that $\Omega_{\max}$ has a unique supporting hyperplane at any point $z$ of~$\Lambda_{\Gamma}$, namely~$z^{\perp}$.
By Proposition~\ref{prop:Lambda-non-pos-neg}, the set $\Lambda_{\Gamma}$ is negative or positive.
We assume that it is negative (the positive case is analogous).
Let $\widetilde{\Lambda}_{\Gamma}$ be a cone of $\RR^{p,q}\smallsetminus\{0\}$ lifting~$\Lambda_{\Gamma}$ on which all inner products $\langle\cdot,\cdot\rangle_{p,q}$ of noncollinear points are negative.
By Proposition~\ref{prop:Lambda-non-pos-neg}, the set $\Omega_{\max}$ is the projectivization of the interior $\widetilde{\Omega}_{\max}$ of the set of $x'\in\RR^{p,q}$ such that $\langle x,x'\rangle_{p,q}\leq 0$ for all $x\in\widetilde{\Lambda}_{\Gamma}$.
A supporting hyperplane to $\widetilde{\Omega}_{\max}$ in~$\RR^{p,q}$ is the kernel of a linear form $\ell=\sum_{i=1}^k \langle x_i,\cdot\rangle$ with $x_1,\dots,x_k \in \widetilde{\Lambda}_{\Gamma}$.
For any $x\in\widetilde{\Lambda}_{\Gamma}$ in such a hyperplane, we have $\ell(x)=0$ and $\langle x_i,x\rangle_{p,q}\leq 0$ for all~$i$, hence $\langle x_i,x\rangle_{p,q}=0$ for all~$i$.
But the set $\Lambda_\Gamma$ is transverse by assumption, and so all $x_i$ are  collinear to~$x$.
Thus the unique supporting hyperplane to $\widetilde{\Omega}_{\max}$ at $x\in\widetilde{\Lambda}_{\Gamma}$ is~$x^{\perp}$.
Taking images in $\PP(\RR^{p,q})$, the unique supporting hyperplane to $\Omega_{\max}$ at a point $z\in\Lambda_{\Gamma}$ is~$z^{\perp}$.

We now assume that $\Omega$ contains a uniform neighborhood of $\Omega_{\min}$ in $(\Omega_{\max},d)$.
Let us check that $\Omega$ has a unique supporting hyperplane at any point $z$ of~$\Lambda_{\Gamma}$, namely~$z^{\perp}$.
For $z\in\Lambda_{\Gamma}$, let $(y_t)_{t\geq 0}$ and $(z_t)_{t\geq 0}$ be two straight geodesic rays in $\Omega_{\max}$ with endpoint~$z$, such that $(z_t)_{t\geq 0}$ is contained in~$\Omega_{\min}$.
Since $\Omega_{\max}$ has a unique supporting hyperplane at~$z$, up to reparametrization we have $d(y_t,z_t)\to 0$ as $t\to +\infty$.
Since $\Omega$ contains some uniform neighborhood of $\Omega_{\min}$ in $(\Omega_{\max},d)$, we deduce that some subray $(y_t)_{t\geq t_0}$ is contained in~$\Omega$.
This shows that $z^\perp$ is also the unique supporting hyperplane of $\Omega$ at~$z$.
\end{proof}

\subsection{Extreme points at the limit set}

We next prove the following.

\begin{proposition}\label{prop:possibly-superfluous}
Let $\Gamma$ be an irreducible discrete subgroup of $\PO(p,q)$
preserving a properly convex open subset $\Omega$ of $\PP(\RR^{p,q})$ and acting cocompactly on some nonempty properly convex closed subset $\C$ of~$\Omega$.
If the proximal limit set $\Lambda_{\Gamma}$ is transverse, then every point of $\Lambda_\Gamma$ is an extreme point of $\overline{\Omega}$.
\end{proposition}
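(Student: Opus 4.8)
The plan is to argue by contradiction: suppose some $z\in\Lambda_\Gamma$ is not an extreme point of $\overline\Omega$, so $z$ lies in the relative interior of a nontrivial segment $[y^-,y^+]\subset\partial_{\scriptscriptstyle\PP}\Omega$. The first step is to pass to the largest $\Gamma$-invariant properly convex open set $\Omega_{\max}\supset\Omega$ provided by Proposition~\ref{prop:Lambda-non-pos-neg}, and record from that proposition (together with the transversality hypothesis) that $\Omega_{\max}$ — whose boundary contains $\Lambda_\Gamma$ — has $z^\perp$ as a supporting hyperplane at $z$; since $\Lambda_\Gamma$ is transverse, this is in fact the unique supporting hyperplane at $z$, exactly as in the first part of the proof of Lemma~\ref{lem:C1-at-limit-set}. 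Consequently any segment of $\partial_{\scriptscriptstyle\PP}\Omega_{\max}$ through $z$ must lie inside the hyperplane $z^\perp$; and since $[y^-,y^+]\subset\partial_{\scriptscriptstyle\PP}\Omega\subset\overline{\Omega_{\max}}$, the whole segment $[y^-,y^+]$ lies in $z^\perp$. In particular $y^-,y^+\in z^\perp$, and both endpoints lie in $\overline{\Omega_{\max}}\cap z^\perp\subset\partial_{\scriptscriptstyle\PP}\Omega_{\max}$.

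Next I would bring in the cocompact action on $\C$. Since $z\in\Lambda_\Gamma\subset\partial_i\C$ (by Fact~\ref{fact:Yves}/Remark~\ref{rem:inv-dom}, $\Lambda_\Gamma$ is in the boundary of $\Omega$, hence of $\C$), choose a sequence $c_m\in\C$ with $c_m\to z$, and pick a point $w$ in the interior of $\C$. The segments $[w,c_m]$ lie in $\C$; using cocompactness, translate by $\gamma_m\in\Gamma$ so that a point $u_m\in[w,c_m]$ staying a bounded Hilbert distance (in $\Omega_{\max}$) into the interior of $\C$ is carried into a fixed compact subset of $\mathrm{Int}(\C)$. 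Passing to a subsequence, $\gamma_m w\to w_\infty$, $\gamma_m c_m\to z_\infty\in\Lambda_\Gamma$, and $\gamma_m u_m\to u_\infty\in\mathrm{Int}(\C)$, with $u_\infty$ on the segment $[w_\infty,z_\infty]$. The point of keeping $u_m$ an interior point is that $u_\infty$ is a genuine interior point of $\C$, hence of $\Omega$, while $z_\infty\in\partial_{\scriptscriptstyle\PP}\Omega$; so $[w_\infty,z_\infty]$ meets the interior of $\Omega$. Now the image under $\gamma_m$ of the extra freedom at $z$ — the segment $[y^-,y^+]$ — gives, in the limit, a nontrivial segment in $\partial_{\scriptscriptstyle\PP}\Omega$ containing $z_\infty$ in its relative interior (here one uses that the $\gamma_m$ are projective transformations carrying $[y^-,y^+]$ to segments of $\partial_{\scriptscriptstyle\PP}\Omega$, and that, after extraction, these converge to a segment of positive length — this is where a uniform lower bound on the ``size'' of the segment at the translated points is needed).

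The contradiction is then extracted as in Lemma~\ref{lem:no-degenerate-faces}: $z_\infty\in\Lambda_\Gamma$ lies in the relative interior of a nontrivial boundary segment of $\Omega$, which (by the supporting-hyperplane analysis above applied at $z_\infty$) must lie in $z_\infty^\perp$; so $\overline\C\subset\overline{\Omega_{\max}}$ would contain points $y$ with $y\in z_\infty^\perp$ for $z_\infty\in\Lambda_\Gamma$, and the interval $[y,z_\infty)$ would be a lightlike ray contained in $\C$ (by convexity). Running the orbit argument of Lemma~\ref{lem:no-degenerate-faces} on this lightlike ray contradicts the proper discontinuity of the action of $\Gamma$ on $\C$.

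\textbf{Main obstacle.} The delicate point is the ``persistence of the segment'' in the limit: I need the $\Gamma$-translates of the segment $[y^-,y^+]\subset\partial_{\scriptscriptstyle\PP}\Omega$, read near the basepoint, not to collapse to a point. The clean way to handle this is to observe that it suffices to produce \emph{some} $z'\in\Lambda_\Gamma$ lying in the relative interior of a boundary segment of $\overline\Omega$ whose endpoints also lie in $\overline\Omega$ — because $\Lambda_\Gamma$ is the minimal closed $\Gamma$-invariant set and the set of such ``bad'' points of $\Lambda_\Gamma$ is $\Gamma$-invariant and (by a compactness argument on faces of $\overline\Omega$, as in the proof of Lemma~\ref{lem:geod-loop}) closed; if it is nonempty it is all of $\Lambda_\Gamma$, but then, arguing at a point $z'$ whose face-dimension in $\partial_{\scriptscriptstyle\PP}\overline\Omega$ is maximal among points of $\Lambda_\Gamma$, one finds a $\Gamma$-translate of $z'$ sitting in a strictly larger face — impossible. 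I would fill in this monotonicity-of-faces argument; alternatively, since $\Omega\subset\Omega_{\max}$ and the face of $z$ in $\partial_{\scriptscriptstyle\PP}\Omega_{\max}$ is trivial (by uniqueness of the supporting hyperplane and transversality, $z^\perp\cap\overline{\Omega_{\max}}$ can contain no segment through $z$ unless $\Lambda_\Gamma$ fails to be transverse), the segment $[y^-,y^+]$ cannot exist in $\partial_{\scriptscriptstyle\PP}\Omega\subset\overline{\Omega_{\max}}$ in the first place — which already gives the conclusion directly, and the cocompactness of the action on $\C$ is only needed to guarantee, via Lemma~\ref{lem:no-degenerate-faces}, that $\C\subset\Omega_{\max}$ so that this boundary analysis applies. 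This shorter route is probably the one to write up.
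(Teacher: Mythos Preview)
Your ``shorter route'' contains a genuine gap: you conflate the $C^1$ property at~$z$ (unique supporting hyperplane) with the strict convexity property at~$z$ (extreme point). Lemma~\ref{lem:C1-at-limit-set} tells you that $z^\perp$ is the unique supporting hyperplane to $\Omega_{\max}$ at~$z$, hence any boundary segment through~$z$ lies in $z^\perp$. But this does \emph{not} say that the face $F_z := z^\perp\cap\overline{\Omega_{\max}}$ is reduced to~$\{z\}$, nor that $z$ is an extreme point of~$F_z$. Transversality of $\Lambda_\Gamma$ only gives $\Lambda_\Gamma\cap z^\perp=\{z\}$; it says nothing about other points of $\overline{\Omega_{\max}}\cap z^\perp$. (Think of the midpoint of a straight edge of a stadium-shaped convex body: unique supporting line, yet not extreme.) So the shortcut simply asserts the conclusion for $\Omega_{\max}$ without proof, and cocompactness --- which is genuinely needed --- has dropped out. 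Your remark that cocompactness enters ``only via Lemma~\ref{lem:no-degenerate-faces} to get $\C\subset\Omega_{\max}$'' is also off: here $\C\subset\Omega\subset\Omega_{\max}$ is already given by hypothesis.

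Your longer route is closer in spirit but, as you note, the ``persistence of the segment'' under $\gamma_m$ is the crux, and the face-dimension argument you sketch is not worked out. The paper avoids this difficulty entirely by a different, shorter mechanism: take $y\in\C$ and observe that for $z_1,z_2\in I$ with $z\in(z_1,z_2)$, the open triangle with vertices $y,z_1,z_2$ lies in a uniform $d_\Omega$-neighborhood of the ray $[y,z)\subset\C$, hence in some $\C_r$; then $(z_1,z_2)\subset\partial_i\C_r$. Shrinking $z_1,z_2$ toward~$z$ makes $r$ arbitrarily small, and Lemma~\ref{lem:unif-neighb} gives $\partial_i\C_r=\Lambda_\Gamma$ for small~$r$, which is transverse --- contradicting the presence of the segment $(z_1,z_2)$ in~$\partial_i\C_r$. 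This bypasses any limiting argument on translated segments.
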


Proposition~\ref{prop:possibly-superfluous} relies on the following lemma.

\begin{lemma} \label{lem:unif-neighb}
Let $\Gamma$ be a discrete subgroup of $\PO(p,q)$ preserving a properly convex open subset $\Omega$ of $\PP(\RR^{p,q})$ and acting cocompactly on some nonempty properly convex closed subset $\C$ of $\Omega$ contained in $\HH^{p,q-1}$.
For any $r>0$, the closed uniform neighborhood $\C_r$ of $\C$ in $\Omega$ for the Hilbert metric $d_{\Omega}$ is properly convex, and the action of $\Gamma$ on~$\C_r$ is properly discontinuous and cocompact.
If $r>0$ is small enough, then $\C_r$ is contained in~$\HH^{p,q-1}$, and its ideal boundary $\partial_i\C_r$ is equal to the proximal limit set $\Lambda_{\Gamma}$ if $\Lambda_{\Gamma}$ is transverse.
\end{lemma}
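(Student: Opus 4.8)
The plan is to establish the four assertions — proper convexity of $\C_r$, proper discontinuity and cocompactness of the $\Gamma$-action, containment in $\HH^{p,q-1}$ for small $r$, and the identification $\partial_i\C_r = \Lambda_\Gamma$ — more or less in that order, leaning throughout on the fact that $\Gamma$ is already known (via Fact~\ref{fact:Yves} / Proposition~\ref{prop:Lambda-non-pos-neg}) to be acting on $\Omega$ with Hilbert metric $d_\Omega$ and that $\C$ has compact quotient.

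First I would observe that $\C_r$ is convex: a closed metric ball for a Hilbert metric is convex (this is standard — balls in Hilbert geometry are convex bodies), and the $r$-neighborhood $\C_r = \bigcup_{c\in\C} \overline{B_{d_\Omega}(c,r)}$ of a convex set $\C$ is convex since if $y\in\overline{B(c,r)}$ and $y'\in\overline{B(c',r)}$ with $c,c'\in\C$, then for $t\in[0,1]$ the point $ty+(1-t)y'$ lies within Hilbert distance $r$ of $tc+(1-t)c'\in\C$ by convexity of the metric (the Hilbert metric is convex along pairs of straight segments traversed in parallel, in the sense made precise by Fact~\ref{fact:triang-ineq-Hilbert} and the Busemann-convexity of Hilbert metrics). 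It is properly convex because $\C_r\subset\Omega$ and $\Omega$ is properly convex; its closure lies in $\overline\Omega$. Proper discontinuity of $\Gamma$ on $\C_r$ is immediate from proper discontinuity on all of $\Omega$ (Section~\ref{subsec:prop-conv-proj}), and cocompactness follows because $\C_r$ is exactly the closed $r$-neighborhood of $\C$ for a $\Gamma$-invariant metric, so a compact fundamental domain $D\subset\C$ for $\C$ yields the compact set $D_r=\bigcup_{c\in D}\overline{B(c,r)}$, whose $\Gamma$-translates cover $\C_r$ (here I use that $\overline{B(c,r)}$ is compact, which holds since $(\Omega,d_\Omega)$ is a proper metric space).

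For the containment in $\HH^{p,q-1}$ when $r$ is small: since $\C\subset\HH^{p,q-1}$ is closed in $\HH^{p,q-1}$ with compact quotient by $\Gamma$, and since $\HH^{p,q-1}$ is open in $\PP(\RR^{p,q})$, a fundamental domain $D\subset\C$ for the action sits at positive $d_\Omega$-distance, say $\geq 2r_0$, from $\partial_{\scriptscriptstyle\PP}\HH^{p,q-1}\cap\overline\Omega$ — here one uses that $D$ is compact and that $d_\Omega$ is a genuine metric on the open set $\Omega\supset\overline\C$, though one must be slightly careful since points of $\partial_{\scriptscriptstyle\PP}\HH^{p,q-1}$ may lie in $\Omega$. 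The correct statement is: $D$ being compact in $\HH^{p,q-1}$ and $\HH^{p,q-1}\cap\Omega$ being open, the $d_\Omega$-distance from $D$ to $(\Omega\cap\partial_{\scriptscriptstyle\PP}\HH^{p,q-1})\cup\partial_{\scriptscriptstyle\PP}\Omega$ is bounded below by some $2r_0>0$; then for $r\leq r_0$ the invariant set $\C_r$, covered by $\Gamma$-translates of $D_{r_0}\subset\HH^{p,q-1}$, stays inside $\HH^{p,q-1}$.

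Finally, for $\partial_i\C_r=\Lambda_\Gamma$ under the transversality hypothesis: the inclusion $\Lambda_\Gamma\subseteq\partial_i\C_r$ holds because $\Lambda_\Gamma\subset\partial_i\C$ (Fact~\ref{fact:Yves}, Remark~\ref{rem:inv-dom}) and taking a neighborhood only enlarges, so $\Lambda_\Gamma\subset\overline{\C_r}$; since $\Lambda_\Gamma\subset\partial_{\scriptscriptstyle\PP}\HH^{p,q-1}$ and $\C_r\subset\HH^{p,q-1}$, we get $\Lambda_\Gamma\subset\partial_i\C_r$. For the reverse inclusion I would argue exactly as in the proof of Lemma~\ref{lem:Lambda-Lambda-Gamma} (indeed $\C_r$ is a concrete instance of the sets $\C$ treated there): apply Lemma~\ref{lem:no-degenerate-faces} to get that $\C_r$ lies in a maximal $\Gamma$-invariant properly convex open $\Omega_{\max}$, use the Hilbert metric $d$ of $\Omega_{\max}$, and show that any $z\in\partial_i\C_r\smallsetminus\Lambda_\Gamma$ would force, via a cocompactness-and-extraction argument on a sequence $z_m\to z$ together with comparison points in $\C_{\min}$, a nontrivial segment in $\partial_{\scriptscriptstyle\PP}\Omega_{\max}\cap\partial_{\scriptscriptstyle\PP}\HH^{p,q-1}$ joining two points of $\Lambda_\Gamma$, contradicting transversality of $\Lambda_\Gamma=\partial_i\C_{\min}$ (Lemma~\ref{lem:Lambda-Lambda-Gamma}, Lemma~\ref{lem:nonpos-Hpq}). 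The main obstacle I anticipate is the convexity of $\C_r$: one needs the precise fact that Hilbert metrics are Busemann-convex (geodesically convex balls), or alternatively a direct cross-ratio estimate showing that the $t$-midpoint of two points lying in $r$-balls about $c,c'$ lies in the $r$-ball about $tc+(1-t)c'$; this is where I would spend the most care, citing the convexity of Hilbert balls (e.g.\ via the fact that straight segments are geodesics and the comparison in Fact~\ref{fact:triang-ineq-Hilbert}) rather than reproving it.
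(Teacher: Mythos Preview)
Your overall plan matches the paper's proof closely: proper discontinuity from the action on~$\Omega$, cocompactness via a compact fundamental domain $\DD$ and its thickening $\DD_r$, containment in $\HH^{p,q-1}$ from compactness of $\DD$ inside the open set $\HH^{p,q-1}$, and the identification $\partial_i\C_r=\Lambda_\Gamma$ by direct appeal to Lemma~\ref{lem:Lambda-Lambda-Gamma} (the paper simply cites that lemma rather than re-running its argument, which you may as well do too).

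The one genuine gap is your justification of convexity of $\C_r$. You argue that if $d_\Omega(y,c)\leq r$ and $d_\Omega(y',c')\leq r$ then $d_\Omega(ty+(1-t)y',\,tc+(1-t)c')\leq r$, invoking ``Busemann-convexity of Hilbert metrics''. But Hilbert metrics are \emph{not} Busemann-convex in general (indeed they are Busemann-convex only when $\Omega$ is an ellipsoid, i.e.\ in the hyperbolic case); Fact~\ref{fact:triang-ineq-Hilbert} does not give you this inequality. The convexity of closed uniform neighborhoods of convex sets in Hilbert geometry is nevertheless true, but it is a separate classical result of Busemann \cite[(18.12)]{bus55}, proved by a direct cross-ratio argument rather than by any metric non-positive-curvature property. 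The paper simply cites this reference; you should do the same, or reproduce Busemann's actual argument, rather than rely on a convexity-of-the-metric claim that fails here.
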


\begin{proof}[Proof of Lemma~\ref{lem:unif-neighb}]
The action of $\Gamma$ is properly discontinuous on~$\Omega$ (see Section~\ref{subsec:prop-conv-proj}), hence also on~$\C$ and on $\C_r$ for any $r>0$.
Let $\DD\subset\HH^{p,q-1}$ be a compact fundamental domain for the action of $\Gamma$ on~$\C$.
For any $r>0$, the set $\C_r$ is the union of the $\Gamma$-translates of the closed uniform $r$-neighborhood $\DD_r$ of $\DD$ in $(\Omega,d_{\Omega})$.
Since $\DD_r$ is compact, the action of $\Gamma$ on $\C_r$ is cocompact. 
The proper convexity of~$\C_r$ follows from \cite[(18.12)]{bus55}.
Since $\DD$ is compact and contained in  the open set $\HH^{p,q-1}$, for small enough $r > 0$ we have $\DD_r \subset \HH^{p,q-1}$, hence $\C_r\subset\HH^{p,q-1}$.
In that case, if $\Lambda_{\Gamma}$ is transverse, then $\partial_i\C_r = \Lambda_{\Gamma}$ by Lemma~\ref{lem:Lambda-Lambda-Gamma}.
\end{proof}

\begin{proof}[Proof of Proposition~\ref{prop:possibly-superfluous}]
Suppose by contradiction that $\Lambda_{\Gamma}$ is transverse but that there exists $z \in \Lambda_\Gamma$ which is contained in a nontrivial open segment $I$ of $\partial_{\scriptscriptstyle\PP} \Omega$.
Fix $y \in \C$.
For any $z_1, z_2\in I$ with $z \in (z_1, z_2)$, the open triangle with vertices $y,z_1,z_2$ is contained in a uniform neighborhood of the ray $[y,z)$ in $(\Omega,d_{\Omega})$; in particular, it is contained in a uniform neighborhood $\C_r$ of~$\C$ in $(\Omega,d_{\Omega})$, for some $r>0$, and $(z_1,z_2)$ is contained in the ideal boundary $\partial_i\C_r$.
By choosing $z_1,z_2$ close enough to~$z$, we may make $r$ as small as we like.
By Lemma~\ref{lem:unif-neighb}, if $r> 0$ is small enough, then $\partial_i\C_r=\Lambda_{\Gamma}$ is transverse: contradiction. 
\end{proof}

\subsection{Proof of Proposition~\ref{prop:CramponMarquis}.\eqref{item:CM1}} \label{subsec:proofCM1}

Suppose $\Gamma$ is $\HH^{p,q-1}$-convex cocompact: it acts properly discontinuously and cocompactly on some nonempty properly convex closed subset $\C$ of~$\HH^{p,q-1}$ whose ideal boundary $\partial_i\C$ is transverse.
By Lemma~\ref{lem:Lambda-Lambda-Gamma}, the set $\partial_i\C$ is equal to the proximal limit set $\Lambda_{\Gamma}$.
Let $\C_{\min}$ be the convex hull of $\Lambda_{\Gamma}$ in~$\C$: it is a closed convex subset of~$\C$, which has compact quotient by~$\Gamma$.
By Lemma~\ref{lem:no-degenerate-faces}, the set $\C_{\min}$ is contained in a maximal $\Gamma$-invariant properly convex open subset $\Omega_{\max}$ of $\PP(\RR^{p,q})$.
By Lemma~\ref{lem:unif-neighb}, there exists $r>0$ such that the closed uniform neighborhood $\C_r$ of $\C_{\min}$ in $\Omega_{\max}$ for the Hilbert metric $d_{\Omega_{\max}}$ is properly convex, with ideal boundary $\partial_i\C_r=\Lambda_{\Gamma}$, and the action of $\Gamma$ on~$\C_r$ is properly discontinuous and cocompact.
In order to prove that $\Gamma$ is strongly convex cocompact in $\PP(\RR^{p+q})$ (Definition~\ref{def:cm}), it is sufficient to prove the following.

\begin{lemma} \label{lem:strict-conv-C1}
In this setting, there is a $\Gamma$-invariant open neighborhood $\Omega$ of $\C_{\min}$ in $\mathcal{U}_r:=\mathrm{Int}(\C_r)$ which is strictly convex with $C^1$ boundary $\partial_{\scriptscriptstyle\PP}\Omega$.
\end{lemma}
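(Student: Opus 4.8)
The plan is to build the neighborhood $\Omega$ by a double-dualization procedure, exploiting the fact (Lemma~\ref{lem:C1-at-limit-set}) that any $\Gamma$-invariant properly convex open set containing a uniform neighborhood of $\Omega_{\min}$ automatically has $z^{\perp}$ as its unique supporting hyperplane at each $z\in\Lambda_{\Gamma}$. The object $\mathcal{U}_r = \mathrm{Int}(\C_r)$ is already $\Gamma$-invariant, properly convex, and contains a uniform neighborhood of $\Omega_{\min}$, with $\partial_i\mathcal{U}_r = \Lambda_{\Gamma}$; by Proposition~\ref{prop:possibly-superfluous} every point of $\Lambda_{\Gamma}$ is an extreme point of $\overline{\mathcal{U}_r}$, and by Lemma~\ref{lem:C1-at-limit-set} the boundary $\partial_{\scriptscriptstyle\PP}\mathcal{U}_r$ is $C^1$ and strictly convex \emph{along} $\Lambda_{\Gamma}$. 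The only failure of global strict convexity or $C^1$-ness can occur away from $\Lambda_{\Gamma}$, i.e.\ at the ``lateral'' part of $\partial_{\scriptscriptstyle\PP}\mathcal{U}_r$ lying in $\HH^{p,q-1}$ (the boundary $\partial_{\scriptscriptstyle\HH}\C_r$), which is disjoint from $\Lambda_{\Gamma}$ and on which $\Gamma$ acts cocompactly. So the problem is local and equivariant on a compact region.

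First I would smooth $\mathcal{U}_r$ from the inside. Fix a compact fundamental domain $\DD$ for the $\Gamma$-action on $\C_{\min}$, and a slightly larger compact set $\DD'$ (a fundamental domain for the action on $\C_r$, say). Near $\partial_{\scriptscriptstyle\HH}\C_r$, which stays a definite Hilbert-distance away from $\partial_i\C_r=\Lambda_{\Gamma}$, I would cover $\partial_{\scriptscriptstyle\HH}\DD'$ by finitely many small round balls (for some auxiliary Euclidean metric in an affine chart where $\mathcal{U}_r$ is bounded) contained in $\mathcal{U}_r$, and replace $\mathcal{U}_r$ near that region by the convex hull of $\C_{\min}$ together with these balls. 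Concretely: choose finitely many strictly convex $C^1$ open sets $\mathcal{O}_1,\dots,\mathcal{O}_k$ with $\C_{\min}\subset\bigcup\mathcal{O}_i\subset\mathcal{U}_r$, take $\mathcal{O}$ to be the convex hull of the $\mathcal{O}_i$ in an affine chart, then $\Gamma$-saturate. Lemma~\ref{lem:Conv-C1} guarantees $\partial_{\scriptscriptstyle\PP}\mathcal{O}$ is $C^1$; choosing the $\mathcal{O}_i$ to be round balls makes $\mathcal{O}$ strictly convex as well (a convex hull of finitely many strictly convex bodies is strictly convex), at least in the region where the hull operation is active. The delicate point is to do this in a way compatible with the eventual $\Gamma$-action, since $\Gamma$ is infinite and we can only choose finitely many $\mathcal{O}_i$ directly.

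The cleanest route around the equivariance obstruction is to pass to the dual. Set $\mathcal{U}_r^*\subset\PP((\RR^{p,q})^*)\simeq\PP(\RR^{p,q})$ (using the form to identify); then $C^1$ of $\partial_{\scriptscriptstyle\PP}\mathcal{U}_r$ corresponds to strict convexity of $\mathcal{U}_r^*$, and strict convexity of $\mathcal{U}_r$ to $C^1$ of $\partial_{\scriptscriptstyle\PP}\mathcal{U}_r^*$. The dual $\mathcal{U}_r^*$ contains $\Omega_{\max}^*=\Omega_{\min}$-type object, and $\Lambda_{\Gamma}^*\simeq\Lambda_{\Gamma}$ sits in $\partial_{\scriptscriptstyle\PP}\mathcal{U}_r^*$ as a set of extreme points with unique ($C^1$) supporting hyperplane, by applying Lemma~\ref{lem:C1-at-limit-set} and Proposition~\ref{prop:possibly-superfluous} to $\mathcal{U}_r^*$. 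So I would: (a) take the $\Gamma$-invariant convex hull $\mathcal{O}'$ inside $\mathcal{U}_r$ of $\C_{\min}$ together with finitely many round balls covering $\partial_{\scriptscriptstyle\HH}\DD'$ and all their $\Gamma$-translates — this saturation stays inside $\mathcal{U}_r$ because those translates escape to $\Lambda_{\Gamma}$ and each local modification only shaves off corners of $\mathcal{U}_r$ in a compact region — obtaining $\mathcal{O}'$ with $C^1$ boundary (Lemma~\ref{lem:Conv-C1}) and, along $\Lambda_{\Gamma}$, unchanged; (b) dualize $\mathcal{O}'$ to get a $\Gamma$-invariant properly convex open set whose boundary is strictly convex, then run the same hull-of-balls construction on it to also make \emph{its} boundary $C^1$; (c) dualize back. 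The result $\Omega$ is $\Gamma$-invariant, strictly convex, $C^1$, sandwiched $\C_{\min}\subset\Omega\subset\mathcal{U}_r$; since $\partial_i\Omega$ is between $\Lambda_{\Gamma}$ and $\partial_i\mathcal{U}_r=\Lambda_{\Gamma}$, we get $\partial_i\Omega=\Lambda_{\Gamma}$, so $\Omega$ is an open neighborhood of $\C_{\min}$ in $\mathcal{U}_r$ as required.

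I expect the main obstacle to be step (b)–(c): verifying that the finite hull-of-balls modifications can be made $\Gamma$-equivariantly without the saturation swelling outside $\mathcal{U}_r$ (respectively outside $\mathcal{U}_r^*$), and that strict convexity survives the double dualization on the whole boundary — one must check that the corners introduced in one dualization are genuinely smoothed (not merely reshaped) by the next hull construction, and that nothing goes wrong along $\Lambda_{\Gamma}$, where all these operations must be the identity. The key enabling facts are that $\partial_{\scriptscriptstyle\HH}\C_r$ stays uniformly away from $\Lambda_{\Gamma}$ in the Hilbert metric (so $\Gamma$-translates of the modification region accumulate only on $\Lambda_{\Gamma}$, where $\overline{\mathcal{U}_r}$ is already strictly convex $C^1$ by Lemma~\ref{lem:C1-at-limit-set} and Proposition~\ref{prop:possibly-superfluous}), and the cocompactness of the $\Gamma$-action on $\C_r$, which reduces the construction to finitely many orbits of local data.
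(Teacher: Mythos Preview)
Your reduction is correct: using Lemma~\ref{lem:C1-at-limit-set} and Proposition~\ref{prop:possibly-superfluous}, the boundary of $\mathcal{U}_r$ is already strictly convex and $C^1$ along $\Lambda_{\Gamma}$, so the construction need only repair the lateral boundary $\partial_{\scriptscriptstyle\HH}\C_r$, on which $\Gamma$ acts cocompactly. The paper makes exactly this reduction.

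The gap is in the double-dualization itself. Dualization swaps ``strictly convex'' and ``$C^1$'', and your hull-of-balls operation (via Lemma~\ref{lem:Conv-C1}) only produces $C^1$ boundary; it typically \emph{destroys} strict convexity, since the convex hull of disjoint strictly convex bodies has flat pieces (think of the capsule spanned by two balls). Concretely: after step~(a) you have $\mathcal{O}'$ with $C^1$ boundary, so $(\mathcal{O}')^*$ is strictly convex but generally not $C^1$. In step~(b) you take a hull of $(\mathcal{O}')^*$ with balls to make it $C^1$; but this hull acquires flat supporting hyperplanes joining the balls to $(\mathcal{O}')^*$, so it is no longer strictly convex. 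Dualizing back in step~(c) then yields a strictly convex set that is again not $C^1$. The two properties keep trading places and you never hold both at once. You flag this yourself (``genuinely smoothed, not merely reshaped'') but supply no mechanism; none comes for free from duality or from Lemma~\ref{lem:Conv-C1}.

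The paper avoids this by working on one side only. It first builds a $\Gamma$-invariant $C^1$ neighborhood $\Omega_1$ via a hull-of-$C^1$-bodies argument (your step~(a)). Then, to obtain strict convexity \emph{while keeping} $C^1$, it performs an explicit local deformation: at each face $\mathcal{F}_y\subset\partial_{\scriptscriptstyle\PP}\Omega_1$ it chooses $n-1$ hyperplanes in generic position near the supporting hyperplane, and pushes the boundary inward along each normal direction by a smooth strictly concave reparametrization. Composing these $n-1$ pushes kills all segments through $\mathcal{F}_y$. This is done $\Gamma_0$-equivariantly for a torsion-free finite-index $\Gamma_0$ (so that faces have disjoint orbits), and finally one averages over the finitely many $\Gamma_0$-cosets by taking a sublevel set of a sum of convex Hilbert-distance functions. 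The strict-convexity step is thus a hands-on local construction, not a duality trick.
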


Indeed, for such an $\Omega$, the orbital limit set $\Lambda^{\mathsf{orb}}_{\Omega}(\Gamma)$ is equal to $\Lambda_\Gamma$ by Lemma~\ref{lem:snowman}, hence the convex hull of $\Lambda^{\mathsf{orb}}_{\Omega}(\Gamma)$ in $\Omega$ is $\C_{\min}$, which has compact quotient by~$\Gamma$.
Thus if such an~$\Omega$ exists, then $\Gamma$ is strongly convex cocompact in $\PP(\RR^{p+q})$.

Recall that, by definition, $\Omega$ is strictly convex with $C^1$ boundary $\partial_{\scriptscriptstyle\PP}\Omega$ if every point of $\partial_{\scriptscriptstyle\PP}\Omega$ has the property that it is an extreme point of $\overline{\Omega}$ and $\Omega$ has a unique supporting hyperplane at that point.
If $\Omega$ is any $\Gamma$-invariant properly convex open neighborhood of $\C_{\min}$ in~$\Omega_{\max}$, then every point of~$\Lambda_\Gamma$ has this property by Lemma~\ref{lem:C1-at-limit-set} and Proposition~\ref{prop:possibly-superfluous}.
Therefore, in order to prove Lemma~\ref{lem:strict-conv-C1}, we only need to focus on $\partial_{\scriptscriptstyle\PP}\Omega \smallsetminus \Lambda_\Gamma$, that is we must construct $\Omega$ such that each point of $\partial_{\scriptscriptstyle\PP}\Omega \smallsetminus \Lambda_\Gamma$ is an extreme point with unique supporting hyperplane.
Constructing such a neighborhood $\Omega$ clearly involves arbitrary choices; here is one of many possible constructions.
Cooper--Long--Tillmann~\cite[Prop.\,8.3]{clt} give a different construction yielding, in the case $\Gamma$ is torsion-free, a convex set $\Omega$ as in Lemma~\ref{lem:strict-conv-C1} with the slightly stronger property that $\partial_{\scriptscriptstyle\PP}\Omega \smallsetminus \Lambda_{\Gamma}$ is locally the graph of a smooth function with positive definite Hessian.

\begin{proof}[Proof of Lemma~\ref{lem:strict-conv-C1}]
We set $V:=\RR^{p+q}$.
The following argument does not use the quadratic form of signature $(p,q)$.
Let $\Gamma_0$ be a subgroup of~$\Gamma$ which is torsion-free; such a subgroup exists by the Selberg lemma \cite[Lem.\,8]{sel60}. 

We proceed in three steps.
Firstly, we construct a $\Gamma$-invariant open neighborhood $\Omega_1\subset\mathcal{U}_r$ of $\C_{\min}$ in~$\Omega_{\max}$ which has $C^1$ boundary, but which is not necessarily strictly convex.
Secondly, we construct a small deformation $\Omega_2\subset\mathcal{U}_r$ of~$\Omega_1$ which still has $C^1$ boundary and which is strictly convex, but only $\Gamma_0$-invariant, not necessarily $\Gamma$-invariant.
Finally, we use an averaging procedure over translates $\gamma\cdot\Omega_2$ of~$\Omega_2$, for $\gamma\Gamma_0$ ranging over the $\Gamma_0$-cosets of~$\Gamma$, to construct a $\Gamma$-invariant open neighborhood $\Omega\subset\mathcal{U}_r$ of $\C_{\min}$ which has $C^1$ boundary and is strictly convex.

\smallskip
\noindent
$\bullet$ \textbf{Construction of~$\Omega_1$:}
Consider a compact fundamental domain $\DD$ for the action of~$\Gamma$ on~$\C_{\min}$.
The convex hull of $\DD$ in~$\Omega_{\max}$ is still contained in~$\C_{\min}$.
Let $\DD'\subset\C_r$ be a closed neighborhood of this convex hull in~$\mathcal{U}_r$ which has $C^1$ boundary $\partial_{\scriptscriptstyle\PP}\DD'$, and let $\Omega_1$ be the interior of the convex hull of $\Gamma\cdot\DD'$ in~$\mathcal{U}_r$. 
We have $\overline{\Omega_1}\smallsetminus \Lambda_\Gamma \subset \overline{\mathcal{U}_r}\smallsetminus\Lambda_{\Gamma} = \C_r \subset \Omega_{\max}$ by choice of~$r$; in particular, the action of $\Gamma$ on $\overline{\Omega_1}\smallsetminus \Lambda_\Gamma$ is properly discontinuous.

Let us check that $\Omega_1$ has $C^1$ boundary $\partial_{\scriptscriptstyle\PP}\Omega_1$.
We first observe that any supporting hyperplane $\Pi_y$ to $\Omega_1$ at a point $y\in\partial_{\scriptscriptstyle\PP}\Omega_1 \smallsetminus\Lambda_{\Gamma}$ stays away from~$\Lambda_\Gamma$: indeed, if $\Pi_y$ contained a point $z\in \Lambda_{\Gamma}$, then by Lemma~\ref{lem:C1-at-limit-set} it would be equal to the unique supporting hyperplane to $\Omega_1$ at~$z$, namely $z^{\perp}$, contradicting $y\in\Omega_{\max}$.
On the other hand, since the action of $\Gamma$ on $\overline{\Omega_1} \smallsetminus\Lambda_{\Gamma}$ is properly discontinuous, for any neighborhood 
$\mathcal{N}$ of $\Lambda_\Gamma$ in $\PP(V)$ and any infinite sequence of distinct elements $\gamma_j\in\Gamma$, the translates $\gamma_j \cdot \DD'$ are eventually all contained in~$\mathcal{N}$.
Therefore, in a neighborhood of $y$, the hypersurface $\partial_{\scriptscriptstyle\PP}\Omega_1$ coincides with the convex hull of a \emph{finite} union of translates $\gamma\cdot\DD'$, which has $C^1$ boundary by Lemma~\ref{lem:Conv-C1}.

\smallskip
\noindent
$\bullet$ \textbf{Construction of~$\Omega_2$:}
For any $y\in\partial_{\scriptscriptstyle\PP}\Omega_1\smallsetminus \Lambda_\Gamma$, let $\mathcal{F}_y$ be the \emph{face of $y$ in $\partial_{\scriptscriptstyle\PP}\Omega$}, namely the intersection of $\partial_{\scriptscriptstyle\PP}\Omega_1$ with the unique supporting hyperplane $\Pi_y$ to $\Omega_1$ at~$y$.
By the above observation, $\mathcal{F}_y$ is a closed convex subset of $\partial_{\scriptscriptstyle\PP}\Omega_1\smallsetminus\Lambda_\Gamma$.

We claim that $\mathcal{F}_y$ is disjoint from $\gamma\cdot \mathcal{F}_y=\mathcal{F}_{\gamma\cdot y}$ for all $\gamma\in\Gamma_0\smallsetminus\{1\}$.
Indeed, if there existed $y'\in \mathcal{F}_y\cap \mathcal{F}_{\gamma\cdot y}$, then by uniqueness the supporting hyperplanes would satisfy $\Pi_y=\Pi_{y'}=\Pi_{\gamma\cdot y}$, hence $\mathcal{F}_y=\mathcal{F}_{y'}=\mathcal{F}_{\gamma\cdot y}=\gamma\cdot \mathcal{F}_y$.
This would imply $\mathcal{F}_y=\gamma^m\cdot \mathcal{F}_y$ for all $m\in\NN$, hence $\gamma^m\cdot y\in \mathcal{F}_y$ for all $m\in\NN$.
Using the fact that the action of $\Gamma_0$ on $\partial_{\scriptscriptstyle\PP}\Omega_1\smallsetminus\Lambda_{\Gamma}$ is properly discontinuous and taking a limit, we see that $\mathcal{F}_y$ would contain a point of~$\Lambda_{\Gamma}$, which we have seen is not true.
Therefore $\mathcal{F}_y$ is disjoint from $\gamma\cdot \mathcal{F}_y$ for all $\gamma\in\Gamma_0\smallsetminus\{1\}$.

For any $y\in\partial_{\scriptscriptstyle\PP}\Omega_1\smallsetminus\Lambda_{\Gamma}$, the subset of $\PP(V^*)$ consisting of those projective hyperplanes near the supporting hyperplane $\Pi_y$ that separate $\mathcal{F}_y$ from $\Lambda_\Gamma$ is open and nonempty, hence $(n-1)$-dimensional where $n:=p+q=\dim(V)$.
Choose $n-1$ such hyperplanes $\Pi_y^1,\dots, \Pi_y^{n-1}$ in generic position, with $\Pi_y^i$ cutting off a compact region $\mathcal{Q}_y^i\supset \mathcal{F}_y$ from~$\overline{\Omega_1}\smallsetminus \Lambda_\Gamma$.
One may imagine each $\Pi_y^i$ is obtained by pushing $\Pi_y$ normally into $\Omega_1$ and then tilting slightly in one of $n-1$ independent directions.
The intersection $\bigcap_{i=1}^{n-1} \Pi_y^i \subset \PP(V)$ is reduced to a singleton.
By taking each hyperplane $\Pi_y^i$ very close to $\Pi_y$, we may assume that the union $\mathcal{Q}_y:=\bigcup_{i=1}^{n-1} \mathcal{Q}_y^i$ is disjoint from all its $\gamma$-translates for $\gamma \in \Gamma_0 \smallsetminus \{1\}$. In addition, we ensure that $\mathcal{F}_y$ has a neighborhood $\mathcal{Q}'_y$ contained in $\bigcap_{i=1}^{n-1} \mathcal{Q}_y^i$.

Since the action of $\Gamma_0$ on $\partial_{\scriptscriptstyle\PP} \Omega_1\smallsetminus\Lambda_{\Gamma}$ is cocompact, there exist finitely many points $y_1,\dots,y_m\in\partial_{\scriptscriptstyle\PP} \Omega_1\smallsetminus\Lambda_{\Gamma}$ such that $(\partial_{\scriptscriptstyle\PP}\Omega_1\smallsetminus\Lambda_{\Gamma})\subset \Gamma_0\cdot (\mathcal{Q}'_{y_1}\cup\dots\cup \mathcal{Q}'_{y_m})$.
 
We now explain, for any $y\in\partial_{\scriptscriptstyle\PP}\Omega_1\smallsetminus\Lambda_{\Gamma}$, how to deform $\Omega_1$ into a new, smaller properly convex $\Gamma_0$-invariant open neighborhood of~$\C_{\min}$ with $C^1$ boundary, in a way that destroys all segments in $\mathcal{Q}'_y$.
Repeating for $y=y_1, \dots, y_m$, this will produce a strictly convex $\Gamma_0$-invariant open neighborhood $\Omega_2 \subset \Omega_1$ of~$\C_{\min}$ with $C^1$ boundary $\partial_{\scriptscriptstyle\PP}\Omega_2$.

Choose an affine chart containing~$\Omega_{\max}$, an auxiliary Euclidean metric $g$ on this chart, and a smooth strictly concave function $h : \RR^+\rightarrow \RR^+$ with $h(0)=0$ and $\frac{\D}{\D t}\big|_{t=0}\, h(t)=1$ (\eg $h=\tanh$).
We may assume that for every $1\leq i \leq n-1$ the $g$-orthogonal projection $\pi_y^i$ onto~$\Pi_y^i$ satisfies $\pi_y^i(\mathcal{Q}_y^i)\subset \Pi_y^i \cap \Omega_1$, with $(\pi_y^i|_{\mathcal{Q}_y^i})^{-1}(\Pi_y^i\cap \partial_{\scriptscriptstyle\PP} \Omega_1)\subset \Pi_y^i$. 
Define maps $\varphi_y^i: \mathcal{Q}_y^i \rightarrow \mathcal{Q}_y^i$ by the property that $\varphi_y^i$ preserves each fiber $(\pi_y^i)^{-1}(y')$ (a segment), taking the point at distance $t$ from $y'$ to the point at distance~$h(t)$.
Then $\varphi_y^i$ takes any segment $\sigma$ of $\mathcal{F}_y$ to a strictly convex curve, unless $\sigma$ is parallel to $\Pi_i$.
The image $\varphi_y^i(\mathcal{Q}_y^i\cap \partial_{\scriptscriptstyle\PP} \Omega_1)$ is still a convex hypersurface.
Extending $\varphi_y^i$ by the identity on $\mathcal{Q}_y\smallsetminus \mathcal{Q}_y^i$ and repeating with varying $i$, we find that the composition $\varphi_y:=\varphi_y^1\circ\dots\circ\varphi_y^{n-1}$, defined on $\mathcal{Q}_y$, 
takes $\mathcal{Q}'_y\cap \partial_{\scriptscriptstyle\PP} \Omega_1$ to a strictly convex hypersurface.
We can extend $\varphi_y$ in a $\Gamma_0$-equivariant fashion to $\Gamma_0\cdot\mathcal{Q}_y$, and extend it further by the identity on the rest of~$\Omega_1$: the set $\varphi_y(\Omega_1)$ is still $\Gamma_0$-invariant, with $C^1$ boundary, and is still contained in~$\mathcal{U}_r$.

Repeating with finitely many points $y_1, \dots, y_m$ as above, we obtain a strictly convex, $\Gamma_0$-invariant open neighborhood $\Omega_2\subset\mathcal{U}_r$ of~$\C_{\min}$ with $C^1$ boundary $\partial_{\scriptscriptstyle\PP}\Omega_2$.

\smallskip
\noindent
$\bullet$ \textbf{Construction of~$\Omega$:}
Consider the finitely many $\Gamma_0$-cosets $\gamma_1\Gamma_0, \dots, \gamma_k\Gamma_0$ of~$\Gamma$ and the corresponding translates $\Omega'_i:=\gamma_i\cdot\Omega_2$.
Let $\Omega''$ be a $\Gamma$-invariant properly convex (not necessarily strictly convex) open neighborhood of~$\C_{\min}$ in $\mathcal{U}_r$ which has $C^1$ boundary $\partial_{\scriptscriptstyle\PP}\Omega''$ and is contained in all $\Omega'_i$, $1\leq i\leq k$.
(Such a neighborhood $\Omega''$ can be constructed for instance by the same method as for $\Omega_1$ above.) 
Since $\Omega'_i$ is strictly convex, uniform neighborhoods of $\Omega''$ in $(\Omega'_i,d_{\Omega'_i})$ are strictly convex \cite[(18.12)]{bus55}.
Therefore, by cocompactness, if $h : [0,1]\to [0,1]$ is a convex function with sufficiently fast growth (\eg $h(t)=t^{\alpha}$ for large enough $\alpha>0$), then the $\Gamma_0$-invariant function $H_i:=h\circ d_{\Omega'_i}(\cdot, \Omega'')$ is convex on the convex region $H_i^{-1}([0,1])$, and in fact smooth and strictly convex near every point outside $\Omega''$.
The function $H:=\sum_{i=1}^k H_i$ is $\Gamma$-invariant and its sublevel set $\Omega:=H^{-1}([0,1))$ is a $\Gamma$-invariant open neighborhood of $\C_{\min}$ in~$\mathcal{U}_r$ which is strictly convex with $C^1$ boundary $\partial_{\scriptscriptstyle\PP}\Omega$.
\end{proof}

\subsection{Proof of Proposition~\ref{prop:CramponMarquis}.\eqref{item:CM2}} \label{subsec:proofCM2}

Suppose $\Gamma$ preserves a nonempty strictly convex open subset $\Omega$ of $\PP(\RR^n)$.
By Lemma~\ref{lem:snowman}, the orbital limit set $\Lambda^{\mathsf{orb}}_{\Omega}(\Gamma)$ coincides with the proximal limit set $\Lambda_\Gamma$.
Suppose that the convex hull $\C_{\min}$ of $\Lambda_\Gamma$ in~$\Omega$ has compact quotient by~$\Gamma$.
By Proposition~\ref{prop:Lambda-non-pos-neg}, the set $\Lambda_{\Gamma}\subset\partial_{\scriptscriptstyle\PP}\HH^{p,q-1}$ is nonpositive or nonnegative (Definition~\ref{def:non-pos-neg}).
Moreover, $\Lambda_{\Gamma}$ is transverse, because it is contained (Fact~\ref{fact:Yves}) in the boundary of the strictly convex set~$\Omega$.
Therefore, $\Lambda_{\Gamma}$ is negative or positive.
If $\Lambda_\Gamma$ is negative, then $\C_{\min} \subset \HH^{p,q-1}$ by Lemma~\ref{lem:nonpos-Hpq}.\eqref{item:nonpos-Hpq-Lambda0} and Lemma~\ref{lem:Omega-min-Hpq}, and $\Gamma$ is $\HH^{p,q-1}$-convex cocompact by Remark~\ref{rem:conseq-Lambda-Lambda-Gamma}.
Similarly, if $\Lambda_{\Gamma}$ is positive, then the image of~$\Gamma$ under the natural isomorphism $\PO(p,q)\simeq\PO(q,p)$ is $\HH^{q,p-1}$-convex cocompact.

\section{Examples of $\HH^{p,q-1}$-convex cocompact subgroups}\label{sec:examples}

In this section we consider the following general construction.

\begin{proposition} \label{prop:quasiFuchsian}
Let $H$ be a real semisimple Lie group of real rank~$1$.
For $p,q\in\NN^*$, let $\tau : H\to G:=\PO(p,q)$ be a linear representation which is proximal, in the sense that $\tau(H)$ contains an element which is proximal in $\partial_{\scriptscriptstyle\PP}\HH^{p,q-1}$.
Then for any word hyperbolic group $\Gamma$ and any representation $\sigma_0 : \Gamma\to H$ with finite kernel and convex cocompact image (in the classical sense) in the rank-one group~$H$,
\begin{enumerate}
  \item\label{item:qF1} the composition $\rho_0 := \tau \circ \sigma_0 : \Gamma \rightarrow G$ is $P_1^{p,q}$-Anosov and the proximal limit set $\Lambda_{\rho_0(\Gamma)}\subset\partial_{\scriptscriptstyle\PP}\HH^{p,q-1}$ is negative or positive (Definition~\ref{def:pos-neg});
  \item\label{item:qF2} the connected component $\mathcal{T}_{\rho_0}$ of~$\rho_0$ in the space of $P_1^{p,q}$-Anosov representations from $\Gamma$ to~$G$ is a neighborhood of $\rho_0$ in $\Hom(\Gamma,G)$ consisting entirely of $P_1^{p,q}$-Anosov representations with negative proximal limit set or entirely of $P_1^{p,q}$-Anosov representations with positive proximal limit set.
\end{enumerate}
\end{proposition}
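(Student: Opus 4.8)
The plan is to treat the two parts in sequence, deriving \eqref{item:qF1} first and then bootstrapping to \eqref{item:qF2} via Theorem~\ref{thm:main-negative} and Proposition~\ref{prop:comp-Ano-neg}.

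\smallskip
\textbf{Part \eqref{item:qF1}.} First I would recall that a convex cocompact subgroup of a rank-one Lie group $H$ is word hyperbolic and the inclusion $\sigma_0(\Gamma)\hookrightarrow H$ is $P$-Anosov for $P$ a minimal parabolic of $H$ (this is the rank-one case of the equivalence cited in \S\ref{subsec:intro-Anosov}, originally \cite[Th.\,5.15]{gw12}). The composition with a proximal representation $\tau : H\to\PO(p,q)$ then yields a $P_1^{p,q}$-Anosov representation: since $\tau$ is proximal, $\tau$ sends the parabolic $P$ into $P_1^{p,q}$ in a way compatible with the flag varieties, and the functoriality of the Anosov condition under such compatible homomorphisms \cite[Prop.\,4.7]{gw12} gives that $\rho_0=\tau\circ\sigma_0$ is $P_1^{p,q}$-Anosov; moreover the boundary map $\xi$ of $\rho_0$ is $\tau\circ(\text{boundary map of }\sigma_0)$ precomposed appropriately, so the proximal limit set $\Lambda_{\rho_0(\Gamma)}$ is the $\tau$-image of the limit set $\Lambda_{\sigma_0(\Gamma)}\subset\partial H$. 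The key remaining point is that $\Lambda_{\rho_0(\Gamma)}$ is negative or positive. For this I would use that $\partial H$ is \emph{connected}: the visual boundary of a rank-one symmetric space is a sphere of dimension $\geq 1$, and a convex cocompact subgroup's limit set is either all of $\partial H$ (uniform lattice case) or a closed subset. Actually the cleanest route is to invoke the dichotomy of Proposition~\ref{prop:conn-transv} applied to $\Lambda_{\rho_0(\Gamma)}$: this set is closed and transverse (transversality of the Anosov boundary map), so it suffices to show it is connected. The hard part here is precisely this connectivity claim, because $\Lambda_{\sigma_0(\Gamma)}$ need not be connected (a convex cocompact free subgroup of $\mathrm{PO}(p,1)$ has a Cantor limit set). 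So Proposition~\ref{prop:conn-transv} does \emph{not} directly apply, and the proof of \eqref{item:qF1} must argue negativity/positivity differently — presumably by a direct argument using the geometry of $\tau$: the representation $\tau$ restricted to a rank-one subgroup carries a $\tau$-equivariant embedding $\iota$ of $\partial H$ into $\partial_{\scriptscriptstyle\PP}\HH^{p,q-1}$, and one checks that $\iota(\partial H)$ is a negative (or positive) subset — this is an $H$-invariance statement: the $H$-orbit of a triple in $\iota(\partial H)$ has constant sign, and one computes the sign on a single model triple. Then any subset of $\iota(\partial H)$, in particular $\Lambda_{\rho_0(\Gamma)}$, inherits negativity (resp. positivity).

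\smallskip
\textbf{Part \eqref{item:qF2}.} Given \eqref{item:qF1}, the connected component $\mathcal{T}_{\rho_0}$ in the space of $P_1^{p,q}$-Anosov representations is open in $\Hom(\Gamma,G)$ by \cite{lab06,gw12}. By Proposition~\ref{prop:comp-Ano-neg}, the sign of the proximal limit set (negative, resp. positive) is constant on each connected component of the space of $P_1^{p,q}$-Anosov representations; since $\Lambda_{\rho_0(\Gamma)}$ is negative or positive by \eqref{item:qF1}, every $\rho\in\mathcal{T}_{\rho_0}$ has negative (resp. positive) proximal limit set. This is immediate and there is no obstacle in this step — it is purely an application of Proposition~\ref{prop:comp-Ano-neg} and the openness of the Anosov condition.

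\smallskip
\textbf{Main obstacle.} I expect the crux to be the negativity/positivity statement in \eqref{item:qF1}: one must identify the sign of the form $\langle\cdot,\cdot\rangle_{p,q}$ on the cone over $\iota(\partial H)$. The conceptual content is that a rank-one subgroup $\tau(H)\subset\PO(p,q)$ acting on $\partial_{\scriptscriptstyle\PP}\HH^{p,q-1}$ has a closed orbit $\iota(\partial H)$ on which transversality holds, and the sign of a noncollinear inner product of lifts is $H$-invariant hence constant; one then evaluates it on an explicit pair coming from a single $\mathfrak{sl}_2$-triple inside $\tau(\mathfrak h)$. The subtlety is that whether the sign is negative or positive can depend on $\tau$ (and on conventions of signature), which is why the statement allows both possibilities; the examples in Section~\ref{sec:examples} will pin down which case occurs in each instance.
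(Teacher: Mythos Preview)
Your treatment of \eqref{item:qF2} matches the paper exactly: it is a direct application of Proposition~\ref{prop:comp-Ano-neg} together with the openness of the Anosov condition.

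For \eqref{item:qF1}, you correctly set up the argument via \cite[Th.\,5.15 and Prop.\,4.7]{gw12} and correctly identify that the crux is showing the ambient set $\iota(\partial H)$ is negative or positive, after which $\Lambda_{\rho_0(\Gamma)}\subset\iota(\partial H)$ inherits the sign. However, your proposed method for this step --- $H$-invariance of the sign on triples plus evaluation on a single model triple --- has a gap: rank-one groups are not in general $3$-transitive on their visual boundary (for instance $\PU(n,1)$ has the Cartan angular invariant distinguishing triples), so constancy on a single $H$-orbit does not give constancy on all triples. You could patch this by arguing instead that the space of triples in $\partial H$ is connected and the sign function is locally constant, but that is precisely the content of Proposition~\ref{prop:conn-transv}, which you had dismissed.

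The paper's route is exactly to apply Proposition~\ref{prop:conn-transv}, but to the ambient set $\Lambda:=\iota(H/P)$ rather than to $\Lambda_{\rho_0(\Gamma)}$. The set $\Lambda$ is closed and connected since $H/P$ is a sphere. The one missing ingredient is \emph{transversality} of $\Lambda$ (which you also do not address for the larger set), and here the paper uses a slick indirect trick: uniform lattices in $H$ exist, and for a uniform lattice the Anosov boundary map surjects onto $\Lambda$ and is transverse by the Anosov property; hence $\Lambda$ itself is transverse. Proposition~\ref{prop:conn-transv} then applies, and no computation on any explicit triple is needed --- which is just as well, since the statement only asserts ``negative or positive'' without deciding which.
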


\begin{proof}
Since $H$ has real rank~$1$, the convex cocompact representation $\sigma_0$ is $P$-Anosov where $P$ is a minimal parabolic subgroup of~$H$ \cite[Th.\,5.15]{gw12}; in particular, there is an injective, continuous, $\sigma_0$-equivariant boundary map $\xi_{\sigma_0} : \partial_{\infty}\Gamma\to H/P$.
By \cite[Prop.\,4.7]{gw12} (see also \cite[Prop.\,3.1]{lab06}), since $\tau$ is proximal, there is a $\tau$-equivariant embedding $\iota : H/P \hookrightarrow \partial_{\scriptscriptstyle\PP}\HH^{p,q-1}$ and $\rho_0 = \tau\circ\sigma_0$ is $P_1^{p,q}$-Anosov with boundary map $\iota\circ\xi_{\sigma_0} : \partial_{\infty}\Gamma\to\partial_{\scriptscriptstyle\PP}\HH^{p,q-1}$.
In particular, the proximal limit set $\Lambda_{\rho_0(\Gamma)} = \iota\circ\xi_{\sigma_0}(\partial_{\infty}\Gamma)$ is contained in $\Lambda:=\iota(H/P)$, which is a closed, connected subset of $\partial_{\scriptscriptstyle\PP}\HH^{p,q-1}$.
If $\sigma_0(\Gamma)$ is a uniform lattice in~$H$, then $\Lambda_{\rho_0(\Gamma)}=\Lambda$; since uniform lattices of~$H$ exist, we deduce that $\Lambda$ is transverse.
By Proposition~\ref{prop:conn-transv}, the set $\Lambda$ is negative or positive.
In particular, for arbitrary $\sigma_0(\Gamma)$ (not necessarily a uniform lattice), the set $\Lambda_{\rho_0(\Gamma)}\subset\Lambda$ is negative or positive, proving~\eqref{item:qF1}.

Statement~\eqref{item:qF2} follows from \eqref{item:qF1} and from Proposition~\ref{prop:comp-Ano-neg}.
\end{proof}

Here is an immediate consequence of Theorem~\ref{thm:main-negative}, Proposition~\ref{prop:CramponMarquis}, and Proposition~\ref{prop:quasiFuchsian}.

\begin{corollary} \label{cor:T-Gamma-rho-0}
In the setting of Proposition~\ref{prop:quasiFuchsian}, the group $\rho(\Gamma)$ is strongly convex cocompact in $\PP(\RR^{p+q})$ (Definition~\ref{def:cm}) for all irreducible $\rho\in\mathcal{T}_{\rho_0}$.

More precisely, either $\rho(\Gamma)$ is $\HH^{p,q-1}$-convex cocompact for all irreducible $\rho\in\mathcal{T}_{\rho_0}$, or $\rho(\Gamma)$ is $\HH^{q,p-1}$-convex cocompact (after identifying $\PO(p,q)$ with $\PO(q,p)$) for all irreducible $\rho\in\mathcal{T}_{\rho_0}$.
\end{corollary}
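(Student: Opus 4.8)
The plan is to feed the dichotomy of Proposition~\ref{prop:quasiFuchsian}.\eqref{item:qF2} into Theorem~\ref{thm:main-negative} and Proposition~\ref{prop:CramponMarquis}.\eqref{item:CM1}. First I would check that for every irreducible $\rho\in\mathcal{T}_{\rho_0}$ the image $\rho(\Gamma)$ is an \emph{irreducible discrete} subgroup of $G=\PO(p,q)$ whose natural inclusion $\rho(\Gamma)\hookrightarrow G$ is $P_1^{p,q}$-Anosov: being $P_1^{p,q}$-Anosov, $\rho$ has discrete image and finite kernel, so $\rho$ descends to an injective $P_1^{p,q}$-Anosov representation of the word hyperbolic group $\Gamma/\ker\rho$ (which has the same Gromov boundary as $\Gamma$, the boundary map $\xi$ of $\rho$ being constant on $\ker\rho$-orbits), and the image of this descended representation is precisely $\rho(\Gamma)$; irreducibility of $\rho(\Gamma)$ is exactly the standing hypothesis on $\rho$. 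Along the way I would recall from Section~\ref{subsec:intro-Anosov} that the proximal limit set $\Lambda_{\rho(\Gamma)}\subset\partial_{\scriptscriptstyle\PP}\HH^{p,q-1}$ of this subgroup coincides with $\xi(\partial_{\infty}\Gamma)$.

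Next I would split according to Proposition~\ref{prop:quasiFuchsian}.\eqref{item:qF2}, which asserts that $\mathcal{T}_{\rho_0}$ consists either entirely of $P_1^{p,q}$-Anosov representations with negative proximal limit set, or entirely of such representations with positive proximal limit set. In the first alternative, each irreducible $\rho\in\mathcal{T}_{\rho_0}$ makes $\rho(\Gamma)$ satisfy condition~\eqref{item:a2} of Theorem~\ref{thm:main-negative}, hence condition~\eqref{item:a1}: $\rho(\Gamma)$ is $\HH^{p,q-1}$-convex cocompact, and then Proposition~\ref{prop:CramponMarquis}.\eqref{item:CM1} shows that $\rho(\Gamma)$ is strongly convex cocompact in $\PP(\RR^{p+q})$. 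In the second alternative, each irreducible $\rho\in\mathcal{T}_{\rho_0}$ makes $\rho(\Gamma)$ satisfy condition~\eqref{item:a4}, hence condition~\eqref{item:a3}: $\rho(\Gamma)$ is $\HH^{q,p-1}$-convex cocompact after identifying $\PO(p,q)$ with $\PO(q,p)$, and applying Proposition~\ref{prop:CramponMarquis}.\eqref{item:CM1} with the roles of $p$ and $q$ interchanged shows that $\rho(\Gamma)$ is strongly convex cocompact in $\PP(\RR^{q+p})=\PP(\RR^{p+q})$. Since the alternative that occurs is fixed by $\mathcal{T}_{\rho_0}$ and independent of the chosen $\rho$, this yields the refined ``either\dots or\dots'' statement and, a fortiori, the first assertion.

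I do not expect a genuine obstacle here. The only point demanding a sentence of justification is the passage from ``$\rho$ is $P_1^{p,q}$-Anosov and irreducible'' to ``$\rho(\Gamma)$ is an irreducible discrete subgroup of $G$ whose natural inclusion is $P_1^{p,q}$-Anosov'', since Theorem~\ref{thm:main-negative} and Proposition~\ref{prop:CramponMarquis} are phrased for subgroups of $G$ rather than for representations with nontrivial (finite) kernel; this is settled by the standard facts that quotienting a word hyperbolic group by a finite normal subgroup changes neither hyperbolicity nor the Gromov boundary, and that the Anosov boundary map descends unchanged to the quotient.
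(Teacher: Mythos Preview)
Your proposal is correct and follows essentially the same route as the paper, which simply states that the corollary is ``an immediate consequence of Theorem~\ref{thm:main-negative}, Proposition~\ref{prop:CramponMarquis}, and Proposition~\ref{prop:quasiFuchsian}'' without further elaboration. Your additional care in passing from the representation $\rho$ to the subgroup $\rho(\Gamma)$ (quotienting by the finite kernel) is a detail the paper leaves implicit.
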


Corollary~\ref{cor:T-Gamma-rho-0} also holds for representations $\rho\in\mathcal{T}_{\rho_0}$ that are not irreducible: see \cite{dgk-cc}.

We now make explicit a few examples to which Corollary~\ref{cor:T-Gamma-rho-0} applies.

\subsection{$\HH^{p,q-1}$-quasi-Fuchsian groups} \label{subsec:quasiFuchsian}

Let $\Gamma$ be the fundamental group of a convex cocompact (\eg closed) hyperbolic manifold $M$ of dimension $m\geq 2$, with holonomy $\sigma_0 : \Gamma\to\PO(m,1)=\mathrm{Isom}(\HH^m)$.
The representation $\sigma_0$ is $P_1^{m,1}$-Anosov \cite[Th.\,5.15]{gw12}.
The proximal limit set $\Lambda_{\sigma_0(\Gamma)} \subset \partial_\infty \HH^m$ is negative since any subset of $\partial_\infty \HH^m$ is.

For $p,q\in\NN^*$ with $p\geq m$, the natural embedding $\RR^{m,1}\hookrightarrow\RR^{p,q}$ induces a representation $\tau : \OO(m,1)\to\OO(p,q)\to\PO(p,q)$ which is proximal, and a $\tau$-equivariant embedding $\iota : \partial_{\infty}\HH^m\hookrightarrow\partial_{\scriptscriptstyle\PP}\HH^{p,q-1}$.
The set $\Lambda:=\iota(\partial_{\infty}\HH^m)\subset\partial_{\scriptscriptstyle\PP}\HH^{p,q-1}$ is negative by construction.

The representation $\sigma_0$ lifts to a representation $\widetilde{\sigma}_0 : \Gamma\to H:=\OO(m,1)$.
Let $\rho_0 := \tau\circ\widetilde{\sigma}_0 : \Gamma \to G:=\PO(p,q)$.
The proximal limit set $\Lambda_{\rho_0(\Gamma)} = \iota(\Lambda_{\sigma_0(\Gamma)}) \subset \Lambda$ is negative.
Thus Corollary~\ref{cor:T-Gamma-rho-0} implies the following.

\begin{proposition} \label{prop:quasi-Fuchsian}
In this setting, the representation $\rho_0 : \Gamma\to G=\PO(p,q)$ is $P_1^{p,q}$-Anosov.

The connected component $\mathcal{T}_{\rho_0}$ of~$\rho_0$ in the space of $P_1^{p,q}$-Anosov representations from $\Gamma$ to~$G$ is a neighborhood of $\rho_0$ in $\Hom(\Gamma,G)$ consisting entirely of $P_1^{p,q}$-Anosov representations with negative proximal limit set.

For any irreducible $\rho\in\mathcal{T}_{\rho_0}$, the group $\rho(\Gamma)$ is $\HH^{p,q-1}$-convex cocompact, hence strongly convex cocompact in $\PP(\RR^{p+q})$.
\end{proposition}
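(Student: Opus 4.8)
The plan is to verify the hypotheses of Corollary~\ref{cor:T-Gamma-rho-0} (equivalently, Proposition~\ref{prop:quasiFuchsian}) in the concrete situation at hand, and then simply to read off the conclusions. First I would take $H := \OO(m,1)$, a semisimple Lie group of real rank~$1$, and the linear representation $\tau : H \to G = \PO(p,q)$ induced by the inclusion $\RR^{m,1} \hookrightarrow \RR^{p,q}$ (sending the negative-definite direction of $\RR^{m,1}$ into a negative-definite direction of $\RR^{p,q}$, which is possible since $p \geq m$ and $q \geq 1$). One checks that $\tau$ is proximal in the sense of Proposition~\ref{prop:quasiFuchsian}: a hyperbolic element of $\OO(m,1)$, which is proximal on $\partial_\infty \HH^m$, maps to an element of $\PO(p,q)$ whose top eigenvalue is simple, hence is proximal in $\partial_{\scriptscriptstyle\PP}\HH^{p,q-1}$ (Remark~\ref{rem:lim-set-POpq}). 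The group $\Gamma = \pi_1(M)$ is word hyperbolic since $M$ is a convex cocompact hyperbolic $m$-manifold, and the holonomy $\sigma_0 : \Gamma \to \PO(m,1)$ has finite kernel and convex cocompact image; after passing to the lift $\widetilde\sigma_0 : \Gamma \to \OO(m,1)$ (which exists and still has finite kernel and discrete, convex cocompact image) the representation $\rho_0 := \tau \circ \widetilde\sigma_0 : \Gamma \to G$ is exactly of the form considered in Proposition~\ref{prop:quasiFuchsian}.

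Next I would invoke Proposition~\ref{prop:quasiFuchsian}.\eqref{item:qF1}: $\rho_0$ is $P_1^{p,q}$-Anosov, and its proximal limit set $\Lambda_{\rho_0(\Gamma)} \subset \partial_{\scriptscriptstyle\PP}\HH^{p,q-1}$ is negative or positive. To pin down which, note that the boundary map of $\rho_0$ is $\iota \circ \xi_{\sigma_0}$, where $\iota : \partial_\infty \HH^m \hookrightarrow \partial_{\scriptscriptstyle\PP}\HH^{p,q-1}$ is the $\tau$-equivariant embedding; hence $\Lambda_{\rho_0(\Gamma)} \subset \Lambda := \iota(\partial_\infty \HH^m)$. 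The key elementary point is that $\Lambda$ is \emph{negative} and not positive: since $\partial_\infty\HH^m$ lifts to the cone of negative-norm vectors in $\RR^{m,1}$ (the positive light cone, say), scaled so that pairwise inner products of distinct rays are strictly negative, and since $\langle\cdot,\cdot\rangle_{m,1}$ is the restriction of $\langle\cdot,\cdot\rangle_{p,q}$ to $\RR^{m,1} \subset \RR^{p,q}$, the image cone in $\RR^{p,q}$ witnesses the negativity of $\Lambda$ in the sense of Definition~\ref{def:pos-neg}. (Here I'm using Remark~\ref{rem:pos-neg}.\eqref{item:not-neg-and-pos}: a set with at least three points cannot be both negative and positive, so once it is negative the ambiguity is resolved — and $\partial_\infty\HH^m$ has at least three points since $m \geq 2$.) Therefore $\Lambda_{\rho_0(\Gamma)} \subset \Lambda$ is negative as well.

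Having established that $\rho_0$ is $P_1^{p,q}$-Anosov with negative proximal limit set, Proposition~\ref{prop:comp-Ano-neg} (equivalently Proposition~\ref{prop:quasiFuchsian}.\eqref{item:qF2}) gives that the connected component $\mathcal{T}_{\rho_0}$ of $\rho_0$ in the space of $P_1^{p,q}$-Anosov representations is open in $\Hom(\Gamma,G)$ (Anosov is an open condition, by \cite{lab06,gw12}) and consists entirely of $P_1^{p,q}$-Anosov representations with negative proximal limit set. Finally, for any irreducible $\rho \in \mathcal{T}_{\rho_0}$, Theorem~\ref{thm:main-negative} (implication \eqref{item:a2}$\Rightarrow$\eqref{item:a1}) applies — $\rho(\Gamma)$ is an irreducible discrete subgroup of $\PO(p,q)$, word hyperbolic, $P_1^{p,q}$-Anosov, with negative proximal limit set — so $\rho(\Gamma)$ is $\HH^{p,q-1}$-convex cocompact; and then Proposition~\ref{prop:CramponMarquis}.\eqref{item:CM1} upgrades this to strong convex cocompactness in $\PP(\RR^{p+q})$.

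The argument is essentially a bookkeeping exercise once the machinery of Sections~\ref{sec:non-pos-sets}--\ref{sec:proofCM} is in place; the only genuinely content-bearing step is the observation that the image limit set $\Lambda = \iota(\partial_\infty\HH^m)$ is negative rather than positive, which I expect to be the main (though minor) obstacle — it must be checked carefully that the sign of $\langle\cdot,\cdot\rangle_{p,q}$ restricted to the relevant cone comes out negative with the chosen embedding $\RR^{m,1}\hookrightarrow\RR^{p,q}$, i.e.\ that the copy of hyperbolic space sits inside $\HH^{p,q-1}$ and not inside $\SS^{p-1,q}$. Once the embedding is normalized correctly this is immediate from the fact that the form is simply restricted, with no change of sign.
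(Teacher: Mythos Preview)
Your proposal is correct and follows essentially the same route as the paper: the setup in Section~\ref{subsec:quasiFuchsian} establishes that $\rho_0 = \tau\circ\widetilde\sigma_0$ fits the framework of Proposition~\ref{prop:quasiFuchsian} with $\Lambda_{\rho_0(\Gamma)}\subset\Lambda=\iota(\partial_\infty\HH^m)$ negative, and then Corollary~\ref{cor:T-Gamma-rho-0} (which you correctly unpack as Proposition~\ref{prop:comp-Ano-neg} plus Theorem~\ref{thm:main-negative} plus Proposition~\ref{prop:CramponMarquis}) yields all three conclusions. One terminological slip: $\partial_\infty\HH^m$ lifts to a cone of \emph{null} (isotropic) vectors, not negative-norm vectors; but your intended argument---that on one sheet of the null cone in $\RR^{m,1}$ all pairwise inner products of noncollinear vectors are negative, and this persists under the isometric inclusion into $\RR^{p,q}$---is correct.
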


For $p=m+1=3$ and $q=1$, when the hyperbolic surface $M$ is closed, the representation $\rho_0 : \Gamma\to\OO(2,1)\hookrightarrow\PO(3,1)$ is called \emph{Fuchsian}, and $\mathcal{T}_{\rho_0}$ is the classical space of \emph{quasi-Fuchsian} representations of $\Gamma=\pi_1(M)$ into $\PO(3,1)$, which Bers parametrized by the product of two copies of the Teichm\"uller space of~$M$.

Suppose $p=m$ and $q=2$.
The space $\HH^{p,1}$ is the $(p+1)$-dimensional (Lorentzian) \emph{anti-de Sitter} space $\mathrm{AdS}^{p+1}$.
When the hyperbolic $m$-manifold $M$ is closed, Proposition~\ref{prop:quasi-Fuchsian} follows from work of Mess \cite{mes90} (for $p=2$) and Barbot--M\'erigot \cite{bm12} (for $p\geq 3$).
In that case $\mathcal{T}_{\rho_0}$ is actually a full connected component of $\Hom(\Gamma,\PO(p,2))$, by Mess \cite{mes90} (for $p=2$) and Barbot \cite{bar15} (for $p\geq 3$).
The terminology \emph{AdS quasi-Fuchsian} is used for $\HH^{p,1}$-convex cocompact representations of $\Gamma$ into $\PO(p,2)$.
For $p=2$, these are exactly the elements of $\mathcal{T}_{\rho_0}$, and they are parametrized by the product of two copies of the Teichm\"uller space of~$M$ \cite{mes90}.
For $p\geq 3$, it is conjectured \cite{bar15} that any $\HH^{p,1}$-convex cocompact representation of~$\Gamma$ lies in $\mathcal{T}_{\rho_0}$.

\subsection{Hitchin representations into $\PO(m,m+1)$ and $\PO(m+1,m+1)$, and maximal representations into $\PO(2,q)$} \label{subsec:Hitchin}

For $n\geq 2$, let
$$\tau_n : \SL_2(\RR) \longrightarrow \SL_n(\RR)$$
be the irreducible $n$-dimensional linear representation of $\SL_2(\RR)$, obtained from the action of $\SL_2(\RR)$ on the $(n-1)^{\mathrm{st}}$ symmetric power $\mathrm{Sym}^{n-1}(\RR^2) \simeq \RR^n$.
The image of~$\tau_n$ preserves the nondegenerate bilinear form $B_n := -\omega^{\otimes (n-1)}$ induced from the area form $\omega$ of~$\RR^2$.
This form is symmetric if $n$ is odd, and antisymmetric (\ie symplectic) if $n$ is even.

Suppose $n = 2m+1$ is odd.
The symmetric bilinear form $B_n$ has signature
\begin{equation} \label{eqn:kn-ln}
(k_n, \ell_n) := \left\{ \begin{array}{ll} (m+1,m) & \text{ if $m$ is odd,}\\
(m,m+1) & \text{ if $m$ is even.} \end{array} \right.
\end{equation}
If we identify the orthogonal group $\OO(B_n)$ (containing the image of~$\tau_n$) with $\OO(k_n,\ell_n)$, then there is a unique $\tau_n$-equivariant embedding $\iota_n : \partial_{\infty}\HH^2\hookrightarrow\partial_{\scriptscriptstyle\PP}\HH^{k_n,\ell_n-1}$, and an easy computation shows that its image $\Lambda_n := \iota(\partial_\infty \HH^2)$ is negative.

For $p \geq k_n$ and $q \geq \ell_n$, the representation $\tau_n : \SL_2(\RR)\to\OO(B_n)\simeq\OO(k_n,\ell_n)$ and the natural embedding $\RR^{k_n,\ell_n}\hookrightarrow\RR^{p,q}$ induce a representation $\tau : H = \SL_2(\RR) \to \PO(p,q)$ which is proximal, and a $\tau$-equivariant embedding $\iota : \partial_{\infty}\HH^2\hookrightarrow\partial_{\scriptscriptstyle\PP}\HH^{k_n,\ell_n-1}\hookrightarrow\partial_{\scriptscriptstyle\PP}\HH^{p,q-1}$.
The set $\Lambda:=\iota(\partial_{\infty}\HH^2)\subset\partial_{\scriptscriptstyle\PP}\HH^{p,q-1}$ is negative by construction.

Let $\Gamma$ be the fundamental group of a convex cocompact orientable hyperbolic surface, with holonomy $\sigma_0 : \Gamma\to\PSL_2(\RR)$.
The representation $\sigma_0$ lifts to a representation $\widetilde{\sigma}_0 : \Gamma\to H:=\SL_2(\RR)$.
Let $\rho_0 := \tau\circ\widetilde{\sigma}_0 : \Gamma \to G:=\PO(p,q)$.
The proximal limit set $\Lambda_{\rho_0(\Gamma)} = \iota(\Lambda_{\sigma_0(\Gamma)}) \subset \Lambda$ is negative.
Thus Corollary~\ref{cor:T-Gamma-rho-0} implies the following.

\begin{proposition} \label{prop:gen-Hitchin}
In this setting, the representation $\rho_0 : \Gamma\to G=\PO(p,q)$ is $P_1^{p,q}$-Anosov.

The connected component $\mathcal{T}_{\rho_0}$ of~$\rho_0$ in the space of $P_1^{p,q}$-Anosov representations from $\Gamma$ to~$G$ is a neighborhood of $\rho_0$ in $\Hom(\Gamma,G)$ consisting entirely of $P_1^{p,q}$-Anosov representations with negative proximal limit~set.

For any irreducible $\rho\in\mathcal{T}_{\rho_0}$, the group $\rho(\Gamma)$ is $\HH^{p,q-1}$-convex cocompact, hence strongly convex cocompact in $\PP(\RR^{p+q})$.
\end{proposition}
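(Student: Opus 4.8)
The plan is to verify that this proposition is an immediate application of Corollary~\ref{cor:T-Gamma-rho-0} — which itself packages Theorem~\ref{thm:main-negative}, Proposition~\ref{prop:CramponMarquis}, and Proposition~\ref{prop:quasiFuchsian} — once we supply the one non-formal ingredient, namely that the image $\Lambda := \iota(\partial_\infty\HH^2) \subset \partial_{\scriptscriptstyle\PP}\HH^{p,q-1}$ is negative (equivalently, by Definition~\ref{def:pos-neg} and Lemma~\ref{lem:justif-neg-triple}, that every triple of distinct points of~$\Lambda$ spans an ideal triangle contained in $\HH^{p,q-1}$). So the first step is to set up the Veronese-type curve $\iota_n$ explicitly and compute the sign of $\langle\cdot,\cdot\rangle$ on it.

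First I would recall the structure of $\tau_n : \SL_2(\RR) \to \OO(B_n) \simeq \OO(k_n,\ell_n)$ acting on $\mathrm{Sym}^{2m}(\RR^2) \simeq \RR^{2m+1}$, and describe the highest-weight line explicitly: identifying $\RR^2$ with polynomials of degree~$1$ and $\mathrm{Sym}^{2m}(\RR^2)$ with binary forms of degree $2m$, the point $\iota_n([v])$ for $v = (x,y) \in \RR^2 \smallsetminus \{0\}$ is the line spanned by $(xs+yt)^{2m} \in \mathrm{Sym}^{2m}(\RR^2)$; this is an isotropic line for $B_n = -\omega^{\otimes 2m}$ and gives the $\tau_n$-equivariant embedding $\iota_n : \PP^1\RR = \partial_\infty\HH^2 \hookrightarrow \partial_{\scriptscriptstyle\PP}\HH^{k_n,\ell_n-1}$. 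The key computation is then the value of the bilinear form $B_n$ paired against two such lifts: for $v,w\in\RR^2$ one has, up to a positive normalization constant depending only on $m$,
\begin{equation*}
\langle (xs+yt)^{2m},\ (x's+y't)^{2m}\rangle_{B_n} = c_m \cdot \bigl(\omega(v,w)\bigr)^{2m},
\end{equation*}
where $\omega(v,w) = xy'-x'y$ and $c_m = -\binom{2m}{m}^{-1} < 0$ with the sign dictated by $B_n = -\omega^{\otimes 2m}$. Since $\omega(v,w)^{2m} \geq 0$ always and is $>0$ whenever $[v]\neq[w]$, this inner product is strictly negative for distinct points, so with these particular lifts $\langle\cdot,\cdot\rangle$ is negative on noncollinear points: $\Lambda_n$ is a negative subset. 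This extends verbatim to $\Lambda \subset \partial_{\scriptscriptstyle\PP}\HH^{p,q-1}$ under the isometric inclusion $\RR^{k_n,\ell_n} \hookrightarrow \RR^{p,q}$, since the restriction of $\langle\cdot,\cdot\rangle_{p,q}$ to $\RR^{k_n,\ell_n}$ is exactly $B_n$ (this is the content of the remark at the start of Section~\ref{subsec:Hitchin} about signatures \eqref{eqn:kn-ln}, together with the hypotheses $p\geq k_n$, $q\geq\ell_n$).

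With negativity of~$\Lambda$ in hand, the rest is assembly. The representation $\sigma_0$ is convex cocompact in the rank-one group $\PSL_2(\RR)$, hence lifts to $\widetilde\sigma_0 : \Gamma \to \SL_2(\RR)$ with finite kernel and convex cocompact image; $\tau$ is proximal because $\tau_n$ sends a hyperbolic element of $\SL_2(\RR)$ to an element whose eigenvalues $\lambda^{2m}, \lambda^{2m-2}, \dots, \lambda^{-2m}$ have a unique one of maximal modulus, so $\tau(H)$ contains an element proximal in $\partial_{\scriptscriptstyle\PP}\HH^{p,q-1}$. Thus Proposition~\ref{prop:quasiFuchsian} applies to $\rho_0 = \tau\circ\widetilde\sigma_0$: it is $P_1^{p,q}$-Anosov, with boundary map $\iota\circ\xi_{\sigma_0}$ and proximal limit set $\Lambda_{\rho_0(\Gamma)} = \iota(\Lambda_{\sigma_0(\Gamma)}) \subset \Lambda$, which is negative (a subset of a negative set is negative). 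The component $\mathcal{T}_{\rho_0}$ is open and consists of $P_1^{p,q}$-Anosov representations by openness of the Anosov condition \cite{lab06,gw12}, and consists entirely of representations with negative proximal limit set by Proposition~\ref{prop:comp-Ano-neg} (this is exactly Proposition~\ref{prop:quasiFuchsian}.\eqref{item:qF2}, with the dichotomy resolved on the ``negative'' side because $\Lambda_{\rho_0(\Gamma)}$ is negative). Finally, for irreducible $\rho\in\mathcal{T}_{\rho_0}$, Theorem~\ref{thm:main-negative} (implication \eqref{item:a2}~$\Rightarrow$~\eqref{item:a1}) gives $\HH^{p,q-1}$-convex cocompactness, and Proposition~\ref{prop:CramponMarquis}.\eqref{item:CM1} upgrades this to strong convex cocompactness in $\PP(\RR^{p+q})$.

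The only real obstacle is the sign computation in the middle paragraph: one must be careful that the identification $\OO(B_n) \simeq \OO(k_n,\ell_n)$ with the signature convention \eqref{eqn:kn-ln}, the choice of sign in $B_n = -\omega^{\otimes(n-1)}$, and the orientation of $\HH^{p,q-1}$ (cut out by $\langle\cdot,\cdot\rangle < 0$) all line up so that the Veronese curve lands in $\HH^{k_n,\ell_n-1}$ rather than in $\SS^{k_n-1,\ell_n}$; a single sign error flips "negative" to "positive" and sends the conclusion to the $\HH^{q,p-1}$ side. Everything else is formal invocation of the machinery already proved.
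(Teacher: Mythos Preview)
Your proposal is correct and follows exactly the paper's approach: the paper states that ``an easy computation shows'' $\Lambda_n$ is negative and that $\Lambda_{\rho_0(\Gamma)}\subset\Lambda$ is negative ``by construction,'' then simply writes ``Thus Corollary~\ref{cor:T-Gamma-rho-0} implies the following'' before stating Proposition~\ref{prop:gen-Hitchin}. You have supplied the details of that easy computation (the Veronese pairing $B_n(v^{2m},w^{2m})$ being a negative multiple of $\omega(v,w)^{2m}$) and unpacked Corollary~\ref{cor:T-Gamma-rho-0} into its constituent pieces, which is exactly what the paper intends.
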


It follows from \cite{lab06,fg06} (see \eg \cite[\S\,6.1]{biw14}) that when $(p,q)=(k_n,\ell_n)$ as in \eqref{eqn:kn-ln} or $(p,q) = (m+1,m+1)$ and when $\Gamma$ is a closed surface group, the space $\mathcal{T}_{\rho_0}$ of Proposition~\ref{prop:gen-Hitchin} is a full connected component of $\Hom(\Gamma,\PO(p,q))$, called the \emph{Hitchin component} of $\Hom(\Gamma,\PO(p,q))$.
Proposition~\ref{prop:gen-Hitchin} specializes in that case to Proposition~\ref{prop:Hitchin}.

By \cite{biw10,biw}, when $p=m+1=2$ and $\Gamma$ is a closed surface group, the space $\mathcal{T}_{\rho_0}$ is a full connected component of $\Hom(\Gamma,\PO(2,q))$, consisting of so-called \emph{maximal representations}.

\section{New examples of Anosov representations} \label{sec:ex-Ano}

In this section we use Theorem~\ref{thm:main-negative} to give new examples of Anosov representations, for any hyperbolic right-angled Coxeter group.

\subsection{Representations of Coxeter groups into orthogonal groups} \label{subsec:deform-Tits-repr}

By definition, a \emph{right-angled Coxeter group} is a group $W_S$ generated by a finite set of involutions $S = \{s_1, \ldots, s_n\}$, with presentation
$$W_S = \big\langle s_1,\dots,s_n ~|~ (s_i s_j)^{m_{i,j}}=1\quad \forall\,1\leq i,j\leq n\big\rangle$$
where $m_{i,i}=1$ and $m_{i,j} = m_{j,i} \in\{2, \infty\}$ for $i\neq j$.
It is said to be \emph{irreducible} if $S$ cannot be written as the disjoint union of two proper subsets $S'$ and~$S''$ such that $W_{S'}$ and $W_{S''}$ commute (\ie $m_{i,j}=2$ for all $s_i\in S'$ and $s_j\in S''$).

The following construction gives representations of $W_S$ into orthogonal groups, and may be formulated for arbitrary Coxeter groups.
It is a deformation of the well-known geometric representation due to Tits (see Krammer~\cite{kra94}).
Let $(e_1,\dots,e_n)$ be a basis of~$\RR^n$ and $B$ a symmetric bilinear form on~$\RR^n$ satisfying
\begin{equation} \label{item:def-B}
B(e_i,e_j)=\left\{ \begin{array}{rl}
1 & \mathrm{if}\ i=j,\\
0 & \mathrm{if}\ m_{i,j} = 2,\\
-\alpha_{i,j} & \mathrm{if}\ m_{i,j}=\infty,
\end{array} \right.
\end{equation}
where $\alpha_{i,j} = \alpha_{j,i}$ are any real numbers $\geq 1$.
Consider the representation $\rho: W_S \to \mathrm{Aut}_{\RR}(B) \subset \GL_n(\RR)$ sending any generator $s_i$ to the $B$-orthogonal reflection of~$\RR^n$ with respect to~$e_i$:
$$\rho(s_i) = \big(x \longmapsto x - 2 B(e_i,x)\,e_i\big).$$
It is possible that $B$ is degenerate.
To avoid this inconvenience, we can perturb the coefficients $\alpha_{i,j}$ slightly and $B$ becomes nondegenerate.
Indeed, $\det(B)$ is a polynomial in the variables $\alpha_{i,j}$ which is not identically zero (it would take value~$1$ if all $\alpha_{i,j}$ were set to zero).

\begin{remark}
If $B$ is degenerate and one wishes to keep the chosen values of the~$\alpha_{i,j}$, then one may work instead in the vector space $\RR^n/\mathrm{Ker}(B)$, where $\mathrm{Ker}(B)$ is the kernel of~$B$.
Note that $B$ descends to a nondegenerate symmetric bilinear form $\overline{B}$ on $\RR^n/\mathrm{Ker}(B)$ and the representation $\rho$ to a representation $\overline{\rho}$ into the general linear group of $\RR^n/\mathrm{Ker}(B)$ that preserves~$\overline{B}$.
The following arguments easily transpose to this setting.
\end{remark}

From now on, we assume that $B$ is nondegenerate.
We identify $B$ with $\langle\cdot,\cdot\rangle_{p,q}$ and $\mathrm{Aut}_{\RR}(B)$ with $\OO(p,q)$ for some $p,q\in\NN$.
The basis $(e_1,\dots,e_n)$ becomes a basis $(x_1,\dots,x_n)$ of~$\RR^{p,q}$ with $\langle x_i,x_j\rangle_{p,q} = B(e_i,e_j)$ for all $i,j$.

\subsection{Conditions for $\HH^{p,q-1}$-convex cocompactness}

By work of Tits and Vinberg \cite{vin71}, the representation $\rho$ is injective and discrete, and $W_S$ acts properly discontinuously via~$\rho$ on the interior $\widetilde{\Omega}$ of the $\rho(W_S)$-orbit of the fundamental closed polyhedral cone
\begin{equation} \label{eqn:Delta-tilde}
\widetilde{\Delta} = \big\{ v \in \RR^{p,q} ~|~ \langle v,x_i\rangle_{p,q} \leq 0 \quad\forall 1\leq i\leq n\big\}
\end{equation}
in~$\RR^{p,q}$.
Since $B$ is nondegenerate, $\widetilde{\Delta}$ has nonempty interior.
The elements $\rho(s_i)$ are reflections in the faces of~$\widetilde{\Delta}$.
Let $\Omega$ be the image of $\widetilde{\Omega}$ in $\PP(\RR^{p,q})$.
We shall prove the following.

\begin{theorem}\label{thm:coxeter}
In the setting of Section~\ref{subsec:deform-Tits-repr}, suppose that $W_S$ is infinite and irreducible, and that the following conditions are both satisfied:
\begin{enumerate}
  \item\label{item:no-Z2} there does not exist disjoint subsets $S',S''$ of~$S$ such that $W_{S'}$ and $W_{S''}$ are both infinite and commute;
  \item\label{item:param>1} the parameters $\alpha_{ij}$ of \eqref{item:def-B}, which define $B$ and~$\rho$, are all $>1$.
\end{enumerate}
Then $\Omega$ is properly convex and the group $\rho(W_S)\subset\mathrm{Aut}_{\RR}(B)\simeq\OO(p,q)$ is $\HH^{p,q-1}$-convex cocompact: it acts properly discontinously and cocompactly on $\C := \Omega \cap \overline{\Omega^*}$, which is a nonempty properly convex closed subset of~$\HH^{p,q-1}$, and the ideal boundary $\partial_i \C$ does not contain any nontrivial segment.
\end{theorem}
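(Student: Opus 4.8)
The plan is to verify directly the clauses of Definition~\ref{def:ccc-Hpq} for $\rho(W_S)$ acting on $\C=\Omega\cap\overline{\Omega^*}$, using throughout the tiling $\overline\Omega=\bigcup_{w\in W_S}\rho(w)\overline\Delta$ and the proper discontinuity on $\Omega$ furnished by \cite{vin71}.

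First I would dispatch the soft points. Since $B=\langle\cdot,\cdot\rangle_{p,q}$ is nondegenerate, the $x_i$ form a basis, so $\mathrm{cone}(x_1,\dots,x_n)$ is a properly convex cone; under the identification of $(\RR^{p,q})^*$ with $\RR^{p,q}$ via $B$ (under which the contragredient of $\rho$ becomes $\rho$ itself), one has $\overline{\Omega^*}\subseteq\overline{\Delta^*}=\PP\big(\mathrm{cone}(x_1,\dots,x_n)\big)$, while $\Omega^*$ contains $\mathrm{Int}(\Delta^*)$ by \cite{vin71} applied to the dual representation. In particular $\Omega^*\neq\emptyset$, so $\Omega$ is properly convex; symmetrically $\overline{\Omega^*}$ is compact in an affine chart, so $\C$, being the intersection of the convex open set $\Omega$ with the convex compact set $\overline{\Omega^*}$, is convex with convex closure, hence properly convex. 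It is $\rho(W_S)$-invariant, and $\mathrm{Int}(\C)\supseteq\Omega\cap\Omega^*$ is nonempty: the vector $v_0:=x_1+\dots+x_n$ lies in $\mathrm{Int}(\widetilde\Delta)$ because $\langle v_0,x_j\rangle_{p,q}=1-\sum_{m_{ij}=\infty}\alpha_{ij}<0$ (irreducibility of $W_S$ forces each $j$ to have a neighbour in the defining graph, and \eqref{item:param>1} makes this sum $>1$), while $[v_0]\in\mathrm{Int}(\Delta^*)\subseteq\Omega^*$. Proper discontinuity of $\rho(W_S)$ on $\C\subseteq\Omega$ is then immediate.

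The crucial computation is that $\C\subset\HH^{p,q-1}$ and that the action on $\C$ is cocompact. Let $v=\sum_i t_ix_i$ with $t_i\geq 0$ and $\langle v,x_j\rangle_{p,q}\leq 0$ for all $j$ (after choosing the lift correctly, a short sign analysis using \eqref{item:param>1} and irreducibility shows both conditions can be met simultaneously for any point of $\overline\Delta\cap\overline{\Delta^*}$). Then $\langle v,v\rangle_{p,q}=\sum_j t_j\langle v,x_j\rangle_{p,q}\leq 0$, with equality only if $t_j\langle v,x_j\rangle_{p,q}=0$ for every $j$; writing $S_0=\{j:t_j>0\}$, the relation $\langle v,x_k\rangle_{p,q}=0$ for $k\in S_0$ reads $t_k=\sum_{j\in S_0,\,m_{jk}=\infty}\alpha_{jk}t_j$, so no $k\in S_0$ can be isolated in the defining graph of $W_{S_0}$, and summing over $k\in S_0$ gives $\sum_k t_k>\sum_k t_k$ since every $\alpha_{jk}>1$ — a contradiction. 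Hence $\overline\Delta\cap\overline{\Delta^*}$ is a compact subset of $\HH^{p,q-1}$. Since $\overline{\Omega^*}\subseteq\overline{\Delta^*}$ and $\overline\Omega=\rho(W_S)\cdot\overline\Delta$, we get $\C=\bigcup_{w\in W_S}\rho(w)(\C\cap\overline\Delta)$ with $\C\cap\overline\Delta\subseteq\overline\Delta\cap\overline{\Delta^*}\subset\HH^{p,q-1}$; thus $\C\subset\HH^{p,q-1}$, $\C$ is closed in $\HH^{p,q-1}$, and $\rho(W_S)$ acts cocompactly on $\C$ with a compact fundamental domain inside $\overline\Delta\cap\overline{\Delta^*}$.

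It remains to prove that $\partial_i\C=\overline\C\setminus\C$ contains no nontrivial projective segment, equivalently that it is transverse; this is the main obstacle. From $\overline\C\subseteq\overline\Omega\cap\overline{\Omega^*}$, from $\C\subset\HH^{p,q-1}$, and from the proper convexity of $\overline\C$, the segment joining two distinct points $[v],[v']\in\partial_i\C$ lies in $\overline\C\subseteq\HH^{p,q-1}\cup\partial_{\scriptscriptstyle\PP}\HH^{p,q-1}$, which (as $\langle v,v\rangle_{p,q}=\langle v',v'\rangle_{p,q}=0$) forces $\langle v,v'\rangle_{p,q}\leq 0$; and $\langle v,v'\rangle_{p,q}=0$ would force the whole isotropic segment $[v,v']$ to lie in $\partial_i\C$, hence in $\partial\Omega$. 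I would rule this out by a combinatorial analysis of the Tits--Vinberg tiling: a boundary segment of $\Omega$ produces an infinite string of tiles $\rho(w_k)\overline\Delta$ whose separating walls share a common ideal point, i.e.\ are asymptotically parallel; but by \eqref{item:param>1} the form $\langle\cdot,\cdot\rangle_{p,q}$ has signature $(1,1)$ on each plane $\mathrm{span}(x_i,x_j)$ with $m_{ij}=\infty$, so every such rank-two configuration of walls is ultraparallel, never asymptotic, and the string can only close up around a Euclidean flat, forcing $W_S$ to contain a rank-two free abelian subgroup — i.e.\ disjoint $S',S''\subseteq S$ with $W_{S'},W_{S''}$ infinite and commuting, contradicting \eqref{item:no-Z2}. (Note \eqref{item:no-Z2} is precisely word-hyperbolicity of $W_S$, which is in any case necessary for $\HH^{p,q-1}$-convex cocompactness, so neither hypothesis can be dropped.) Once $\partial_i\C$ is known to be transverse, all requirements of Definition~\ref{def:ccc-Hpq} are met and $\rho(W_S)$ is $\HH^{p,q-1}$-convex cocompact.
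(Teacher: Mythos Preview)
Your setup and the computation showing $\Sigma=\overline\Delta\cap\overline{\Delta^*}\subset\HH^{p,q-1}$ match the paper's Lemma~\ref{lem:Sigma-in-Hpq} closely (the paper takes a minimal $t_k$ where you sum, but the content is the same). However, there are two real gaps before you even reach the ideal boundary, and the final step is not a proof.

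\textbf{Cocompactness and closedness.} You assert that $\C\cap\overline\Delta$ is a compact fundamental domain, but $\C=\Omega\cap\overline{\Omega^*}$ is only \emph{relatively} closed in~$\Omega$: a sequence in $\C\cap\overline\Delta$ could accumulate on $\partial_{\scriptscriptstyle\PP}\Omega\cap\Sigma$. To rule this out you need $\Sigma\subset\Omega$, which is the paper's Lemma~\ref{lem:P-in-Omega}: one must show that the $W_S$-stabilizer of every point of $\Sigma$ is finite, and this is exactly where hypothesis~\eqref{item:no-Z2} is used (to bound the subgroup $W_{S_x^0}$ generated by the vanishing-coordinate reflections). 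Your argument uses only~\eqref{item:param>1}. Separately, you assert that $\C$ is closed in $\HH^{p,q-1}$; this also needs justification (the paper's Lemma~\ref{lem:C-closed}), since a priori a $W_S$-orbit of a compact set in $\Omega\cap\HH^{p,q-1}$ could accumulate on $\partial_{\scriptscriptstyle\PP}\Omega\cap\HH^{p,q-1}$.

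\textbf{The ideal boundary.} Your reduction to an isotropic segment in $\partial_{\scriptscriptstyle\PP}\Omega$ is correct, but what follows is a heuristic, not an argument. You say that such a segment ``produces an infinite string of tiles whose separating walls share a common ideal point'' and that this ``can only close up around a Euclidean flat, forcing $W_S$ to contain $\ZZ^2$''. Neither claim is substantiated: you have not explained which tiles, in what sense the walls are asymptotic, why the signature-$(1,1)$ observation on $\mathrm{span}(x_i,x_j)$ controls walls that are $\rho(w)$-translates of the $x_i^\perp$, or how a $\ZZ^2$ subgroup is produced. The paper's mechanism is quite different and more concrete: one passes to a uniform neighborhood $\C_r\subset\HH^{p,q-1}$, shows (Lemma~\ref{lem:cox-no-ray}) that $\partial_{\scriptscriptstyle\HH}\C_r$ contains no straight ray by observing that a supporting hyperplane along such a ray would be invariant under a proximal element $\rho(s_is_j)$ and hence meet the interior, a contradiction; then a cocompactness-and-triangle limiting argument forces any ideal segment to bound an entirely isotropic triangle, which is impossible since its interior lies in $\HH^{p,q-1}$. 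Your sketch does not supply a substitute for either of these two steps.
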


Here we denote by $\Omega^*$ the dual convex to~$\Omega$ (see Section~\ref{subsec:prop-conv-proj}), viewed as a subset of $\PP(\RR^{p,q})$ using the nondegenerate bilinear form $\langle\cdot,\cdot\rangle_{p,q}$ (see \eqref{eqn:Omega*}).

\begin{remarks}
\begin{enumerate}
  \item Condition~\eqref{item:no-Z2} of Theorem~\ref{thm:coxeter} neither implies, nor is implied by, the irreducibility of~$W_S$.
  \item Let $W_S$ be an irreducible, word hyperbolic, right-angled Coxeter group.
In \cite{dgk-racg-cc}, we shall generalize Theorem~\ref{thm:coxeter} to representations $\rho : W_S\to\GL(\RR^n)$ which do not necessarily preserve a quadratic form.
This will enable us to completely describe the moduli space of $P_1$-Anosov representations $\rho : W_S\to\GL(\RR^n)$ realizing $W_S$ as a reflection group.
  \item In this more general context, Marquis \cite{mar17} considered groups generated by reflections in the faces of a polytope which is not necessarily right-angled, but \emph{2-perfect} (a condition on the codimension-$2$ faces).
He gave a full criterion \cite[Th.\,A]{mar17} for a notion of convex cocompactness which is a priori weaker than strong convex cocompactness in $\PP(\RR^n)$.
The $2$-perfect assumption forces the Zariski closure of~the~re\-flection group to be conjugate to $\OO(n-1,1)$ or
$\SL^{\pm}(\RR^n)$ \cite[Th.\,B]{mar17}.
\end{enumerate}
\end{remarks}

\subsection{Anosov representations for right-angled Coxeter groups}

Note that in the setting of Theorem~\ref{thm:coxeter} the group $\rho(W_S)\subset\OO(B)\simeq\OO(p,q)$ is irreducible \cite{bh04}.
Here is an immediate consequence of Theorems \ref{thm:main}.\eqref{item:ccc-implies-Anosov}~and~\ref{thm:coxeter}.

\begin{corollary} \label{cor:cox-Anosov}
In the setting of Theorem~\ref{thm:coxeter}, the group $W_S$ is word hyperbolic and the representation $\rho : W_S\to\OO(p,q)$ is $P_1^{p,q}$-Anosov with negative proximal limit set $\Lambda_{\rho(W_S)}\subset\partial_{\scriptscriptstyle\PP}\HH^{p,q-1}$.
\end{corollary}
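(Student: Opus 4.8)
The plan is to obtain Corollary~\ref{cor:cox-Anosov} by feeding the output of Theorem~\ref{thm:coxeter} into Theorem~\ref{thm:main}, as the phrase ``immediate consequence'' promises. First I would collect the standing properties of~$\rho$. By Tits--Vinberg \cite{vin71} the representation $\rho : W_S\to\OO(B)\simeq\OO(p,q)$ is injective with discrete image, and by \cite{bh04} (recalled just above) the image $\rho(W_S)$ is irreducible in $\PGL(\RR^{p+q})=\PGL(\RR^{p,q})$. Composing with the quotient $\OO(p,q)\to\PO(p,q)=G$ produces $\bar\rho : W_S\to G$ whose image $\Gamma:=\bar\rho(W_S)$ is again irreducible and discrete, and whose kernel has order at most~$2$.

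Next I would apply Theorem~\ref{thm:coxeter}: under hypotheses \eqref{item:no-Z2} and~\eqref{item:param>1}, the group $\Gamma$ acts properly discontinuously and cocompactly on the nonempty properly convex closed subset $\C=\Omega\cap\overline{\Omega^*}$ of $\HH^{p,q-1}$, and $\partial_i\C$ contains no nontrivial projective segment, i.e. is transverse; so $\Gamma$ is $\HH^{p,q-1}$-convex cocompact in the sense of Definition~\ref{def:ccc-Hpq}. Theorem~\ref{thm:main}.\eqref{item:ccc-implies-Anosov} (equivalently the implication \eqref{item:a1}$\Rightarrow$\eqref{item:a2} of Theorem~\ref{thm:main-negative}) then says that $\Gamma$ is word hyperbolic and that the inclusion $\Gamma\hookrightarrow G$ is $P_1^{p,q}$-Anosov. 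Pulling back along the finite-kernel surjection $W_S\twoheadrightarrow\Gamma$, the group $W_S$ is word hyperbolic --- word hyperbolicity is unchanged by passing to or from a quotient with finite normal kernel --- and $\rho : W_S\to\OO(p,q)\to G$ is $P_1^{p,q}$-Anosov.

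For the last clause I would use the final assertion of Theorem~\ref{thm:main-negative} (or directly Lemma~\ref{lem:Lambda-Lambda-Gamma}): since $\Lambda_\Gamma\subset\partial_i\C$ by Fact~\ref{fact:Yves} and $\partial_i\C$ is transverse, the set $\Lambda_\Gamma$ is transverse, hence $\Lambda_\Gamma=\partial_i\C$; being the ideal boundary of a properly convex subset of $\HH^{p,q-1}$, this set is nonpositive by Lemma~\ref{lem:nonpos-Hpq}.\eqref{item:nonpos-Hpq-Omega}, and a transverse nonpositive subset of $\partial_{\scriptscriptstyle\PP}\HH^{p,q-1}$ lifts to a cone on which $\langle\cdot,\cdot\rangle_{p,q}$ is $\leq 0$ and nonvanishing on noncollinear pairs, hence $<0$, so it is negative. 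Thus $\Lambda_{\rho(W_S)}=\Lambda_\Gamma$ is negative, completing the proof. There is no genuinely hard step here --- all the real work sits in Theorem~\ref{thm:coxeter} and Theorem~\ref{thm:main} --- so the only points needing care are the translation between $\OO(p,q)$ and $\PO(p,q)$ and the transfer of word hyperbolicity across the at-most-two-to-one quotient, neither of which presents a genuine obstacle.
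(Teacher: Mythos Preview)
Your proof is correct and follows the same route as the paper, which simply declares the corollary an immediate consequence of Theorem~\ref{thm:coxeter} and Theorem~\ref{thm:main}.\eqref{item:ccc-implies-Anosov}. You have just made the details explicit --- the passage from $\OO(p,q)$ to $\PO(p,q)$, the transfer of word hyperbolicity across the finite kernel, and the negativity of the limit set (which the paper leaves implicit in the implication \eqref{item:a1}$\Rightarrow$\eqref{item:a2} of Theorem~\ref{thm:main-negative}).
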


In particular, this yields a new proof of Moussong's hyperbolicity criterion \cite{mou87} in the case of right-angled Coxeter groups.

\begin{corollary}[Moussong \cite{mou87}] \label{cor:Moussong}
A right-angled Coxeter group $W_S$ is word hyperbolic if and only if it satisfies condition~\eqref{item:no-Z2} of Theorem~\ref{thm:coxeter}, or equivalently if and only if the generating set $S$ does not contain elements $s_{i_1},s_{i_2},s_{i_3},s_{i_4}$ with $m_{i_1,i_3}=m_{i_2,i_4}=\infty$ and $m_{i_j,i_{j+1}}=2$ for all $j\in\ZZ/4\ZZ$.
\end{corollary}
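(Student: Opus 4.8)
The plan is to derive both directions from the machinery already assembled. For the "if" direction, suppose $W_S$ satisfies condition~\eqref{item:no-Z2} of Theorem~\ref{thm:coxeter}. If $W_S$ is finite there is nothing to prove, so assume it is infinite. The subtlety is that $W_S$ may be reducible, so Theorem~\ref{thm:coxeter} does not apply verbatim; I would reduce to the irreducible case by writing $S = S_1 \sqcup \dots \sqcup S_r$ into the maximal subsets generating mutually commuting subgroups, so that $W_S \simeq W_{S_1}\times\dots\times W_{S_r}$. Condition~\eqref{item:no-Z2} forces at most one factor $W_{S_j}$ to be infinite: indeed two distinct infinite factors would be infinite commuting subgroups supported on disjoint subsets of $S$. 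A finite direct product of a word hyperbolic group with finite groups is word hyperbolic, so it suffices to handle a single infinite irreducible factor $W_{S_j}$ that itself still satisfies condition~\eqref{item:no-Z2} (this is inherited, since any offending pair of disjoint infinite commuting subgroups inside $W_{S_j}$ would be one inside $W_S$). For such a factor, choose parameters $\alpha_{ik} > 1$ as in Theorem~\ref{thm:coxeter}\eqref{item:param>1}; after a small perturbation if necessary the form $B$ is nondegenerate (the determinant is a nonzero polynomial in the $\alpha$'s, as noted in Section~\ref{subsec:deform-Tits-repr}), and this perturbation can be kept within the region $\alpha_{ik}>1$. Theorem~\ref{thm:coxeter} then applies and gives that $\rho(W_{S_j})$ is $\HH^{p,q-1}$-convex cocompact; by Theorem~\ref{thm:main}.\eqref{item:ccc-implies-Anosov} (or directly by the statement of Theorem~\ref{thm:coxeter}), $W_{S_j}$ is word hyperbolic. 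Hence $W_S$ is word hyperbolic.

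For the "only if" direction, suppose condition~\eqref{item:no-Z2} fails: there exist disjoint $S', S'' \subseteq S$ with $W_{S'}$ and $W_{S''}$ both infinite and commuting. Then $W_S$ contains $W_{S'} \times W_{S''}$ as a subgroup. An infinite right-angled Coxeter group always contains $\ZZ$ (for instance, for any $s_i$ and $s_k$ with $m_{i,k} = \infty$, the element $s_i s_k$ has infinite order; and an infinite $W_{S'}$ must contain two generators that do not commute, else $W_{S'}$ is a finite product of $\ZZ/2$'s). Therefore $W_S$ contains $\ZZ \times \ZZ$, which is not word hyperbolic, and a group containing $\ZZ^2$ cannot be word hyperbolic; hence $W_S$ is not word hyperbolic.

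Finally I would record the equivalence between condition~\eqref{item:no-Z2} and the stated combinatorial condition on quadruples $s_{i_1}, s_{i_2}, s_{i_3}, s_{i_4}$. If such a quadruple exists, then $S' = \{s_{i_1}, s_{i_3}\}$ and $S'' = \{s_{i_2}, s_{i_4}\}$ are disjoint, $W_{S'} \simeq W_{S''} \simeq \ZZ/2 * \ZZ/2$ is infinite, and $m_{i_j, i_{j+1}} = 2$ for all $j \in \ZZ/4\ZZ$ means every generator of $S'$ commutes with every generator of $S''$, so $W_{S'}$ and $W_{S''}$ commute: condition~\eqref{item:no-Z2} fails. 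Conversely, if condition~\eqref{item:no-Z2} fails, pick disjoint $S', S''$ with $W_{S'}, W_{S''}$ infinite and commuting. Since $W_{S'}$ is infinite (and right-angled), it is not a finite direct product of copies of $\ZZ/2$, so there exist $s_{i_1}, s_{i_3} \in S'$ with $m_{i_1, i_3} = \infty$; similarly there exist $s_{i_2}, s_{i_4} \in S''$ with $m_{i_2, i_4} = \infty$. The fact that $W_{S'}$ and $W_{S''}$ commute gives $m_{i_j, i_{j+1}} = 2$ for all $j \in \ZZ/4\ZZ$. This produces the desired quadruple.

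The main obstacle is the reduction to the irreducible case in the "if" direction: one must check carefully that condition~\eqref{item:no-Z2} passes to the (unique) infinite irreducible factor and that the nondegeneracy perturbation of $B$ is compatible with keeping all $\alpha_{ik} > 1$, so that Theorem~\ref{thm:coxeter} genuinely applies. Everything else is a routine unwinding of definitions together with the standard fact that a word hyperbolic group contains no $\ZZ^2$.
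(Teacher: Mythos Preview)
Your proposal is correct and follows the same strategy as the paper: derive word hyperbolicity from Theorem~\ref{thm:coxeter} (via Corollary~\ref{cor:cox-Anosov}), and the converse from the absence of $\ZZ^2$ subgroups in word hyperbolic groups. Your write-up is in fact more complete than the paper's two-line proof, which cites Corollary~\ref{cor:cox-Anosov} directly without making explicit the reduction to the irreducible case or the combinatorial equivalence with the no-empty-square condition---both of which you spell out correctly.
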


This last condition is sometimes known as the \emph{no empty square} condition.

\begin{proof}
If $W_S$ is word hyperbolic, then it does not contain any subgroup isomorphic to~$\ZZ^2$, and so condition~\eqref{item:no-Z2} of Theorem~\ref{thm:coxeter} is satisfied.
Corollary~\ref{cor:cox-Anosov} states that the converse is true.
\end{proof}

We can now prove Theorem~\ref{thm:exist-Ano-RACG}.

\begin{proof}[Proof of Theorem~\ref{thm:exist-Ano-RACG}]
Let $W=W_S$ be a right-angled Coxeter group with $n$ generators as above.
Since finite groups trivially satisfy Theorem~\ref{thm:exist-Ano-RACG}, we assume that $W_S$ is infinite.
We also assume that $W_S$ is word hyperbolic; then condition~\eqref{item:no-Z2} of Theorem~\ref{thm:coxeter} is clearly satisfied.
If the Coxeter group $W_S$ is irreducible, then Corollary~\ref{cor:cox-Anosov} provides a $P_1^{p,q}$-Anosov representation $\rho : W_S\to\OO(p,q)$.
Otherwise there is a nontrivial partition $S = S' \sqcup S''$ of the generating set such that $W_S$ is the direct product of its subgroups $W_{S'}$ and $W_{S''}$ generated respectively by $S'$ and~$S''$.
Up to switching $S'$ and~$S''$, we may assume that $W_{S''}$ is a finite group and $W_{S'}$ an infinite, irreducible word hyperbolic Coxeter group, still satisfying condition~\eqref{item:no-Z2} of Theorem~\ref{thm:coxeter}.
Corollary~\ref{cor:cox-Anosov} yields a $P_1^{p',q'}$-Anosov representation $\rho': W_{S'} \to \OO(p',q')$.
The composition of $\rho'$ with the natural projection $W_S \to W_{S'}\simeq W_S/W_{S''}$ is also $P_1^{p',q'}$-Anosov since its restriction to the finite-index subgroup $W_{S'}$ is (see \cite[Cor.\,1.3]{gw12}).
\end{proof}

\begin{remark}
In the context of our work \cite{dgk-cox} on proper affine actions of right-angled Coxeter groups, the possibility that $\rho$ might be $P_1^{p,q}$-Anosov in Corollary~\ref{cor:cox-Anosov} was first suggested to us by Anna Wienhard.
\end{remark}

\subsection{Proof of Theorem~\ref{thm:coxeter}}

We assume that conditions \eqref{item:no-Z2} and~\eqref{item:param>1} of Theorem~\ref{thm:coxeter} are both satisfied.
Condition~\eqref{item:param>1} ensures that the irreducible reflection group $\rho(W_S)$ is \emph{of negative type} in the terminology of \cite{vin71}, \ie the Cartan matrix $(2B(e_i,e_j))_{1\leq i,j\leq n}$ has at least one negative eigenvalue.
Therefore $\Omega$ is properly convex by \cite[Lem.\,15]{vin71}.
The dual convex set $\Omega^*$, seen as an open subset of $\PP(\RR^{p,q})$ via $\langle\cdot,\cdot\rangle_{p,q}$, is given by
\begin{equation} \label{eqn:Omega*}
\Omega^* = \PP\big(
\big\{ x'\in\RR^{p,q} ~|~ \langle x,x'\rangle_{p,q} < 0 \quad\forall x\in\overline{\widetilde{\Omega}}\big\}\big),
\end{equation}
where $\overline{\widetilde{\Omega}}$ is the closure of $\widetilde{\Omega}$ in $\RR^{p,q}\smallsetminus\{0\}$.

\begin{lemma}\label{lem:non-empty}
The properly convex set $\C = \Omega \cap \overline{\Omega^*}$ is nonempty.
\end{lemma}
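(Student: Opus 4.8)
The plan is to exhibit a nonempty set contained in both $\Omega$ and $\overline{\Omega^*}$, namely the minimal invariant convex set attached to $\Gamma:=\rho(W_S)$.

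\textbf{Step 1 (the invariant convex sets).} Since $W_S$ is infinite, $\Gamma$ is an infinite discrete subgroup of $\PO(p,q)$ (by \cite{vin71}) which is irreducible (by \cite{bh04}) and preserves the nonempty properly convex open set $\Omega$ constructed above. I would apply Proposition~\ref{prop:Lambda-non-pos-neg}: the proximal limit set $\Lambda_{\Gamma}$ lifts to a cone $\widetilde{\Lambda}_{\Gamma}\subset\RR^{p,q}\smallsetminus\{0\}$ on which $\langle\cdot,\cdot\rangle_{p,q}$ is nonpositive or nonnegative; there is a smallest nonempty $\Gamma$-invariant properly convex open subset $\Omega_{\min}=\PP(\mathrm{Int}(\RR^+\text{-span of }\widetilde{\Lambda}_{\Gamma}))\subseteq\Omega$ (in particular $\Omega_{\min}\neq\emptyset$), and a largest one $\Omega_{\max}\supseteq\Omega$, which is the projectivization of the interior of $\{x'\in\RR^{p,q}\,|\,\langle x,x'\rangle_{p,q}\leq 0\ \forall x\in\widetilde{\Lambda}_{\Gamma}\}$ in the nonpositive case (resp.\ with $\geq 0$ in the nonnegative case). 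The goal then becomes to show $\Omega_{\min}\subseteq\Omega\cap\overline{\Omega^*}=\C$.

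\textbf{Step 2 (the inclusion $\Omega_{\min}\subseteq\overline{\Omega^*}$).} Since $\Omega\subseteq\Omega_{\max}$, I would choose the lift $\widetilde{\Omega}$ of $\Omega$ appearing in \eqref{eqn:Omega*} to be contained in the defining set of $\Omega_{\max}$; as that set is closed, this gives $\langle x,x'\rangle_{p,q}\leq 0$ (resp.\ $\geq 0$) for all $x\in\widetilde{\Lambda}_{\Gamma}$ and all $x'\in\overline{\widetilde{\Omega}}$. Now take $[v]\in\Omega_{\min}$ and write $v=\sum_i t_i w_i$ with $w_i\in\widetilde{\Lambda}_{\Gamma}$ and $t_i>0$. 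Then $\langle v,x'\rangle_{p,q}=\sum_i t_i\langle w_i,x'\rangle_{p,q}$ has constant sign ($\leq 0$, resp.\ $\geq 0$) as $x'$ ranges over $\overline{\widetilde{\Omega}}$. Replacing $v$ by $-v$ in the nonnegative case (which does not change $[v]$), we have $\langle v,x'\rangle_{p,q}\leq 0$ for all $x'\in\overline{\widetilde{\Omega}}$. Since $\widetilde{\Omega}$ is a properly convex cone, its closed polar cone $\{y\,|\,\langle x,y\rangle_{p,q}\leq 0\ \forall x\in\overline{\widetilde{\Omega}}\}$ is the union of $\{0\}$ with the closure of the open cone defining $\Omega^*$ in \eqref{eqn:Omega*}; as $v\neq 0$, this yields $[v]\in\overline{\Omega^*}$. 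Hence $\Omega_{\min}\subseteq\overline{\Omega^*}$, and combined with $\Omega_{\min}\subseteq\Omega$ we get $\emptyset\neq\Omega_{\min}\subseteq\C$. (That $\C$ is properly convex is automatic, being a convex subset of the properly convex set $\Omega$.)

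\textbf{Main obstacle.} The only substantive point is Step 2: relating the convex hull of the limit set to the dual of $\Omega$. The mechanism I would rely on is that the linear inequalities cutting out $\Omega_{\max}$ — which are inherited by $\Omega$ and hence by $\overline{\widetilde{\Omega}}$ — are exactly the pairings against $\widetilde{\Lambda}_{\Gamma}$, while $\widetilde{\Lambda}_{\Gamma}$ positively spans the cone over $\Omega_{\min}$; this is what pins down the sign of $\langle\cdot,\cdot\rangle_{p,q}$ on $\widetilde{\Omega}_{\min}\times\overline{\widetilde{\Omega}}$. The bookkeeping nuisance that $\Lambda_{\Gamma}$ might be nonpositive \emph{or} nonnegative disappears upon passing to projective space, since $[v]=[-v]$.
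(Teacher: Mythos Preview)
Your proof is correct and follows essentially the same route as the paper's: both observe that the proximal limit set $\Lambda_{\rho(W_S)}$ lies in $\overline{\Omega}\cap\overline{\Omega^*}$ and then invoke irreducibility. The paper cites Fact~\ref{fact:Yves} directly to place $\Lambda_{\rho(W_S)}$ in both closures and concludes that the $\rho(W_S)$-invariant convex set $\overline{\Omega}\cap\overline{\Omega^*}$ has nonempty interior; you unpack the same mechanism via Proposition~\ref{prop:Lambda-non-pos-neg} and an explicit sign computation, obtaining along the way the slightly stronger inclusion $\Omega_{\min}\subseteq\C$.
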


\begin{proof}
By Fact~\ref{fact:Yves}, the proximal limit set $\Lambda_{\rho(W_S)}$ is nonempty and contained in the respective closures $\overline{\Omega}$ and~$\overline{\Omega^*}$ of $\Omega$ and~$\Omega^*$ in $\PP(\RR^{p,q})$, hence in the intersection $\overline{\Omega} \cap \overline{\Omega^*}$, which is invariant under $\rho(W_S)$.
Since $\rho$ is irreducible, the interior $\Omega \cap \Omega^* \subset \C$ is nonempty.
\end{proof}

Let
$$\widetilde \Delta^* := \bigg\{ x = \sum_{i=1}^n t_i x_i \in \RR^{p,q} ~\Big|~ t_i \geq 0 \quad\forall 1\leq i \leq n \bigg\}$$
be the dual simplex to~$\widetilde \Delta$, and let $\widetilde{\Sigma} := \widetilde{\Delta} \cap \widetilde{\Delta}^*$, \ie
$$\widetilde{\Sigma} = \bigg\{ x = \sum_{i=1}^n t_i x_i \in \RR^{p,q} ~\Big|~ t_i \geq 0 \ \mathrm{and}\ \langle x, x_j\rangle_{p,q} \leq 0 \quad\forall 1\leq i,j \leq n \bigg\}.$$
The set $\overline{\Omega^*}\subset\PP(\RR^{p,q})$ is the projectivization of the intersection of all $\rho(W_S)$-translates of $\widetilde \Delta^*\smallsetminus\{0\}$.
Therefore $\C = \Omega \cap \overline{\Omega^*}$ is contained in the $\rho(W_S)$-orbit of the projectivization $\Sigma\subset\PP(\RR^{p,q})$ of $\widetilde{\Sigma}\smallsetminus\{0\}$.

\begin{lemma} \label{lem:Sigma-in-Hpq}
The compact set $\Sigma\subset\PP(\RR^{p,q})$ is contained in $\HH^{p,q-1}$. 
\end{lemma}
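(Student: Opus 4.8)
The plan is to show that every nonzero $x = \sum_i t_i x_i \in \widetilde{\Sigma}$ satisfies $\langle x, x\rangle_{p,q} < 0$, which is exactly the statement that $[x] \in \HH^{p,q-1}$. First I would write out
$$\langle x, x\rangle_{p,q} = \sum_{i,j} t_i t_j \langle x_i, x_j\rangle_{p,q} = \sum_i t_i^2 + \sum_{i \neq j} t_i t_j \langle x_i, x_j\rangle_{p,q},$$
using $\langle x_i, x_i\rangle_{p,q} = B(e_i, e_i) = 1$. Separately, for each fixed $j$, the defining inequality $\langle x, x_j\rangle_{p,q} \leq 0$ of $\widetilde{\Delta}$ reads $t_j + \sum_{i \neq j} t_i \langle x_i, x_j\rangle_{p,q} \leq 0$. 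Multiplying by $t_j \geq 0$ and summing over $j$ gives
$$\sum_j t_j^2 + \sum_{i \neq j} t_i t_j \langle x_i, x_j\rangle_{p,q} \leq 0,$$
i.e. $\langle x, x\rangle_{p,q} \leq 0$ for all $x \in \widetilde{\Sigma}$. So $\Sigma$ is at least contained in $\overline{\HH^{p,q-1}} = \HH^{p,q-1} \cup \partial_{\scriptscriptstyle\PP}\HH^{p,q-1}$; the real work is to rule out $\langle x, x\rangle_{p,q} = 0$.

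Next I would analyze the equality case. If $\langle x, x \rangle_{p,q} = 0$ with $x \in \widetilde{\Sigma}\smallsetminus\{0\}$, then in the summation-over-$j$ argument above, every inequality that was used must be an equality: for each $j$ with $t_j > 0$ we need $\langle x, x_j\rangle_{p,q} = 0$. Let $I = \{i : t_i > 0\}$, which is nonempty. So $\langle x, x_j \rangle_{p,q} = 0$ for all $j \in I$, hence $0 = \langle x, x\rangle_{p,q} = \sum_{j \in I} t_j \langle x, x_j\rangle_{p,q}$ is consistent, but more importantly, for $j \in I$ we get $t_j = -\sum_{i \in I, i\neq j} t_i \langle x_i, x_j\rangle_{p,q}$. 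Here condition~\eqref{item:param>1} enters decisively: for $i \neq j$, $\langle x_i, x_j\rangle_{p,q}$ is either $0$ (when $m_{i,j} = 2$) or $-\alpha_{i,j} \leq -1 < 0$ (when $m_{i,j} = \infty$, with the strict inequality from $\alpha_{i,j} > 1$). The plan is then to derive a contradiction with the hyperbolicity / irreducibility hypothesis~\eqref{item:no-Z2}: restricting to the index set $I$, the submatrix $(\langle x_i, x_j\rangle_{p,q})_{i,j \in I}$ has the vector $(t_i)_{i \in I}$ (with all positive entries) in its kernel, so this principal submatrix of the Cartan-type form is degenerate and the subgroup $W_{I} := \langle s_i : i \in I\rangle$ is an affine (or reducible non-hyperbolic) Coxeter subgroup. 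I would argue, using the structure of right-angled Coxeter groups and the fact that $\alpha_{i,j} > 1$ strictly forces the associated bilinear form to be \emph{negative definite} when restricted to any subset whose Coxeter graph has no empty square — contradicting the degeneracy — or alternatively, directly: the positivity $t_i > 0$ together with $t_j = \sum_{i \in I, i \neq j} t_i \alpha_{i,j} [m_{i,j} = \infty]$ and $\alpha_{i,j} > 1$ would force, after summing, $\sum_j t_j < \sum_j t_j$ unless the "$\infty$-graph" on $I$ is empty, i.e.\ $W_I$ is finite; but a finite right-angled Coxeter group has positive definite form, so $(t_i)$ cannot be in the kernel. Either way we reach a contradiction, so $\langle x, x\rangle_{p,q} < 0$ and $\Sigma \subset \HH^{p,q-1}$.

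The main obstacle I anticipate is pinning down exactly the right intermediate claim about the restricted bilinear form: one wants "for every proper index subset $I$ arising this way, $B|_{\mathrm{span}(e_i : i \in I)}$ is negative definite (hence nondegenerate)", which should follow from Vinberg's theory (the reflection group is of negative type, so all proper standard parabolic subgroups are of nonnegative/finite type, and condition~\eqref{item:no-Z2} rules out the affine/reducible-infinite cases, leaving only \emph{finite} type subgroups, whose Tits form with parameters $\alpha_{i,j} > 1$ is positive definite — giving the contradiction with a nonzero kernel vector). Care is needed because the statement is about an \emph{indefinite} form $\langle\cdot,\cdot\rangle_{p,q}$ globally, so the sign conventions (negative type means a negative eigenvalue, while the restriction to a finite-type parabolic is positive definite) must be tracked correctly. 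Once that lemma about parabolic subgroups is in hand, the equality-case contradiction is short, and compactness of $\Sigma$ is immediate since $\widetilde{\Sigma}$ is a closed cone and $\Sigma$ is its image in $\PP(\RR^{p,q})$, which is compact because $\widetilde{\Sigma}$ contains no line (it lies in $\overline{\HH^{p,q-1}}$).
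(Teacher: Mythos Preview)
Your opening computation that $\langle x,x\rangle_{p,q}=\sum_j t_j\langle x,x_j\rangle_{p,q}\leq 0$, and your identification of the equality case (for each $j\in I:=\{i:t_i>0\}$ one has $t_j=\sum_{i\in I,\ m_{i,j}=\infty} t_i\alpha_{i,j}$), match the paper exactly. Where you diverge is in extracting the contradiction.

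The paper's argument is a one-liner: choose $k\in I$ with $t_k$ minimal. If $I_k:=\{i\in I:m_{i,k}=\infty\}$ is empty then $t_k=0$, contradicting $k\in I$; otherwise pick any $i\in I_k$ and read off $t_k\geq t_i\alpha_{i,k}>t_i\geq t_k$, using only $\alpha_{i,k}>1$ and minimality of~$t_k$. This uses condition~\eqref{item:param>1} of Theorem~\ref{thm:coxeter} and nothing else---in particular it does \emph{not} use condition~\eqref{item:no-Z2}.

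Your ``summing'' alternative can be made to work along similar lines (if every $j\in I$ has $I_j\neq\emptyset$, then $\sum_j t_j>\sum_j\sum_{i\in I_j}t_i=\sum_i |I_i|\,t_i\geq\sum_i t_i$), but as written it is incomplete: you handle only the extreme case where the $\infty$-graph on $I$ is entirely empty, whereas the correct dichotomy is whether \emph{some} vertex of $I$ is isolated (which already forces its coefficient to vanish). More importantly, your surrounding plan---invoking condition~\eqref{item:no-Z2}, Vinberg's type classification, and the signature of the restriction of $B$ to parabolic subgroups---is both unnecessary and confused: you write ``negative definite'' where it would have to be positive definite, and since the deformed form with $\alpha_{i,j}>1$ is not the standard Tits form, the finite/affine/indefinite trichotomy for $W_I$ does not transfer directly. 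Drop all of that machinery: the minimal-$t_k$ trick, using only $\alpha_{i,j}>1$, finishes the proof in two lines.
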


\begin{proof}
Let $x = \sum_{i=1}^n t_i x_i \in \widetilde{\Sigma}$ where $t_i \geq 0$ and $\langle x, x_j \rangle_{p,q} \leq 0$ for all $1\leq i, j\leq n$.
We have
$$\langle x, x\rangle_{p,q} = \sum_{i=1}^n t_i \langle x, x_i \rangle_{p,q} \leq 0,$$
hence $x$ projects to a point of $\HH^{p,q-1}\cup\partial_{\scriptscriptstyle\PP}\HH^{p,q-1}$.
Suppose by contradiction that $x$ projects to a point of $\partial_{\scriptscriptstyle\PP}\HH^{p,q-1}$, \ie $\sum_{i=1}^n t_i \langle x, x_i \rangle_{p,q} = 0$.
Then $t_i \langle x, x_i \rangle_{p,q} = 0$ for all $1\leq i\leq n$.
Let $I \subset \{1,\ldots,n\}$ be the (nonempty) collection of indices $k$ such that $t_k > 0$.
For any $k \in I$ we have $\langle x, x_k\rangle_{p,q} = 0$, hence $t_k = \sum_{i\in I_k} t_i \alpha_{i,k}$ where $I_k := \{ i\in I \,|\, m_{i,k}=\infty\}$.
Recall that $\alpha_{i,k}>1$ for all $i\in I_k$.
Therefore we reach a contradiction by considering $k \in I$ such that $t_k$ is minimal.
\end{proof} 

\begin{lemma}\label{lem:P-in-Omega}
The stabilizer in~$W_S$ of any point of~$\Sigma$ is finite.
In particular (see \cite[Th.\,2]{vin71}), the compact set $\Sigma$ is contained in~$\Omega$.
\end{lemma}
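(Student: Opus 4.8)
The plan is to reduce the statement, via Vinberg's theory, to a purely combinatorial fact about the support and the wall-set of a point of $\widetilde\Sigma$, and then to prove that fact by combining conditions~\eqref{item:no-Z2} and~\eqref{item:param>1} with Lemma~\ref{lem:Sigma-in-Hpq}. First I would fix $[x]\in\Sigma$ together with a lift $x=\sum_{i=1}^n t_i x_i\in\widetilde\Sigma\smallsetminus\{0\}$, where $t_i\geq 0$ and $\langle x,x_j\rangle_{p,q}\leq 0$ for all $j$; set $J:=\{i : t_i>0\}$ (nonempty, since $x\neq 0$) for the support and $I:=\{i : \langle x,x_i\rangle_{p,q}=0\}$ for the set of facets of $\widetilde\Delta$ through $x$. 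By \cite[Th.\,2]{vin71}, once we know that the standard parabolic subgroup $W_I:=\langle s_i : i\in I\rangle$ of $W_S$ is finite, the point $x$ lies in the interior $\widetilde\Omega$ of the Tits cone, hence $[x]\in\Omega$; and since $\rho(W_S)$ acts properly discontinuously on $\widetilde\Omega$, hence on $\Omega$, the stabilizer of $[x]$ is then automatically finite. So both assertions of the lemma follow once we establish that $W_I$ is finite for every such $x$.

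Since $W_S$ is right-angled, $W_I$ is finite if and only if $\langle x_a,x_b\rangle_{p,q}=0$ (equivalently $m_{a,b}=2$) for all distinct $a,b\in I$. I would argue by contradiction: suppose there are distinct $a,b\in I$ with $m_{a,b}=\infty$, and split into three cases according to whether $a,b$ lie in $J$. (i) If $a,b\in J$, then expanding $0=\langle x,x_a\rangle_{p,q}=t_a+\sum_{i\neq a}t_i\langle x_i,x_a\rangle_{p,q}$, using $\langle x_i,x_a\rangle_{p,q}\leq 0$ and $\langle x_b,x_a\rangle_{p,q}=-\alpha_{a,b}<-1$, gives $t_a\geq \alpha_{a,b}\,t_b>t_b$; by symmetry $t_b>t_a$, a contradiction. (ii) If exactly one of $a,b$ lies outside $J$, say $b$, then $0=\langle x,x_b\rangle_{p,q}=\sum_{i\in J}t_i\langle x_i,x_b\rangle_{p,q}$ is a sum of nonpositive terms, so $\langle x_i,x_b\rangle_{p,q}=0$ for every $i\in J$; in particular $\langle x_a,x_b\rangle_{p,q}=0$, contradicting $m_{a,b}=\infty$. (iii) If $a,b\notin J$, the same computation gives $\langle x_a,x_i\rangle_{p,q}=\langle x_b,x_i\rangle_{p,q}=0$ for all $i\in J$, so the infinite dihedral group $\langle s_a,s_b\rangle$ commutes with $W_J$ and has generators disjoint from those of $W_J$; by condition~\eqref{item:no-Z2} this forces $W_J$ to be finite, hence $\langle x_i,x_j\rangle_{p,q}=0$ for all distinct $i,j\in J$, hence $\langle x,x\rangle_{p,q}=\sum_{i\in J}t_i^2>0$, contradicting Lemma~\ref{lem:Sigma-in-Hpq}.

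The main obstacle is case~(iii): neither reflection-group combinatorics nor Vinberg's theory alone suffices there, and one must combine the ``no empty square'' hypothesis~\eqref{item:no-Z2} (which rules out a direct product of two infinite standard parabolics) with the elementary fact that a finite right-angled Coxeter group has pairwise orthogonal simple roots, hence a positive definite form, and with the previously established Lemma~\ref{lem:Sigma-in-Hpq}, which places $\Sigma$ inside the ``timelike'' region $\HH^{p,q-1}$ where $\langle x,x\rangle_{p,q}<0$. The strict inequalities $\alpha_{i,j}>1$ coming from~\eqref{item:param>1} are needed precisely in case~(i) to obtain the strict comparison $t_a>t_b$.

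Having shown $W_I$ finite, I would conclude in two lines: by \cite[Th.\,2]{vin71} every $[x]\in\Sigma$ lies in $\Omega$, so $\Sigma\subset\Omega$; and proper discontinuity of the $\rho(W_S)$-action on $\widetilde\Omega$ (equivalently on $\Omega$) yields finiteness of the stabilizer in $W_S$ of each point of $\Sigma$. This also gives the ``in particular'' statement of the lemma and feeds directly into the remainder of the proof of Theorem~\ref{thm:coxeter}.
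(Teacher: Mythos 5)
Your proof is correct and follows essentially the same route as the paper: the same three key computations appear (the $\alpha_{a,b}>1$ comparison for a non-commuting pair with both coefficients positive, the vanishing of mixed terms forcing commutation, and condition~\eqref{item:no-Z2} combined with Lemma~\ref{lem:Sigma-in-Hpq} to rule out a non-commuting pair with both coefficients zero), merely organized as a case analysis on a hypothetical non-commuting pair in $I$ instead of the paper's decomposition $W_{S_x}=W_{S_x^0}\times W_{S_x^>}$ with both factors finite. The only other difference is cosmetic: you obtain finiteness of point stabilizers from proper discontinuity of the action on $\Omega$ after invoking Vinberg's Theorem~2, whereas the paper identifies the stabilizer with $W_{S_x}$ directly and then cites Vinberg for $\Sigma\subset\Omega$.
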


\begin{proof}
Let $x = \sum_{i=1}^n t_i x_i \in \widetilde{\Sigma}$ where $t_i \geq 0$ and $\langle x, x_j \rangle_{p,q} \leq 0$ for all $1\leq i, j\leq n$.
The stabilizer of $[x]\in\PP(\RR^{p,q})$ in~$W_S$ is the subgroup $W_{S_x}$ generated by the subset
$$S_x := \big\{ s_j \in S ~|~ \langle x, x_j \rangle_{p,q} = 0\big\}.$$
We aim to show $W_{S_x}$ is finite.
For this we split $S_x$ into the disjoint union of its two subsets $S_x^0 := \{s_j\in S_x \,|\, t_j = 0\}$ and $S_x^> := \{s_j\in S_x \,|\, t_j > 0\}$.

We claim that any element of~$S_x^0$ commutes with any element of~$S_x^>$; in particular, $W_{S_x}$ is the direct product of its subgroups $W_{S_x^0}$ and~$W_{S_x^>}$ generated respectively by $S_x^0$ and~$S_x^>$.
Indeed, for any $s_j \in S_x$ we have by definition
\begin{equation} \label{eqn:Sx-def}
0 = \langle x,x_j\rangle_{p,q} = \sum_{i=1}^n t_i \langle x_i, x_j\rangle_{p,q} = \sum_{s_i \in S^>} t_i \langle x_i, x_j\rangle_{p,q},
\end{equation}
where
$$S^> := \{s_i\in S \,|\, t_i > 0\}.$$
If $s_j\in S_x^0$, then each term of the right-hand sum in \eqref{eqn:Sx-def} is nonpositive, hence must be zero.
Thus for any $s_i\in S^>$ and $s_j\in S_x^0$ we have $\langle x_i,x_j\rangle_{p,q} = 0$, which means that $s_i$ and~$s_j$ commute.
Therefore $W_{S_x}=W_{S_x^0}\times W_{S_x^>}$.

Let us prove that $W_{S_x^>}$ is finite.
For this it is sufficient to prove that $m_{j,k} = 2$ for all distinct $s_j, s_k \in S_x^>$.
Suppose by contradiction that $m_{j,k}=\infty$ for some $s_j, s_k \in S_x^>$.
By definition, we have
\begin{align*}
0 = \langle x,x_j \rangle_{p,q} &= t_j + \sum_{s_i \in S^>,\ s_i\neq s_j} t_i \langle x_i, x_j \rangle_{p,q} \\
& \leq t_j - \alpha_{j,k} t_k < t_j - t_k,
\end{align*}
where the last inequality uses condition~\eqref{item:param>1} of Theorem~\ref{thm:coxeter}.
But similarly by considering $\langle x,x_k \rangle_{p,q} = 0$, we find $t_k - t_j < 0$ which is impossible.
Thus $m_{j,k} = 2$ for all distinct $s_j, s_k \in S_x^>$ and $W_{S_x^>}$ is a finite group (a product of finitely many copies of $\ZZ/2\ZZ$).

We now check that $W_{S_x^0}$ is finite.
For this it is sufficient to check that $W_{S^>}$ is infinite, since $S_x^0$ and $S^>$ are disjoint and condition~\eqref{item:no-Z2} of Theorem~\ref{thm:coxeter} is satisfied.
By Lemma~\ref{lem:Sigma-in-Hpq} we have
$$\langle x,x\rangle_{p,q} = \sum_{s_i, s_\ell \in S^>} t_i t_k \langle x_i,x_k\rangle_{p,q} < 0.$$
The diagonal terms in the sum are positive and so there must be a nonzero nondiagonal term.
In other words, there are two distinct elements $s_i, s_\ell \in\nolinebreak S^>$ generating an infinite dihedral group, proving that $W_{S^>}$ is infinite.
\end{proof}

For $r > 0$, let $\C_r$ be the closed uniform $r$-neighborhood of $\C$ in $\Omega$ with respect to the Hilbert metric $d_\Omega$.
It is properly convex \cite[(18.12)]{bus55}.
By Lemma~\ref{lem:unif-neighb}, since the action of $W_S$ on $\C$ via~$\rho$ is cocompact, we may choose $r$ small enough so that $\C_r \subset \HH^{p,q-1}$; the action of $W_S$ via~$\rho$ is still properly discontinuous and cocompact on $\C_r$. We let $\mathcal{U}_r$ denote the interior of $\C_r$.

\begin{lemma} \label{lem:cox-no-ray}
There is no infinite straight ray contained in the boundary $\partial_{\scriptscriptstyle\HH} \C_r := \C_r \smallsetminus \mathcal{U}_r$ of $\C_r$ in~$\HH^{p,q-1}$.
\end{lemma}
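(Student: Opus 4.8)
The plan is to argue by contradiction, exploiting the cocompactness of the $W_S$-action and a geometric rigidity: any straight ray inside $\partial_{\scriptscriptstyle\HH}\C_r$ would, after translating back into a fixed fundamental domain, converge to a biinfinite geodesic in $(\Omega_{\max},d)$ that violates the transversality properties already established. First I would recall that, by Lemma~\ref{lem:no-degenerate-faces} (applied to the properly convex closed set $\C_r\subset\HH^{p,q-1}$ on which $W_S$ acts properly discontinuously and cocompactly), the set $\C_r$ is contained in a maximal $\rho(W_S)$-invariant properly convex open set $\Omega_{\max}\subset\PP(\RR^{p,q})$, and by Lemma~\ref{lem:unif-neighb} (or the proof of Lemma~\ref{lem:Lambda-Lambda-Gamma}) that $\partial_i\C_r=\Lambda_{\rho(W_S)}$, which is negative, hence in particular transverse. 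Let $d$ denote the Hilbert metric on $\Omega_{\max}$.

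Suppose for contradiction that $\partial_{\scriptscriptstyle\HH}\C_r$ contains an infinite straight ray $[a,z)$ with $a\in\partial_{\scriptscriptstyle\HH}\C_r$ and $z\in\partial_i\C_r=\Lambda_{\rho(W_S)}$ (the endpoint exists and lies in $\partial_i\C_r$ because $[a,z)$ is a geodesic ray of $(\Omega_{\max},d)$, by Lemma~\ref{lem:geod-loop}.\eqref{item:end-geod-ray}). Pick a sequence of points $a_m\to z$ along this ray. By cocompactness of the $W_S$-action on $\C_r$, choose $\gamma_m\in W_S$ with $\rho(\gamma_m)\cdot a_m$ in a fixed compact subset of $\mathcal{U}_r=\mathrm{Int}(\C_r)$; after extracting, $\rho(\gamma_m)\cdot a_m\to a_\infty\in\mathcal{U}_r$, while $\rho(\gamma_m)\cdot a\to a'_\infty$ and $\rho(\gamma_m)\cdot z\to z_\infty$ in $\overline{\C_r}$, with $z_\infty\in\Lambda_{\rho(W_S)}$. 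The translated rays $\rho(\gamma_m)\cdot[a,z)$ are straight geodesic rays of $(\Omega_{\max},d)$ lying in $\partial_{\scriptscriptstyle\PP}\C_r$; the key point is that $a_\infty$ lies in the \emph{interior} $\mathcal{U}_r$, so the limiting configuration is a biinfinite straight line through $a_\infty$ with both ideal endpoints in $\partial_{\scriptscriptstyle\PP}\C_r$ but passing through the interior --- this already contradicts the fact that a straight segment through an interior point of a properly convex set cannot have both endpoints on a face-free (transverse) boundary while staying in the boundary near $a_\infty$. More carefully: the segment $[a'_\infty,z_\infty]$ contains the interior point $a_\infty$, hence its endpoints $a'_\infty,z_\infty$ are distinct points of $\partial_{\scriptscriptstyle\PP}\C_r$; but the portion of $\partial_{\scriptscriptstyle\PP}\C_r$ traversed by $\rho(\gamma_m)\cdot[a,z)$ accumulates, by transversality of $\partial_i\C_r=\Lambda_{\rho(W_S)}$ and the analysis in Lemma~\ref{lem:Lambda-Lambda-Gamma}, only onto $\Lambda_{\rho(W_S)}$, and the segment joining two points of $\Lambda_{\rho(W_S)}$ that crosses $\HH^{p,q-1}$ can meet $\partial_{\scriptscriptstyle\PP}\HH^{p,q-1}$ only at its endpoints (negativity). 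Thus $a_\infty\in\mathcal{U}_r\subset\HH^{p,q-1}$ cannot simultaneously lie on $[a'_\infty,z_\infty]\subset\partial_{\scriptscriptstyle\PP}\C_r$ with $a'_\infty,z_\infty$ both limits of ideal endpoints in $\Lambda_{\rho(W_S)}$ --- this forces $a'_\infty = z_\infty$, whence $\overline{\C_r}$ would contain the whole projective line through $a_\infty$, contradicting proper convexity.

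Alternatively, and perhaps more cleanly, I would run the argument of Lemma~\ref{lem:geod-non-strict-conv} and Lemma~\ref{lem:C-hyp} directly: since $\C_r$ is properly convex, closed in $\HH^{p,q-1}$, with $W_S$ acting properly discontinuously cocompactly and with transverse ideal boundary $\partial_i\C_r=\Lambda_{\rho(W_S)}$, those lemmas apply verbatim. In particular $(\C_r,d)$ is Gromov hyperbolic and its boundary is $\Lambda_{\rho(W_S)}$; a straight ray in $\partial_{\scriptscriptstyle\HH}\C_r$ would be a geodesic ray of $(\C_r,d)$ whose image lies entirely in the topological boundary of $\C_r$ inside $\HH^{p,q-1}$, and after translating into a compact fundamental domain and extracting, it limits onto a biinfinite geodesic of $(\Omega_{\max},d)$ with both endpoints equal (by transversality of $\partial_i\C_r$), contradicting Lemma~\ref{lem:geod-loop}.\eqref{item:end-biinf-geod}. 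The main obstacle, and the step requiring the most care, is the extraction argument: one must ensure that the translated rays do not degenerate (i.e.\ that the limiting object is genuinely biinfinite, using that the original ray is infinite and that translating a point $a_m\to z$ into a compact set forces the $d$-length of the translated ray on both sides of $\rho(\gamma_m)\cdot a_m$ to grow), and that the limiting endpoints both lie in $\Lambda_{\rho(W_S)}=\partial_i\C_r$ so that transversality can be invoked to collapse them. Once that is in place, Lemma~\ref{lem:geod-loop}.\eqref{item:end-biinf-geod} delivers the contradiction immediately.
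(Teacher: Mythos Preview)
Your proposal has a fundamental circularity. You invoke Lemma~\ref{lem:unif-neighb} and Lemma~\ref{lem:Lambda-Lambda-Gamma} to conclude that $\partial_i\C_r=\Lambda_{\rho(W_S)}$ is transverse (negative), and then use this transversality to run the arguments of Lemmas~\ref{lem:geod-non-strict-conv}--\ref{lem:C-hyp}. But both of those lemmas carry the \emph{hypothesis} ``if $\Lambda_\Gamma$ is transverse''. At this point in the paper nothing of the sort is known: Lemma~\ref{lem:cox-no-ray} is a step in the proof of Theorem~\ref{thm:coxeter}, and the conclusion of Theorem~\ref{thm:coxeter} is precisely that $\partial_i\C$ contains no nontrivial segment (equivalently, that $\Lambda_{\rho(W_S)}$ is transverse). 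So you are assuming the result you are meant to help prove.

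There is also a concrete error in your first extraction argument, independent of the circularity. You write ``choose $\gamma_m\in W_S$ with $\rho(\gamma_m)\cdot a_m$ in a fixed compact subset of $\mathcal{U}_r=\mathrm{Int}(\C_r)$'' and then deduce $a_\infty\in\mathcal{U}_r$. This is impossible: the points $a_m$ lie on the ray, which sits in $\partial_{\scriptscriptstyle\HH}\C_r=\C_r\smallsetminus\mathcal{U}_r$, and this boundary is $\rho(W_S)$-invariant. Hence every $\rho(\gamma_m)\cdot a_m$ also lies in $\partial_{\scriptscriptstyle\HH}\C_r$, and so does any limit $a_\infty$. The contradiction you claim (``$a_\infty$ lies in the interior'') never materializes.

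The paper's proof is genuinely different and does not use transversality at all. It exploits the Coxeter tiling: a ray in $\partial_{\scriptscriptstyle\HH}\C_r$ must cross infinitely many walls of the tiling, and one can arrange (after a $\rho(W_S)$-translate) that it crosses two walls $\PP(x_i^\perp)$ and $\PP(x_j^\perp)$ for \emph{non-commuting} generators $s_i,s_j$. A supporting hyperplane $H$ to $\C_r$ at the crossing point with $\PP(x_i^\perp)$ must contain the ray, and a reflection argument forces $\rho(s_i)\cdot H=H$ and likewise $\rho(s_j)\cdot H=H$, so $H$ is invariant under $g:=\rho(s_is_j)$. Condition~\eqref{item:param>1} ($\alpha_{i,j}>1$) makes $g$ proximal; its axis $(\xi_g^-,\xi_g^+)$ lies in $\C$, and iterating $g$ on a point of $H\cap\C_r$ shows $H$ contains this axis, contradicting the fact that $H$ is a supporting hyperplane disjoint from $\mathrm{Int}(\C_r)\supset\C$. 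The key input is thus the reflection-group structure and condition~\eqref{item:param>1}, not any a priori transversality.
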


\begin{proof}
Suppose by contradiction that there is such a ray $\mathcal{R} = [y,z) \subset \partial_{\scriptscriptstyle\HH} \C_r$, with $y \in \partial_{\scriptscriptstyle\HH} \C_r$ and $z \in \partial_i \C_r$.
Let $\Delta\subset\PP(\RR^{p,q})$ be the fundamental polytope of the reflection group $\rho(W_S)$, namely the image of \eqref{eqn:Delta-tilde} in $\PP(\RR^{p,q})$; it is bounded by the reflection hyperplanes $\PP(x_i^\perp)$ for $1\leq i\leq n$.

Up to replacing $\mathcal{R}$ with some $\rho(W_S)$-translate, we may assume that $\mathcal{R}$ crosses $\PP(x_i^\perp)$ and $\PP(x_j^\perp)$ transversely for some generators $s_i\neq s_j$ that do not commute.
Indeed, let $(\rho(s_{i_1}\dots s_{i_m})\cdot\Delta)_{m\geq M}$ be a sequence of $\rho(W_S)$-translates of~$\Delta$ that  meet~$\mathcal{R}$, where $i_1,\dots,i_m\in\{1,\dots,n\}$ and $M\in\NN^*$.
Since $\mathcal{R}$ is infinite, the elements $s_{i_{\ell}}$ for $\ell\geq M$ do not all commute: there exist $M\leq\ell<m$ such that $s_{i_{\ell}}$ does not commute with~$s_{i_m}$ but does with $s_{i_{\ell+1}},\dots,s_{i_{m-1}}$.
Then $s_{i_{\ell}}\dots s_{i_{m-1}} = s_{i_{\ell+1}}\dots s_{i_{m-1}}s_{i_{\ell}}$, and so up to renumbering we may assume that $s_{i_{m-1}}$ and $s_{i_m}$ do not commute.
Thus $\rho(s_{i_1}\dots s_{i_{m-1}})^{-1}\cdot\mathcal{R}$ meets $\Delta$ and its translates $\rho(s_{i_{m-1}})\cdot\Delta = \rho(s_{i_{m-1}})^{-1}\cdot\Delta$ and $\rho(s_{i_m})\cdot\Delta$.
It follows that $\rho(s_{i_1}\dots s_{i_{m-1}})^{-1}\cdot\mathcal{R}$ crosses the hyperplanes $\PP(x_{i_{m-1}}^\perp)$ and $\PP(x_{i_m}^\perp)$ transversely.
Indeed, $\rho(s_{i_{m-1}})\cdot \Delta$ and $\rho(s_{i_m}) \cdot \Delta$ are separated in~$\Omega$ by the hyperplanes $\PP(x_{i_{m-1}}^\perp)$ and $\PP(x_{i_m}^\perp)$, whose intersection lies outside of $\Omega$ because it is pointwise fixed by the infinite subgroup of $\rho(W_S)$ generated by $\rho(s_{i_{m-1}})$ and $\rho(s_{i_m})$.
Therefore, up to replacing $\mathcal{R}$ with some $\rho(W_S)$-translate, we may assume that $\mathcal{R}$ crosses $\PP(x_i^\perp)$ and $\PP(x_j^\perp)$ transversely for some generators $s_i\neq s_j$ that do not commute.

Let $y_i$ be the intersection point of $\mathcal{R}$ with the hyperplane $\PP(x_i^\perp)$ and let $H$ be a supporting hyperplane to $\C_r$ at~$y_i$.
Then $H$ must contain~$\mathcal{R}$.
Similarly, $\rho(s_i)\cdot H$ must contain~$\mathcal{R}$.
Note that $H \cap \PP(x_i^\perp) = (\rho(s_i)\cdot H) \cap \PP(x_i^\perp)$.
Since $H$ is spanned by $H \cap \PP(x_i^\perp)$ and $\mathcal{R}$, we deduce that $\rho(s_i)\cdot H = H$.
On the other hand, since $H$ contains~$\mathcal{R}$, it is also a supporting hyperplane to $\C_r$ at the intersection point $y_j$ of $\mathcal{R}$ with the hyperplane $\PP(x_j^{\perp})$, and similarly $\rho(s_j)\cdot H = H$.
Therefore $H$ is invariant under $\rho(s_i s_j)$.

By a straightforward calculation (see \cite[\S\,2]{vin71}), condition~\eqref{item:param>1} of Theorem~\ref{thm:coxeter} implies that $g:=\rho(s_is_j)\in\PO(p,q)$ is proximal in $\PP(\RR^{p,q})$, hence in $\partial_{\scriptscriptstyle\PP}\HH^{p,q-1}$ (see Remark~\ref{rem:lim-set-POpq}).
If $\xi_g^+,\xi_g^-\in\partial_{\scriptscriptstyle\PP}\HH^{p,q-1}$ are the attracting and repelling fixed points of~$g$, then $\C$ meets the projective line spanned by $\xi_g^+$ and~$\xi_g^-$ in an open interval $(\xi_g^+,\xi_g^-)$: indeed, we have $\xi_g^+,\xi_g^-\in\partial_i\C$ and $\langle\xi_g^+,\xi_g^-\rangle_{p,q}\neq 0$ (see Remark~\ref{rem:lim-set-POpq} and Fact~\ref{fact:Yves}).
Since $y_i\in\C_r\subset\Omega$ we have $y_i\notin(\xi_g^-)^{\perp}$ (see Proposition~\ref{prop:Lambda-non-pos-neg}), and so $g^m\cdot y_i\to\xi_g^+$ as $m\to +\infty$; similarly, $g^{-m}\cdot y_i\to\xi_g^-$ as $m\to +\infty$.
Thus $H$ contains the interval $(\xi_g^-, \xi_g^+)$: contradiction since $H$ does not meet~$\C$.
\end{proof}

Before giving the proof of Theorem~\ref{thm:coxeter}, we prove one more general lemma.

\begin{lemma} \label{lem:C-closed}
Let $\Gamma$ be a discrete subgroup of $\OO(p,q)$ preserving a properly convex open subset $\Omega \subset \PP(\RR^{p,q})$.
Any accumulation point of the $\Gamma$-orbit of a compact subset $\mathcal{K}$ of $\Omega \cap \HH^{p,q-1}$ is contained in $\partial_{\scriptscriptstyle\PP}\HH^{p,q-1}$.
\end{lemma}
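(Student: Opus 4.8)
The statement asserts that for a discrete subgroup $\Gamma$ of $\OO(p,q)$ preserving a properly convex open set $\Omega \subset \PP(\RR^{p,q})$, any accumulation point of $\Gamma \cdot \mathcal{K}$, with $\mathcal{K}$ a compact subset of $\Omega \cap \HH^{p,q-1}$, lies in $\partial_{\scriptscriptstyle\PP}\HH^{p,q-1}$. The plan is to argue that such an accumulation point cannot lie in $\HH^{p,q-1}$ and cannot lie in $\SS^{p-1,q}$, which together with the fact that $\PP(\RR^{p,q})$ is the disjoint union of these three pieces forces it into the boundary quadric.

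First I would rule out accumulation inside $\HH^{p,q-1}$. Suppose $\gamma_m \cdot y_m \to w$ with $y_m \in \mathcal{K}$ and $w \in \HH^{p,q-1}$; after passing to a subsequence we may assume $y_m \to y \in \mathcal{K} \subset \Omega$, so $\gamma_m \cdot y \to w$ as well (the $\gamma_m$ act by projective transformations which are equicontinuous on compacta of $\Omega$ in the Hilbert metric; more simply, $d_\Omega(\gamma_m \cdot y_m, \gamma_m \cdot y) = d_\Omega(y_m, y) \to 0$, and if $w \in \Omega$ this forces $\gamma_m \cdot y \to w$). But $\Gamma$ acts properly discontinuously on $\Omega$ (Section~\ref{subsec:prop-conv-proj}), so if $w \in \Omega$ the sequence $(\gamma_m)$ is finite, contradicting that $w$ is a genuine accumulation point of the orbit unless the $\gamma_m$ are eventually constant, in which case $w = \gamma \cdot y \in \gamma \cdot \mathcal{K} \subset \Omega \cap \HH^{p,q-1}$ is not an accumulation point in the relevant sense. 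The real content is thus the case $w \in \HH^{p,q-1} \smallsetminus \Omega$, i.e. $w$ lies in $\HH^{p,q-1}$ but in the boundary or exterior of $\Omega$; here I would use that $\overline{\Omega}$ is closed and $\Gamma$-invariant, so $w \in \overline{\Omega}$, hence $w \in \partial_{\scriptscriptstyle\PP}\Omega \cap \HH^{p,q-1}$. To exclude this I would invoke proper discontinuity together with a Hilbert-metric escape argument: pick any fixed interior point $y_0 \in \Omega \cap \HH^{p,q-1}$ in a common Hilbert ball with $\mathcal{K}$; then $d_\Omega(y_0, \gamma_m \cdot y_m)$ stays bounded, so $\gamma_m^{-1} \cdot y_0$ stays within bounded $d_\Omega$-distance of $\mathcal{K}$, forcing $(\gamma_m)$ finite by proper discontinuity — contradiction. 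So no accumulation inside $\HH^{p,q-1}$.

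Next I would rule out accumulation in $\SS^{p-1,q}$. This is the step I expect to be the main obstacle, because properness of the $\Gamma$-action is only available on $\Omega$, not on all of $\SS^{p-1,q}$, and a priori $\Omega$ could meet $\SS^{p-1,q}$. The key observation is that $\langle \cdot, \cdot\rangle_{p,q}$ is continuous and $\Gamma$-invariant: if $y_m \in \mathcal{K} \subset \HH^{p,q-1}$, then lifting to the double cover $\widehat{\HH}^{p,q-1} = \{\langle x, x\rangle_{p,q} = -1\}$ we may choose lifts $\hat y_m$ with $\langle \hat y_m, \hat y_m\rangle_{p,q} = -1$, and $\widehat\Gamma \subset \OO(p,q)$ preserves this level set, so every point of the orbit $\widehat\Gamma \cdot \hat y_m$ still satisfies $\langle \cdot, \cdot\rangle_{p,q} = -1$. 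Hence the orbit $\Gamma \cdot \mathcal{K}$ remains inside $\HH^{p,q-1} \cup \partial_{\scriptscriptstyle\PP}\HH^{p,q-1} = \overline{\HH^{p,q-1}}$ (a set whose lift is $\{\langle x,x\rangle \le 0\}$, which is closed in $\RR^{p,q}\smallsetminus\{0\}$ up to scaling), and therefore so is every accumulation point. Combined with the previous paragraph excluding the open part $\HH^{p,q-1}$, this pins the accumulation point to $\partial_{\scriptscriptstyle\PP}\HH^{p,q-1}$. Actually this second argument subsumes the sign-of-the-form part of the first, so I would streamline: the orbit lies in the closed region $\{[x] : \langle x,x\rangle_{p,q} \le 0\} = \overline{\HH^{p,q-1}}$, so accumulation points lie there; and proper discontinuity on $\Omega \supset \mathcal{K}$ shows no accumulation point can lie in $\Omega$, hence none in $\Omega \cap \HH^{p,q-1}$; finally since $\mathcal{K} \subset \Omega \cap \HH^{p,q-1}$ and $\overline{\Omega}$ is $\Gamma$-invariant and closed, any accumulation point lies in $\overline{\Omega} \cap \overline{\HH^{p,q-1}}$, and the open part $\Omega \cap \HH^{p,q-1}$ being excluded leaves exactly $\partial_{\scriptscriptstyle\PP}\HH^{p,q-1}$ (intersected with $\overline\Omega$), as desired. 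I would write the proof in essentially this order: reduce to a sequence, note the form is preserved and bounds the orbit inside $\overline{\HH^{p,q-1}}$, then apply proper discontinuity of the action on $\Omega$ to exclude accumulation in the interior.
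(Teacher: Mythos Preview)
Your argument has a genuine gap in the case $w\in\partial_{\scriptscriptstyle\PP}\Omega\cap\HH^{p,q-1}$. You correctly observe that the orbit stays in $\overline{\HH^{p,q-1}}$ (ruling out $\SS^{p-1,q}$) and in $\overline{\Omega}$, and that proper discontinuity on~$\Omega$ rules out accumulation inside~$\Omega$. But removing $\Omega\cap\HH^{p,q-1}$ from $\overline{\Omega}\cap\overline{\HH^{p,q-1}}$ leaves
\[
\big(\partial_{\scriptscriptstyle\PP}\Omega\cap\HH^{p,q-1}\big)\ \cup\ \big(\overline{\Omega}\cap\partial_{\scriptscriptstyle\PP}\HH^{p,q-1}\big),
\]
not just $\partial_{\scriptscriptstyle\PP}\HH^{p,q-1}$. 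The first piece --- timelike boundary points of~$\Omega$ --- is exactly what still needs to be excluded, and your ``Hilbert-metric escape argument'' does not do it: if $\gamma_m\cdot y_m\to w\in\partial_{\scriptscriptstyle\PP}\Omega$ then $d_{\Omega}(y_0,\gamma_m\cdot y_m)\to+\infty$ by completeness of the Hilbert metric, so the claim that this distance stays bounded is simply false. Proper discontinuity gives you nothing at boundary points of~$\Omega$.

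The paper closes this gap with a different idea that genuinely uses the hypothesis $\Gamma\subset\OO(p,q)$ discrete (not merely that $\Gamma$ acts properly on~$\Omega$). Lift $y_m$ and $\gamma_m\cdot y_m$ to the hyperboloid $\{\langle x,x\rangle_{p,q}=-1\}$; if $w\in\HH^{p,q-1}$ then both sequences of lifts stay in a compact subset of $\RR^{p,q}$. Discreteness of~$\Gamma$ in $\OO(p,q)$ then forces the existence of some vector $x\in\RR^{p,q}$ with $\gamma_m\cdot x\to\infty$. Short segments $[x_m-\varepsilon x,\,x_m+\varepsilon x]$ project into~$\Omega$, but their $\gamma_m$-images, having bounded midpoint and unbounded endpoints, limit onto a full projective line in~$\overline{\Omega}$, contradicting proper convexity. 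This is the missing ingredient: you need to leverage discreteness of~$\Gamma$ as a matrix group, not just properness of its action on~$\Omega$.
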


\begin{proof}
Suppose by contradiction that there are sequences $(y_m)\in\mathcal{K}^{\NN}$ and $(\gamma_m)\in\Gamma^{\NN}$ such that the $\gamma_m$ are pairwise distinct and $z_m := \gamma_m\cdot y_m$ converges to some $z\in\HH^{p,q-1}$.
We can lift the $y_m\in \HH^{p,q-1}$ to vectors $x_m\in\RR^{p,q}$ with $\langle x_m,x_m\rangle_{p,q}=-1$; both the $x_m$ and the $\gamma_m\cdot x_m$ stay in a compact subset of~$\RR^{p,q}$. 
On the other hand, since $\Gamma$ is discrete, there exists $x \in \RR^{p,q}\smallsetminus\{0\}$ such that $(\gamma_m\cdot x)_{n\in\NN}$ leaves every compact subset of~$\RR^{p,q}$.
(Indeed, if we fix a basis of $\RR^{p,q}$, then at least one element $x$ of this basis must satisfy this property.)
Up to passing to a subsequence, we may assume that the direction of $\gamma_m\cdot x$ converges to some null direction $\ell$.
There exists $\varepsilon > 0$ such that all segments $[x_m - \varepsilon x,x_m+\varepsilon x] \subset \RR^{p,q}\smallsetminus\{0\}$ project to segments $\sigma_m$ contained in~$\Omega$.
The images $\gamma_m\cdot\sigma_m$, which are again contained in~$\Omega$, converge to the full projective line spanned by $x$ and~$\ell$.
This contradicts the proper convexity of~$\Omega$.
Thus the $\Gamma$-orbit of~$\mathcal{K}$ does not have any accumulation point in~$\HH^{p,q-1}$.
\end{proof}

\begin{proof}[Proof of Theorem~\ref{thm:coxeter}]
Let $\mathcal{C}'\subset\PP(\RR^{p,q})$ be the $\rho(W_S)$-orbit of~$\Sigma$.
By Lemmas \ref{lem:Sigma-in-Hpq} and~\ref{lem:P-in-Omega}, we have $\mathcal{C}'\subset\HH^{p,q-1}\cap\Omega$.
In particular, the action of $W_S$ on~$\mathcal{C}'$ via~$\rho$ is properly discontinuous, and cocompact since $\Sigma$ is a compact fundamental domain.

The set $\C$ is nonempty by Lemma~\ref{lem:non-empty}.
Since $\C \subset \mathcal{C}'$ and $\C$ is closed in $\Omega$, the action of $W_S$ on $\C$ via~$\rho$ is also properly discontinuous and cocompact.
By Lemma~\ref{lem:C-closed}, the set $\C$ is closed in $\HH^{p,q-1}$.
We now complete the proof by showing that $\partial_i \C$ does not contain any nontrivial projective segment.

Suppose by contradiction that there is a nontrivial segment $[a',b']$ in $\partial_i \C$.
Since $\partial_i\C\subset\partial_{\scriptscriptstyle\PP}\HH^{p,q-1}$, we have $\langle a',b'\rangle_{p,q}=0$.
By Lemma~\ref{lem:unif-neighb}, there exists $r>0$ such that the closed uniform neighborhood $\C_r$ of $\C$ in $(\Omega,d_{\Omega})$ is properly convex, contained in $\HH^{p,q-1}$, and the action of $\Gamma$ on~$\C_r$ is properly discontinuous and cocompact.
Extend $[a',b']$ to a segment $[a,b]$ which is maximal in $\partial_i \C_r$.
Note that $[a,b]$ is also a maximal segment of $\partial_{\scriptscriptstyle\PP} \C_r$ (since any segment of $\partial_{\scriptscriptstyle\PP} \C_r$ containing $[a,b]$ is contained in $\partial_{\scriptscriptstyle\PP} \HH^{p,q-1} \cap \partial_{\scriptscriptstyle\PP} \C_r = \partial_i \C_r$).
Consider a point $c \in \C$ and a sequence of points $y_m\in\C$ in the triangle with vertices $a,b,c$, such that $(y_m)_m$ converges to an interior point $y$ of $[a,b] \subset \partial_i \C_r$.

We claim that in $\mathcal{U}_r:=\mathrm{Int}(\C_r)$, the Hilbert distance $d_{\mathcal{U}_r}$ from $y_m$ to either projective interval $(a,c]$ or $(b,c]$ tends to infinity with~$m$.
Indeed, consider a sequence $(z_m)_m$ of points of $(a,c]$ converging to $z \in [a,c]$ and let us check that $d_{\mathcal{U}_r}(y_m,z_m) \to +\infty$ (the proof for $(b,c]$ is the same).
If $z \in (a,c]$, then $z\in\C_r$ and so $d_{\mathcal{U}_r}(y_m,z_m) \to +\infty$ by properness of the Hilbert metric.
Otherwise $z = a$.
In that case, for each~$m$, consider $y'_m, z'_m \in \partial_{\scriptstyle\PP} \mathcal{U}_r$ such that $y'_m, y_m, z_m, z'_m$ are aligned in that order.
Up to taking a subsequence, we may assume $y'_m \to y'$ and $z'_m \to z'$ for some $y',z' \in \partial_{\scriptstyle\PP} \mathcal{U}_r$, with $y', y, a, z'$ aligned in that order.
By maximality of $[a,b]$ in $\partial_{\scriptscriptstyle\PP}\mathcal{U}_r$, we must have $z = a = z'$, hence $d_{\mathcal{U}_r}(y_m,z_m) \to +\infty$ in this case as well, proving the claim.

Since $\Gamma$ acts cocompactly on~$\C$, there is a sequence $(\gamma_m)\in\Gamma^{\NN}$ such that $\gamma_m\cdot\nolinebreak y_m$ remains in a fixed compact subset of $\C\subset\mathcal{U}_r$.
Up to passing to a subsequence, we may assume that $(\gamma_m\cdot y_m)_m$ converges to some $y_{\infty}\in\C$, and $(\gamma_m\cdot a)_m$ and $(\gamma_m\cdot b)_m$ and $(\gamma_m\cdot\nolinebreak c)_m$ converge respectively to some $a_{\infty},b_{\infty},c_{\infty}\in\partial_i\C_r$, with $[a_{\infty},b_{\infty}]\subset\partial_i\C_r$.
The triangle with vertices $a_\infty, b_\infty, c_\infty$ is nondegenerate since it contains $y_\infty \in \C$.
Further, $y_\infty$ is infinitely far (for the Hilbert metric $d_{\mathcal{U}_r}$) from the edges $[a_\infty, c_\infty]$ and $[b_\infty, c_\infty]$, and so these edges are fully contained in $\partial_{\scriptscriptstyle\PP}\mathcal{U}_r$.
By Lemma~\ref{lem:cox-no-ray}, there is no straight infinite ray in $\partial_{\scriptscriptstyle\HH} \C_r  := \C_r \smallsetminus \mathcal{U}_r$, hence $[b_\infty, c_\infty]$ and $[a_\infty, c_\infty]$ do not intersect $\partial_{\scriptscriptstyle\HH} \C_r$ and are contained in $\partial_i \C_r\subset \partial_{\scriptscriptstyle\PP} \HH^{p,q-1}$. 
But this means that $\langle a_\infty, b_\infty\rangle_{p,q} = \langle b_\infty, c_\infty\rangle_{p,q} = \langle a_\infty, c_\infty\rangle_{p,q} = 0$, hence every point of the triangle with vertices $a_\infty, b_\infty, c_\infty$ is null, contradicting the fact that $y_\infty$ lies in $\C \subset \HH^{p,q-1}$.
\end{proof}

\begin{remark}
In the proof of Theorem~\ref{thm:coxeter}, we do not assume that the set  $\mathcal{C}' = \rho(W_S)\cdot\Sigma$ is convex.
We only use that $\mathcal{C}'$ is contained in~$\Omega$ and contains $\C = \Omega \cap \overline{\Omega^*}$.
In fact, by studying the local convexity of $\mathcal{C}'$ along its boundary faces, it is possible to show that $\mathcal{C}'$ is convex and equal to~$\C$; this is done in a general setting by Greene--Lee--Marquis \cite{glm17}.
\end{remark}

\appendix
\section{Connectedness in the space of unordered tuples} \label{app:connectivity}

The following general statement, on which Proposition~\ref{prop:conn-transv} relies, is probably well known.
We provide a proof for the reader's convenience.

\begin{fact} \label{fact:conn-ktuple}
Let $\Lambda$ be a connected Hausdorff topological space.
For $k\geq 1$, the space $\Lambda^{(k)}$ of unordered $k$-tuples of pairwise distinct points of~$\Lambda$ is also connected (for the restriction of the product topology).
\end{fact}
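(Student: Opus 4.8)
The statement is a purely topological fact: if $\Lambda$ is connected (and Hausdorff, so that diagonals are closed), then the space $\Lambda^{(k)}$ of unordered $k$-tuples of distinct points is connected. The natural approach is to relate $\Lambda^{(k)}$ to the ordered configuration space. Let
\[
F_k(\Lambda) := \big\{ (x_1,\dots,x_k) \in \Lambda^k ~|~ x_i \neq x_j \text{ for } i \neq j \big\}
\]
with the subspace topology from $\Lambda^k$. The symmetric group $\mathfrak{S}_k$ acts on $F_k(\Lambda)$ by permuting coordinates, and $\Lambda^{(k)} = F_k(\Lambda)/\mathfrak{S}_k$ with the quotient topology. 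Since a quotient of a connected space is connected, it suffices to prove that $F_k(\Lambda)$ is connected. (Note that Hausdorffness guarantees each diagonal $\{x_i = x_j\}$ is closed, so $F_k(\Lambda)$ is open in $\Lambda^k$; we do not even need this for connectivity, but it makes $\Lambda^{(k)}$ a reasonable space.)

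The heart of the matter is then: \emph{$F_k(\Lambda)$ is connected whenever $\Lambda$ is connected}. I would prove this by induction on $k$. The base case $k=1$ is trivial since $F_1(\Lambda) = \Lambda$. For the inductive step, consider the projection $\pi : F_k(\Lambda) \to F_{k-1}(\Lambda)$ forgetting the last coordinate. The fiber over a point $(x_1,\dots,x_{k-1})$ is $\Lambda \smallsetminus \{x_1,\dots,x_{k-1}\}$. The difficulty — and this is the main obstacle — is that $\Lambda \smallsetminus \{x_1,\dots,x_{k-1}\}$ need \emph{not} be connected for a general connected Hausdorff space (e.g. removing a point from an interval). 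So one cannot simply say "connected base, connected fibers, hence connected total space". Instead I would argue as follows: fix a basepoint $(a_1,\dots,a_k) \in F_k(\Lambda)$; given any other point $(x_1,\dots,x_k)$, I want to show it lies in the same connected component. The strategy is to move coordinates one at a time, but always through the "big" component of the complement. The key lemma I would isolate is: \emph{if $\Lambda$ is connected and $A \subset \Lambda$ is finite, then $\Lambda \smallsetminus A$ has at most one connected component whose closure is not a single point plus — more usefully — for any two points $u, v \in \Lambda \smallsetminus A$ one can find a path-component argument}; but since $\Lambda$ need not be locally connected, I would instead avoid component-counting and argue directly with a connectedness-of-the-total-space statement.

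Here is the cleaner route I would actually take for the inductive step, sidestepping fiber-connectedness. Suppose $F_{k-1}(\Lambda)$ is connected. Let $U$ be a nonempty clopen subset of $F_k(\Lambda)$; I want $U = F_k(\Lambda)$. For each fixed $\mathbf{y} = (x_1,\dots,x_{k-1}) \in F_{k-1}(\Lambda)$, the slice $U_{\mathbf y} := \{ t \in \Lambda \smallsetminus\{x_1,\dots,x_{k-1}\} : (\mathbf y, t) \in U\}$ is clopen in $\Lambda \smallsetminus \{x_1,\dots,x_{k-1}\}$. Now the crucial observation: the set $V := \{ (\mathbf y, \mathbf y') \in F_{k-1}(\Lambda)^2 : U_{\mathbf y} = U_{\mathbf y'} \text{ after the obvious identification of complements on overlaps}\}$ — this is getting delicate. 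Rather than this, I think the genuinely clean argument is: \textbf{reduce to $k=2$.} Indeed $F_k(\Lambda)$ fibers over $F_{k-1}(\Lambda)$, and conversely, to connect $(a_1,\dots,a_k)$ to $(x_1,\dots,x_k)$, first use connectivity of $F_2(\Lambda)$ — which must be established by hand — to slide the last coordinate off collisions. So the true main obstacle is the case $k=2$: showing $F_2(\Lambda) = \{(x,y) : x \neq y\}$ is connected. For this I would use: $F_2(\Lambda) = \bigcup_{x \in \Lambda} (\{x\} \times (\Lambda \smallsetminus\{x\}))$; the sets $W_x := (\Lambda \smallsetminus \{x\}) \times (\Lambda \smallsetminus \{x\}) \cap F_2(\Lambda)$ — hmm, still not obviously connected. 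The actual standard trick: for connected Hausdorff $\Lambda$ with $|\Lambda| \geq 3$, $F_2(\Lambda)$ is connected because given $(x_0,y_0)$ and $(x_1,y_1)$ in $F_2(\Lambda)$, pick a point $z \notin \{x_0,y_0,x_1,y_1\}$ (possible if $|\Lambda|\geq 5$; handle small $\Lambda$ separately, noting a connected Hausdorff space with $\geq 2$ points is infinite), and connect $(x_0,y_0) \to (x_0,z)$ within $\{x_0\}\times(\Lambda\smallsetminus\{x_0\})$? — no, that set is $\Lambda$ minus a point, possibly disconnected. I concede the argument requires genuine care; the honest plan is: prove $F_k(\Lambda)$ is connected by showing that if it were disconnected, say $F_k(\Lambda) = A \sqcup B$ clopen nonempty, then projecting and using connectivity of $\Lambda$ repeatedly on the first coordinate while keeping the others in a fixed finite "generic" position yields a contradiction — formally, one shows the map $\Lambda \smallsetminus \{c_2,\dots,c_k\} \to \{0,1\}$, $t \mapsto [\,(t,c_2,\dots,c_k)\in A\,]$, together with its translates, must be globally constant because the "transition" locus is both open and closed in a connected space $F_{k-1}(\Lambda)$. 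I expect this bookkeeping — tracking how the clopen decomposition restricts along the various forgetful projections and using the inductive hypothesis that $F_{k-1}(\Lambda)$ is connected to propagate constancy — to be the main technical obstacle, but it is elementary point-set topology with no deep input.
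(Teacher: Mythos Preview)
Your proposal correctly identifies the central difficulty --- that removing finitely many points from a connected Hausdorff space can disconnect it --- but it does not resolve it. You try several approaches (connected base with connected fibers; reducing to $k=2$; tracking clopen sets along forgetful projections) and explicitly abandon or leave vague each one. The final ``honest plan'' of propagating constancy of a clopen decomposition through the inductive hypothesis is not an argument: you would still need to explain why the clopen slice $U_{\mathbf y}\subset\Lambda\smallsetminus\{x_1,\dots,x_{k-1}\}$, which is some union of components of a varying complement, is forced to be everything or nothing, and that is exactly the missing step.

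The paper supplies the missing idea in the form of two short lemmas proved by the same ``open-and-closed intersection'' trick. The crucial one (Lemma~A.3) says: given $k+1$ distinct points $x_0,\dots,x_k$ in a connected space with closed points, there exists some $j_0$ such that \emph{all} of $x_0,\dots,x_k$ except $x_{j_0}$ lie in the same connected component $\Lambda'$ of $\Lambda\smallsetminus\{x_{j_0}\}$. The proof: if not, then $\bigcap_j \Lambda_{x_0}^{\{x_j\}}$ is shown, via the inductive hypothesis $\mathrm{H}_{k-1}$, to be a nonempty proper clopen subset of $\Lambda$, contradiction. This lemma is precisely what makes the induction go through, because now one can apply the inductive hypothesis to the connected subspace~$\Lambda'$ rather than to the possibly-disconnected $\Lambda\smallsetminus\{x_{j_0}\}$. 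A companion lemma (A.2) shows that among the $k$-subtuples of a $(k{+}1)$-tuple, at least two overlapping ones lie in the same component of $\Lambda^{(k)}$; together these give that all $k$-subtuples of any $(k{+}1)$-tuple lie in one component, hence $\Lambda^{(k)}$ is connected. Your clopen-propagation instinct is in the right spirit, but the concrete mechanism --- intersecting the components $\Lambda_{x_0}^{\{x_j\}}$ and showing this intersection is clopen --- is absent from your proposal.
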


Given a finite subset $X$ of~$\Lambda$ and a point $x\in \Lambda \smallsetminus X$, we denote by $\Lambda_x^X$ the connected component of $\Lambda\smallsetminus X$ containing~$x$.
Since $\Lambda$ is Hausdorff, its finite subsets are closed, and so $\Lambda_x^X$ is an open subset of $\Lambda$ and its closure $\overline{\Lambda_x^X}$ is contained in $\Lambda_x^X\cup X$.

\begin{lemma} \label{lem:conn-1}
Let $\Lambda$ be a connected topological space with closed singletons.
For any $k\geq 1$ and $\{x_0, \dots, x_k\}\in\Lambda^{(k+1)}$, there exists $0\leq i_0 < k$ such that the unordered $k$-tuples $\{x_0, \dots, x_k\} \smallsetminus \{x_{i_0}\} $ and $\{x_0, \dots, x_{k-1}\}$ belong to the same connected component of~$\Lambda^{(k)}$.
\end{lemma}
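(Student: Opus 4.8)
The plan is to reduce the statement to a claim about connected components of $\Lambda$ with finitely many points deleted, and then to extract the correct index $i_0$ from the fact — recorded just above the statement — that such a component is open in $\Lambda$ with closure contained in the component together with the deleted points.

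First I would set $F := \{x_0,\dots,x_{k-1}\}$ and, for each $i<k$, $F_i := F\smallsetminus\{x_i\}$, a set of $k-1$ pairwise distinct points. For each such $i$ the assignment $y\mapsto F_i\cup\{y\}$ defines a continuous map $\phi_i : \Lambda\smallsetminus F_i\to\Lambda^{(k)}$: it is the composition of $y\mapsto(x_{j_1},\dots,x_{j_{k-1}},y)$, for a fixed enumeration $F_i=\{x_{j_1},\dots,x_{j_{k-1}}\}$, with the canonical projection $\widetilde\Lambda^{(k)}\to\Lambda^{(k)}$ onto unordered tuples, and its target makes sense because $y\notin F_i$ forces $F_i\cup\{y\}$ to have exactly $k$ distinct elements. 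One checks $\phi_i(x_i)=\{x_0,\dots,x_{k-1}\}$ and $\phi_i(x_k)=\{x_0,\dots,x_k\}\smallsetminus\{x_i\}$. Hence it suffices to produce an index $i_0<k$ such that $x_{i_0}$ and $x_k$ lie in one and the same connected component of $\Lambda\smallsetminus F_{i_0}$: the $\phi_{i_0}$-image of that component is then a connected subset of $\Lambda^{(k)}$ containing both $\{x_0,\dots,x_{k-1}\}$ and $\{x_0,\dots,x_k\}\smallsetminus\{x_{i_0}\}$.

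To find such an $i_0$, I would consider $C:=\Lambda_{x_k}^{F}$, the connected component of $x_k$ in $\Lambda\smallsetminus F$. By the property recalled above, $C$ is open in $\Lambda$ and $\overline{C}\subseteq C\cup F$. Now $C$ is nonempty and $C\neq\Lambda$ (since $x_0\notin C$); because $\Lambda$ is connected, $C$ cannot also be closed, so $\overline{C}\smallsetminus C$ is a nonempty subset of $F$. Choose $x_{i_0}\in\overline{C}\smallsetminus C$; then automatically $i_0\in\{0,\dots,k-1\}$, i.e.\ $i_0<k$, since $x_k$ was deliberately excluded from $F$ in the definition of $C$. As $C\subseteq C\cup\{x_{i_0}\}\subseteq\overline{C}$ and $C$ is connected, the set $C\cup\{x_{i_0}\}$ is connected; it is contained in $(\Lambda\smallsetminus F)\cup\{x_{i_0}\}=\Lambda\smallsetminus F_{i_0}$ and contains both $x_k$ and $x_{i_0}$, so these two points lie in a common component of $\Lambda\smallsetminus F_{i_0}$, completing the argument.

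The step I expect to require the most care — the conceptual obstacle — is precisely that one cannot simply "slide" $x_k$ to $x_{i_0}$ along a path, since $\Lambda$ need not be path-connected; the argument must instead go through the component $C$ and its boundary, and it leans essentially on the recorded fact that $\Lambda_{x_k}^{F}$ is open (so that, were it also closed, the connectedness of $\Lambda$ would be violated). I would therefore be careful to invoke that fact explicitly, to verify continuity of the $\phi_i$ through the projection $\widetilde\Lambda^{(k)}\to\Lambda^{(k)}$, and to double-check that the produced index genuinely lies in $\{0,\dots,k-1\}$.
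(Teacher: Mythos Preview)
Your proof is correct. Both you and the paper make the same reduction: it suffices to find $i_0<k$ with $x_{i_0}$ and $x_k$ in the same connected component of $\Lambda\smallsetminus F_{i_0}$ (your $F_i$ is the paper's $X_i$), and then push this through a continuous map into $\Lambda^{(k)}$.

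From there the two arguments diverge. The paper argues by contradiction, considering the intersection $\bigcap_{i<k}\Lambda_{x_k}^{X_i}$ of the $k$ components obtained by deleting one $x_i$ at a time; if no $x_i$ lies in its own $\Lambda_{x_k}^{X_i}$, this intersection is shown to be a proper nonempty clopen subset of $\Lambda$. You instead delete \emph{all} of $F=\{x_0,\dots,x_{k-1}\}$ at once, take the single component $C=\Lambda_{x_k}^{F}$, and directly locate a frontier point $x_{i_0}\in\overline{C}\smallsetminus C\subseteq F$; then $C\cup\{x_{i_0}\}$, being connected and sitting inside $\Lambda\smallsetminus F_{i_0}$, does the job. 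Your route is a little more direct and constructive, while the paper's version has the advantage of mirroring the structure of the proof of the companion Lemma~\ref{lem:conn-2} (the same intersection-of-components trick), giving the two lemmas a uniform presentation. Both proofs rest on exactly the same recorded fact that $\Lambda_x^X$ is open with closure contained in $\Lambda_x^X\cup X$.
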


\begin{proof}
For $0\leq i<k$, let $X_i:=\{x_0, \dots, x_{k-1}\}\smallsetminus \{x_i\}$.
It is sufficient to prove the existence of $0\leq i_0 < k$ such that $x_{i_0}$ and $x_k$ belong to the same connected component of $\Lambda\smallsetminus X_{i_0}$, \ie $x_{i_0}\in \Lambda_{x_k}^{X_{i_0}}$.
We have
\begin{equation} \label{eqn:inters-1}
\overline{\bigcap_{0\leq i <k} \Lambda_{x_k}^{X_i}} \subset \bigcap_{0\leq i <k} \overline{\Lambda_{x_k}^{X_i}} \subset \bigcap_{0\leq i <k} (\Lambda_{x_k}^{X_i} \cup X_i).
\end{equation}
Suppose by contradiction that $x_i\notin \Lambda_{x_k}^{X_i}$ for all $0\leq i<k$: then $x_i\notin\Lambda_{x_k}^{X_i} \cup X_i$, so the right-hand intersection in \eqref{eqn:inters-1} is disjoint from all $X_i$ and can be rewritten $\bigcap_{0\leq i <k} \Lambda_{x_k}^{X_i}$.
This set is therefore open and (by~\eqref{eqn:inters-1}) closed, and contains $x_k$ but no other $x_i$, contradicting the fact that $\Lambda$ is connected.
\end{proof}

\begin{lemma} \label{lem:conn-2}
Let $\Lambda$ be a connected topological space with closed singletons.
For any $k\geq 1$ and $\{x_0, \dots, x_k\}\in\Lambda^{(k+1)}$, there exists $1\leq j_0 \leq k$ such that $x_i \in \Lambda_{x_0}^{\{x_{j_0}\}}$ for all $i\in\{1,\dots,k\}\smallsetminus\{ j_0\}$.
\end{lemma}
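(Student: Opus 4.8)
\textbf{Proof proposal for Lemma~\ref{lem:conn-2}.}

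The plan is to reduce the statement to a statement about a single point being removed, and then to apply the same "cover by the complements of points" trick that proves Lemma~\ref{lem:conn-1}. Concretely, for $1\leq j\leq k$ write $\Lambda_{x_0}^{\{x_j\}}$ for the connected component of $\Lambda\smallsetminus\{x_j\}$ containing~$x_0$. Since $\Lambda$ has closed singletons, each $\Lambda_{x_0}^{\{x_j\}}$ is open in~$\Lambda$, and its closure satisfies $\overline{\Lambda_{x_0}^{\{x_j\}}}\subset \Lambda_{x_0}^{\{x_j\}}\cup\{x_j\}$. I want to produce a single index $j_0$ with $x_i\in\Lambda_{x_0}^{\{x_{j_0}\}}$ for every $i\neq j_0$ in $\{1,\dots,k\}$.

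First I would argue by contradiction: suppose that for every $j\in\{1,\dots,k\}$ there exists some $i(j)\in\{1,\dots,k\}\smallsetminus\{j\}$ with $x_{i(j)}\notin\Lambda_{x_0}^{\{x_j\}}$. Then in particular $x_{i(j)}\notin\Lambda_{x_0}^{\{x_j\}}\cup\{x_j\}$ for each~$j$. Now consider the intersection
$$ Z \;:=\; \bigcap_{j=1}^{k} \Lambda_{x_0}^{\{x_j\}}. $$
As in the proof of Lemma~\ref{lem:conn-1}, one has
$$ \overline{Z}\;\subset\;\bigcap_{j=1}^{k}\overline{\Lambda_{x_0}^{\{x_j\}}}\;\subset\;\bigcap_{j=1}^{k}\bigl(\Lambda_{x_0}^{\{x_j\}}\cup\{x_j\}\bigr). $$
The contradiction hypothesis guarantees that for each fixed~$j$, the point $x_j$ fails to lie in $\Lambda_{x_0}^{\{x_{j'}\}}\cup\{x_{j'}\}$ for $j':=i^{-1}$-type index — more precisely, I would show that no $x_j$ can lie in the right-hand intersection: given $j$, pick $j'$ with $x_{j}\notin\Lambda_{x_0}^{\{x_{j'}\}}$ (such $j'$ exists because $x_j=x_{i(j')}$ for a suitable $j'$, using that $i$ is a function on $\{1,\dots,k\}$ — one must check the combinatorics here, see below), so $x_j\notin \Lambda_{x_0}^{\{x_{j'}\}}\cup\{x_{j'}\}$ since $x_j\neq x_{j'}$. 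Hence the right-hand intersection contains none of $x_1,\dots,x_k$, so it is disjoint from $\{x_1,\dots,x_k\}$ and therefore equals $Z$ itself. Thus $Z$ is both open and closed in~$\Lambda$; it contains $x_0$ but no $x_j$ with $j\geq 1$, so $Z\neq\Lambda$, contradicting connectedness of~$\Lambda$.

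The main obstacle is the combinatorial bookkeeping in the contradiction step: the hypothesis gives, for each $j$, \emph{some} bad index $i(j)$, and I need to conclude that \emph{every} $x_j$ ($1\le j\le k$) is excluded from $\overline{Z}$. The cleanest fix is to not phrase it via the function $i(\cdot)$ at all, but directly: fix $j\in\{1,\dots,k\}$; I must find $j'$ with $x_j\notin\Lambda_{x_0}^{\{x_{j'}\}}$. If no such $j'$ existed, then $x_j\in\Lambda_{x_0}^{\{x_{j'}\}}$ for all $j'\neq j$, and trivially we may take $j_0:=j$ in the conclusion of the lemma (note $x_j\in\Lambda_{x_0}^{\{x_j\}}$ is vacuous — we only need $x_i$ for $i\neq j$) — so the lemma already holds and there is nothing to prove. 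Hence we may assume that for every $j$ there is a $j'(j)$ with $x_j\notin\Lambda_{x_0}^{\{x_{j'(j)}\}}$, which is exactly what is needed to run the argument above. I would double-check the edge case $k=1$ separately ($j_0=1$ works trivially since the index set $\{1,\dots,k\}\smallsetminus\{j_0\}$ is empty), and also verify that $x_0\in Z$ genuinely holds, i.e.\ that $x_0\neq x_j$ for all $j\geq 1$, which is part of the hypothesis $\{x_0,\dots,x_k\}\in\Lambda^{(k+1)}$.
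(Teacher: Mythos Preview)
Your argument has a genuine gap in the ``cleanest fix'' paragraph. You claim that if for some fixed $j$ there is no $j'\neq j$ with $x_j\notin\Lambda_{x_0}^{\{x_{j'}\}}$, i.e.\ if $x_j\in\Lambda_{x_0}^{\{x_{j'}\}}$ for all $j'\neq j$, then one may take $j_0:=j$ in the conclusion of the lemma. But the conclusion with $j_0=j$ asks for $x_i\in\Lambda_{x_0}^{\{x_j\}}$ for all $i\neq j$: that is a statement about where the \emph{other} points $x_i$ sit after removing $x_j$, whereas your hypothesis says where $x_j$ sits after removing each of the other points. These are dual conditions and neither implies the other. A concrete illustration: take $\Lambda=[0,4]$, $x_0=2$, $x_1=1$, $x_2=3$, $x_3=0.5$. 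Then $x_1\in\Lambda_{x_0}^{\{x_2\}}=[0,3)$ and $x_1\in\Lambda_{x_0}^{\{x_3\}}=(0.5,4]$, so your hypothesis holds for $j=1$; yet $j_0=1$ fails in the lemma since $x_3=0.5\notin\Lambda_{x_0}^{\{x_1\}}=(1,4]$.

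In terms of the directed graph with an edge $j\to m$ whenever $x_m\notin\Lambda_{x_0}^{\{x_j\}}$, the contradiction hypothesis says every vertex has an out-edge, while what you need to make $Z$ clopen is that every vertex has an in-edge; these are not equivalent. The paper closes exactly this gap by induction: assuming the statement for $k-1$, if some $x_i$ lies in the right-hand intersection (i.e.\ $x_i\in\Lambda_{x_0}^{\{x_j\}}$ for all $j\neq i$), then the failure of the conclusion at each $j\neq i$ forces $\{x_1,\dots,x_k\}\smallsetminus\{x_i,x_j\}\not\subset\Lambda_{x_0}^{\{x_j\}}$ for all such $j$, contradicting the inductive hypothesis applied to $x_0$ and $\{x_1,\dots,x_k\}\smallsetminus\{x_i\}$. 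Your direct approach does not supply a substitute for this step.
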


We call this property $\mathrm{H}_k$, or $\mathrm{H}_k(x_0, \{x_1,\dots, x_k\})$ to be specific.

\begin{proof}
We argue by induction.
Property~$\mathrm{H}_1$ is vacuously true.
Assuming $\mathrm{H}_{k-1}$ where $k\geq 2$, let us prove $\mathrm{H}_k$ by contradiction.
We have
\begin{equation} \label{eqn:inters-2}
\overline{\bigcap_{1\leq j \leq k} \Lambda_{x_0}^{\{x_j\}}} \subset \bigcap_{1\leq j \leq k} \overline{\Lambda_{x_0}^{\{x_j\}}} \subset \bigcap_{1\leq j \leq k} (\Lambda_{x_0}^{\{x_j\}} \cup \{x_j\}).
\end{equation}
Suppose $\mathrm{H}_k(x_0, \{x_1,\dots, x_k\})$ fails: that is, for all $1\leq j \leq k$, 
$$\{x_1,\dots,x_k\}\smallsetminus\{ x_j\} \not\subset \Lambda_{x_0}^{\{x_{j}\}}.$$ 

We claim that the right member of \eqref{eqn:inters-2} then cannot contain any $x_i$ for $1\leq i \leq k$: indeed that would imply $x_i\in \Lambda_{x_0}^{\{x_j\}}$ for all $j\in\{1,\dots,k \} \smallsetminus \{i\}$, hence the above relationship would yield
$$\{x_1,\dots,x_k\}\smallsetminus\{x_i, x_j\} \not\subset \Lambda_{x_0}^{\{x_{j}\}}$$ 
for all $j\in \{1,\dots, k\}\smallsetminus \{i\}$, contradicting $\mathrm{H}_{k-1}(x_0, \{x_1,\dots, x_k\}\smallsetminus \{x_i\})$.

Therefore the right-hand side of \eqref{eqn:inters-2} can be written $\bigcap_{1\leq j \leq k} \Lambda_{x_0}^{\{x_j\}}$, which by \eqref{eqn:inters-2} turns out to be closed.
It is also open, and contains $x_0$ but no other~$x_i$: this contradicts connectedness of $\Lambda$.
Therefore $\mathrm{H}_k$ holds.
\end{proof}

\begin{proof}[Proof of Fact~\ref{fact:conn-ktuple}]
We argue by induction on~$k$ to prove that $\Lambda^{(k)}$ is connected for any connected topological space~$\Lambda$ with closed singletons.
The case $k=1$ is obviously true.
For $k\geq 2$, suppose that $(\Lambda')^{(k-1)}$ is connected for any connected~$\Lambda'$ with closed singletons, and let us prove that $\Lambda^{(k)}$ is connected for any connected~$\Lambda$ with closed singletons.

Consider $\{x_0, \dots, x_k\}\in\Lambda^{(k+1)}$.
By Lemma~\ref{lem:conn-2}, up to exchanging the labels $j_0$ and~$k$, we have $x_i \in \Lambda_{x_0}^{\{x_k\}}$ for all $1\leq i\leq k-1$, \ie all points $x_0, \dots, x_{k-1}$ belong to the same connected component $\Lambda'$ of $\Lambda\smallsetminus \{x_k\}$.
Since $\Lambda'^{(k-1)}$ is connected, all $(k-1)$-tuples $\{x_0, \dots, x_{k-1}\}\smallsetminus \{x_i\}$ for $0\leq i < k$ belong to the same connected component of $(\Lambda\smallsetminus \{x_k\})^{(k-1)}$, and so all $k$-tuples $\{x_0, \dots, x_k\}\smallsetminus \{x_i\}$ for $0\leq i < k$ belong to the same component of $\Lambda^{(k)}$.
But by Lemma~\ref{lem:conn-1} one of these $k$-tuples (for $i=i_0$) belongs to the same component as $\{x_0, \dots, x_{k-1}\}$.
Therefore \emph{all} $k$-tuples contained in $\{x_0, \dots, x_k\}$ belong to the same component of $\Lambda^{(k)}$.
This is true for all $\{x_0, \dots, x_k\}\in\Lambda^{(k+1)}$, hence $\Lambda^{(k)}$ is connected.
\end{proof}


\end{document}